\documentclass{article}

\usepackage{amsmath,amsfonts,amssymb,amstext,amsthm,amscd,mathrsfs}
\usepackage{epsfig}
\usepackage[all]{xy}
\usepackage{pdfsync}
\usepackage{graphicx}
\usepackage{tikz-cd}

\def\Z{{ \mathbf{Z}}}
\def\C{{\mathbf{C}}}
\def\N{{\mathbf N}}
\def\D{{\mathbf D}}
\def\B{{\mathbf B}}
\def\R{{\mathbf R}}
\def\Oo{{\mathcal O}}
\def\P{{\mathbf{P}}}
\def\A{{\mathbf A}}
\def\P{{\mathbf P}}
\def\J{{\tilde{J}}}
\def\Jj{{\mathcal J}}
\def\m{{\mathfrak{m}}}

\def\X{{\mathfrak{X}}}

\def\elem(#1,#2){  \{ {{#1}\over \overline {\ #2\ }}\}  }
\def\polar{{P_k(X;L^{d-k})}}

\newtheorem{definition}{Definition}[section]

\newtheorem{remark}{Remark}[section]

\newtheorem{ex}{Example}[section]
\newtheorem{exercise}{Exercise}[section]
\newenvironment{example}{\begin{ex}\em}{\end{ex}}
\newtheorem{theorem}{Theorem}[section]
\newtheorem{corollary}{Corollary}[section]

\newtheorem{proposition}[theorem]{Proposition}
\newtheorem{lemma}{Lemma}[section]

\setcounter{tocdepth}{2}
\begin{document}
\title{Local polar varieties in the geometric study of singularities}
\author{Arturo Giles Flores and Bernard Teissier}
\maketitle
\textit{This paper is dedicated to those administrators of research who realize how much damage is done by the evaluation of mathematical research solely by the rankings of the journals in which it is published, or more generally by bibliometric indices. We hope that their lucidity will become widespread in all countries.}
\pagestyle{myheadings}
\markboth{\rm Arturo Giles Flores and Bernard Teissier}{\rm Local polar varieties}

\begin{abstract}
\noindent  This text presents several aspects of the theory of equisingularity of complex analytic spaces from the standpoint of Whitney conditions. The goal is to describe from the geometrical, topological, and algebraic viewpoints a canonical locally finite partition of a reduced complex analytic space $X$ into nonsingular strata with the property that the local geometry of $X$ is constant on each stratum. Local polar varieties appear in the title because they play a central role in the unification of viewpoints. The geometrical viewpoint leads to the study of spaces of limit directions at a given point of $X\subset \C^n$ of hyperplanes of $\C^n$ tangent to $X$ at nonsingular points, which in turn leads to the realization that the Whitney conditions, which are used to define the stratification, are in fact of a Lagrangian nature. The local polar varieties are used to analyze the structure of the set of limit directions of tangent hyperplanes. This structure helps in particular to understand how a singularity differs from its tangent cone, assumed to be reduced. The multiplicities of local polar varieties are related to local topological invariants, local vanishing Euler-Poincar\'e characteristics, by a formula which turns out to contain, in the special case where the singularity is the vertex of the cone over a reduced projective variety, a Pl\"ucker-type formula for the degree of the dual of a projective variety. \par\medskip\centerline{\textbf{R\'esum\'e}}\par\medskip\noindent Ce texte pr\'esente plusieurs aspects de la th\'eorie de l'\'equisingularit\'e des espaces analytiques complexes telle qu'elle est d\'efinie par les conditions de Whitney. Le but est de d\'ecrire des points de vue g\'eom\'etrique, topologique et alg\'ebrique une partition canonique localement finie d'un espace analytique complexe r\'eduit $X$ en strates non singuli\`eres telles que la g\'eom\'etrie locale de $X$ soit constante le long de chaque strate. Les vari\'et\'es polaires locales apparaissent dans le titre parce qu'elles jouent un r\^ole central dans l'unification des points de vue. Le point de vue g\'eom\'etrique conduit \`a l'\'etude des directions limites en un point donn\'e de $X\subset \C^n$ des hyperplans de $\C^n$ tangents \`a $X$ en des points non singuliers. Ceci am\`ene \`a r\'ealiser que les conditions de Whitney, qui servent \`a d\'efinir la stratification, sont en fait de nature lagrangienne. Les vari\'et\'es polaires locales sont utilis\'ees pour analyser la structure de l'ensemble des positions limites d'hyperplans tangents. Cette structure aide \`a comprendre comment une singularit\'e diff\`ere de son c\^one tangent, suppos\'e r\'eduit. Les multiplicit\'es des vari\'et\'es polaires locales sont reli\'ees \`a des invariants topologiques locaux, des caract\'eristiques d'Euler-Poincar\'e \'evanescentes, par une formule qui se r\'ev\`ele, dans cas particulier o\`u la singularit\'e est le sommet du c\^one sur une vari\'et\'e projective r\'eduite, donner une formule du type Pl\"ucker pour le calcul du degr\'e de la vari\'et\'e duale d'une vari\'et\'e projective.
\end{abstract}

\tableofcontents
\vskip.3truecm
\section{Introduction}\label{basic}
The origin of these notes is a course imparted by the second author in the
``$2^{{\rm ndo}}$ Congreso Latinoamericano de Matem\'aticos'' celebrated in Cancun, Mexico on
July 20 -26, 2004. The first redaction was subsequently elaborated by the authors.\par
The theme of the course was the local study of analytic subsets of $\C^n$, which is the local study of reduced complex analytic spaces. 
That is, we will consider subsets defined in a neighbourhood of a point $0\in \C^n$ 
by equations:
\[f_1(z_1,\ldots,z_n)=\cdots=f_k(z_1,\ldots,z_n)=0\]
\[f_i \in \C\{z_1,\ldots,z_n\}, \; f_i(0)=0, \, i=1,\ldots,k.\]
Meaning that the subset $X\subset \C^n$ is thus defined in a neighbourhood $U$ of $0$,
where all the series $f_i$ converge. Throughout this text, the word ``local" means that we work with ``sufficiently small" representatives of a germ $(X,x)$. \textit{For simplicity we assume throughout that the spaces we study are equidimensional; all their irreducible components have the same dimension}. The reader who needs the general case of reduced spaces should have no substantial difficulty in making the extension.\par
The purpose of the course was to show how to \textit{stratify} $X$. In other words, partition $X$ into
finitely many \textbf{nonsingular}\footnote{We would prefer to use \textit{regular} to emphasize the absence of singularities, but this term has too many meanings. } complex analytic manifolds $\{X_{\alpha}\}_{\alpha \in A}$, which will 
be called \textbf{strata}, such that:
\begin{itemize}
\item[i)] The closure $\overline{X_{\alpha}}$ is a closed complex analytic subspace of $X$, for all $\alpha \in A$.
\item[ii)] The frontier $\overline{X_{\beta}} \setminus X_{\beta}$ is a union of strata $X_{\alpha}$, for all $\beta \in A$.
\item[iii)] Given any $x \in X_{\alpha}$, the ``geometry'' of all the closures $\overline{X_{\beta}}$
            containing $X_{\alpha}$ is locally constant along $X_{\alpha}$ in a neighbourhood of x. 
\end{itemize}
\noindent The stratification process being such that at each step one can take connected components, one usually assumes that the strata are connected and then the closures $\overline{X_{\alpha}}$ are equidimensional. 
\par Recall that two closed subspaces $X\subset U\subset\C^n$ and $X'\subset U'\subset\C^n$, where $U$ and $U'$ are open, have the same \textit{embedded topological type} if there exists a homeomorphism $\phi\colon U\to U'$ such that $\phi (X)=X'$. If $X$ and $X'$ are representatives of germs $(X,x)$ and $(X',x')$ we require that $\phi(x)=x'$ and we say the two germs have the same embedded topological type.\par\noindent If by ``geometry" we mean the embedded local topological type at $x\in X_\alpha$ of $\overline{X_{\beta}}\subset \C^n$ \textit{and} of its sections by affine subspaces of $\C^n$ of general directions passing near $x$ or through $x$, which we call the \textit{total local topological type}, there is a \textbf{minimal} such partition, in the sense that any other partition with the same properties must be a sub-partition of it. Characterized by differential-geometric conditions, called Whitney conditions, bearing on limits of tangent spaces and of secants, it plays an important role in generalizing to singular spaces geometric concepts such as Chern classes and integrals of curvature. The existence of such partitions, or stratifications, without proof of the existence of a minimal one, and indeed the very concept of stratification\footnote{Which was subsequently developed, in particular by Thom in \cite{Th} and Mather in \cite{Ma1}.}, are originally due to Whitney in \cite{Whi1}, \cite{Whi2}.  In these papers Whitney also initated the study in complex analytic geometry of limits of tangent spaces at nonsingular points. In algebraic geometry the first general aproach to such limits is due to Semple in \cite{Se}.\par In addition to topological and differential-geometric characterizations, the partition can also be described algebraically by means of \textbf{Polar Varieties}, and this is one of the main points of these lectures.\par
Apart from the characterization of Whitney conditions by equimultiplicity of polar varieties, one of the main results appearing in these lectures is therefore the equivalence of Whitney conditions for a stratification $X=\bigcup_\alpha X_\alpha$ of a  complex analytic space $X\subset \C^n$ with the local topological triviality of the closures $\overline{X_\beta}$ of strata along each $X_\alpha$ which they contain, to which one must add the local topological triviality along $X_\alpha$ of the intersections of the $\overline{X_\beta}$ with (germs of) general nonsingular subspaces of $\C^n$ containing $X_\alpha$.\par
Other facts concerning Whitney conditions also appear in these notes, for example that the Whitney conditions are in fact of a Lagrangian  nature, related to a condition of relative projective duality between the irreducible components of the normal cones of the $\overline{X_\beta}$ along the $X_\alpha$ and of some of their subcones, on the one hand, and the irreducible components of the space of limits of tangents hyperplanes to $\overline{X_\beta}$ at nonsingular points approaching $X_\alpha$, on the other. This duality, applied to the case where $X_\alpha$ is a point,  gives a measure of the geometric difference between a germ of singular space at a point and its tangent cone at that point, assumed to be reduced. Among the important facts concerning polar varieties of a germ $(X,x)$ is that their multiplicity at a point depends only on the \textit{total local topological type} of the germ. This is expressed by a formula (see theorem \ref{formula}) relating the multiplicities of polar varieties to local vanishing Euler-Poincar\'e characteristics. \par Applying this formula to the cone over a projective variety $V$ gives an expression for the degree of the projective dual variety $\check V$ which depends only on the local topological characters of the incidences between the strata of the minimal Whitney stratification of $V$ and the Euler characteristics of these strata and their general linear sections.  In particular we recover with topological arguments the formula for the class (another name for the degree of the dual variety) of a projective hypersurface with isolated singularities.\par The original idea of the course was to be as geometric as possible. Since many proofs in this story are quite algebraic, using in particular the notion of integral dependence on ideals and modules (see \cite{Te3}, \cite{Ga1}), they are often replaced by references. Note also that in this text, intersections with linear subspaces of the nonsingular ambient space are taken as reduced intersections.\par
\par\medskip
 We shall begin by trying to put into historical context the appearance of polar varieties, as a means to give the reader a little insight and intuition into what we will be doing. A part of what follows is taken from \cite{Te5}; see also \cite{Pi2}.

\indent It is possible that the first example of a polar variety is in the treatise on 
conics of Apollonius. The cone drawn
from a point $0$ in affine three-space outside of a fixed sphere 
around that sphere meets the sphere along a circle $C$. If we
consider a plane not containing the point, and the projection $\pi$ 
from $0$ of the affine three-space onto that plane, the circle $C$ is
the set of critical points of the restriction of $\pi$ to the sphere. 
Fixing a plane $H$, by moving the point $0$, we can obtain any circle drawn on the
sphere, the great circles beeing obtained by sending the point to 
infinity in the direction perpendicular to the plane of the 
circle. 
\par \noindent 
Somewhat later, around 1680, John Wallis
asked how many tangents can be drawn to a nonsingular curve of degree
$d$ in the plane from a point in that plane and conjectured that this 
number should always be $\leq d^2$. In modern terms, he was proposing 
to compare the visual
complexity of a curve (or surface) as measured by the number of 
"critical"  lines of sight with its algebraic 
complexity as measured by the degree.
Given an algebraic surface
$S$ of degree $d$ in affine three-space and a point $0$ outside it, 
the lines emanating from
$0$ and tangent to $S$ touch $S$ along an algebraic curve $P$. Taking 
a general hyperplane $H$ through $0$, we see that the number of 
tangents drawn
from $0$ to the curve $S\cap H$ is the number 
of points in $P\cap H$ and therefore is bounded by the degree
of the algebraic curve $P$ drawn on $S$. This algebraic curve is an example of a 
\textit{polar curve}; it is the generalization of Apollonius' circles. Wallis' question was first answered by Goudin and Du S\'ejour who showed in 1756  that this number is $\leq d(d-1)$ (see \cite{Kl3}) and later by Poncelet, who 
saw (without writing a single
equation; see \cite[p. 68, p. 361 and ff.]{Pon}) that the natural setting for the problem which had been 
stated in the real affine plane by Wallis was the complex projective 
plane and that the number of
tangents drawn from a point with projective coordinates
$(\xi:\eta:\zeta)$ to the curve $C$ with homogeneous equation 
$F(x,y,z)=0$ is equal to the number of intersection points of $C$ with a curve of degree $d-1$. 
The equation of this curve was written explicitely later by Pl\"ucker (see \cite{W}, \cite{Pl}):
$$\xi\frac{\partial F}{\partial x}+\eta\frac{\partial F}{\partial 
y}+\zeta\frac{\partial F}{\partial z}=0.\leqno{(1)}$$
This equation is obtained by \textit{polarizing} the polynomial 
$F(x,y,z)$ with respect to $(\xi:\eta:\zeta)$, a terminology which 
comes from the study of conics; it is the
method for obtaining a bilinear form from a quadratic form (in 
characteristic $\neq 2$).\par\noindent It is the polar curve of 
$C$ with respect to the point $(\xi:\eta:\zeta)$, or rather, in the terminology of \cite{Te3}, the projective curve associated to the  \textit{relative polar surface} of the map $(\C^3,0)\to (\C^2,0)$ given by $(F,\xi x+\eta y+\zeta z)$. The term emphasizes that it is attached to a morphism, unlike the polar varieties \`a la Todd, which in this case would be the points on the curve $C$ where the tangent line contains the point $(\xi:\eta:\zeta)$.  In any case, it is of degree $d-1$, where $d$ is the degree of the polynomial $F$ and by B\'ezout's theorem, except in the case 
where the curve $C$ is not reduced,
i.e., has multiple components, the number of intersection points 
counted with multiplicities is exactly $d(d-1)$. So we
conclude with Poncelet that the number (counted with multiplicities) of points 
of the nonsingular curve $C$ where the tangent goes through the 
point with coordinates
$(\xi:\eta:\zeta)$ is equal to $d(d-1)$. The equations written by Pl\"ucker shows that, as the point varies in the 
projective plane, the equations $(1)$ describe a \textit{linear 
system} of curves of degree $d-1$ in the
plane, which cuts out a linear system of points on the curve
$C$, of degree $d(d-1)$. It is the most natural linear system after that which comes from the lines (hyperplanes)
$$\lambda x+\mu y +\nu z=0,\ \ (\lambda : \mu :\nu)\in \P^2$$
\noindent
and comes from the linear system of points in the dual space $\check \P^2$ while the linear system $(1)$ can be seen as coming from the linear system of lines in $\check\P^2$. The projective duality between $\P^2$ and the space $\check\P^2$ of lines in $\P^2$ exchanges the two linear systems. The \textit{dual curve} $\Check C\in \check \P^2$ is the closure of the set of points in $\check\P^2$ corresponding to lines in $\P^2$ which are tangent to $C$ at a nonsingular point. Its degree is called the \textit{class} of the curve $C$. It is the number of intersection points of $\check C$ with a general line of $\check\P^2$, and that, by construction, is the number of tangents to $C$ passing through a given general point of $\P^2$.    \par In the
theory of algebraic curves, there is an important formula called the Riemann-Hurwitz formula. Given an algebraic map $f\colon C\to
C'$ between compact nonsingular complex algebraic curves, which is of degree $\hbox{deg}f=d$ (meaning that for a general
point $c'\in C'$, $f^{-1}(c')$ consists of $d$ points, and is ramified at the points $x_i\in C, \ 1\leq
i\leq r$, which means that near $x_i$, in suitable local coordinates on $C$ and $C'$, the
map $f$ is of the form $t\mapsto t^{e_i+1}$ with $e_i\in \N,\ \  e_i\geq 1$ .
The integer $e_i$ is the \textit{ramification index} of $f$ at $x_i$. Then we
have the Riemann-Hurwitz formula relating the genus of $C$ and the genus of
$C'$ via $f$ and the ramification indices: $$2g(C)-2 = d(2g(C')-2)+\sum_ie_i.$$ If we apply this formula to the case $C'=\P^1$, knowing that any compact
algebraic curve is a finite ramified covering of $\P^1$, we find that we can
calculate the genus of $C$ from any linear system of points made of the fibers of a map $C\to
\P^1$ if we know its degree and its singularities: we get $$2g(C)=2-2d+\sum e_i.$$ The
ramification points $x_i$ can be computed as the so-called \it Jacobian divisor \rm of the linear
system, which consists of the singular points, properly counted, of the singular
members of the linear system. In particular if $C$ is a plane curve and the
linear system is the system of its plane sections by lines through a general point $x=(\xi
:\eta : \zeta)$ of $\P^2$, the map $f$ is the projection from $C$ to $\P^1$ from
 $x$; its degree is the degree $m$ of $C$ and its ramification points are exactly
the points where the line from $x$ is tangent to $C$. Since $x$ is general, these
are simple tangency points, so the $e_i$ are equal to $1$, and their number is
equal to the class $\check m$ of $C$; the formula gives $$2g(C)-2=-2m+\check
m\ ,$$ thus giving for the genus an expression linear in the degree and the class.\par This is the first example
of the relation between  the ``characteristic classes" (in this case only the
genus) and the polar classes; in this case the curve itself, of degree $m$ and
the degree of the polar locus, or apparent contour from $x$, in this case the class $\check m$.
After deep work by Zeuthen, C. Segre, Severi, it was Todd who in three fundamental papers (\cite{To1}, \cite{To2}, \cite{To3}) found the correct generalization of the formulas known for curves and surfaces.\par\noindent
More precisely, given a nonsingular $d$-dimensional variety $V$ in the complex projective space $\P^{n-1}$, for a linear
subspace $D\subset \P^{n-1}$ of dimension $n-d+k-3$, i.e., of codimension $d-k+2$, with $0\leq k\leq d$, let us set
$$P_k(V;D)=\{v\in V /{\rm dim}(T_{V,v}\cap D)\geq k-1\}.$$
This is the \textit{Polar variety} of $V$ associated to $D$; if $D$ is general, it is either empty
or purely of codimension $k$ in $V$. If $n=3$, $d=1$ and $k=1$, we find the points of the projective plane curve $V$ where the tangent lines go through the point $D\in\P^2$. A tangent hyperplane to $V$ at a point $v$ is a hyperplane containing the tangent space $T_{V,v}$. The polar variety $P_k(V,D)$ with respect to a general $D$ of codimension $d-k+2$ consists of the points of $V$ where a tangent hyperplane contains $D$, a condition which is equivalent to the dimension inequality. We see that this construction is a direct generalization of
the apparent contour. The eye $0$ is replaced by the linear subspace $D$!\par Todd shows
that (rational equivalence classes of) \footnote{Which he invents for the occasion.} the following formal linear combinations of varieties of codimension $k$, for $0\leq k\leq d$:
$$V_k=\sum_{j=0}^{j=k}(-1)^j{d-k+j+1\choose j}P_{k-j}(V;D_{d-k+j+2})\cap H_j,$$ where $H_j$ is a 
linear subspace of codimension $j$ and $D_{d-k+j+2}$ is of codimension $d-k+j+2$, are independent of all the choices made and of the
embedding of $V$ in a projective space, provided that the $D's$ and the $H$'s have been
chosen general enough. Our $V_k$ are in fact Todd's $V_{d-k}$. The intersection numbers arising from intersecting these classes and hyperplanes in the right way to obtain numbers contain a wealth of numerical invariants, such as Euler characteristic and genus. Even the arithmetic genus, which is the generalization of the differential forms definition of the genus of a curve, can be computed. Around 1950 it was realized that the classes of Todd, which had also been considered independently by Eger, are nothing but the Chern classes of the tangent bundle of $V$. \par On the other hand, the basic topological invariant of the variety $V$, its Euler-Poincar\'e
characteristic (also called Euler characteristic for short) satisfies the equality: $$\chi (V)={\rm deg}V_d=\sum_{j=0}^d(-1)^j(j+1)(P_{d-j}(V). H_j),\eqno{(E)}$$ where $P_{d-j}(V)$ is the polar variety of codimension $d-j$ with respect to a general $D$ of codimension $j+2$, which is omitted from the notation, and $(a. b)$
denotes the intersection number in $\P^{n-1}$. In this case, since we intersect with a linear space of
complementary dimension, $(P_{d-j}(V). H_j)$ is just the degree of the projective variety $P_{d-j}(V)$. \par So Todd's
results give a rather complete generalization of the genus formula for curves, both in its analytic and its
topological aspects. This circle of ideas was considerably extended, in a cohomological framework, to generalized notions of genus and characteristic classes for nonsingular varieties; see \cite{H} and \cite[Chapters 48,49]{Po}. Todd's construction was modernized and extended to the case of a singular projective variety by R. Piene (see \cite{Pi1}).\par
What we use here is a local form, introduced in \cite{L-T1}, of the polar varieties of Todd, adapted to the singular case and defined for any equidimensional and reduced germ of a complex analytic space. The case of a singular projective variety which we have just seen can be deemed to be the special case where our singularity is a cone.\par We do not take classes in (Borel-Moore) homology or elsewhere because the loss of geometric information is too great, but instead look at ``sufficiently general" polar varieties of a given dimension as geometric objects. The hope is that the equisingularity class (up to a Whitney equisingular deformation) of each general polar variety of a germ is an analytic invariant.\footnote{Indeed, the statement at the end of remark 3.2 in \cite{Te3} should be entitled "problem" and not "theorem".} What is known is that the multiplicity is, and this is what we use below.\par\medskip
\textit{Since the stratification we build is determined by local conditions and is canonical, the stratifications defined in the open subsets of a covering of a complex analytic space $X$ will automatically glue up. Therefore it suffices to study the stratifications locally assuming $X\subset\C^n$, as we do here. We emphasize that the result of the construction for $X$, unlike its tools, is independent of the embedding $X\subset\C^n$.}

\section{Limits of tangent spaces, the conormal space and the tangent cone.}\label{symp}

    To set the working grounds, let us fix a reduced and pure-dimensional germ
of analytic subspace $(X,0)\subset (\C ^n,0)$. That is, we are assuming that $X$ is given to us by an ideal $I$
of $\C \{z_1,\ldots,z_n\}$ generated say by $(f_1,\ldots,f_k)$, containing all analytic functions 
vanishing on $X$, and also that all the irreducible components of $X$, corresponding to the minimal prime ideals of $\C \{z_1,\ldots,z_n\}$ which contain $I$, have the same dimension $d$.\\

By definition, a singular point of a  complex analytic space 
is a point where the tangent space cannot be defined as usual. As a 
substitute, we can look at all limit positions of tangent spaces at nonsingular points tending to 
a given singular point. 

\begin{definition}
Given a closed $d$-dimensional analytic subset $X$ in an open set of $\C^n$, a d-plane $T$ of the Grassmannian $G(d,n)$ of $d$-dimensional vector subspaces of $\C^n$  is a limit at $x \in X$ of tangent spaces to the analytic space
$X$ if there exists a sequence $\{x_i\}$ of nonsingular points of $X$ and a sequence of d-planes $\{T_i\}$ of
$G(d,n)$ such that for all i, the d-plane $T_i$ is the direction of the tangent space to $X$ at $x_i$, the sequence 
$\{x_i\}$ converges to $x$ and the sequence $\{T_i\}$ converges to $T$.
\end{definition}

How can we determine these limit positions? Recall that if $X$ is an analytic space then $\mathrm{Sing} X$, the set of
singular points of X, is also an analytic space and the nonsingular part $X^0=X\setminus \mathrm{Sing}X$ is
dense in $X$ and has the structure of a complex manifold. \\
\par\noindent
 Let $X$ be a representative of $(X,0)$. Consider the application (the Gauss map)
\begin{align*} \gamma_{X^0}: X^0 & \longrightarrow G(d,n)\\
                              x  & \longmapsto T_{X^0,x},
\end{align*} 
where $T_{X^0,x}$ denotes
the direction in $G(d,n)$ of the tangent space to the manifold $X^0$ at the point $x$. Let $NX$ be the closure
of the graph of $\gamma_{X^0}$ in $X\times G(d,n)$. It can be proved that $NX$ is an analytic 
subspace of dimension d (\cite[theorem 16.4]{Whi1}, ). 

\begin{definition}
The morphism $\nu_{X}: NX \longrightarrow X$ induced by the first projection of $X \times G(d,n)$
is called the \textit{Semple-Nash modification} of $X$. \[ \xymatrix { &\ \ \ \ \ \ \ \ \ \ \ \ \ \ \ \ \ \ \ \ \ \ NX\ \ \subset X\times G(d,n)\ar[dl]_{\nu_{X}}\ar[dr]^{\gamma_{X}} &  \\
                 X  & & G(d,n)} \]
\end{definition} 
It is an isomorphism over the nonsingular part of $X$ and is proper since the Grassmannian is compact and the projection $X\times G(d,n)\to X$ is proper. It is therefore a proper birational map. It seems to have been first introduced by Semple (see the end of \cite{Se}) who also asked whether iterating this construction would eventually resolve the singularities of $X$, and later rediscovered by Whitney (see  (\cite{Whi1}) and also by Nash, who asked the same question as Semple. It is still without answer except for curves. \par\noindent
The notation $NX$ is justified by the fact that the Semple-Nash modification is independent, up to a unique $X$-isomorphism, of the embedding of $X$ into a nonsingular space. See \cite[\S 2]{Te2}, where the abstract construction of the Semple-Nash modification is explained in terms of the Grothendieck Grassmannian associated to the module of differentials of $X$.
 The fiber 
$\nu_X^{-1}(0)$ is a closed algebraic
subvariety of $G(d,n)$; set-theoretically, it is the 
set of limit positions of tangent spaces at points of $X^0$ 
tending to $0$.\par\noindent
For an exposition of basic results on limits of tangent spaces in the case of germs of complex analytic surfaces,  good references are \cite{ACL} and \cite{Sn1}; the latter makes connections with the resolution of singularities. For a more computational approach, see \cite{O1}. \par\noindent In \cite{Hn}, H. Hennings has announced a proof of the fact that if $x$ is an isolated singular point of $X$, the dimension of $\nu_X^{-1}(x)$ is ${\rm dim}X-1$, generalizing a result of A. Simis, K. Smith and B. Ulrich in \cite{SSU}.\\\par
The Semple-Nash modification is somewhat difficult to handle from the viewpoint of intersection theory
because of the complexity due to the rich geometry of the Grassmannian. There is a less intrinsic but more amenable way of encoding the 
limits of tangent spaces. The idea is to replace a tangent space to $X^0$ by the 
collection of all the hyperplanes of $\C^n$ which contain it. Tangent hyperplanes live in a 
projective space, namely the dual projective space $\check{\P}^{n-1}$,
which is easier to deal with than the Grassmannian.

\subsection{Some symplectic Geometry}
In order to describe this set of tangent hyperplanes, we are going to use the 
language of symplectic geometry and Lagrangian submanifolds. Let us start
with a few definitions.\\

Let $M$ be any $n$-dimensional manifold, and let $\omega$ be a de Rham 2-form on M,
that is, for each $p \in M$, the map
\[\omega_p:T_{M,p} \times T_{M,p} \rightarrow \R \]
is skew-symmetric bilinear on the tangent space to $M$ at $p$, and $\omega_p$ varies 
smoothly with $p$. We say that $\omega$ is \textbf{symplectic} if it is closed and $\omega_p$ 
is non-degenerate for all $p \in M$. Non degeneracy means that the map which to $v\in T_{M,p}$ associates the homomorphism $w\mapsto \omega (v,w)\in \R$ is an isomorphism from $T_{M,p}$ to its dual. A \textbf{symplectic manifold} is a pair
$(M,\omega)$, where $M$ is a manifold and $\omega$ is a symplectic form. These definitions extend, replacing $\R$ by $\C$, to the case of a complex analytic manifold i.e., nonsingular space.\\ \par

For any manifold $M$, its cotangent bundle $T^*M$ has a canonical symplectic
structure as follows. Let
\begin{align*}
      \pi:T^*M & \longrightarrow M \\
       p=(x,\xi) & \longmapsto   x ,   
\end{align*} 
where $\xi \in T_{M,p}^*$, be the natural projection. The \textbf{Liouville 1-form} $\alpha$ on $T^*M$ may be defined pointwise
by:
\[\alpha_{p}(v)=\xi\left( (d\pi_{p})v\right), \; \; {\rm for} \; v \in T_{T^*M,p}.\]
Note that $d\pi_{p}$ maps $T_{T^*M,p}$ to $T_{M,x}$, so that $\alpha$ is well defined. 
The \textbf{canonical symplectic 2-form} $\omega$ on $T^*M$ is defined as 
\[\omega = - d \alpha.\]
And it is not hard to see that if $(U,x_1,\ldots,x_n)$ is a coordinate chart for $M$
with associated cotangent coordinates $(T^*U,x_1,\ldots,x_n,\xi_1,\ldots,\xi_n)$, then
locally:
\[\omega=\sum_{i=1}^n dx_i\wedge d\xi_i. \]

\begin{definition}
Let $(M,\omega)$ be a 2n-dimensional symplectic manifold. A submanifold $Y$ of $M$ is a
\textbf{Lagrangian submanifold} if at each $p \in Y$, $T_{Y,p}$ is a Lagrangian subspace
of $T_{M,p}$, i.e., $\omega_p |_{T_{Y,p}}\equiv 0$ and ${\rm dim} T_{Y,p}= \frac{1}{2} {\rm dim} T_{M,p}$. Equivalently,
if $i:Y\hookrightarrow M$ is the inclusion map, then $Y$ is \textbf{Lagrangian} if and only 
if $i^*\omega=0$ and ${\rm dim}Y=\frac{1}{2}{\rm dim}M$.\\  Let $M$ be a nonsingular complex analytic space of even dimension equipped with a closed non degenerate $2$-form $\omega$. If $Y\subset M$ is a complex analytic subspace, which may have singularities, we say that it is a \textbf{Lagrangian subspace} of $M$ if it is purely of dimension $\frac{1}{2}{\rm dim}M$ and there is a dense nonsingular open subset of the corresponding reduced subspace which is a Lagrangian submanifold in the sense that $\omega$ vanishes on the tangent space.
\end{definition}

\begin{ex}
The \textbf{zero section} of $T^*M$
\[X:=\{ (x,\xi) \in T^*M | \xi=0 \; in \; T_{M,x}^*\}\]
is an $n$-dimensional Lagrangian submanifold of $T^*M$.
\end{ex}
\begin{exercise} Let $f(z_1,\ldots ,z_n)$ be a holomorphic function on an open set $U\subset \C^n$. Consider the differential $df$ as a section $df\colon U\to T^*U$ of the cotangent bundle. Verify that the image of this section is a Lagrangian submanifold of $T^*U$. Explain what it means. What is the image in $U$ by the natural projection $T^*U\to U$ of the intersection of this image with the zero section?
\end{exercise}

\subsection{Conormal space.}

Let now $M$ be a complex analytic manifold and $X\subset M$ be a possibly singular complex subspace of pure dimension d, and let as before
$X^0 =X \setminus \mathrm{Sing}X$ be the nonsingular part of X, which is a submanifold of $M$.

\begin{definition}
Set
\[N^*_{X^0,x} = \{ \xi \in T_{M,x}^* | \xi (v)=0, \; \forall v \in T_{X^0,x}\};\]
this means that the hyperplane $\{\xi=0\} $ contains the tangent space to $X^0$ at x.\\
The \textbf{conormal bundle} of $X^0$ is
\[T^*_{X^0}M =\{ (x,\xi) \in T^*M | x \in X^0, \; \xi \in N^*_{X^0,x}\}. \]
\end{definition}

\begin{proposition}\label{si}
Let $i:T^*_{X^0}M\hookrightarrow T^*M$ be the inclusion and let $\alpha$ be the Liouville 1-form in $T^*M$
as before. Then $i^*\alpha =0$. In particular the conormal bundle $T^*_{X^0}M$ is a conic Lagrangian submanifold of $T^*M$,
and has dimension n.
\end{proposition}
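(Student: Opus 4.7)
The plan is to reduce everything to the pointwise formula for the Liouville form. A point of $T^*_{X^0}M$ is a pair $(x,\xi)$ with $x\in X^0$ and $\xi\in N^*_{X^0,x}$. Since $\pi\circ i: T^*_{X^0}M\to X^0$ is the restriction of the projection, any $v\in T_{T^*_{X^0}M,(x,\xi)}$ is sent by $d\pi$ into $T_{X^0,x}$. Then
\[
(i^*\alpha)_{(x,\xi)}(v)=\alpha_{(x,\xi)}(di(v))=\xi\bigl(d\pi(di(v))\bigr)=\xi\bigl(d(\pi\circ i)(v)\bigr),
\]
and this vanishes because $d(\pi\circ i)(v)\in T_{X^0,x}$ while $\xi\in N^*_{X^0,x}$ annihilates $T_{X^0,x}$ by the very definition of the conormal fiber. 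Hence $i^*\alpha=0$.

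Next I would settle the dimension. Over each $x\in X^0$ the fiber $N^*_{X^0,x}$ is the annihilator in $T^*_{M,x}$ of the $d$-dimensional subspace $T_{X^0,x}$ of the $n$-dimensional space $T_{M,x}$, so it has dimension $n-d$. Since $T^*_{X^0}M\to X^0$ is a vector bundle, the total space is nonsingular of dimension $d+(n-d)=n$, which is exactly half of $\dim T^*M=2n$.

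The Lagrangian property now follows formally: from $i^*\alpha=0$ one gets $i^*\omega=-i^*(d\alpha)=-d(i^*\alpha)=0$, so $T^*_{X^0}M$ is isotropic, and being of dimension $\frac12\dim T^*M$ it is Lagrangian. Finally, the conic character is immediate from the definition of $N^*_{X^0,x}$: if $\xi$ annihilates $T_{X^0,x}$ then so does $\lambda\xi$ for every $\lambda\in\C$, so $T^*_{X^0}M$ is stable under the natural $\C^*$-action on the fibers of $T^*M$.

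There is really no substantive obstacle here; the only point deserving a bit of care is recognizing that vectors tangent to $T^*_{X^0}M$ project via $d\pi$ into $T_{X^0,x}$ (not merely into $T_{M,x}$), which is what makes the computation of $i^*\alpha$ work. Once that observation is made, the rest is bookkeeping: a dimension count and the formal implication ``isotropic of half dimension $\Rightarrow$ Lagrangian''.
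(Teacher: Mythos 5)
Your proof is correct and is exactly the standard argument: the paper itself does not write out a proof but defers to \cite[Proposition 3.6]{CS}, which proceeds precisely as you do, by observing that $d(\pi\circ i)$ lands in $T_{X^0,x}$ and that $\xi\in N^*_{X^0,x}$ kills that subspace, then deducing $i^*\omega=0$ and concluding with the dimension count. Nothing to add.
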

\begin{proof}$\;$\linebreak
See \cite[Proposition 3.6]{CS}.
\end{proof}
\noindent In the same context we can define the \textbf{conormal space of X in M} as 
the closure $T_X^*M$ of $T^*_{X^0}M$ in $T^*M$, with the \textbf{conormal map} $\kappa_X:T^*_XM \rightarrow X$,
induced by the natural projection $\pi:T^*M\to M$. The conormal space is of dimension $n$. It may be singular and by Proposition \ref{si}, $\alpha$ vanishes on every tangent vector at a nonsingular point,
so it is by construction a Lagrangian subspace of $T^*M$. \newline

The fiber $\kappa_X^{-1}(x)$ of the conormal map $\kappa_X\colon 
T^*_XM \to X$ above a point
$x\in X$ consists, if $x\in X^0$, of the vector space $\C^{n-d}$ of all the equations of 
hyperplanes tangent to $X$ at $x$, in the sense that they contain the 
tangent space $T_{X^0,x}$. If $x$ is a singular
point, the fiber consists of all equations of limits of hyperplanes tangent at nonsingular points of $X$ tending to 
$x$. \par\noindent
The fibers of $\kappa_X$ are invariant under multiplication by an 
element of $\C^*$, and we can divide by the equivalence relation this 
defines. The idea is to consider only up to homothety the defining forms of tangent 
hyperplanes, and not a specific linear form defining it. That is, the conormal space is
stable by vertical homotheties (a property also called \textbf{conical}), so we can ``projectivize'' it.
Moreover, we can characterize those subvarieties of the cotangent space which are
the conormal spaces of their images in $M$.

\begin{proposition}\label{conormalequiv}{\rm (see \cite[Chap.II, \S 10]{P})} Let $M$ be a nonsingular 
analytic variety of dimension $n$ and let $L$ be a closed conical 
irreducible analytic subvariety of $T^*M$. The following conditions 
are equivalent:\par\noindent
1) The variety $L$ is the conormal space 
of its image in $M$. \par\noindent
2) The Liouville 1-form $\alpha$ 
vanishes on all tangent vectors to $L$ at every nonsingular point of 
$L$.\par\noindent
3) The symplectic 2-form $\omega=-\hbox{\rm 
d}\alpha$ vanishes on every pair of tangent vectors to $L$ at every 
nonsingular point of $L$.
\end{proposition}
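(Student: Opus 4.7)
The plan is to establish the three implications 1) $\Rightarrow$ 2), 2) $\Leftrightarrow$ 3), and the main content 2) $\Rightarrow$ 1). The first is essentially Proposition \ref{si}: at nonsingular points of $L$ which happen to lie over the smooth locus $X^0$ of the image $X=\pi(L)$, the tangent space of $L$ projects into $T_{X^0,x}$ by $d\pi$, on which $\xi$ vanishes by definition of the conormal bundle, so $\alpha(v)=\xi(d\pi(v))=0$; by density and closedness this gives $i^*\alpha=0$ on all smooth points of $L=T^*_XM$.

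For 2) $\Leftrightarrow$ 3), the nontrivial direction 3) $\Rightarrow$ 2) uses the conical hypothesis. In Darboux coordinates $(x_i,\xi_i)$, the Euler vector field $E=\sum_i \xi_i\,\partial/\partial\xi_i$ generates the $\C^*$-action in the fibers, so conicity of $L$ means $E$ is tangent to $L$ at every point. A direct computation gives $\iota_E\omega=-\alpha$, so for any tangent vector $v$ to $L$ at a nonsingular point, $\alpha(v)=-\omega(E,v)=0$ because both $E$ and $v$ are tangent to $L$. The reverse implication 2) $\Rightarrow$ 3) is immediate since $i^*\omega=-d(i^*\alpha)=0$ on the smooth locus.

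The crux of the proposition is the implication 2) $\Rightarrow$ 1), and this is the step I expect to require the most care. The idea is to set $X:=\overline{\pi(L)}$ with $d=\dim X$, and to exploit the fact that $L$ is irreducible of dimension $n$. By generic smoothness applied to the analytic map $\pi|_L\colon L\to X$, there is a Zariski dense open subset $L^*$ of the smooth locus of $L$ such that $\pi(L^*)$ is contained in the smooth part $X^0$ of $X$ and $d\pi_{(x,\xi)}(T_{L,(x,\xi)})=T_{X^0,x}$ for every $(x,\xi)\in L^*$. Then hypothesis 2) gives $0=\alpha(v)=\xi(d\pi(v))$ for every $v\in T_{L,(x,\xi)}$, which means exactly $\xi\in N^*_{X^0,x}$, so $(x,\xi)\in T^*_{X^0}M$. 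Thus $L^*\subset T^*_{X^0}M$, and passing to closures yields $L\subset T^*_XM$. Since both sides are irreducible analytic subvarieties of $T^*M$ of the same dimension $n$ (irreducibility of $T^*_XM$ follows from that of $X^0$, which comes from the irreducibility of $L$ via $X=\pi(L)$), equality $L=T^*_XM$ follows, establishing 1).

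The main obstacle is the generic-smoothness argument in 2) $\Rightarrow$ 1): one has to know that there are enough points of $L$ where the derivative of the projection surjects onto the tangent space of $X$ at a smooth point of $X$, so that the vanishing of $\alpha$ on $T_{L,(x,\xi)}$ translates into $\xi$ annihilating the full tangent space of $X^0$. This requires the analytic-geometric fact that the image of an irreducible analytic variety under a proper-on-fibers (or at least locally well-behaved) analytic morphism has a well-defined generic dimension, together with the constant rank theorem applied off a proper analytic subset. Once this is set up, the rest is a direct translation between the condition $\xi(T_{X^0,x})=0$ defining the conormal bundle and the condition $\alpha|_{T_L}=0$ built into the Liouville form.
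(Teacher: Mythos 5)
The paper does not actually prove this proposition; it simply refers to Pham \cite[Chap. II, \S 10]{P}, so there is no in-house argument to compare yours with. Your proof is the standard one and is essentially complete: 1) $\Rightarrow$ 2) is Proposition \ref{si} extended by density of $T^*_{X^0}M$ in $L$; the identity $\iota_E\omega=-\alpha$ for the Euler field $E=\sum_i\xi_i\,\partial/\partial\xi_i$, which is tangent to $L$ precisely because $L$ is conical, correctly gives 3) $\Rightarrow$ 2), the converse being trivial; and generic smoothness of $\pi\vert_L\colon L\to X$ (valid in the complex analytic setting, with $\pi(L)$ closed analytic by Remmert's theorem applied to the projectivized, hence proper, projection) reduces 2) $\Rightarrow$ 1) to the tautology $\alpha_{(x,\xi)}(v)=\xi(d\pi\,v)$, which identifies a dense open subset of $L$ with a subset of $T^*_{X^0}M$. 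The one point you should make explicit is the dimension hypothesis you invoke in the last step: as stated, the proposition omits the requirement $\dim L=n$, and conditions 2) and 3) by themselves only force each tangent space $T_{L,p}$ to be isotropic, hence $\dim L\le n$; for instance the zero section over a proper closed submanifold $Y\subset M$ is conical, irreducible, closed and annihilates $\alpha$, yet is a proper subvariety of $T^*_YM$. So one must either add $\dim L=n$ to the hypotheses (as is implicit in the intended applications, where $L$ is Lagrangian) or weaken 1) to the inclusion $L\subset T^*_XM$; with that proviso your argument is correct.
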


Since conormal varieties are conical, we may as well projectivize with respect
to vertical homotheties of $T^*M$ and work in $\P T^*M$, where it still makes sense
to be Lagrangian since $\alpha$ is homogeneous by definition\footnote{In symplectic geometry it is called \textbf{Legendrian} with respect to the natural contact structure on $\P T^*M$.}.

Going back to our original problem we have $X\subset M=\C^n$, so $T^*M=\C^n\times \check{\C}^n$
and $\P T^*M=\C^n \times \check{\P}^{n-1}$. So we have the \textbf{(projective) conormal space}
$\kappa_X:C(X) \to X$ with $C(X)\subset X \times \check{\P}^{n-1}$, where $C(X)$ denotes the projectivization of the
conormal space $T^*_XM$. Note that we have not 
changed the name of the map $\kappa_X$ after projectivizing since there is no 
ambiguity, and that the dimension of $C(X)$ is $n-1$, which shows immediately that it depends on 
the embedding of $X$ in an affine space.\par\noindent
When there is no ambiguity we shall often omit the subscript in $\kappa_X$. We have the following result:

\begin{proposition}\label{dimC}
The (projective) conormal space $C(X)$ is a closed, reduced, complex analytic subspace of
$X \times \check{\P}^{n-1}$ of dimension $n-1$. For any $x\in X$ the dimension of the fiber $\kappa_X^{-1}(x)$ is at most $n-2$.
\end{proposition}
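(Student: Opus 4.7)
The plan is to work locally over the smooth locus first, then extend across the singular locus using the Semple--Nash modification together with the proper mapping theorem, and finally derive the fiber dimension bound from the irreducible structure of $C(X)$.

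First, over the open dense smooth locus $X^0$, the conormal bundle $T^*_{X^0}M$ is a holomorphic rank-$(n-d)$ vector subbundle of $X^0\times\check{\C}^n$, since the fibers are the $(n-d)$-dimensional annihilators $N^*_{X^0,x}$ of the tangent spaces $T_{X^0,x}$, and these vary holomorphically with $x$. Projectivizing gives a locally trivial $\check{\P}^{n-d-1}$-bundle $\P T^*_{X^0}M\subset X^0\times\check{\P}^{n-1}$, which is a smooth analytic subset of pure dimension $d+(n-d-1)=n-1$. This already supplies the dimension of the part of $C(X)$ sitting over $X^0$ and shows that $\dim\kappa_X^{-1}(x)=n-d-1\leq n-2$ for every $x\in X^0$, assuming $d\geq 1$ (which we may, as otherwise $X$ is a point and the statement is vacuous).

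To prove analyticity of the closure across $\mathrm{Sing}\,X$, I would factor through the Semple--Nash modification $\nu_X\colon NX\to X$, which we already know is proper and analytic. Pulling back the tautological $d$-plane subbundle of $G(d,n)\times\C^n$ along $\gamma_X\colon NX\to G(d,n)$ yields an analytic rank-$d$ subbundle of $NX\times\C^n$; its annihilator $\mathcal{T}^\perp$ is an analytic rank-$(n-d)$ subbundle of $NX\times\check{\C}^n$, and $\P\mathcal{T}^\perp\subset NX\times\check{\P}^{n-1}$ is an analytic $\check{\P}^{n-d-1}$-bundle of pure dimension $n-1$. The morphism $\nu_X\times\mathrm{id}_{\check{\P}^{n-1}}$ is proper, so by Remmert's proper mapping theorem the image of $\P\mathcal{T}^\perp$ in $X\times\check{\P}^{n-1}$ is a closed analytic subset. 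Over $X^0$ this image coincides with $\P T^*_{X^0}M$ because $\nu_X$ is an isomorphism there, so the image is precisely the closure $C(X)$, equipped with the reduced analytic structure. Since $\P T^*_{X^0}M$ is dense in $C(X)$ and has dimension $n-1$, while the total image has dimension at most $n-1$, we conclude $\dim C(X)=n-1$.

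For the fiber bound at a singular point $x$, I would argue by contradiction. Suppose $\dim\kappa_X^{-1}(x)=n-1$. The fiber is a closed analytic subset of $\{x\}\times\check{\P}^{n-1}$, an irreducible variety of dimension $n-1$, so the fiber must equal the whole $\{x\}\times\check{\P}^{n-1}$. Now $C(X)$ decomposes into irreducible components that are precisely the closures of the conormal bundles over the irreducible components of $X^0$: each such component $C(X_i)$ is irreducible of dimension $n-1$ and satisfies $\kappa_X(C(X_i))=X_i$, an irreducible component of $X$ of dimension $d\geq 1$. The irreducible $(n-1)$-dimensional subset $\{x\}\times\check{\P}^{n-1}$ sitting inside $C(X)$ would therefore have to coincide with one of the components $C(X_i)$, but this forces $\kappa_X(C(X_i))=\{x\}$, contradicting $\dim X_i=d\geq 1$.

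The main obstacle is the analyticity step across the singular locus: a naive attempt to apply Remmert--Stein directly to the pair $(\P T^*_{X^0}M,\,\mathrm{Sing}\,X\times\check{\P}^{n-1})$ fails because the exceptional set may reach dimension $n+d-2$, exceeding $n-1$. Routing the construction through $NX$ circumvents this, after which the dimension and fiber statements reduce to elementary consequences of the projective bundle structure over $X^0$.
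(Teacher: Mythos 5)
Your argument is correct, but it is worth noting that the paper itself offers no proof of this proposition: it simply refers to classical sources (\cite[Chap. III]{CS}, \cite{Te3}, Prop. 4.1). What you have written is therefore a genuine, self-contained justification, and the route you chose -- pulling back the tautological bundle to the Semple--Nash modification $NX$, taking the projectivized annihilator bundle, and pushing forward by the proper map $\nu_X\times\mathrm{id}$ -- is precisely the construction the paper carries out later for other purposes (the space $E$ of Proposition \ref{ConormalvsNash} and Corollary \ref{Nashbundle}, where $E\to NX$ is exhibited as a locally trivial $\P^{n-d-1}$-bundle and $p_2(E)=C(X)$). So your proof anticipates, and is consistent with, the paper's own machinery; the cited references instead obtain the result from explicit equations for $C(X)$ or from Lagrangian/involutivity considerations. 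Your final fiber-dimension argument (an $(n-1)$-dimensional fiber would force $\{x\}\times\check{\P}^{n-1}$ to be an irreducible component of the pure $(n-1)$-dimensional space $C(X)$, contradicting surjectivity of $\kappa_X$ onto a $d$-dimensional component) is clean and correct, and your remark that a direct Remmert--Stein extension fails on dimension grounds is accurate.

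Two small points of hygiene. First, the step ``so the image is precisely the closure $C(X)$'' deserves one more sentence: since $\nu_X^{-1}(X^0)$ is dense in $NX$ and the bundle $\P\mathcal{T}^\perp$ restricted to a dense open set is dense in its total space, properness (hence closedness) of the map gives that the image equals the closure of its restriction over $X^0$, i.e.\ $C(X)$; this also shows each irreducible component of $X$ contributes an irreducible component of $C(X)$, which your last paragraph uses. Second, for $d=0$ the fiber bound is actually false rather than vacuous (the conormal of a point is all of $\check{\P}^{n-1}$); the hypothesis $d\geq 1$ is implicit in the paper's standing assumptions and should simply be stated.
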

\begin{proof}$\;$\newline
These are classical facts. See \cite[Chap. III]{CS} or \cite{Te3}, proposition 4.1, p. 379.
\end{proof}

\subsection{Conormal spaces and projective duality}\label{du}

Let us assume for a moment that $V\subset \P^{n-1}$ is a projective algebraic variety. 
In the spirit of last section, let us take $M=\P^{n-1}$ with homogeneous coordinates $(x_0:\ldots:x_{n-1})$, and 
consider the dual projective space $\check{\P}^{n-1}$ with coordinates $(\xi_0:\ldots:\xi_{n-1})$; its 
points are the hyperplanes of $\P^{n-1}$ with equations $\sum_{i=0}^{n-1}\xi_ix_i=0$. 

\begin{definition}\label{incidence}
  Define the \textbf{incidence variety} $I\subset \P^{n-1} \times \check\P^{n-1}$ as the set of 
points satisfying:
\[ \sum_{i=0}^{n-1} x_i\xi_i=0,\]
where $(x_0:\ldots:x_{n-1};\xi_0:\ldots:\xi_{n-1}) \in \P^{n-1} \times \check\P^{n-1}$
\end{definition}

\begin{lemma}{\rm (Kleiman; see \cite[\S 4]{Kl2})}\label{inclema}
    The projectivized cotangent bundle of $\P^{n-1}$ is naturally isomorphic to $I$.   
\end{lemma}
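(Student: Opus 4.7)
My plan is to identify both $\P T^*\P^{n-1}$ and $I$ with the same subbundle of the trivial projective bundle $\P^{n-1}\times\check\P^{n-1}$ over $\P^{n-1}$, by projectivizing the (dual) Euler sequence on $\P^{n-1}$.

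First, I set up the bijection fiberwise. Both sides are smooth of dimension $2n-3$ and project onto $\P^{n-1}$ with fibers isomorphic to $\P^{n-2}$. Over a point $[x]\in\P^{n-1}$, the fiber of $I$ is the set of projective hyperplanes $H\subset\P^{n-1}$ passing through $[x]$, whereas the fiber of $\P T^*\P^{n-1}$ is $\P(T^*_{\P^{n-1},[x]})$, which parametrizes hyperplanes in the tangent space $T_{\P^{n-1},[x]}$. The natural set-theoretic bijection sends a hyperplane $H\ni[x]$ to the line in $T^*_{\P^{n-1},[x]}$ spanned by any linear form whose kernel is the tangent space $T_{H,[x]}$; this is bijective because a projective hyperplane through $[x]$ is recovered uniquely from its tangent direction there.

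Next, I would globalize this by using the dual Euler sequence
\begin{equation*}
0\longrightarrow\Omega^1_{\P^{n-1}}\longrightarrow\Oo_{\P^{n-1}}(-1)^{\oplus n}\xrightarrow{\;\mu\;}\Oo_{\P^{n-1}}\longrightarrow 0,
\end{equation*}
whose right-hand map $\mu$ has fiber $(\xi_0,\ldots,\xi_{n-1})\mapsto\sum_i x_i\xi_i$ at $[x]$. Projectivizing relatively over $\P^{n-1}$, and using the canonical identification $\P(\Oo_{\P^{n-1}}(-1)^{\oplus n})=\P^{n-1}\times\check\P^{n-1}$ (projectivization is insensitive to tensoring by a line bundle), the inclusion $\Omega^1_{\P^{n-1}}\hookrightarrow\Oo_{\P^{n-1}}(-1)^{\oplus n}$ becomes a closed embedding $\P T^*\P^{n-1}=\P\Omega^1_{\P^{n-1}}\hookrightarrow\P^{n-1}\times\check\P^{n-1}$ whose image is the zero locus of $\mu$, namely the incidence variety $I$.

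The only substantive step is to check that the morphism produced from the Euler sequence actually implements the set-theoretic bijection of the first paragraph, so that the identification $\P T^*\P^{n-1}\cong I$ is \emph{natural} rather than an abstract coincidence of dimensions. I would verify this in the affine chart $\{x_0\neq 0\}$ with coordinates $y_i=x_i/x_0$: a covector $\omega=\sum_{i\geq 1}\eta_i\,dy_i$ at $[x]$ maps, via the inclusion $\Omega^1\hookrightarrow\Oo(-1)^{\oplus n}$, to the class $[\xi_0:\eta_1:\cdots:\eta_{n-1}]\in\check\P^{n-1}$ with $\xi_0$ determined uniquely by the relation $\sum_j x_j\xi_j=0$; a direct computation then shows that $\sum_j\xi_j x_j=0$ is precisely the equation of the unique projective hyperplane through $[x]$ whose tangent space at $[x]$ is annihilated by $\omega$. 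Compatibility across affine charts is automatic from the intrinsic nature of the Euler sequence, so naturality follows and the lemma is proved.
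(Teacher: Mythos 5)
Your proof is correct, but it takes a genuinely different route from the paper's. The paper constructs the isomorphism by hand: it writes down the explicit maps $\phi_i$ on the cotangent charts $\C^{n-1}\times\bigl\{\check\C^{n-1}\setminus\{0\}\bigr\}$, checks injectivity and that the image lies in $I$, and then verifies that the $\phi_j$ glue by invoking the transformation rule $(x,\xi)\mapsto\bigl(h(x),(Dh^{-1}|_x)^T\xi\bigr)$ for cotangent coordinates. You instead derive the embedding intrinsically from the dual Euler sequence, using that relative projectivization kills the twist by $\Oo(-1)$ and that the image of $\P\,\Omega^1_{\P^{n-1}}$ in $\P^{n-1}\times\check\P^{n-1}$ is the zero locus of $\mu$, i.e.\ exactly $I$; naturality and gluing are then automatic, and your chart computation at the end (the covector $\sum_{i\geq 1}\eta_i\,dy_i$ going to $[\xi_0:\eta_1:\cdots:\eta_{n-1}]$ with $\xi_0=-\sum_{j\geq 1}y_j\eta_j$) reproduces precisely the paper's formula $\phi_i$ with its slot $-\sum_{j}^{*_i}z_j\xi_j$. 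What each approach buys: the paper's argument is elementary and self-contained, requiring nothing beyond the definition of the cotangent bundle; yours is shorter and conceptually cleaner once the Euler sequence and the identification $\P(E\otimes L)\cong\P(E)$ are granted, and it makes the word ``naturally'' in the statement manifest rather than something to be checked chart by chart. One small point of care you handled correctly: the middle term must be read as $\Oo(-1)\otimes V^*$ (coordinates $\xi_i$ dual to the $x_i$) so that its projectivization is $\P^{n-1}\times\check\P^{n-1}$ and not $\P^{n-1}\times\P^{n-1}$.
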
\begin{proof}$\;$\newline
 Let us first take a look at the cotangent bundle of $\P^{n-1}$:
 \[\pi:T^*\P^{n-1} \longrightarrow \P^{n-1}.\]
 Remember that the fiber $\pi^{-1}(x)$ over a point $x$ in $\P^{n-1}$ is
 by definition isomorphic to $\check{\C}^{n-1}$, the vector space
 of linear forms on $\C^{n-1}$. Recall that projectivizing the
 cotangent bundle means projectivizing the fibers, and so we get a
 map:
\[\Pi:\P T^*\P^{n-1} \longrightarrow \P^{n-1}\]
 where the fiber is isomorphic to $\check{\P}^{n-2}$. So we can
 see a point of $\P T^*\P^{n-1}$ as a pair $(x,\xi)\in \P^{n-1}\times\check\P^{n-2}$. 
 On the other hand, if we fix a point $x \in \P^{n-1}$, the equation defining
 the incidence variety $I$ tells us that the set of points $(x ,\xi) \in
 I$ is the set of hyperplanes of $\P^{n-1}$ that go through the point
 $x $, which we know is isomorphic to $\check{\P}^{n-2}$.

 Now to explicitly define the map, take a chart $\C^{n-1} \times \left\{ \check{\C}^{n-1} \setminus \{0\} \right\} $ of the
 \linebreak manifold $T^*\P^{n-1} \setminus \{\mathrm{zero \; section}\}$, where the $\C^{n-1}$ corresponds to a usual
 chart of $\P^{n-1}$ and $\check{\C}^{n-1}$ to its associated cotangent chart.
 Define the map:
 \begin{align*}
    \phi_i: \C^{n-1} \times \left\{ \check{\C}^{n-1} \setminus \{0\} \right\} &\longrightarrow \P^{n-2} \times \check{\P}^{n-2} \\
            (z_1,\ldots,z_{n-1};\xi_1,\ldots,\xi_{n-1}) &\longmapsto \left( \varphi_i(z),
     (\xi_1:\cdots:\xi_{i-1}:-\sum_{j=1}^{n-1*_i} z_j\xi_j: \xi_{i+1}:\cdots: \xi_{n-1})\right)  
 \end{align*}
  where $\varphi_i(z)=(z_1:\cdots:z_{i-1}:1:z_{i+1}:\cdots:z_{n-1})$ and the star means that the index $i$ is excluded from the sum.
  
  An easy calculation shows that $\phi_i$ is injective, has its image in the incidence variety $I$ and is well defined
  on the projectivization $\C^{n-1} \times \check\P^{n-2}$. It is also clear, that varying $i$ from $1$ to $n-1$
  we can reach any point in $I$. Thus, all we need to check now is that the $\phi_j$'s paste
  together to define a map. For this, the important thing is to remember that if $\varphi_i$ and $\varphi_j$
  are charts of a manifold, and $h:= \varphi_j^{-1}\varphi_i=(h_1,\ldots,h_{n-1})$ then the change of coordinates 
  in the associated cotangent charts $\tilde{\varphi_i}$ and $\tilde{\varphi_j}$ is given by:
  \[ \xymatrix{ & \ T^*M  \ar[dr]^{\tilde{\varphi_j}^{-1}} & \\
     \C^{n-1} \times \check{\C}^{n-1} \ar[ur]^{\tilde{\varphi_i}} \ar[rr]_h & & \C^{n-1} \times \check{\C}^{n-1} \\ 
      (x_1,\ldots,x_{n-1}; \xi_1,\ldots,\xi_{n-1})&\longmapsto   &(h(x);(Dh^{-1}|_x)^T (\xi))} \]
  
  
\end{proof}
\noindent
By Lemma \ref{inclema} the incidence variety $I$ inherits the Liouville 1-form $\alpha \ (:=\sum \xi_i dx_i$ locally) 
from its isomorphism with $\P T^*\P^{n-1}$. Exchanging $\P^{n-1}$ and $\check{\P}^{n-1}$, $I$ is also
isomorphic to $\P T^*\check{\P}^{n-1}$ so it also inherits the 1-form $\check{\alpha}(:=\sum x_i d\xi_i$ 
locally).  

\begin{lemma}{\rm(Kleiman; see \cite[\S 4]{Kl2})}
Let I be the incidence variety as above. Then
$\alpha + \check{\alpha}= 0$ on $I$.
\end{lemma}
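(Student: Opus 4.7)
The plan is to lift the identity from the projective incidence variety $I$ to the affine double cone
\[
\hat I = \{(x,\xi)\in \C^n\times\check\C^n : \sum_{i=0}^{n-1} x_i\xi_i = 0\},
\]
where it reduces to the elementary fact that, for $F(x,\xi)=\sum_i x_i\xi_i$, one has
\[
dF = \sum_{i}\xi_i\,dx_i + \sum_{i} x_i\,d\xi_i,
\]
and this $1$-form vanishes on $\hat I$ because $F$ does. Thus the ``affine versions'' of $\alpha$ and $\check\alpha$ automatically sum to zero on $\hat I$.

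To deduce the statement on $I$ itself, I would pass to a common affine chart of $I$ coming from the charts of $\P T^*\P^{n-1}$ and $\P T^*\check\P^{n-1}$ furnished by the map $\phi_i$ of Lemma~\ref{inclema} and its dual. Concretely, take $x_0=1$ on $\P^{n-1}$ and $\xi_1=1$ on $\check\P^{n-1}$; then an open subset of $I$ is coordinatized by $(y_1,\dots,y_{n-1};\xi_2,\dots,\xi_{n-1})$, with the incidence relation determining the missing dual coordinate as $\xi_0 = -y_1 - \sum_{j\geq 2} y_j\xi_j$. In this chart the Liouville form on $T^*\P^{n-1}$ reads $\alpha = \sum_{j\geq 1}\xi_j\,dy_j$. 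Via the dual parameterization of $I$ as $\P T^*\check\P^{n-1}$, matched so that the two projective trivializations describe the same point of $I$, one expresses $\check\alpha$ in the same coordinates; substituting the incidence relation and its differential produces the expected cancellation of the $\sum y_j\,d\xi_j$ terms and yields $\check\alpha = -\alpha$. Since $I$ is covered by charts of this form and the construction is symmetric in $(x,\xi)\leftrightarrow(\xi,x)$, the identity propagates to all of $I$.

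The main obstacle I anticipate is precisely this matching of trivializations: neither $\alpha$ nor $\check\alpha$ is literally a globally defined $1$-form on the projectivization $I$, since each only makes sense after choosing a local section of the corresponding $\C^*$-bundle. The real content of the bookkeeping is therefore to select compatible sections on both sides, pinned down by the requirement that both sides describe the same point of $\P^{n-1}\times\check\P^{n-1}$. Once this compatibility is in place, the identity reduces cleanly to the affine relation $dF|_{\hat I}=0$ noted at the start, and the proof concludes.
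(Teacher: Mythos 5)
Your argument is correct, and it rests on exactly the same observation as the paper's proof: the identity $\alpha+\check\alpha=0$ is what you get by differentiating the incidence relation $\sum_i x_i\xi_i=0$, the only difficulty being that this polynomial is not a function on $\P^{n-1}\times\check\P^{n-1}$. Where you genuinely diverge is in how you legitimize the differentiation. The paper identifies $\sum_i x_i\xi_i$ with a section of the line bundle $p^*O_{\P^{n-1}}(1)\otimes\check p^*O_{\check\P^{n-1}}(1)$ over $I$, introduces a flat connection on that bundle, and differentiates invariantly. You instead work upstairs on the affine double cone $\hat I$, where $dF=\sum_i\xi_i\,dx_i+\sum_i x_i\,d\xi_i$ honestly annihilates tangent vectors to $\hat I$ (since $F$ vanishes identically there), and then descend by fixing compatible normalizations $x_0=1$, $\xi_1=1$ on the two factors; in that chart one indeed has $\alpha=dy_1+\sum_{j\ge2}\xi_j\,dy_j$ and $\check\alpha=d\xi_0+\sum_{j\ge2}y_j\,d\xi_j$, and differentiating $\xi_0=-y_1-\sum_{j\ge2}y_j\xi_j$ gives the cancellation immediately. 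Your version is more elementary and checkable by hand, at the price of the chart-matching you rightly flag as the real content of the bookkeeping: a choice of local section of each $\C^*$-bundle is precisely a local trivialization of the corresponding $O(1)$, so your ``compatible sections'' are the chart-level incarnation of the paper's flat connection, and the symmetry of the construction in $(x,\xi)$ does propagate the identity over all of $I$. The two routes buy the same theorem; the paper's is coordinate-free, yours makes the cancellation explicit.
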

\begin{proof}$\;$\newline
 Note that if the polynomial $\sum_{i=0}^{n-1} x_i\xi_i$ defined a function on $\P^{n-1} \times \check\P^{n-1}$, we 
 would obtain the result by differentiating it. The idea of the proof is basically the same, it involves 
 identifying the polynomial $\sum_{i=0}^{n-1} x_i\xi_i$ with a section of the line bundle $p^*O_{\P^{n-1}}(1) \otimes   
 \check{p}^*O_{\check\P^{n-1}}(1)$ over $I$, where $p$ and  $\check{p}$ are the natural projections of $I$ to $\P^{n-1}$ 
 and $\check\P^{n-1}$ respectively and $O_{\P^{n-1}}(1)$ denotes the canonical line bundle, introducing the appropriate flat 
 connection on this bundle, and differentiating.
\end{proof}
 
In particular, this lemma tells us that if at some point $z \in I$ we have that $\alpha=0$,
then $\check{\alpha}=0$ too. Thus, a closed conical 
irreducible analytic subvariety of $T^*\P^{n-1}$ as in Proposition \ref{conormalequiv}
 is the conormal space of its image in $\P^{n-1}$ if and only if it is the
conormal space of its image in $\check\P^{n-1}$.
So we have $\P T_V^*\P^{n-1} \subset I \subset \P^{n-1} \times \check\P^{n-1}$ and the restriction
of the two canonical projections:
\[ \xymatrix{ & \P T_V^*\P^{n-1} \subset I \ar[dl]_p \ar[dr]^{\check{p}} & \\
                V\subset \P^{n-1} & &\check\P^{n-1}\supset \check{V}}  \]

\begin{definition}
    The \textbf{dual variety $\check{V}$} of $V\subset\P^{n-1}$ is the image by the map $\check{p}$ of $\P T_V^* \P^{n-1} \subset I$ in $\check\P^{n-1}$.
So by construction $\check{V}$ is the closure in $\check\P^{n-1}$ of the set of hyperplanes 
tangent to $V^0$. 
\end{definition}

  We immediately get by symmetry that $\check{\check{V}}=V$. What is more,
we see that establishing a projective duality is equivalent to finding a Lagrangian subvariety in $I$; its images in $\P^{n-1}$ and $\check{\P}^{n-1}$
are necessarily dual. \\

\begin{lemma}\label{cone}
        Let us assume that $(X,0)\subset (\C^n,0)$ is the cone over a projective algebraic 
variety $V\subset \P^{n-1}$.
Let $x \in X^0$ be a nonsingular point of $X$. Then the tangent space $T_{X^0,x}$,
contains the line $\ell=\overline {0x}$ joining $x$ to the origin. Moreover, the tangent map at $x$ to the projection $\pi\colon X\setminus\{0\}\to V$ induces an isomorphism 
$T_{X^0,x} / \ell \simeq T_{V,\pi (x)}$.

\end{lemma}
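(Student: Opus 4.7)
The plan is to exploit the $\C^*$-action by dilation that $X$ carries as a cone. For the first assertion I would invoke Euler's identity: writing the homogeneous ideal of $X$ with generators $f_1,\dots,f_k$ of degrees $d_1,\dots,d_k$, the identity $\sum_j z_j \partial f_i/\partial z_j = d_i f_i$ evaluated at $x\in X$ shows that the vector $x$ itself lies in the kernel of the Jacobian $(\partial f_i/\partial z_j)(x)$. At a smooth point $x\in X^0$ this kernel is exactly the linear tangent space $T_{X^0,x}$, so $\C\cdot x \subset T_{X^0,x}$; since $x$ lies both on the affine tangent space and on $\ell$, this gives $\ell \subset x + T_{X^0,x}$, as claimed. (Equivalently, the Euler vector field $\sum_j z_j \partial_{z_j}$ is tangent to $X$ because $X$ is $\C^*$-invariant, and it specializes to $x$ at the point $x$.)

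For the second assertion I would compute $d\pi_x$ in a standard affine chart $U_i=\{x_i\ne 0\}$ of $\P^{n-1}$ containing $\pi(x)$, where $\pi$ reads $(z_0,\dots,z_{n-1})\mapsto (z_j/z_i)_{j\ne i}$. A short differentiation shows that the kernel of $d\pi_x$, viewed on the ambient tangent space $T_{\C^n,x}=\C^n$, is precisely the line $\C\cdot x$. Restricting $d\pi_x$ to $T_{X^0,x}$ and using Step~1 (which tells us $\C\cdot x \subset T_{X^0,x}$), the kernel becomes exactly $\ell$. This yields a natural injective linear map
\[
T_{X^0,x}/\ell \hookrightarrow T_{V,\pi(x)}.
\]

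To upgrade this injection to an isomorphism I would argue by dimensions. The left-hand side has dimension $d-1$ where $d=\dim X$. The only real subtlety, and hence the main (minor) obstacle, is to ensure that $\pi(x)$ is a smooth point of $V$, so that $T_{V,\pi(x)}$ is the honest tangent space of dimension $\dim V = d-1$, rather than a strictly larger Zariski tangent space. This is the classical statement that the singular locus of an affine cone, away from the vertex, is itself the cone over $\mathrm{Sing}(V)$; I would either cite it or re-derive it in the spirit of the Euler identity above, by comparing the rank of the Jacobian of $X$ at $x$ with that of $V$ at $\pi(x)$ in the chosen affine chart and observing that they differ by exactly one (the radial direction). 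Once smoothness of $\pi(x)$ is established, the injection between equidimensional spaces of dimension $d-1$ is automatically an isomorphism, completing the proof.
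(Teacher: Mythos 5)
Your proof is correct and follows essentially the same route as the paper's: Euler's identity shows that the radial vector $x$ lies in the kernel of the Jacobian matrix of the homogeneous generators, and at a smooth point that kernel is exactly $T_{X^0,x}$. You in fact supply more detail than the paper does for the second assertion (the computation that $\ker d\pi_x=\C\cdot x$, and the smoothness of $\pi(x)$ needed for the dimension count), which the paper's one-line proof leaves implicit.
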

\begin{proof}$\;$\newline
  This is due to Euler's identity for a homogeneous polynomial of degree $m$:
\[m.f=\sum_{i=1}^n x_i\frac{\partial f}{\partial x_i} \]
and the fact that if $\{f_1,\ldots, f_r\}$ is a set of homogeneous polynomials defining $X$, 
then $T_{X^0,x}$ is the kernel of the matrix:
\[\left( \begin{array}{c}d f_1 \\ \cdot \\ \cdot \\ d f_r \end{array} \right)\] representing the differentials $df_i$ in the basis $dx_1,\ldots ,dx_n$.
\end{proof}

     It is also important to note that the tangent space to $X^0$ is constant along all nonsingular points $x$ of $X$ in the same
generating line since the partial derivatives are homogeneous as well, and contains the generating line. By Lemma \ref{cone}, the quotient of this tangent space by the generating line is the tangent space to $V$ at the point corresponding to the generating line. \\

     So, $\P T_X^*\C^n$ has an image in $\check{\P}^{n-1}$ which is the projective dual
of V.\newline
\[\xymatrix {& \P T_V^*\P^{n-1}\ar[dl]\ar[dr] & &
                 \P T_X^*\C^n\subset \check{\P}^{n-1}\times \C^n \ar[dl]\ar[dr] & \\
              V\subset \P^{n-1} & & \check{\P}^{n-1}\supset \check{V}& &
              X\subset \C^n  }\]

The fiber over 0 of $\P T_X^*\C^n \to X$ is equal
to $\check{V}$  as subvariety of $\check{\P}^{n-1}$: it is the set of limit positions at 0 of hyperplanes tangent to $X^0$.\par\noindent
For more information on projective duality, in addition to Kleiman's papers one can consult \cite{Tev}.\par\medskip
A \textit{relative} version of the conormal space and of projective duality will play an important role in these notes. Useful references are \cite{HMS}, \cite{Kl2}, \cite{Te3}. The relative conormal space is used in particular to define the \textit{relative polar varieties}.\par\noindent

 Let $f:X\to S$ be a morphism of reduced analytic spaces, with purely $d$-dimensional fibers and such that there exists a closed nowhere dense
      analytic space such that the restriction to its complement $X^0$ in $X$ :
      \[f|_{X^0} : X^0 \longrightarrow S\]
      has all its fibers smooth. They are manifolds of dimension $d={\rm dim }X-{\rm dim}S$. Let us assume furthermore that the map $f$ is induced, via a closed embedding $X\subset Z$ by a smooth map $F\colon Z\to S$. This means that locally on $Z$ the map $F$ is analytically isomorphic to the first projection $S\times\C^N\to S$. Locally on $X$, this is always the case because we can embed the graph of $f$, which lies in $X\times S$, into $\C^N\times S$.\par\noindent Let us denote by $\pi_F\colon T^*(Z/S)\to Z$ the relative cotangent bundle of $Z/S$, which is a fiber bundle whose fiber over a point $z\in Z$ is the dual $T^*_{Z/S,x}$ of the tangent vector space at $z$ to the fiber $F^{-1}(F(z))$. 
      For $x \in X^0$, denote by $M(x)$ the submanifold $f^{-1}(f(x))\cap X^0$ of $X^0$. Using this submanifold we will build the \textbf{conormal space of $X$ relative to $f$}, denoted
      by $T_{X/S}^*(Z/S)$, by setting
      \[N^*_{M(x),x} = \{ \xi \in T^*{Z/S,x}\vert \xi (v)=0, \; \forall v \in T_{M(x),x}\}\] and
      \[T^*_{X^0/S}(Z/S) =\{ (x,\xi) \in T^*(Z/S) \vert\  x \in X^0, \; \xi \in N^*_{M(x),x}\}, \] 
      and finally taking the closure of $T^*_{X^0/S}(Z/S)$ in $T^*(Z/S)$, which is a complex analytic space $T^*_{X/S}(Z/S)$ by general theorems (see \cite{Re}, \cite{K-K}). Since $X^0$ is dense in $X$, this closure maps onto $X$ by the natural projection $\pi_F\colon T^*(Z/S)\to Z$.\par 
      Now we can projectivize with respect to the homotheties on $\xi$, as in the case where $S$ is a point we have seen above. We obtain the (projectivized) relative conormal space $C_f(X)\subset \P T^*(Z/S)$ (also denoted by $C(X/S)$), naturally endowed with a map
      $$\kappa_f\colon C_f(X)\longrightarrow X.$$ We can assume that locally the map $f$ is the restriction of the first projection to $X\subset S\times U$, where $U$ is open in $\C^n$. Then we have $T^*(S\times U/S)=S\times U\times \check\C^n$ and $\P T^*(S\times U/S)=S\times U\times \check\P^{n-1}$. This gives an inclusion $C_f(X)\subset X\times\check \P^{n-1}$ such that $\kappa_f$ is the restriction of the first projection,  and a point of $C_f(X)$ is a pair $(x,H)$, where $x$ is a point of $X$ and $H$ is a limit direction at $x$ of hyperplanes of $\C^n$ tangent to the fibers of the map $f$ at points of $X^0$. By taking for $S$ a point we recover the classical case studied above.\par
\begin{definition}\label{RELA} Given a smooth morphism $F\colon Z\to S$ as above, the projection to $S$ of $Z=S\times U$, with $U$ open in $\C^n$, we shall say that a reduced complex subspace $W\subset T^*(Z/S)$ is \textbf{$F$-Lagrangian} (or \textbf{$S$-Lagrangian} if there is no ambiguity on $F$) if the fibers of the composed map $q:=(\pi_F\circ F)\vert W\colon W\to S$ are purely of dimension $n={\rm dim}Z-{\rm dim}S$ and the differential $\omega_F$ of the relative \linebreak Liouville differential form $\alpha_F$ on $\C^n\times\check\C^n$ vanishes on all pairs of tangent vectors at smooth points of the fibers of the map $q$.
\end{definition}
With this definition it is not difficult to verify that $T^*_{X/S}(Z/S)$ is $F$-Lagrangian, and by abuse of language we will say the same of $C_f(X)$. But we have more:
\begin{proposition}{\rm (L\^e-Teissier, see \cite{L-T2}, proposition 1.2.6)}\label{SpecLag} Let $F\colon Z\to S$ be a smooth complex analytic map with fibers of dimension $n$. Assume that $S$ is reduced. Let $W\subset T^*(Z/S)$ be a reduced closed complex subspace and set as above $q= \pi_F\circ F\vert W\colon W\to S$. Assume that the dimension of the fibers of $q$ over points of dense open analytic subsets $U_i$ of the irreducible components $S_i$ of $S$ is $n$. 
\begin{enumerate}
\item If the Liouville form on $T^*_{F^{-1}(s)}=(\pi_F\circ F)^{-1}(s)$ vanishes on the tangent vectors at smooth points of the fibers $q^{-1}(s)$ for $s\in U_i$ and all the fibers of $q$ are of dimension $n$, then the Liouville form vanishes on tangent vectors at smooth points of all fibers of $q$.
\item The following conditions are equivalent:
\begin{itemize}
\item The subspace $W\subset T^*(Z/S)$ is $F$-Lagrangian;
\item The fibers of $q$, once reduced, are all purely of dimension $n$ and there exists a dense open subset $U$ of $S$ such that for $s\in U$ the fiber $q^{-1}(s)$ is reduced and is a Lagrangian subvariety of $(\pi_F\circ F)^{-1}(s)$;\par\noindent
If moreover $W$ is homogeneous with respect to homotheties on $T^*(Z/S)$, these conditions are equivalent to: 
\item All fibers of $q$, once reduced, are purely of dimension $n$ and each irreducible component $W_j$ of $W$ is equal to $T^*_{X_j/S}(Z/S)$, where $X_j=\pi_F(W_j)$.
\end{itemize}
\end{enumerate}
\end{proposition}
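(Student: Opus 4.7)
The whole proposition rests on (1), which is a spreading-of-vanishing statement; the two equivalences in (2) will then follow by combining (1) with the absolute version of Proposition~\ref{conormalequiv} applied fiberwise over $S$. I will therefore prove (1) first and reduce each part of (2) to it.

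\textbf{Part (1): spreading the vanishing.} The plan is a standard density-and-closedness argument on the relative smooth locus of $q$. Let $W^{\rm sm}\subset W$ be the open subset of points $w$ that are smooth in the reduced fiber $q^{-1}(q(w))_{\rm red}$. On $W^{\rm sm}$ the relative tangent sheaf to the fibers of $q$ is locally free of rank $n$, because by hypothesis every fiber is purely of dimension $n$, so dimension cannot jump. The relative Liouville form $\alpha_F$ (pulled back to $W$) restricted to this rank-$n$ subbundle of $TW^{\rm sm}$ defines a holomorphic section of a vector bundle, and its vanishing locus is therefore closed and analytic in $W^{\rm sm}$. By assumption this section vanishes on $q^{-1}(\bigcup_i U_i)\cap W^{\rm sm}$; and since each $U_i$ is dense in the component $S_i$ and all fibers of $q$ are purely $n$-dimensional, this set is dense in $W^{\rm sm}$. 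Closedness then forces $\alpha_F$ to vanish on the relative tangent bundle everywhere on $W^{\rm sm}$, which is the conclusion (noting that vanishing of $\alpha_F$ on fiber tangent vectors at smooth points is equivalent to vanishing of $\omega_F=-d\alpha_F$ on pairs of such vectors, since the fibers of $\pi_F\circ F$ are cotangent bundles and $\alpha_F$ is the canonical 1-form there).

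\textbf{Part (2), equivalence of the first two conditions.} The forward implication is essentially the definition, combined with generic flatness and generic smoothness of $q$ over $S$: over a suitable dense open $U\subset S$ the fibers of $q$ become smooth and reduced of dimension $n$, and the $F$-Lagrangian condition restricts on them to ordinary Lagrangianness. For the reverse implication I apply Part (1) with each $U_i:=U\cap S_i$, which is dense in $S_i$; the hypothesis that every reduced fiber of $q$ is purely of dimension $n$ is precisely what Part (1) requires, and the conclusion is that $\omega_F$ (or $\alpha_F$) vanishes on the smooth locus of every fiber, i.e., $W$ is $F$-Lagrangian.

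\textbf{Part (2) under homogeneity, equivalence with the conormal description.} The implication (c)$\Rightarrow$(a) is immediate from the relative analogue of Proposition~\ref{si}: on each $T^*_{X_j/S}(Z/S)$ the relative Liouville form vanishes by construction over the smooth locus of $X_j\to S$, and then Part (1) propagates this to all fibers. For (a)$\Rightarrow$(c), take an irreducible component $W_j$ of $W$, set $X_j:=\pi_F(W_j)\subset Z$, and work over a dense open $V\subset S$ where $X_j$ has smooth fibers of fixed dimension, $W_j$ has reduced fibers of dimension $n$, and $q\vert W_j$ is flat. Over a point $s\in V$ the fiber of $W_j$ is a closed, irreducible, conical Lagrangian subvariety of $T^*(F^{-1}(s))$, so by Proposition~\ref{conormalequiv} it coincides with the conormal space of its image, which is the fiber of $T^*_{X_j/S}(Z/S)$ over $s$. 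Hence $W_j$ and $T^*_{X_j/S}(Z/S)$ agree over the dense open $V$; both being closed irreducible analytic subsets of $T^*(Z/S)$ of the same dimension, they coincide globally.

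\textbf{Main obstacle.} The delicate point is really Part (1): the pure $n$-dimensionality of \emph{all} (not merely generic) fibers of $q$ is needed to ensure both that the relative tangent sheaf stays locally free on $W^{\rm sm}$ and that $q^{-1}(\bigcup U_i)$ is dense in $W$; without it, a fiber could acquire extra dimension and escape the closure of the good locus. A secondary subtlety is in (a)$\Rightarrow$(c), where one must verify that generic fibers of $q\vert W_j$ are reduced in order to apply the absolute Proposition~\ref{conormalequiv}; this follows from the conicalness of $W_j$ together with generic flatness, but deserves explicit mention.
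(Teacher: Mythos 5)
Your overall strategy --- the vanishing of a differential form is a closed condition, so isotropy spreads from a dense family of fibers to all fibers --- is exactly the mechanism the paper itself points to (it defers the actual proof to \cite{L-T2} and records only this gist). But the way you implement the closedness in Part (1) does not work as written. You assert that on the set $W^{\rm sm}$ of points that are smooth in their reduced fiber, ``the relative tangent sheaf to the fibers of $q$ is locally free of rank $n$,'' so that $\alpha_F$ restricted to it is a holomorphic section of a bundle with closed analytic zero locus. This is false: reduction of the fibers is not an analytic operation in the family, and the tangent spaces $T_{q^{-1}(q(w))_{\rm red},\,w}$ need not vary holomorphically (or even form a coherent subsheaf) across the parameter. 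Already for $W=\{y^2=sx^2\}$ over the $s$-line, every reduced fiber is smooth along $\{y=s=0,\ x\neq 0\}$, yet $\Omega^1_{W/S}$ jumps in rank there and the fiberwise tangent direction $(1,\pm\sqrt{s})$ is two-valued, not a holomorphic section of anything. The correct implementation of your idea has three steps: (i) pass to the closure, in the Grassmannian bundle of $n$-planes, of the graph of $w\mapsto T_{q^{-1}(q(w)),w}$ over the dense open set where $q$ is smooth; isotropy of the plane is a closed analytic condition there, so every limit plane is isotropic; (ii) compare these limit planes with the genuine tangent planes of a special reduced fiber at its smooth points --- this is a Whitney-condition-$a)$ type statement, valid only on a dense open subset of the smooth locus of each component of the special fiber (Whitney $a)$ being a stratifying condition), and it is here that the pure $n$-dimensionality of all fibers is used to force every component of every fiber into the closure of the good locus; (iii) spread from that dense open subset to the whole smooth locus of the \emph{fixed} reduced fiber by a second closedness argument, which is legitimate there because within one reduced fiber the tangent spaces do form a bundle over the smooth points. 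Your one-step version silently skips the comparison (ii), which is the real content.

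A second, smaller error: your parenthetical claim that vanishing of $\alpha_F$ on fiber tangent vectors is equivalent to vanishing of $\omega_F$ on pairs of such vectors ``since the fibers of $\pi_F\circ F$ are cotangent bundles'' is wrong without homogeneity --- the graph of $df$ is Lagrangian in $T^*U$ but $\alpha$ restricts to $df\neq 0$ on it. The implication $\omega_F=0\Rightarrow\alpha_F=0$ needs $W$ conic (Euler vector field tangent to $W$, as in Proposition \ref{conormalequiv}), which is only assumed in the last bullet of (2). Since Part (1) is stated for $\alpha_F$ while the second bullet of (2) only gives vanishing of $\omega_F$ on generic fibers, you must either run the spreading argument directly for $\omega_F$ (it goes through verbatim) or confine that reduction to the homogeneous case. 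The rest of your sketch of (2) --- generic reducedness and smoothness, the fiberwise appeal to Proposition \ref{conormalequiv}, and the closure comparison in the last equivalence --- is acceptable at this level of detail, modulo the observation, left implicit in the statement itself, that no irreducible component of $W$ may lie entirely over a nowhere dense subset of $S$, since otherwise the density you invoke fails.
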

Assuming that $W$ is irreducible, the gist of this proposition is that if $W$ is, generically over $S$, the relative conormal of its image in $Z$, and if the dimension of the fibers of $q$ is constant, then $W$ is everywhere the relative conormal of its image. This is essentially due to the fact that the vanishing of a differential form is a closed condition on a cotangent space. In section \ref{reldu} we shall apply this, after projectivization with respect to homotheties on $T^*(Z/S)$,  to give the Lagrangian characterization of Whitney conditions.
\subsection{Tangent cone}\label{tgt}

At the very beginning we mentioned how the limit of tangent spaces can be thought of as 
a substitute for the tangent space at singular points. There is another common substitute
for the missing tangent space, the \textbf{tangent cone}.

   Let us start by the geometric definition. Let $X\subset \C^n$ be a representative of
$(X,0)$. The canonical projection $\C^n\setminus \{0\} \to \P^{n-1}$ 
induces the secant line map
\begin{align*}
s_X\colon X\setminus\{0\}&\to \P^{n-1},\\ 
x & \mapsto[0x],
\end{align*}  
 where $[0x]$ is the direction of the secant line $\overline{0x}\subset\C^n$.  Denote by $E_0X$ the closure in $X\times \P^{n-1}$ of the graph of $s_X$. $E_0X$ is an 
analytic subspace of dimension $d$, and the natural projection $e_0X\colon 
E_0X\to X$ induced by the first projection is called the \textbf{blowing up} of $0$ in $X$. The fiber 
$e_0^{-1}(0)$ is a projective subvariety of $\P^{n-1}$ of dimension $d-1$, not necessarily reduced  
(see \cite[\S 10]{Whi1}). 

\begin{definition}\label{tanconedef1}
The cone with vertex $0$ in $\C^n$ corresponding to 
the subset $|e_0^{-1}(0)|$ of the projective space $\P^{n-1}$
is the set-theoretic \textbf{tangent cone}.
\end{definition}

The construction shows that set-theoretically $e_0^{-1}(0)$ is the set of limit directions of secant lines $\overline{0x}$ for points $x\in 
X\setminus\{0\}$ tending to $0$. This means more precisely that for 
each sequence $(x_i )_{i\in \N}$ of points of $X\setminus\{0\}$, tending to $0$ as $i\to\infty$ we can, 
since $\P^{n-1}$ is compact, extract a subsequence such that the directions $[0x_i]$ of the secants
$\overline{0x_i}$ converge. The set of such limits is the underlying set of $e_0^{-1}(0)$ (see \cite[Theorem 5.8]{Whi2}). \\

The algebraic definition is this: $\Oo=\Oo_{X,0}= \C\{z_1,\ldots,z_n\}/\left<f_1,\ldots, f_k
\right>$ be the local algebra of $X$ at $0$ and let $\m=\m_{X,0}$ 
be its maximal ideal. There is a natural filtration of $\Oo_{X,0}$ by 
the powers of $\m$:
\[\Oo_{X,0}\supset \m \supset\cdots\supset \m^i\supset \m^{i+1}\supset\cdots,\]
which is \textit{separated} in the sense that $\bigcap_{i=o}^\infty 
\m^i=(0)$ because the ring $\Oo_{X,0}$ is local and N\oe therian. Thus, for any element $h\in \Oo_{X,0}$ there is a unique integer $\nu(h)$ such that $h\in \m^{\nu(h)}\setminus \m^{\nu(h)+1}$. It is called the $\m$-adic order of $h$ and the image of $h$ in the quotient $\m^{\nu(h)}/ \m^{\nu(h)+1}$, which is a finite dimensional $\C$-vector space, is called the \textit{initial form} of $h$ for the $\m$-adic filtration. Initial forms are the elements of a ring, which we now define in the case of immediate interest. The general definition \ref{initialform} is given below :

\begin{definition}
We define the \textbf{associated graded ring of $\Oo$ with respect to $\m$}, written ${\rm gr}_{\m}O$ to
be the graded ring
\[{\rm gr}_{\m}\Oo\colon = \bigoplus_{i\geq 0}\m^i/ \m^{i+1},\]
where $\m^0=\Oo$ and the multiplication is induced by that of $\Oo$.
\end{definition}

   Note that ${\rm gr}_{\m}\Oo$ is generated as $\C-$algebra by $\m/\m^2$, which is a finite dimensional
vector space. Thus, ${\rm gr}_{\m}\Oo$ is a finitely generated $\C-$algebra, to which we can associate 
a complex analytic space ${\rm Specan}{\rm gr}_{\m}\Oo$. It is nothing but the affine algebraic variety ${\rm Spec}{\rm gr}_{\m}\Oo$ viewed as a complex analytic space with the sheaf of holomorphic functions replacing the sheaf of algebraic functions. Moreover, since ${\rm gr}_{\m}\Oo$ is graded and 
finitely generated in degree one, the associated affine variety ${\rm Specan}{\rm gr}_{\m}\Oo$ is a cone.
(For more on Specan, see \cite[Appendix I, 3.4 and Appendix III 1.2]{He-Or} or additionally \cite[p. 172]{K-K}). 
\begin{definition}\label{tanconedef2}
   We define the \textbf{tangent cone} $C_{X,0}$ as the complex analytic space ${\rm Specan}({\rm gr}_{\m}\Oo)$.
\end{definition}
   
    We have yet to establish the relation between the geometric and algebraic definitions of
the tangent cone. In order to do that we will need to introduce the \textbf{specialization of $\mathbf{X}$ to its 
tangent cone}, which is a very interesting and important construction in its own right.\\

    Take the representative $(X,0)$ of the germ associated to the analytic algebra $\Oo$ from above. Now,
 the convergent power series $f_1,\ldots, f_k$ define analytic functions in a small enough polycylinder $P(\alpha):=\{z \in \C^n \; : \vert z_i \vert < \alpha_i \}$
 around $0$. Suppose additionally that 
 the initial forms of the $f_i$'s generate the homogeneous ideal of initial forms of elements of
 $I=\left<f_1,\ldots,f_k\right>$.\\
     Let $f_i=f_{i,m_i}(z_1,\ldots,z_n)+f_{i,m_{i+1}}(z_1,\ldots,z_n)+f_{i,m_{i+2}}(z_1,\ldots,z_n)+\ldots$, and set 
    \[F_i:= v^{-m_i}f_i(vz_1,\ldots,vz_n)=\] \[f_{i,m_i}(z_1,\ldots,z_n)+vf_{i,m_{i+1}}(z_1,\ldots,z_n)+v^2f_{i,m_{i+2}}(z_1,\ldots,z_n)+\ldots\in 
     \C [[v,z_1,\ldots,z_n]].\]
   Note that the series $F_i$, actually converge in the domain of $\C^{n}\times \C$ defined by the inequalities
 $\vert vz_i\vert< \alpha_i $ thus defining analytic functions on this open set. Take the analytic space
 $ \X \subset \C^n \times \C$ defined by the $F_i$'s and the analytic map defined by the projection to the 
 $t$-axis.
 \[\xymatrix{\ \ \ \ \ \ \ \ \ \ \ \ \ \ \X \subset \C^n \times \C \ar[d]_p\\ \C }\]
 Now we have a family of analytic spaces parametrized by an open subset of the complex line $\C$.
 Note that for $v \neq 0$, the analytic space $p^{-1}(v)$ is isomorphic to $X$ and in fact for
 $v=1$ we recover exactly the representative of the germ $(X,0)$ with which we started. But for
 $v=0$, the analytic space $p^{-1}(0)$ is the closed analytic subspace of $\C^n$ defined by
 the homogeneous ideal generated by the initial forms of elements of $I$.\par\medskip
 
    We need a short algebraic parenthesis in order to explain the relation between this
ideal of initial forms and our definition of tangent cone (definition \ref{tanconedef2}).
 \par\medskip
    Let $R$ be a Noetherian ring, and $I\subset J \subset R$ ideals such that
    \[R\supset J \supset\cdots\supset J^i \supset J^{i+1}\supset\cdots\ \ .\]	 
is a separated filtration in the sense that $\bigcap_{i=o}^\infty J^i=(0)$ (see \cite[Chap. III, \S 3, No.2]{B1}).

    Take the quotient ring $A=R/I$, define the ideal $\tilde{J_i}:= (J^i + I)/I \subset A$ and consider the 
induced filtration
    \[A\supset \tilde{J} \supset\cdots\supset \tilde{J_i} \supset \tilde{J}_{i+1}\supset\cdots.\]
Note that in fact $\tilde{J}_i$ is the image of $J^i$ in $A$ by the quotient map.

    Consider now the associated graded rings
 \[{\rm gr}_JR= \bigoplus_{i=0}^\infty J^i/J^{i+1}, \]	
 \[{\rm gr}_{\tilde{J}}A= \bigoplus_{i=0}^\infty \tilde{J}_i/\tilde{J}_{i+1}. \]  	
 
 \begin{definition}\label{initialform}
  Let $f \in I$. Since $\bigcap_{i=o}^\infty J^i=(0)$, there exists a largest natural number
  $k$ such that $f \in J^k$. Define the \textbf{initial form} of $f$ with respect to $J$ as
  \[{\rm in}_J f := f \;\; \mathrm{(mod}\; J^{k+1}) \; \in {\rm gr}_JR. \]
  
 Using this, define the \textbf{ideal of initial forms}, or \textbf{initial ideal} of $I$ as the ideal
  of ${\rm gr}_JR$ generated by the initial forms of all the element of $I$. 
  \[{\rm In}_JI:=\left<\mathrm{in}_J f\right>_{f\in I} \subset {\rm gr}_JR.\]
 \end{definition}
 
 \begin{lemma}\label{key}
  Using the notations defined above, the following sequence is exact:
  \[ \xymatrix { 0 \ar[r] & {\rm In}_JI \ar@{^{(}->}[r] & {\rm gr}_JR \ar[r]^\phi & {\rm gr}_{\tilde{J}}A 
     \ar[r] & 0 } \]
  that is, ${\rm gr}_{\tilde{J}}A\cong {\rm gr}_JR/{\rm In}_JI$.
 \end{lemma}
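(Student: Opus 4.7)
The strategy is to check exactness one graded piece at a time. The quotient morphism $\pi\colon R\to A=R/I$ carries $J^i$ into $\tilde J_i=(J^i+I)/I$ and $J^{i+1}$ into $\tilde J_{i+1}$, so it induces a well-defined $\C$-linear map
\[
\phi_i\colon J^i/J^{i+1}\longrightarrow \tilde J_i/\tilde J_{i+1},\qquad y+J^{i+1}\longmapsto \pi(y)+\tilde J_{i+1},
\]
and these assemble into the graded ring homomorphism $\phi$. Surjectivity of $\phi_i$ is immediate: any element of $\tilde J_i$ can be lifted to some $y+z$ with $y\in J^i$ and $z\in I$, and modulo $I$ this equals $\pi(y)$, so $\phi_i(y+J^{i+1})$ hits the given class. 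Hence $\phi$ is surjective.

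The next step is to identify the graded pieces of $\mathrm{In}_J I$ explicitly as
\[
(\mathrm{In}_J I)_i \;=\; \bigl(I\cap J^i + J^{i+1}\bigr)/J^{i+1}.
\]
Since $\mathrm{In}_J I$ is generated by the homogeneous elements $\mathrm{in}_J f$, it is a homogeneous ideal, and its degree-$i$ component consists of finite sums $\sum_j \bar g_j\cdot \mathrm{in}_J f_j$ with $\bar g_j\in J^{i-\nu(f_j)}/J^{i-\nu(f_j)+1}$. Lifting each $\bar g_j$ to $\tilde g_j\in J^{i-\nu(f_j)}$, the product $\tilde g_j f_j$ lies in $I\cap J^i$, and its class in $J^i/J^{i+1}$ is $\bar g_j\cdot \mathrm{in}_J f_j$; this gives the inclusion $\subseteq$. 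Conversely, any $f\in I\cap J^i$ represents the initial form $\mathrm{in}_J f$ in degree $i$ (if $f\notin J^{i+1}$) or zero otherwise, and its class in $J^i/J^{i+1}$ clearly belongs to $\mathrm{In}_J I$; this gives $\supseteq$.

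Finally, compute $\ker \phi_i$. A class $y+J^{i+1}$ lies in $\ker\phi_i$ iff $\pi(y)\in\tilde J_{i+1}$, i.e.\ iff $y\in J^{i+1}+I$. Writing $y=y'+f$ with $y'\in J^{i+1}$ and $f\in I$ forces $f=y-y'\in J^i$, hence $f\in I\cap J^i$, and then $y+J^{i+1}=f+J^{i+1}\in (I\cap J^i+J^{i+1})/J^{i+1}$. Conversely, every such element is represented by an element of $I$, hence is killed by $\phi_i$. Combining with the description from the previous paragraph gives $\ker\phi_i=(\mathrm{In}_J I)_i$, and summing over $i$ yields the claimed short exact sequence.

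The only subtle step is the identification of $(\mathrm{In}_J I)_i$, because $\mathrm{In}_J I$ is the ideal generated by the initial forms, not just the set of initial forms; the grading of $\mathrm{gr}_J R$ together with the homogeneity of the generators is what makes this bookkeeping go through cleanly, and it is where one must be careful when writing out the argument.
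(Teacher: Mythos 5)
Your proof is correct and follows essentially the same route as the paper's: both work degree by degree, identify $\tilde{J}_i/\tilde{J}_{i+1}$ with $J^i/(I\cap J^i+J^{i+1})$, and match $\ker\phi_i$ with the degree-$i$ piece of $\mathrm{In}_JI$. The only cosmetic difference is that you isolate the identification $(\mathrm{In}_JI)_i=(I\cap J^i+J^{i+1})/J^{i+1}$ as an explicit intermediate step, whereas the paper argues the two inclusions between $\mathrm{In}_JI$ and $\ker\phi$ directly, using that the kernel is an ideal containing the generators $\mathrm{in}_Jf$.
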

 \begin{proof}$\;$\\
  First of all, note that
  \[ \J_i/\J_{i+1} \cong \frac{\frac{J^i+I} {I}}{\frac{J^{i+1}+I}{I}} \cong 
      \frac{J^i+I}{J^{i+1}+I} \cong \frac{J^i}{I\cap J^i + J^{i+1}},\] 
  where the first isomorphism is just the definition, the second one is one of the classical isomorphism
  theorems and the last one comes from the surjective map $J^i \rightarrow \frac{J^i+I}{J^{i+1}+I}$
  defined by $x \mapsto x + J^{i+1}+I$. This last map tells us that there are natural 
  surjective morphisms:
  \begin{align*}
    \varphi_i: \frac{J^i}{J^{i+1}} & \longrightarrow \frac{\J_i}{\J_{i+1}}\cong \frac{J^i}{I\cap J^i + J^{i+1}},\\ 
      x + J^{i+1} & \longmapsto x + I\cap J^i + J^{i+1} ,    
  \end{align*}
  which we use to define the surjective graded morphism of graded rings \linebreak
  $\phi: {\rm gr}_JR \to {\rm gr}_\J A $. Now all that is left to prove is that the kernel of $\phi$
  is exactly ${\rm In}_JI$.
  
  Let $f \in I$ be such that $\mathrm{in}_Jf= f + J^{k+1} \in J^k/J^{k+1}$, then
  \[\phi({\rm in}_Jf)= \varphi_k (f + J^{k+1})= f + I\cap J^k + J^{k+1}= 0.\]
  because $f \in I\cap J^k$. Since by varying $f \in I$ we get a set of generators of the ideal ${\rm In}_JI$,
  we have ${\rm In}_JI\subset Ker \;\phi$.  
  
  To prove the other inclusion, let $g=\bigoplus \overline{g_k} \in Ker \; \phi$, where we
  use the notation $\overline{g_k}:= g_k + J^{k+1} \in J^k/J^{k+1}$. Then, $\phi(g)=0$ implies 
  by homogeneity \linebreak $\phi(\overline{g_k})=\varphi_k(g_k + J^{k+1})=0$ for all $k$.
 Suppose that $\overline{g_k}\neq 0$ then 
  \[\varphi(g_k + J^{k+1})= g_k + I\cap J^k + J^{k+1}= 0 \]
  implies $g_k = f + h$, where $0 \neq f \in (I\cap J^k)\setminus J^{k+1}$ and $h$ belongs to 
  $J^{k+1}$. This means that $g_k \equiv f \;\; ( \mathrm{mod} \; J^{k+1})$,
  which implies $g_k + J^{k+1}=\mathrm{in}_Jf$ and concludes the proof.
 \end{proof}

In order to relate our definition of the tangent cone with the space we obtained in our previous description of 
 the specialization, just note that in our case the roles of $R$ and $J\subset R$ are played by the ring of convergent power 
 series $\C\{z_1,\ldots,z_n\}$, and its maximal ideal $\m$ respectively, while $I$ corresponds to the ideal
 $<f_1,\ldots,f_k>$ defining the germ $(X,0)\subset (\C^n,0)$ and $A$ to its analytic algebra $O_{X,0}$.
 
 More importantly, the graded ring ${\rm gr}_{\m}R$, with this choice of $R$, is naturally isomorphic to the ring of 
 polynomials $\C[z_1,\ldots,z_n]$ in such a way that definition \ref{initialform} coincides with the usual concept of 
 initial form of a series and tells us that
 \[{\rm gr}_{\m} \Oo_{X,0} \cong \frac{\C[z_1,\ldots,z_n]}{{\rm In}_{\m}I}. \]
 
 We would like to point out that there is a canonical way of working out the specialization in the algebraic setting
 that, woefully, cannot be translated word for word into the analytic case, but which, nonetheless 
 takes us to a weaker statement. Suppose that $X$ is an algebraic variety, that is, the $f_i$'s
 are polynomials in $z_1,\ldots, z_n$, and
 consider the  \textit{extended Rees Algebra}\footnote{This algebra was introduced for an ideal $J\subset \Oo$ by D. Rees in \cite{Rs}, in the form $\Oo[v,Jv^{-1}]\subset \Oo[v,v^{-1}]$.} of $\Oo=\Oo_{X,0}$ with respect to $\m$ (see \cite[Appendix]{Za}, or  \cite[section 6.5]{Eis}):
  \[\mathcal{R} = \bigoplus_{i \in \Z} \m^i v^{-i} \subset \Oo[v,v^{-1}],\]
  where we set $\m^i=\Oo$ for $i\leq 0$. Note that $\mathcal{R}\supset \Oo[v] \supset \C[v]$, in fact it 
  is a finitely generated $\Oo-$algebra and consequently a finitely generated $\C$-algebra 
  (See \cite[pages 120-122]{Mat}). Moreover we have:\\
   
\begin{proposition}\label{Reesalgebra}
Let $\mathcal{R}$ be the extended Rees algebra defined above. Then:
\begin{itemize}
\item[i)] The $\C[v]-$algebra $\mathcal{R}$ is torsion free.
\item[ii)] $\mathcal{R}$ is faithfully flat over $\C[v]$
\item[iii)] The map $\phi:\mathcal{R}\to {\rm gr}_{\m}\Oo$ sending $xv^{-i}$ to the image of $x$ in 
           $\m^i/\m^{i+1}$ is well defined and induces an isomorphism $\mathcal{R}/(v\cdot\mathcal{R}) 
           \simeq {\rm gr}_{\m}\Oo$.
\item[iv)] For any $v_0 \in \C\setminus \{0\}$ the map of $\C$-algebras $\mathcal{R} \to  \Oo$ determined by
           $xv^{-i} \mapsto xv_0^{-i}$ induces an isomorphism $\frac{\mathcal{R}}{(v-v_0)\cdot \mathcal{R}}
            \simeq \Oo$.
\end{itemize}
\end{proposition}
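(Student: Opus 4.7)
\medskip
\noindent\textbf{Proof strategy.} For (i), since $\mathcal{R} \subset \Oo[v, v^{-1}]$ as $\C[v]$-modules, it suffices to establish that the larger ring is torsion-free over $\C[v]$. This follows from a leading-coefficient argument in $\Oo[v]$: for $p \in \C[v]$ nonzero with leading coefficient $p_n \in \C^*$ and $f \in \Oo[v]$ nonzero with leading coefficient $f_m \neq 0$, the top coefficient of $pf$ is $p_n f_m \neq 0$, so $pf \neq 0$; this property then passes to the localization $\Oo[v, v^{-1}]$ and hence to the submodule $\mathcal{R}$.

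For (iii), I would grade $\mathcal{R}$ by setting $\mathcal{R}_i = \m^i v^{-i}$ (with $\m^i = \Oo$ for $i \leq 0$), so that $v$ has degree $-1$. Then $\phi$ is a surjective graded ring homomorphism, well-defined because $x \in \m^i$ by construction. Multiplication by $v$ sends $\mathcal{R}_i$ into $\m^i v^{-(i-1)} \subset \mathcal{R}_{i-1}$, whence $(v\mathcal{R})_j = \m^{j+1} v^{-j}$; consequently $(\mathcal{R}/v\mathcal{R})_j \cong \m^j/\m^{j+1}$ for $j \geq 0$ and vanishes for $j < 0$, matching ${\rm gr}_\m \Oo$ degree by degree under the map induced by $\phi$.

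For (iv), I would reduce to $v_0 = 1$ via the $\C$-algebra automorphism of $\Oo[v, v^{-1}]$ determined by $v \mapsto v_0^{-1} v$, which preserves $\mathcal{R}$ because $v_0^{-i}\m^i = \m^i$ when $v_0 \in \C^*$, and sends the ideal $(v-1)\mathcal{R}$ to $(v-v_0)\mathcal{R}$. For $v_0 = 1$, evaluation at $v = 1$ is a surjection $\mathcal{R} \to \Oo$ that kills $v-1$, inducing a surjection $\mathcal{R}/(v-1)\mathcal{R} \to \Oo$. For injectivity, I would write a putative kernel element as $f = (v-1)g$ with $g = \sum_j g_j v^j \in \Oo[v,v^{-1}]$, obtaining the recursion $g_j = g_{j-1} - f_j$; ascending induction on $j$ starting from the lowest nonzero term, combined with $f \in \mathcal{R}$ (i.e.\ $f_j \in \m^{-j}$ for $j < 0$), forces $g_j \in \m^{-j}$ for all $j$, so $g \in \mathcal{R}$.

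Finally, (ii) follows from the other three parts: over the PID $\C[v]$ torsion-freeness is equivalent to flatness, so (i) gives flatness, and faithful flatness amounts to $\mathcal{R}/(v-a)\mathcal{R} \neq 0$ for every maximal ideal $(v-a) \subset \C[v]$, which is handled by (iii) for $a = 0$ (the quotient is ${\rm gr}_\m\Oo$, nonzero since it contains $\Oo/\m = \C$) and by (iv) for $a \neq 0$ (the quotient is $\Oo$). I expect the main obstacle to be the injectivity step in (iv), where one must verify that the divisibility by $v - v_0$ already takes place inside the smaller ring $\mathcal{R}$ and not merely in $\Oo[v, v^{-1}]$; the combination of the grading convention from (iii) and the coefficient-by-coefficient recursion is what controls this.
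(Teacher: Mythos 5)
Your proof is correct and complete. Note that the paper does not actually prove this proposition: it defers entirely to the references (the appendix of \cite{Za}, \cite[p.\ 171]{Eis}, and the exercises of \cite{B2}), so there is no in-text argument to compare against; what you have written is essentially the standard argument found in those sources. All the delicate points are handled properly: the leading-coefficient argument for (i) correctly avoids assuming $\Oo$ is a domain, since it only uses that the top coefficient of $p$ is a nonzero \emph{scalar}; the degree-by-degree identification of $\mathcal{R}/v\mathcal{R}$ with ${\rm gr}_{\m}\Oo$ in (iii) is right (the negative-degree pieces $\Oo v^{|i|}$ are exactly absorbed by $v\mathcal{R}$); and you correctly isolate the one genuinely nontrivial step, namely that in (iv) the quotient $g=f/(v-1)$, which a priori only lives in $\Oo[v,v^{-1}]$ (by dividing $v^kf$ by the monic polynomial $v-1$), actually lies in $\mathcal{R}$ --- your recursion $g_j=g_{j-1}-f_j$, started at the lowest nonzero coefficient and using $\m^{-j+1}\subset\m^{-j}$, settles this. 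The deduction of (ii) from (i), (iii) and (iv) via the criterion that a flat module over $\C[v]$ is faithfully flat iff its reduction modulo every maximal ideal $(v-a)$ is nonzero is also standard and correctly applied.
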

\begin{proof}$\;$\newline
See the appendix written by the second author in \cite{Za}, \cite[p. 171]{Eis}, or additionally \cite{B2} in the exercises for \S 6 of of Chap VIII.
\end{proof}

The proposition may be a little technical, but what it says is that the extended Rees algebra 
is a way of producing flat degenerations of a ring to its associated graded ring, since the inclusion morphism 
$\C[v] \hookrightarrow \mathcal{R}$ is flat. Now taking the space $\X$ associated
to $\mathcal{R}$ and the map $\X\to \C$ associated to the inclusion $\C[v] \hookrightarrow \mathcal{R}$, we obtain 
a map:
\[\varphi: \X \longrightarrow \C \]
such that 
\begin{itemize}
\item  The map $\varphi$ is faithfully flat.
\item  The fiber $\varphi^{-1}(0)$ is the algebraic space associated to $\mathrm{gr}_{\m}O$, that is,
                the tangent cone $C_{X,0}$.
\item  The space $\varphi^{-1}(v_0),$ is  isomorphic to $X$, for all $v_0\neq 0$.
\end{itemize}
Thus, we have produced a 1-parameter flat family of algebraic spaces
specia-\linebreak lizing $X$ to $C_{X,0}$.\\Recall that flatness means that the fibers of the map vary continuously; in this case, it means that every point in any fiber is the limit of a sequence of points points in the nearby fibers. Faithful flatness means that in addition the map is surjective; in our case surjectivity is automatic, but for example the inclusion of an open set in $X$ is flat but not faithfully flat.\\
Note that in this construction we may replace the maximal ideal $\m$ by any ideal $J$ to obtain a faithfully flat specialization of $\Oo$ to ${\rm gr}_J\Oo$.\par\medskip\noindent 
As you can see, the problem when trying to translate this into the analytic case is first of all,
that in general the best thing we can say is that the algebra $\mathcal{R}$ is finitely generated over $\Oo$, but 
not even essentially finitely generated over $\C$, which means that it cannot be viewed as corresponding to an open set in an affine algebraic variety.  \\

Given any finitely generated algebra over an analytic algebra such as $\Oo$, there 
is a ``smallest'' analytic algebra which contains it, in the sense that any map of $\C$-algebras from our algebra
to an analytic algebra factors uniquely through this ``analytization''. The proof: our algebra is a 
quotient of a polynomial ring $\Oo[z_1,\ldots, z_s]$ by an ideal $I$; take the quotient of the corresponding convergent 
power series ring $\Oo\{z_1,\ldots, z_s\}$, which is an analytic algebra, by the ideal generated by $I$; it is again an analytic algebra. So we can use this to translate our result into 
a similar one which deals with germs of analytic spaces.\\

Taking the analytic algebra $\mathcal{R}^h$ associated to $\mathcal{R}$, and the analytic germ 
$\X$ associated to $\mathcal{R}^h$, we have a germ of map induced by the inclusion $\C\{t\} 
\hookrightarrow \mathcal{R}^h$:
\[ \varphi: (\X,0) \longrightarrow (\mathbf{D},0) \] 
which preserves all the properties established in the algebraic case, that is:
\begin{itemize}
\item  The map $\varphi$ is faithfully flat.
\item The fiber $\varphi^{-1}(0)$ is the germ of analytic space associated to $\mathrm{gr}_{\m}\Oo$, that is
                the tangent cone $C_{X,0}$
\item The fiber $\varphi^{-1}(v_0)$ is a germ of analytic space isomorphic to $(X,0)$, for all $v_0\neq 0$.
\end{itemize}
This means that we have produced a 1-parameter flat family of germs of analytic spaces
specializing $(X,0)$ to $(C_{X,0},0)$. The way this construction relates to our previous
analytic construction is explained in the next exercise.

\begin{exercise}\label{ejercontan}$\;$\newline
1) Suppose $(X,0)$ is a germ of hypersurface.\par\noindent Then $\Oo=\C\{z_1,\ldots,z_n\}/\left<f(z_1,\ldots,z_n)\right>$.
Show that\par\noindent  $\mathcal{R}^h=\C\{v,z_1,\ldots,z_n\}//\left<v^{-m}f(vz_1,\ldots,tz_n)\right>$. \par\noindent
Note that this makes sense since, as we saw above, writing \[f=f_m(z_1,\ldots,z_n)+f_{m+1}(z_1,\ldots,z_n)+ \ldots,\]
where $f_k$ is an homogeneous polynomial of degree $k$, then:
\[v^{-m}f(vz_1,\ldots ,vz_n)= f_m(z_1,\ldots,z_n)+vf_{m+1}(z_1,\ldots,z_n)+ \ldots\ \in\C\{v,z_1,\ldots,z_n\}.\]
2) More generally, take $I\subset \C\{z_1,\ldots,z_n\}$ and choose generators $f_i$ such
that their initial forms $f_{i,m_i}$ generate the ideal of all initial forms of elements
of $I$. Then:
\[\mathcal{R}^h=\C\{v,z_1,\ldots,z_n\}/\left<v^{-m_i}f_i(vz_1,\ldots,vz_n)\right>.\]
\end{exercise}
\noindent Let $\D^*$ be the punctured disk. It is important to note that this computation implies that the biholomorphism $\C^n\times \D^*\rightarrow \C^n\times \D^*$ determined by $(z,v)\mapsto (vz,v)$ induces an isomorphism $\varphi^{-1}(\D^*)\simeq X\times \D^*$.

Finally, we can use this constructions to prove that our two definitions of the tangent cone
are equivalent.

\begin{proposition}\label{limsec}
Let $|C_{X,0}|$ be the underlying set of the analytic space $C_{X,0}$. Then, generating lines
in $|C_{X,0}|$ are the limit positions of secant lines $\overline{0x_i}$ as $x_i \in X\setminus \{0\}$ tends
to 0.
\end{proposition}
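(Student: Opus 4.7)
The plan is to exploit the specialization family $\varphi\colon (\X,0)\to (\D,0)$ of Exercise \ref{ejercontan}, together with the biholomorphism $\Phi\colon \C^n\times\D^*\to \C^n\times\D^*$, $(z,v)\mapsto (vz,v)$, which identifies $\varphi^{-1}(\D^*)$ with $X\times\D^*$. Since $C_{X,0}$ is a cone, it suffices to show that a nonzero point $z_0\in |C_{X,0}|$ lies on a line which is a limit of secants $\overline{0x_i}$ with $x_i\in X\setminus\{0\}$ and $x_i\to 0$, and conversely that every such limit line is generated by a point of $|C_{X,0}|$.

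For the first direction, start with $z_0\in |C_{X,0}|\setminus\{0\}$, viewed as a point of the fiber $\varphi^{-1}(0)\subset \X$. Faithful flatness of $\varphi$ (or just the fact that $\X$ is the closure in $\C^n\times\D$ of its restriction over $\D^*$, since the equations $F_i$ have no $v$-torsion by Proposition \ref{Reesalgebra}) yields a sequence $(z_i,v_i)\in \X$ with $v_i\neq 0$, $v_i\to 0$ and $z_i\to z_0$. Set $x_i:=v_iz_i$; then $\Phi(z_i,v_i)=(x_i,v_i)$, so $x_i\in X$ by the identification above, and $x_i\to 0\cdot z_0=0$. The secant direction $[0x_i]=[0z_i]$ in $\P^{n-1}$ tends to $[0z_0]$, so the line $\overline{0z_0}$ is a limit of secants.

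For the converse, take a sequence $x_i\in X\setminus\{0\}$ with $x_i\to 0$ and $[0x_i]$ converging to a line $\ell\subset \C^n$. The family $\X$ is stable under the $\C^*$-action $\lambda\cdot(z,v)=(\lambda^{-1}z,\lambda v)$ (a direct check on the equations $F_j(z,v)=v^{-m_j}f_j(vz)$ shows $F_j(\lambda^{-1}z,\lambda v)=\lambda^{-m_j}F_j(z,v)$), so for every $v_i\neq 0$ the pair $(x_i/v_i,v_i)$ lies in $\X$. Choose $v_i:=\|x_i\|$ (a positive real scalar), so that $z_i:=x_i/v_i$ is a unit vector in the direction $[0x_i]$. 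After passing to a subsequence, $z_i$ converges to a unit vector $z_0\in\ell$, while $v_i\to 0$. Since $\X$ is closed and $(z_i,v_i)\in \X$, the limit $(z_0,0)$ lies in $\varphi^{-1}(0)=C_{X,0}$, and $z_0$ generates $\ell$.

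The main conceptual point — and the only mildly subtle step — is the correct choice of scaling $v_i$ in the converse direction: one has to rescale the $x_i$ so that the $z_i=x_i/v_i$ stay in a compact set while $v_i\to 0$, otherwise the sequence in $\X$ would escape to infinity and give no limit point in $C_{X,0}$. Everything else is formal once the biholomorphism $\Phi$ and the flatness of $\varphi$ are in hand.
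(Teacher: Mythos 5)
Your proof is correct and follows the same route as the paper's: faithful flatness (equivalently, the absence of $v$-torsion) places the special fibre in the closure of $\varphi^{-1}(\D^*)$, and the biholomorphism $(z,v)\mapsto (vz,v)$ converts points of $\X$ over $v\neq 0$ into points $x_i=v_iz_i$ of $X$ whose secant directions $[0x_i]=[0z_i]$ converge to the given generating line. The only difference is that you also spell out the reverse inclusion (every limit of secants lies in $|C_{X,0}|$) via the rescaling $v_i=\|x_i\|$, which the paper leaves implicit; that step is sound, since $(x_i/v_i,v_i)\in\X$ for any $v_i\neq 0$ and $\X$ is closed.
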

\begin{proof}$\;$\newline
Since $\varphi:(\X,0)\to (\mathbf{D},0)$ is faithfully flat, the special fiber of the map $\varphi$ is contained in the closure of $\varphi^{-1}(\D^*)$ (see \cite[Proposition 3.19]{Fi}). The isomorphism $\varphi^{-1}(\D^*)\simeq X\times \D^*$ which we have just seen shows that for every point 
$\overline{x} \in \varphi^{-1}(0)=C_{X,0}$ there are sequences of points $(x_i,v_i) \in X \times \D^*$ 
tending to $\overline{x}$. Thus $\overline{x}$ is in the limit of
secants $\overline{0x_i}$.\newline
\end{proof}
So, we finally know that our two concepts of tangent cone coincide, at least set-theoretically.
In general the tangent cone contains very little information about $(X,0)$, as shown by the next example.

\begin{ex}$\;$\newline
For all curves $y^2-x^m$, $m\geq 3$, the tangent cone is $y^2=0$ and
it is  non-reduced. 
\end{ex}
We shall see below in Proposition \ref{propnc} that the \textit{constancy} of multiplicity along a nonsingular subspace does contain useful geometric information.
 \begin{remark}\label{initial}A collection of functions in $I$ whose initial forms generate the initial ideal $\mathrm{in}_{\m}I$ certainly generates $I$. The converse is not true (exercise) and if the initial forms of the $f_i$ do not generate the initial ideal, the equations $v^{-m_i}f_i(vz_1,\ldots,vz_n)=0$ seen above de not describe the specialization to the tangent cone because the special fiber is {\rm not} the tangent cone; in fact they do not describe a flat degeneration. We have seen how important this condition in the proof of Proposition \ref{limsec} and shall see it again in section \ref{sec:specialization}. A set of elements of $I$ whose initial forms generate the initial ideal is called a {\rm standard basis} of $I$.\end{remark}

\subsection{Multiplicity}\label{mul}

Nevertheless the analytic structure of $C_{X,0}={\rm Specan}{\rm gr}_{\m}O$ does
carry some significant piece of information on $(X,0)$, its  multiplicity. \\

For a hypersurface, $f=f_m(z_1,\ldots,z_n)+f_{m+1}(z_1,\ldots,z_n)+ \ldots$, the 
multiplicity at 0 is just $m$=the degree of the initial polynomial. And, from the example
above, its tangent cone is also a hypersurface with the same multiplicity at 0 in this sense.
Although the algebraic relation between a germ and its tangent cone is more complicated in general, this equality of multiplicities is preserved as we are going to see. \\ 

Let $\Oo$ be the analytic algebra of $(X,0)\subset (\C^n,0)$, with maximal ideal $\m$ as before. We have
the following consequences of the fact that $\Oo$ is a Noetherian $\C$-algebra:\\
\begin{itemize}
\item For each $i\geq 0$, the quotient $\Oo/\m^{i+1}$ is a finite dimensional vector space over $\C$,
   and the generating function:
\[\sum_{i\geq0}({\rm dim}_{\C}\; \Oo/\m^{i+1})T^i= \frac{Q(T)}{(1-T)^d} \]
is a rational function with numerator $Q(T)\in \mathbb{Z}[T]$, and $Q(1) \in \mathbb{N}$.
See \cite[p. 117-118]{A-M}, or \cite[chap VIII]{B2}.
\item For large enough $i$: 
   \[{\rm dim}_{\C} \; \Oo/m^{i+1}=e_{\m}(\Oo)\frac{i^d}{d!} + {\rm lower \; order \; terms},\]
   and $e_{\m}(\Oo)=Q(1)$ is called \textbf{the multiplicity of} $X$ \textbf{at} $0$, which we will denote
   by $m_0(X)$. See  \cite[chap VIII]{B2}, \cite[\S14]{Mat}.
\item A linear space $L_d+t$ of dimension $n-d$ at a sufficiently small distance $\vert t\vert>0$
   from the origin and of general direction has the property that for a sufficiently small ball $\B(0,\epsilon)$ centered at $0$ in $\C^n$, if $\epsilon$ is small enough and  $\vert t\vert$ small enough with respect to $\epsilon$, $L_d+t$ meets $X\cap \B(0,\epsilon)$ transversally at $m_0(X)$ nonsingular points of $X$, which tend to $0$ with $\vert t\vert$. See \cite[ p. 510-555]{He-Or}.
\item The multiplicity of $X$ at 0 coincides with the multiplicity of $C_{X,0}$ at 0. This follows from the fact that the generating function defined above is the same for the $\C$-algebra ${\mathcal O}$ and for ${\rm gr}_{\m} {\mathcal O}$. See \cite[Chap. VIII, \S 7]{B2}, and also \cite[Thm 5.2.1 \& Cor.]{He-Or}. 
\item If ${\rm gr}_{\m} {\mathcal O}=\C[T_1,\ldots ,T_n]/\left<F_1,\ldots ,F_c\right>$ where the $F_i$ are homogeneous polynomials of respective degrees $d_i$ forming a regular sequence in the polynomial ring, in the sense that for each $i$, the image of $F_i$ in the quotient $\C[T_1,\ldots ,T_n]/\left<F_1,\ldots , F_{i-1}\right>$ is not a zero divisor, then $e_{\m}(\Oo)=d_1\ldots d_c$. See \cite[chap VIII, \S 7, no. 4]{B2}.
\end{itemize}

\section{Normal Cone and Polar Varieties:\\ the normal/conormal diagram}
The normal cone is a generalization of the idea of tangent cone, where the point is replaced by a closed analytic subspace, say $Y\subset X$. If $X$ and $Y$ were nonsingular it would be the normal bundle of $Y$ in $X$. We will only consider
the case where $Y$ is a nonsingular subspace of $X$ and denote by $t$ its dimension (an integer, and not a vector of $\C^n$ as in the previous section !).\\
We will take a global approach here. Let $(X,\Oo_X)$ be a reduced complex analytic space
of dimension $d$ and $Y\subset X$ a closed complex subspace defined by a coherent sheaf of ideals 
$J \subset \Oo_X$. It consists, for every open set $U\subset X$ of all elements of $\Oo_X(U)$ vanishing on 
$Y\cap U$, and as one can expect the structure sheaf $\Oo_Y$ is isomorphic to $(\Oo_X/J)|_Y$. 
Analogously to the case of the tangent cone, let us consider $\mathrm{gr}_J\Oo_X$, but now as the 
associated sheaf of graded rings of $\Oo_X$ with respect to $J$:
\[{\rm gr}_J\Oo_X= \bigoplus_{i\geq0} J^i/J^{i+1}= \Oo_X/J \oplus J/J^2 \oplus \cdots \ .\]

\begin{definition}\label{defnc}
We define the \textbf{normal cone $C_{X,Y}$ of $X$ along $Y$}, as the complex analytic space
${\rm Specan}_Y (\mathrm{gr}_J\Oo_X)$.
\end{definition}

   Note that we have a canonical inclusion $\Oo_Y \hookrightarrow \mathrm{gr}_J\Oo_X$, which gives $\mathrm{gr}_J\Oo_X$ the structure of
a locally finitely presented graded $\Oo_Y$-algebra and consequently, by the Specan construction, a canonical analytic 
projection $C_{X,Y} \stackrel{\Pi}{\longrightarrow} Y$, in which the fibers are cones. The natural surjection $\mathrm{gr}_J\Oo_X\to \Oo_X/J=\Oo_Y$ obtained by taking classes modulo the ideal $\bigoplus_{i\geq 1} J^i/J^{i+1}$ corresponds to an analytic section $Y\hookrightarrow C_{X,Y}$ of the map $\Pi$ sending each point $y\in Y$ to the vertex of the cone $\Pi^{-1}(y)$.\par\noindent
 To be more precise, note that the sheaf of graded $\Oo_X$-algebras $\mathrm{gr}_J\Oo_X$ is a sheaf on $X$ with support $Y$. Using the fact that $J$ and all its powers are coherent $\Oo_X$-modules and that every point $x\in X$ has a basis of neighborhoods $U_\alpha$ such that $\Oo_X(U_\alpha)$ is noetherian it is not difficult to prove that  $\mathrm{gr}_J\Oo_X$ is a locally finitely presented graded $\Oo_Y$-algebra: the space $Y$ is covered by open sets $V$ where the restriction of $\mathrm{gr}_J\Oo_X$ has  a presentation of the form:
	\[ {\rm gr}_J\Oo_X(V) \cong \frac{\Oo_Y(V)\left[T_1, \ldots , T_r \right]}{\left<g_1,\ldots, g_s\right>}.\]
where the $g_i$'s are homogeneous polynomials in $\Oo_Y(V)\left[T_1, \ldots , T_r \right]$ and the images of the $T_j$ in the quotient by $\left<g_1,\ldots, g_s\right>$ are a system of generators of the $\Oo_Y(V)$-module $(J/J^2)(V)$, the image in the quotient of a system of generators of the ideal $J(U)\subset\Oo_X(U) $, where $U$ is an open set of $X$ such that $U\cap Y=V$. The ideal $\left<g_1,\ldots, g_s\right>$ then defines a closed subset $C_{X,Y}\vert V$ of $V\times \C^r$ which is invariant by homotheties on the $T_j's$ and is the restriction $\Pi^{-1}(V)$ over $V$ of the normal cone. \\

  Let us now build, in analogy to the case of the tangent cone, the \textbf{specialization of} 
$X$ \textbf{to the normal cone of} $Y$.\par\noindent Let us first take a look at it in the algebraic case,
when we suppose that $X$ is an algebraic variety, and $Y\subset X$ a closed algebraic subvariety defined
by a coherent sheaf of ideals $J \subset \Oo_X$.\\

   Keeping the analogy with the tangent cone and the Rees algebra technique, we consider the locally
 finitely presented sheaf of graded $\Oo_X$-algebras 
\[\mathcal{R} = \bigoplus_{n \in \Z} J^n v^{-n} \subset \Oo_X[v,v^{-1}],\ \ \ \mathrm{where} \;\; J^n=\Oo_{X,0}\; {\rm for}\ n\leq 0.
\]
Note that we have $\C[v]\subset \Oo_X [v]\subset \mathcal{R}$, where $\C$ denotes the constant sheaf,
thus endowing $\mathcal{R}(X)$ with the structure of a $\C[v]$-algebra that results in an algebraic
map 
\[p: {\rm Spec}\mathcal{R} \longrightarrow \C \]
Moreover, the $\C[v]$-algebra $\mathcal {R}$ has all the analogous properties of Proposition \ref{Reesalgebra}, which 
in turn gives the corresponding properties to $p$, defining a faithfully flat 1-parameter family of varieties such that:
\begin{itemize}
\item [1)] The fiber over 0 is ${\rm Spec}(\mathrm{gr}_J\Oo_X)$.
\item [2)] The general fiber is an algebraic space isomorphic to $X$.
\end{itemize} 
Thus, the map $p$ gives a \textbf{specialization} of $X$ to its normal cone $C_{X,Y}$ along $Y$.\\

     Let us now look at the corresponding construction for germs of analytic spaces. Going back to 
 the complex space $(X,\Oo_X)$, and the nonsingular subspace $Y$ of dimension $t$, take a point $0 \in Y$, and a 
 local embedding $(Y,0)\subset (X,0) \subset (\C^n,0)$. Since $Y$ is nonsingular we can assume it is linear, by 
 choosing a sufficiently small representative of the germ $(X,0)$ and adequate local coordinates on $\C^n$.
 Let $\Oo_{X,0}=\C\{z_1,\ldots,z_n\}/\left< f_1,\ldots,f_k\right>$ be the analytic algebra of the germ, where
 $J=\left<z_1,\ldots,z_{n-t}\right>$ is the ideal defining $Y$ in $X$. Consider now the finitely generated
 $\Oo_{X,0}$-algebra:
 \[\mathcal{R}= \bigoplus_{n \in \Z} J^n v^{-n}, \;\; \mathrm{where} \;\; J^n=\Oo_{X,0}\; {\rm for}\ n\leq 0.\]
 So again, taking the analytic algebra $\mathcal{R}^h$ associated to $\mathcal{R}$ and the analytic germ 
$Z$ associated to $\mathcal{R}^h$, we have a germ of map induced by the inclusion $\C\{v\} 
\hookrightarrow \mathcal{R}^h$:
\[ p: (Z,0) \longrightarrow (\mathbf{D},0) \] 
which preserves all the properties established in the algebraic case, that is:
\begin{itemize}
\item The map $p$ is faithfully flat.
\item The special fiber $p^{-1}(0)$ is the germ of analytic space associated to ${\rm gr}_J \Oo_{X,0}$, that is
                the germ of the normal cone $C_{X,Y}$
\item The fibers  $p^{-1}(v)$ are germs of analytic spaces isomorphic to $(X,0)$ for all $v\neq 0$.
\end{itemize}
Thus, we have produced a 1-parameter flat family of germs of analytic spaces
specializing $(X,0)$ to $(C_{X,Y},0)$.\\
      	
    Using this it can be shown as in the case of the tangent cone that after choosing a local retraction $\rho:(X,0)\to (Y,0)$,
the underlying set of $(C_{X,Y},0)$ can be identified with the set of limit positions of secant
lines $\overline{x_i\rho(x_i)}$ for $x_i \in X\setminus Y$ as $x_i$ tends to $y \in Y$ (for a proof of this, see \cite[\S 2]{Hi}).
We shall see more about this specialization in the global case a little later.\\

    Keeping this germ approach, with $(Y,0)\subset (X,0) \subset \C^n$, and $Y$ a linear subspace of dimension
$t$ we can now interpret definition \ref{initialform} and Lemma \ref{key} in the following way:\par\noindent Using the notations
of section \ref{tgt}, let $R:= \C\{z_1,\ldots,z_{n-t},y_1\ldots,y_t\}$, $J=\left<z_1,\ldots,z_{n-t}\right>\subset R$ the 
ideal defining $Y$, $I=\left< f_1,\ldots,f_k\right>\subset R$ the ideal defining $X$ and $A=R/I=\Oo_{X,0}$. Then,
the ring $R/J$ is by definition $\Oo_{Y,0}$ which is isomorphic to $\C \{y_1,\ldots,y_t\}$, and it is not hard to prove
that \[{\rm gr}_JR \cong \Oo_{Y,0}[z_1,\ldots,z_{n-t}].\]

    More to the point, take an element $f \in I \subset R$. Then we can write
    \[f=\sum_{(\alpha,\beta) \in \N^t \times \N^{n-t}} c_{\alpha \beta} y^{\alpha}z^{\beta}.\]
Now define $\nu_Yf= \; \mathrm{min} \; \{\vert \beta\vert \; | \; c_{\alpha \beta}\neq 0\}$.
One can prove that 
    \[{\rm in}_Jf = \sum_{|\beta|=\nu_Yf} c_{\alpha \beta} y^{\alpha}z^{\beta},\]
which after rearranging the terms with respect to $z$ gives us a polynomial in the variables $z_k$
with coefficients in $\Oo_{Y,0}$, that is, an element of $\mathrm{gr}_JR$. Note that these "polynomials" define
analytic functions in $Y\times \C^{n-t}=\C^t \times \C^{n-t}$, and thus realize, by the Specan construction, the 
germ of the normal cone $(C_{X,Y},0)$ as a germ of analytic subspace of $(\C^n,0)$ with a canonical analytic map
to $(Y,0)$. Let us clarify all this with an example.
  
\begin{ex}
   Take $(X,0) \subset (\C^3,0)$ defined by $x^2-y^2z=0$, otherwise known as Whitney's umbrella. Then
 from what we have discussed we obtain:
 \begin{itemize}
   \item [i)] The tangent cone at $0$, $C_{X,0} \subset\C^3 $, is the analytic subspace defined by $x^2=0$.
   \item[ii)] For $Y=z$-axis, the normal cone $C_{X,Y} \subset \C^3$ of $X$ along $Y$ is the analytic
              subspace defined by $x^2-y^2z=0$, that is the space $X$ itself, which is a cone with vertex $Y$.
   \item[iii)] For $Y=y$-axis, the normal cone $C_{X,Y} \subset \C^3$ of $X$ along $Y$ is the analytic
              subspace defined by $y^2z=0$.         
 \end{itemize}
\end{ex}

\begin{proposition}{\rm (Hironaka, Teissier)}\label{propnc} 
Given a t-dimensional closed nonsingular subspace $Y\subset X$ and a point $y \in Y$, let $T_{Y,y}$ denote the tangent space to $Y$ at $y$. For any local embedding 
$(Y,0) \subset (X,0) \subset \C^n$, the following conditions are equivalent:
\begin{itemize}
\item[i)] The multiplicity $m_y(X)$ of $X$ at the points $y \in Y$ is locally constant on $Y$
near 0.
\item[ii)] The dimension of the fibers of the maps $C_{X,Y}\to Y$ is locally constant on $Y$
near 0.
\item[iii)] For every point $y \in Y$ there exists a dense Zariski open set $D$ of the Grassmanian of $(n-d+t)$-dimensional linear subspaces of $\C^n$ containing
$T_{Y,y}$ such that if $W$ is a representative in an open $U\subset \C^n$ of a germ $(W,y)$ at $y$ of a nonsingular $(n-d+t)$-dimensional subspace of $\C^n$ containing $Y$ and whose tangent space at $y$ is in $D$, there exists an open neighborhood $B\subset U$ of $y$ in $\C^n$ 
such that:
\[|W\cap X \cap B|=Y \cap B,\]
where $\vert Z\vert$ denotes the reduced space of $Z$.\\ In short, locally the intersection with $X$ of a general subspace containing $Y$ and whose intersection with $X$ is of dimension $t$, is $Y$ and nothing more. Any other component of this intersection would "bring more multiplicity" to $X$ at the origin.
\end{itemize}
\end{proposition}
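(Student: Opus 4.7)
The plan is to prove (ii) $\Leftrightarrow$ (iii) by a direct geometric argument using the specialization $\varphi\colon(\X,0)\to(\mathbf{D},0)$ of $X$ to $C_{X,Y}$ constructed in Section 2.4, and to obtain (i) $\Leftrightarrow$ (ii) from the Hironaka--Teissier theorem on normal flatness. After shrinking, I would fix coordinates so that $Y = \{z_1=\cdots=z_{n-t}=0\}$ and $J = \langle z_1,\ldots,z_{n-t}\rangle$; then $C_{X,Y}\subset Y\times\C^{n-t}$, each fibre $\Pi^{-1}(y)$ is a closed cone in $\C^{n-t}$ of dimension at least $d-t$, and a nonsingular $(n-d+t)$-germ $W\supset Y$ whose tangent direction at $y$ lies in a fixed open of the Grassmannian may be taken, without loss of generality for the purposes of (iii) (since the condition depends only on the tangent direction $T_{W,y}$), to be of the linear form $W = Y\times L$ for some $L\subset\C^{n-t}$ of dimension $n-d$. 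Such a linear $W$ is invariant under the rescaling $z\mapsto vz$ defining $\varphi$, so $W\cap\X\to\mathbf{D}$ has generic fibre identified with $W\cap X$ and special fibre $W\cap C_{X,Y}$, the latter restricting to $L\cap\Pi^{-1}(y)$ over each $y$.

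For (ii) $\Leftrightarrow$ (iii): since $\Pi^{-1}(y_0)$ is a cone through the origin of $\C^{n-t}$ and $L$ has complementary dimension $n-d$, a generic $L$ meets $\Pi^{-1}(y_0)$ only at the vertex iff $\dim\Pi^{-1}(y_0) = d-t$, while if $\dim\Pi^{-1}(y_0) > d-t$ every such $L$ meets it in positive dimension. Thus $|W\cap C_{X,Y}| = Y$ locally near $y_0$ for generic $W$ iff $\dim\Pi^{-1}(y_0) = d-t$, and by upper semicontinuity of fibre dimension for analytic morphisms this propagates to a neighbourhood of $y_0$ in $Y$, yielding (ii). To transfer between $|W\cap C_{X,Y}| = Y$ near $y_0$ and the corresponding condition for $X$, one uses that $\varphi$ is faithfully flat: any irreducible component of $W\cap\X$ through $(y_0,0)$ either dominates $\mathbf{D}$ (so its generic fibre is a component of $W\cap X$) or lies in the special fibre; a dimension count together with the required equality $\dim(W\cap X) = t$ then rules out extra special-fibre components and gives the equivalence in both directions.

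For (i) $\Leftrightarrow$ (ii): using the identification in Section 2.6 of $m_y(X)$ with the multiplicity of the tangent cone $C_{X,y}$ at its vertex, i.e., the leading coefficient (times $(d-1)!$) of the Hilbert--Samuel polynomial of $\Oo_{X,y}$, I would invoke the Hironaka--Teissier theorem: constancy of $m_y(X)$ along $Y$ near $0$ is equivalent to $\Oo_Y$-flatness of the graded sheaf $\mathrm{gr}_J\Oo_X$; and since $\mathrm{gr}_J\Oo_X$ is a finitely generated graded $\Oo_Y$-algebra generated in degree one, whose fibres at $y$ are the coordinate rings of $\Pi^{-1}(y)$, this flatness is equivalent to constancy of the Hilbert function of these fibres, and in particular to constancy of $\dim\Pi^{-1}(y)$, which is (ii).

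The main obstacle is the commutative-algebraic step in (i) $\Leftrightarrow$ (ii) that upgrades constancy of just the leading coefficient of the Hilbert--Samuel polynomial (the multiplicity) to constancy of the entire Hilbert function (normal flatness and equidimensionality): this is the substantive content of the normal flatness theorem, which I would cite rather than reprove. The geometric equivalence (ii) $\Leftrightarrow$ (iii) is comparatively direct once the specialization of Section 2.4 is in hand, together with the observation that a generic linear section of complementary codimension through the vertex of a cone meets it only at the vertex exactly when the cone has the expected dimension.
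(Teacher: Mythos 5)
The paper offers no proof of this proposition: it simply cites \cite[\S 6]{Hi}, \cite[Appendix III, Theorem 2.2.2]{He-Or} and \cite[Chapter I, 5.5]{Te3}, so the question is whether your sketch stands on its own. It does not, because the step (i) $\Leftrightarrow$ (ii) rests on a false equivalence. You claim that constancy of $m_y(X)$ along $Y$ is equivalent to normal flatness, i.e.\ to $\Oo_Y$-flatness of $\mathrm{gr}_J\Oo_X$. By the Bennett--Hironaka theorem, normal flatness along a nonsingular $Y$ is equivalent to the constancy along $Y$ of the \emph{entire} Hilbert--Samuel function of $\Oo_{X,y}$; this is strictly stronger than constancy of its normalized leading coefficient, the multiplicity (they coincide essentially only for hypersurfaces, where the fibre of the normal cone is a cone of degree equal to the multiplicity). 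The paper itself contains a counterexample to your equivalence: the surface of Remark \ref{noemdim} is equimultiple (indeed Whitney) along the $v$-axis, yet its embedding dimension jumps from $3$ to $4$ at the origin, so $\dim\Oo_{X,y}/\m_y^2$ is not constant, the Hilbert--Samuel function is not constant, and $X$ is not normally flat along $Y$. The correct statement --- equimultiplicity is equivalent to the fibre of $C_{X,Y}\to Y$ having the minimal dimension $d-t$, equivalently to the analytic spread $\ell(J\Oo_{X,y})$ equalling the height $d-t$, equivalently to $J\Oo_{X,y}$ admitting a reduction generated by $d-t$ elements --- is precisely the theorem of Hironaka being cited, and its proof (Northcott--Rees theory of reductions together with the multiplicity formula) is the substantive content of (i) $\Leftrightarrow$ (ii). Routing the argument through normal flatness replaces that theorem by a false one; and even granting your equivalence, constancy of the fibre dimension does not imply constancy of the Hilbert function, so you would at best recover (i) $\Rightarrow$ (ii).

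Your geometric picture for (ii) $\Leftrightarrow$ (iii) --- a generic linear subspace of complementary dimension through the vertex of a cone avoids the cone away from the vertex exactly when the cone has the expected dimension --- is correct, but the transfer between $W\cap C_{X,Y}$ and $W\cap X$ is where the work lies, and ``a dimension count rules out extra special-fibre components'' is an assertion rather than an argument: an extra $t$-dimensional component of $W\cap X$ through $y_0$ yields a component of $W\cap\X$ dominating $\D$ whose special fibre might a priori collapse into the zero section, and conversely a positive-dimensional $L\cap\Pi^{-1}(y_0)$ sits inside the special fibre of $W\cap\X$ without being visibly forced to deform to components of $W\cap X$. The standard argument works on the blowup $E_YX$ of $J$ instead: the closure of $(W\cap X)\setminus Y$ is the image of $E_YX\cap(X\times\P L)$, the fibre of $E_YX$ over $y_0$ is $\P(\Pi^{-1}(y_0))$ of dimension $\dim\Pi^{-1}(y_0)-1$, and a generic $\P L$ of codimension $d-t$ misses it if and only if $\dim\Pi^{-1}(y_0)=d-t$; this is the content of \cite[Chapter I, 5.5]{Te3}. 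Finally, note that (iii) quantifies over all nonsingular germs $W$ with $T_{W,y}\in D$, not only the linear ones, so your reduction to $W=Y\times L$ also needs justification.
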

\begin{proof}$\;$\\
See \cite[\S 6]{Hi}, \cite[Appendix III, theorem 2.2.2]{He-Or}, and for iii) see \cite[Chapter I, 5.5]{Te3}. The meaning of the last statement is that if the equality is not satisfied, there are $t$-dimensional components of the intersection $|W\cap X \cap B|$, distinct from $Y\cap B$, meeting $Y$ at the point $y$; the multiplicity of $X$ at $y$ must then be larger than at general nearby points of $Y$.
\end{proof} 

\begin{ex}
To illustrate the equivalence of $i)$ and $iii)$, let us look again at Whitney's umbrella $(X,0) \subset (\C^3,0)$ defined by $x^2-y^2z=0$,
and let $Y$ be the $y$-axis, along which $X$ is {\rm not} equimultiple at the origin.
Taking $W$ as the nonsingular 2-dimensional space defined by $z=ax$ gives for the intersection with $X$
\[z-ax=0,\ \ \ \ \ x(x-ay^2)=0,\]
so that whenever $a\neq 0$ the intersection $W\cap X$ has two irreducible components at the origin:
the $y$-axis and the curve defined by the equations $x=ay^2$, $z=ax$.\par\medskip
We can do the same taking for $Y$ the $z$-axis,  along which $X$ {\rm is} equimultiple at the origin, and $W$ defined by $y=ax$. We obtain \[y-ax=0,\ \ \ \ \ x^2(1-az^2)=0,\]
which locally defines the $z$-axis.
\end{ex}
 
    In order to understand better the normal cone and Proposition \ref{propnc}, we are going
to introduce the blowing up of $X$ along $Y$. As in the case of the tangent cone, let us start
by a geometric description. \\

     Let $(Y,0)\subset (X,0)$ be germ of nonsingular subspace of dimension $t$ as before. 
Choose a local analytic retraction $\rho:\C^n \to Y$, a decomposition $\C^n\simeq Y\times\C^{n-t}$ such that $\rho$ coincides with the first projection and use it to define a map:
\begin{align*}
\phi: X\setminus Y & \longrightarrow \P^{n-1-t}\\
         x   & \longmapsto         {\rm direction\  of }\; \ \overline{x\rho (x)}\subset\C^{n-t}.
\end{align*}
Then consider its graph in $(X\setminus Y) \times \P^{n-1-t}$. Note that, since we can assume
that $Y$ is a linear subspace, in a suitable set of coordinates the map $\rho$ is just the canonical linear
projection $\rho:\C^n \to \C^t$. Moreover, the map $\phi$ maps $x=(x_1,\ldots,x_n)\in X\setminus Y 
\mapsto (x_{t+1}:\cdots:x_n)\in \P^{n-1-t}$.

	Just as in the case of the blow-up of a point, the closure of the graph is a complex analytic space 
$E_{Y}X \subset X \times \P^{n-1-t}$ and the natural projection map $e_Y:= p \circ i$:
\[\xymatrix{ E_YX \ar@{^{(}->}[r]^i\ar[dr]_{e_Y}&  X\times \P^{n-1-t}\ar[d]^p\\
                              & X   }\]
is proper. Moreover the map $e_Y$ induces an isomorphism $E_YX\setminus e_Y^{-1}(Y) \to 
X\setminus Y$. Note that if we take an open cover of the complex space $X$, consisting only of local models, 
we can do an analogous construction in each local model (see the proof of Proposition \ref{blowup.eomyalg} below) and then paste them 
all up to obtain a global blow-up. There is an algebraic construction which will save us the effort of 
pasting by doing it all at once. Let $J\subset O_X$ be the ideal defining $Y\subset X$ as before. 

\begin{definition}
The \textbf{Rees algebra}, or  \textbf{blowing up algebra} of $J$ in $\Oo_X$ is the graded $\Oo_X$-algebra:
\[P(J)=\bigoplus_{i\geq0} J^i=\Oo_X \oplus J \oplus J^2 \oplus \cdots\ .\] 
\end{definition}
\noindent
Note that $P(J)/JP(J)\cong {\rm gr}_J\Oo_X $, the associated graded ring of $\Oo_X$ with respect to $J$. Moreover, 
since $J$ is locally generated by $n-t$ coordinates of $\C^n$ whose vanishing defines $Y\subset\C^n$, $P(J)$ is a locally finitely presented  graded $\Oo_X$-algebra,
generated in degree 1, and as such, it has locally a presentation, for suitable open sets $U\subset X$:
\[\frac{\Oo_{X\vert U}[z_1,\ldots,z_{n-t}]}{\left<g_1,\ldots,g_m\right>}\cong P(J)\vert U,\]
where the $g_i$ are homogenous polynomials in $z_1,\ldots,z_{n-t}$ with coefficients in $\Oo_X\vert U$.\\

Defining $\tilde{E}_YX$ as the \textbf{projective analytic spectrum} of $P(J)$, $\tilde{E}_YX= 
{\rm Projan} P(J)$ (see \cite[Appendix III, 1.2.8]{He-Or}), we can view this as defining a family of projective 
varieties parametrized by X, as a result of the  $\Oo_X$-algebra structure.
\[\xymatrix{\ \ \ \ \ \ \ \ \ \ \ \ \ \ \ \ \ \ \tilde{E}_YX \subset X \times \P^{n-1-t}\ar[d]\\ X }\]

To check that these two spaces are the same it is enough to check that they are the same locally for each open set of an appropriate open cover of $X$, and this is where the next proposition comes
into play:
 
\begin{proposition}\label{blowup.eomyalg}
Take a point $x \in X$ and a sufficiently small neighborhood $U \subset X$ of $x$ such that the ideal 
$J(U)\subset \Oo_X(U)$ is finitely generated. Then choosing a system of generators $J=\left< h_1,\ldots,h_s \right>$
gives an embedding $E_YX \subset X \times \P^{s-1}$ and an embedding $\tilde{E}_YX \subset 
X \times \P^{s-1}$. Their images are equal.
\end{proposition}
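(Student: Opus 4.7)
The plan is to identify both $E_YX$ and $\tilde{E}_YX$, embedded in $X\times\P^{s-1}$ through the generators $h_1,\ldots,h_s$, with the closure of the graph of the holomorphic map
\[\psi\colon X\setminus Y\longrightarrow \P^{s-1},\qquad x\longmapsto (h_1(x):\cdots:h_s(x)).\]
The map $\psi$ is well defined on $X\setminus Y$ because the $h_i$ generate the ideal $J$ cutting out $Y$ and hence cannot vanish simultaneously off $Y$.

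For $E_YX$ I would apply the construction given just before the proposition to the generators $h_1,\ldots,h_s$: $\psi$ is holomorphic on $X\setminus Y$, and $E_YX\subset X\times\P^{s-1}$ is, by definition, the closure of its graph. Taking $h_1,\ldots,h_s$ to be the linear coordinates $z_1|_X,\ldots,z_{n-t}|_X$ vanishing on $Y$ recovers the original description verbatim; for any other generating system the same closure-of-graph prescription makes sense and will be seen below to give the same underlying space.

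For $\tilde{E}_YX$ I would use the Rees algebra directly. The generators yield a graded surjection of $\Oo_X$-algebras
\[\Oo_X[T_1,\ldots,T_s]\twoheadrightarrow P(J),\qquad T_i\longmapsto h_i\in J=P(J)_1,\]
with $\deg T_i=1$, and applying $\mathrm{Projan}$ produces a closed embedding $\tilde{E}_YX\hookrightarrow \mathrm{Projan}\,\Oo_X[T_1,\ldots,T_s]=X\times\P^{s-1}$. Over $X\setminus Y$ the ideal $J$ is locally the unit ideal (at each point at least one $h_i$ is invertible), so $P(J)|_{X\setminus Y}$ is locally isomorphic to a polynomial ring in one degree-one variable over $\Oo_{X\setminus Y}$; consequently $\tilde{E}_YX|_{X\setminus Y}\to X\setminus Y$ is an isomorphism whose inverse is $x\mapsto (x,\psi(x))$. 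Hence the intersection of $\tilde{E}_YX$ with $(X\setminus Y)\times\P^{s-1}$ is exactly the graph of $\psi$.

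To conclude I would verify that this graph is dense in $\tilde{E}_YX$, so that $\tilde{E}_YX=\overline{\mathrm{graph}(\psi)}=E_YX$ as closed subspaces of $X\times\P^{s-1}$. The key input is that the exceptional fibre of $\tilde{E}_YX\to X$, namely $\Pi^{-1}(Y)=\mathrm{Projan}\,\mathrm{gr}_J\Oo_X=\P C_{X,Y}$, has pure dimension $d-1$, strictly smaller than $\dim\tilde{E}_YX=d$, so the preimage of $X\setminus Y$ is open dense in $\tilde{E}_YX$. This density statement, which relies on controlling the dimension of the normal cone and on the reducedness of $\Oo_X$ propagating to $P(J)$ (equivalently, on the absence of $J$-torsion), is the main technical point I expect to require care; once in hand, both $E_YX$ and $\tilde{E}_YX$ coincide with $\overline{\mathrm{graph}(\psi)}$ in $X\times\P^{s-1}$, proving the proposition.
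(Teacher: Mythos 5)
Your proof is correct, but it takes a genuinely different route from the paper's. The paper argues equationally: writing $P(J)\cong\Oo_X[z_1,\ldots,z_s]/(g_1,\ldots,g_m)$ via $z_i\mapsto h_i$, it observes that the $g_i$ generate the ideal of \emph{all} homogeneous relations $g(h_1,\ldots,h_s)=0$, and that this relation ideal is precisely the ideal of the closure of the graph of $\lambda\colon x\mapsto(h_1(x):\cdots:h_s(x))$ --- one inclusion because relations vanish on the graph, the other because a homogeneous $g$ vanishing on the graph satisfies $g(h_1,\ldots,h_s)=0$ on the dense open set $X\setminus Y$ and hence identically, $\Oo_X$ being reduced. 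Both spaces are then cut out by the same equations. You instead identify each space with $\overline{\mathrm{graph}(\psi)}$ directly: for $E_YX$ this is the definition, and for $\tilde E_YX$ you combine the fact that $\mathrm{Projan}\,P(J)\to X$ is an isomorphism over $X\setminus Y$ with a density argument over $Y$. Your route is more geometric and makes explicit a point the paper leaves implicit, namely that $\tilde E_YX$ has no irreducible component lying entirely over $Y$; the paper's route is shorter because it compares ideals rather than point sets, but it quietly relies on the same fact when passing from ``the relations cut out the graph over $X\setminus Y$'' to ``they define its closure.''

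On the technical point you flag: rather than invoking purity of dimension $d-1$ for $\P C_{X,Y}$ (a fact which in the literature is usually itself deduced from the blowup description you are trying to establish, so there is a risk of circularity), it is cleaner to argue through the Rees algebra. Since $\Oo_X$ is reduced, the inclusion $P(J)\hookrightarrow\Oo_X[v]$ shows that every minimal prime of $P(J)$ is the kernel of $P(J)\to(\Oo_X/\mathfrak p)[\bar Jv]$ for some minimal prime $\mathfrak p$ of $\Oo_X$; as $X$ is equidimensional and $Y$ is nowhere dense, none of these kernels contains the irrelevant ideal, so every irreducible component of $\tilde E_YX$ dominates an irreducible component of $X$ and in particular meets the locus over $X\setminus Y$. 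This yields the density you need with no dimension count at all.
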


\begin{proof}$\;$\newline
Let $Y\subset X$ be the subspace defined by $J$, which in the following will mean $Y\cap U \subset U \subset X$ 
to avoid complicated and unnecessary notation, but always keeping in mind that we are 
working in a special open set $U$ of $X$ which allows us to use the finiteness properties of analytic geometry. 
Now consider the map:
\begin{align*}
 \ \lambda : X\setminus Y &\longrightarrow \P^{s-1}\\
                 x \ \ \ \ \ \   &\longmapsto (h_1(x):\cdots:h_s(x)),
\end{align*}
and as before let $E_YX \subset X\times \P^{s-1}$ be the closure of the graph of $\lambda$. 

	On the other hand, consider the presentation $\Oo_X[z_1,\ldots,z_s]/(g_1,\ldots,g_m)\cong P(J)$,
where the isomorphism is defined by $z_i\mapsto h_i$.\par\noindent Note that the $g_i\in \Oo_X[z_1,\ldots,z_s], 
\ i=1,\ldots,m$, generate the ideal of all homogeneous relations $g(h_1,\ldots,h_s)=0, \; g\in 
\Oo_X[z_1,\ldots,z_s]$. Those are exactly the equations for the closure of the graph. To see why this
last statement is true, recall that:
\[{\rm graph}(\lambda)=\{(x,z_1:\cdots:z_s)\subset X \times \P^{s-1}| \, (z_1:\cdots:z_s)=(h_1(x):\cdots:
h_s(x))\}.\]
and remember that the elements $g \in \left<g_1,\ldots,g_m\right>$ are homogeneous polynomials in 
$z_1,\ldots,z_s$ with coefficients in $\Oo_X$, so they define analytic functions in $X\times \C^s$ 
such that the homogeneity in the $z$'s allow us to look at their zeros in $X\times \P^{s-1}$. Moreover, 
if  $(x,z_1:\cdots:z_s)\in {\rm graph}(\lambda)$, then $[z]=[h(x)]$ and thus $g(z)=0$. Since the $g_i$ generate the ideal of elements of $\Oo_X[z_1,\ldots,z_s]$ such that $g(h_1,\ldots,h_s)=0$ they are the 
equations defining the graph, and consequently its closure.
\end{proof}
 
Finally, to relate all this to the normal cone, note that in the map:
\[e_Y:E_YX \longrightarrow X\]
the inverse image of $Y$ is the projective family associated to the family of cones
\[C_{X,Y}\longrightarrow Y.\]
This is clear, set-theoretically, in the geometric description. In the algebraic description, it 
follows from the identity:
\[\big(\bigoplus_{i\geq0}J^i\big)\bigotimes_{\Oo_X}\Oo_X/J \cong \bigoplus_{i\geq0}J^i/J^{i+1}={\rm gr}_J\Oo_X\]
and the fact that fiber product corresponds germ-wise to tensor product.\\

\[\xymatrix{\Oo_X\ar@{^{(}->}[r]\ar[d] & \bigoplus_{i\geq0}J^i \\
            \Oo_X/J &} \;\;\;\;\;\;\;\;\;\;\;\;\;\;\;\;\;\;
  \xymatrix{ X & E_YX \ar[l]_{e_Y}\\
            Y\ar@{^{(}->}[u] & }\] 

  The real trick comes when, in the analytic setting, we want to build the specialization to the normal cone 
in a global scenario. We will describe a geometric construction for this. Consider the complex space
$X \times \C$, and the closed nonsingular complex subspace $Y \times \{0\} \subset X \times \C$ defined 
by the coherent sheaf of ideals $\left<J,v\right>$. \\

   Let $\pi\colon Z \to X \times \C$ denote the blowing up of $X \times \C$ along $Y \times \{0\}$. Since $v$ is one of the generators
of the ideal $J^e=\left<J,v\right>$ defining the blown-up subspace, there is an open set $U \subset Z$ where $v$ generates the
pullback of the ideal $J^e \subset \Oo_{X \times \C}$, which is the ideal defining the exceptional 
divisor of the map $\pi$. One can verify by a direct computation that our old acquaintance, the sheaf of $\Oo_X$-algebras $\mathcal{R}$, can be 
identified with the sheaf of analytic functions, algebraic in $v$, over $U$. Moreover, the composed analytic map:
\[Z\supset U \stackrel{\pi\vert U}\longrightarrow  X \times \C \stackrel{{\rm pr}_2}\longrightarrow \C\]
is precisely the map which gives us the \textbf{specialization to the normal cone}.\par
 In the next  section we prove another specialization result which is very useful to prove theorem \ref{limtan1}
 and its generalization in the section of relative duality. It relates the specialization of $(X,0)$ to  its tangent cone to the specialization of $T^*_X\C^n$ to the normal cone of the fiber $\kappa^{-1}(x)$ of $x$ in $T^*_X\C^n$.
\subsection{Specialization to the normal cone of  $\kappa^{-1}(x)\subset C(X)$}
  \begin{proposition}\label{Specializationprop}
	    Let $X \subset \C^n$ be a reduced analytic subspace of dimension $d$ and for $x \in X$, let 
	    $\varphi:\mathfrak{X} \to \C$ be the specialization
	    of $X$ to the tangent cone $C_{X,x}$. Let $\kappa=\kappa_X\colon T^*_X\C^n\rightarrow X$ denote the conormal space of $X$ 
      in $\C^n \times \check{\C}^n$. Then the relative conormal space
	    \[ q: T^*_\mathfrak{X}(\C^n \times \C/\C )\to \mathfrak{X} \to \C\]
	    is isomorphic to the specialization space of $T^*_X\C^n$ to the normal cone $C_{T^*_X\C^n, \kappa^{-1}(x)}$
	    of $\kappa^{-1}(x)$ in $T^*_X\C^n$. In particular, the fibre $q^{-1}(0)$ is isomorphic
     	to this normal cone. 	
     \end{proposition}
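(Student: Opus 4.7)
The plan is to identify both spaces with the closure, inside the common ambient $T^*(\C^n\times\C/\C)=\C^n\times\C\times\check\C^n$, of one and the same explicit subset lying over $\{v\neq 0\}$. First I would fix embeddings of both sides into $\C^n\times\C\times\check\C^n$. For $T^*_\mathfrak{X}(\C^n\times\C/\C)$ this is the very definition. For the specialization space $W$ of $T^*_X\C^n$ along $\kappa^{-1}(x)$, equipped with the scheme structure given by the ideal $\mathfrak{a}:=\kappa^*(\mathfrak{m}_x)=(z_1,\ldots,z_n)\Oo_{T^*_X\C^n}$, the Rees algebra $\mathcal{R}=\bigoplus_{i\in\Z}\mathfrak{a}^iv^{-i}$ is generated over $\Oo_{T^*_X\C^n}$ by $v$ together with $w_i:=z_iv^{-1}$, so that $W=\mathrm{Specan}(\mathcal{R}^h)$ embeds naturally in $\C^n\times\C\times\check\C^n$ with coordinates $(w,v,\xi)$. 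Its defining equations are those of $T^*_X\C^n$ after the substitution $z=vw$, divided by appropriate powers of $v$, in strict analogy with the passage from $f_i(z)$ to $v^{-m_i}f_i(vz)$ in the construction of $\mathfrak{X}$.

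Next I would compare the two spaces over $\{v\neq 0\}$. The biholomorphism of Exercise \ref{ejercontan} identifies $\mathfrak{X}|_{\C^*}$ with $X\times\C^*$ via $(z,v)\mapsto(vz,v)$, which at fixed $v\neq 0$ is the linear scaling $z\mapsto vz$ of $\C^n$. Since a linear subspace of $\C^n$ is invariant under nonzero scaling, the tangent spaces $T_{\mathfrak{X}_v,z}$ and $T_{X,vz}$ coincide as linear subspaces of $\C^n$, and hence their conormals coincide as subspaces of $\check\C^n$: $N^*_{\mathfrak{X}_v,z}=N^*_{X,vz}$. Therefore
\[T^*_\mathfrak{X}(\C^n\times\C/\C)\cap\{v\neq 0\}=\{(z,v,\xi)\in\C^n\times\C^*\times\check\C^n:(vz,\xi)\in T^*_X\C^n\},\]
and Proposition \ref{Reesalgebra}(iv), applied to $\mathcal{R}^h$, shows via the embedding of the preceding step that $W\cap\{v\neq 0\}$ equals the very same subset.

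Finally, I would close by a reducedness-and-flatness argument. By definition $T^*_\mathfrak{X}(\C^n\times\C/\C)$ is the closure of its smooth-fibre open conormal; since $\mathfrak{X}|_{\C^*}$ is dense in $\mathfrak{X}$ by the faithful flatness of the specialization, this closure equals the closure in $\C^n\times\C\times\check\C^n$ of the subset just displayed. On the other hand, $\mathcal{R}$ sits as a subring of the reduced ring $\Oo_{T^*_X\C^n}[v,v^{-1}]$ and is faithfully flat over $\C[v]$ by Proposition \ref{Reesalgebra}; these properties transfer to $\mathcal{R}^h$ over $\C\{v\}$, so $W$ is reduced and has no irreducible component contained in $\{v=0\}$, whence $W$ itself equals the closure in $\C^n\times\C\times\check\C^n$ of $W\cap\{v\neq 0\}$. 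Since both spaces are thus the closure of the same set, they coincide, and the ``in particular'' statement follows because $q^{-1}(0)$ is by construction the fibre of the specialization at $0$. The main obstacle is to verify that the two embeddings into $\C^n\times\C\times\check\C^n$ agree \emph{on the nose} over $\{v\neq 0\}$, rather than merely up to a biholomorphism of $\C^n\times\C^*\times\check\C^n$ that would fail to extend across $v=0$; what makes this work without any rescaling of the cotangent coordinate $\xi$ is precisely the invariance of linear subspaces of $\C^n$ under nonzero scalar multiplication, which identifies the conormals $N^*_{\mathfrak{X}_v,z}$ and $N^*_{X,vz}$ as literally the same subset of $\check\C^n$.
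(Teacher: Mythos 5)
Your argument is correct and is essentially the paper's own: the paper proves the more general relative statement (Proposition \ref{Specializationproprelative}) by exactly this strategy — writing the specialization space with standard-basis (Rees algebra) equations $v^{-l_i}g_i(vz,\xi)$, identifying both spaces over $\{v\neq 0\}$ with a common dense open set, and concluding by faithful flatness that the two closures coincide. The one genuine (and correct) simplification you exploit is that when $Y$ is the single point $x$ the comparison map $(z,v,\xi)\mapsto(vz,v,\xi)$ needs no rescaling of the cotangent coordinates, because $T_{\mathfrak{X}_v,z}=T_{X,vz}$ as linear subspaces of $\C^n$; in the paper's general proof this appears as the fact that only the coordinates dual to $Y$ get multiplied by $v$, and the gradient computation $\nabla F_i=v^{-n_i+1}\nabla f_i(vz)$ rescales the whole spanning set by a single factor.
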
    
\begin{proof} We shall see a proof in a more general situation below in subsection \ref{reldu}.

\end{proof}

\begin{corollary}\label{SpecLagRel}
   The relative conormal space $\kappa_\varphi: T^*_\mathfrak{X} (\C^ n\times \C)/ \C) \to \mathfrak{X}$ is $\varphi-$Lagrangian.
\end{corollary}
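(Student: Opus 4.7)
The plan is to deduce this corollary directly from Proposition \ref{SpecLag}, applied to the smooth morphism $F\colon \C^n\times\C\to\C$ (second projection) and to the closed reduced subspace $W=T^*_\mathfrak{X}(\C^n\times\C/\C)$ of $T^*(\C^n\times\C/\C)$. We need to verify the two hypotheses of that proposition: that $W$ is generically (over $\C$) the relative conormal of its image, and that all fibers of the composed map $q=(\pi_F\circ F)\vert W\colon W\to\C$ have the expected dimension $n$.

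First I would treat the generic behavior over the punctured disk $\D^*$. By construction of the specialization, the restriction $\varphi^{-1}(\D^*)\to\D^*$ is, after the biholomorphism $(z,v)\mapsto(vz,v)$, isomorphic to the trivial family $X\times\D^*\to\D^*$ (cf.\ the discussion following Exercise \ref{ejercontan}). Consequently the relative conormal $T^*_\mathfrak{X}(\C^n\times\C/\C)$ pulls back over $\D^*$ to $T^*_X\C^n\times\D^*$, and each fiber of $q$ above a point $v\neq 0$ is (isomorphic to) $T^*_X\C^n$, which by Proposition \ref{si} is a purely $n$-dimensional Lagrangian subspace of $T^*\C^n$ on which the Liouville form vanishes. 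So the generic hypothesis of Proposition \ref{SpecLag} is satisfied on the dense open set $\D^*\subset\C$.

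Next I would handle the special fiber. By Proposition \ref{Specializationprop}, the fiber $q^{-1}(0)$ is isomorphic to the normal cone $C_{T^*_X\C^n,\kappa^{-1}(x)}$ of the $(n-d)$-dimensional subspace $\kappa^{-1}(x)$ inside the $n$-dimensional space $T^*_X\C^n$. Since the normal cone of any closed analytic subspace of a pure-dimensional space has the same pure dimension as the ambient space (flatness of the specialization to the normal cone, as recalled after Definition \ref{defnc}), we obtain $\dim q^{-1}(0)=n$. Combined with the previous step, every fiber of $q$ is purely of dimension $n$.

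Both hypotheses of Proposition \ref{SpecLag}(1) being met (generic vanishing of the Liouville form on smooth points of fibers, together with constant fiber dimension $n$), we conclude that $\alpha_F$ vanishes on tangent vectors at smooth points of every fiber of $q$, hence $\omega_F=-d\alpha_F$ vanishes on pairs of such tangent vectors. This is precisely the definition of $F$-Lagrangian in Definition \ref{RELA}, and, pulling back along the projection $\varphi\colon\mathfrak{X}\to\C$ (which is the restriction of $F$ to $\mathfrak{X}$), it says that $\kappa_\varphi$ is $\varphi$-Lagrangian. The main subtlety I expect is the dimension statement for the special fiber, which relies essentially on the identification provided by Proposition \ref{Specializationprop} together with the equidimensionality of normal cones; once this is in hand the rest is a direct application of Proposition \ref{SpecLag}.
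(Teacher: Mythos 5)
Your proposal is correct and follows essentially the same route as the paper: identify the general fibers of $q$ with $T^*_X\C^n$ and the special fiber with the normal cone $C_{T^*_X\C^n,\kappa^{-1}(x)}$ via Proposition \ref{Specializationprop}, then invoke Proposition \ref{SpecLag} once the special fiber is known to be purely $n$-dimensional (the paper gets this dimension from the exceptional divisor of the blowup of $T^*_X\C^n$ along $\kappa_X^{-1}(x)$, while you appeal to flatness of the specialization to the normal cone — an equivalent justification). One immaterial slip: $\kappa^{-1}(x)$ need not be $(n-d)$-dimensional when $x$ is singular, but your argument does not use this.
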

\begin{proof}
   We will use the notation of the proof of Proposition \ref{Specializationproprelative}. From definition 
 \ref{RELA} we need to prove that every fiber $q^{-1}(s)$ is a Lagrangian subvariety
 of $\{s\} \times \C^n \times \check{\C}^n$. By Proposition \ref{Specializationprop}
 we know that for $s \neq 0$, the fiber $q^{-1}(s)$ is isomorphic to $T^*_X\C^n$ and so it is Lagrangian.
 Thus, by Proposition \ref{SpecLag} all we need to prove is that the special fiber $q^{-1}(0)$ has 
 the right dimension, which in this case is equal to $n$. \\
Proposition \ref{Specializationprop} also tells us that the fiber $q^{-1}(0)$ is isomorphic
to the normal cone 
\[C_{T^*_X\C^n, T^*_{\{x\}}\C^n \cap T^*_X\C^n}=C_{T^*_X\C^n, \kappa^{-1}(x)}.\] 
Finally, since the projectivized normal cone $\P C_{T^*_X\C^n, T^*_{\{x\}}\C^n \cap T^*_X\C^n}$ is 
obtained as the exceptional divisor of the blowing up of $T^*_X\C^n$ along $\kappa_X^{-1}(x)$, 
it has dimension $n-1$ and so the cone over this projective variety has dimension $n$, which finishes the proof.
\end{proof}
\subsection{Local Polar Varieties}\label{LOCPOL}

  In this section we introduce the local polar varieties of a germ of a reduced equidimensional complex analytic space $(X,0)\subset (\C^n,0)$. The dimension of $X$ is generally denoted by $d$ but to make the comparison with the case of projective varieties $V$ of dimension $d$ mentioned in the introduction we must think of $X$ as the cone  with vertex $0\in\C^n$ over $V$, which is of dimension $d+1$.\par Local polar varieties were first constructed, using the Semple-Nash modification and special Schubert cycles of the Grassmannian, in \cite{L-T1}. The description of the local polar varieties in terms of the conormal space used here is a contribution of Henry-Merle which appears in \cite{HM},  \cite{HMS} and \cite{Te3}. In this subsection we shall get a first glimpse into a special case of what will be called the 
normal-conormal diagram. Let us denote by $C(X)$ and $E_0X$ respectively the conormal space of $X$
and the blowing up of 0 in $X$ as before, then we have the diagram:
\[\xymatrix{E_0C(X)\ar[r]^{\hat{e}_0}\ar[dd]^{\kappa'}\ar[ddr]^\xi & C(X)\ar@{^{(}->}[r]\ar[dd]^\kappa
            \ar[dr]^{\lambda}& X \times \check{\P}^{n-1}\ar[d]^{pr_2} \\
             & &  \check{\P}^{n-1} \\
             E_0X\ar[r]_{e_0}  & X &}\]
where $E_0C(X)$ is the blowing up of the subspace $\kappa^{-1}(0)$ in $C(X)$, and $\kappa'$ is obtained from the universal
property of the blowing up, with respect to $E_0X$ and the map $\xi$.\par\noindent It is worth mentioning
that $E_0C(X)$ lives inside the fiber product \linebreak $C(X) \times_X E_0X$ and can be described in the following 
way: take the inverse image of $E_0X\setminus e_0^{-1}(0)$ in $C(X) \times_X E_0X$ and close it, thus obtaining
$\kappa'$ as the restriction of the second projection to this space.\\

Let $D_{d-k+1}\subset \C^n$ be a linear subspace of codimension $d-k+1$, for $0\leq k \leq d-1$,
and let $L^{d-k}\subset \check{\P}^{n-1}$ the dual space of $D_{d-k+1}$, which is the linear subspace of $ \check{\P}^{n-1}$ consisting of hyperplanes of $\C^n$ that contain $D_{d-k+1}$.\par\medskip The next proposition provides the relation between the intuitive definition of local polar varieties as closures of sets of critical points on $X^0$ of linear projections and the conormal definition, which is useful for proofs.

\begin{proposition}\label{polarv}
For a sufficiently general $D_{d-k+1}$, the image $\kappa(\lambda^{-1}(L^{d-k}))$ is the closure in 
$X$ of the set of points of $X^0$ which are critical for the projection $\pi|_{X^0}:X^0 \to \C^{d-k+1}$ induced
by the projection $\C^n \to \C^{d-k+1}$ with kernel $D_{d-k+1}=(L^{d-k})^{\check{}}$.
\end{proposition}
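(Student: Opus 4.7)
The plan is to translate both sides of the claimed equality into the same linear algebra condition on $T_{X^0,x}$, then pass from an identity on $X^0$ to an identity of closed subsets of $X$ via the properness of $\kappa$ combined with a genericity argument on $L^{d-k}$.

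First I would compute the critical locus directly. At a point $x \in X^0$, the differential $d(\pi|_{X^0})_x$ is the restriction to $T_{X^0,x}$ of the linear projection $\C^n \to \C^{d-k+1}$ with kernel $D_{d-k+1}$; its kernel is therefore $T_{X^0,x} \cap D_{d-k+1}$. Since the target has dimension $d-k+1$ and $T_{X^0,x}$ has dimension $d$, the map fails to be surjective precisely when $\dim(T_{X^0,x} \cap D_{d-k+1}) \geq k$. Next, I would identify $\kappa(\lambda^{-1}(L^{d-k})) \cap X^0$. By definition of the conormal space, a point $x \in X^0$ belongs to this image if and only if there exists a hyperplane $H \subset \C^n$ with $H \supset T_{X^0,x}$ and $H \in L^{d-k}$, i.e.\ $H \supset D_{d-k+1}$. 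Such an $H$ exists if and only if $\dim(T_{X^0,x} + D_{d-k+1}) \leq n-1$, which by the Grassmann formula $\dim(T+D) = d + (n-d+k-1) - \dim(T\cap D)$ is equivalent to $\dim(T_{X^0,x} \cap D_{d-k+1}) \geq k$. Consequently the two subsets of $X^0$ agree.

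To upgrade this pointwise equality to the statement about closures, I would first note that $\kappa \colon C(X) \to X$ is proper (being the restriction of the projection $X\times\check\P^{n-1}\to X$), so $\kappa(\lambda^{-1}(L^{d-k}))$ is a closed analytic subset of $X$. The remaining content is the genericity statement: for $L^{d-k}$ in a dense Zariski open subset of the relevant Grassmannian, no irreducible component of $\lambda^{-1}(L^{d-k})$ is contained in $\kappa^{-1}(\mathrm{Sing}\, X)$. Granting this, the open subset $\lambda^{-1}(L^{d-k}) \cap \kappa^{-1}(X^0)$ is dense in $\lambda^{-1}(L^{d-k})$, and applying the identity $\kappa(\overline A) = \overline{\kappa(A)}$ (valid for any proper $\kappa$) to $A = \lambda^{-1}(L^{d-k}) \cap \kappa^{-1}(X^0)$ yields $\kappa(\lambda^{-1}(L^{d-k})) = \overline{\,\text{critical locus of }\pi|_{X^0}\,}$ in $X$.

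The main obstacle is the genericity step. The expected dimension of $\lambda^{-1}(L^{d-k})$ is $(n-1) - \mathrm{codim}_{\check\P^{n-1}} L^{d-k} = d-k$, so a Bertini/Kleiman-transversality argument applied to the map $\lambda \colon C(X) \to \check\P^{n-1}$, varying $L^{d-k}$ in its Grassmannian, shows that for generic $L^{d-k}$ the preimage $\lambda^{-1}(L^{d-k})$ is either empty or has pure dimension $d-k$. One still needs to check that components of this preimage are not swallowed by $\kappa^{-1}(\mathrm{Sing}\, X)$, which is a proper closed subset of the irreducible $(n-1)$-dimensional space $C(X)$. This follows because the restriction of $\lambda$ to $\kappa^{-1}(\mathrm{Sing}\, X)$ has image of dimension at most $n-2$ in $\check\P^{n-1}$, so for a generic $L^{d-k}$ the intersection $\lambda^{-1}(L^{d-k}) \cap \kappa^{-1}(\mathrm{Sing}\, X)$ has dimension strictly less than $d-k$, hence cannot exhaust any component of $\lambda^{-1}(L^{d-k})$. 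This density argument is where the phrase \emph{sufficiently general} is actually used; everything else in the proof is a direct unraveling of definitions.
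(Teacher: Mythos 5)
Your argument is correct and takes essentially the same route as the paper: both identify the critical locus of $\pi|_{X^0}$ and the set $\kappa(\lambda^{-1}(L^{d-k}))\cap X^0$ with the locus where $\mathrm{dim}(T_{X^0,x}\cap D_{d-k+1})\geq k$ by elementary linear algebra and duality, and both then conclude by combining properness of $\kappa$ with the density of $\lambda^{-1}(L^{d-k})\cap C(X^0)$ in $\lambda^{-1}(L^{d-k})$ for a sufficiently general $L^{d-k}$. The one step to treat with more care is your appeal to Kleiman transversality for that density statement: $C(X)$ is a non-compact local analytic space, so the standard algebraic transversality theorem does not apply directly and one must either stratify $C(X)$ and argue over the compact fiber $\kappa^{-1}(0)$ (as the paper explains in the remark following the proposition) or cite \cite[Chap. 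IV, 1.3]{Te3} as the paper's proof does.
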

\begin{proof}$\;$ \newline
Note that $x \in X^0$ is critical for $\pi$ if and only if
the tangent map  $d_x\pi:T_{X,x}^0\longrightarrow \C^{d-k+1}$ is not onto, which means ${\rm dimker}  d_x \pi \geq k$ 
since 
$\mathrm{dim}T_{X,x}^0=d$, and ${\rm ker} d_x{\pi}=D_{d-k+1}\cap T_{X,x}^0$.\\
Note that the conormal space $C(X^0)$ of the nonsingular part of $X$ is equal to $\kappa^{-1}(X^0)$
so by definition:
\[\lambda^{-1}(L^{d-k})\cap C(X^0) =\{(x,H) \in C(X) |x\in X^0,\; H \in L^{d-k}, \; T_{X,x}^0 \subset H \}\]
equivalently:
\[\lambda^{-1}(L^{d-k}) \cap C(X^0) =\{(x,H), \in C(X)| x\in X^0,\  H \in \check{D},\; H \in (T_{X,x}^0)^{ \check{}}\ \} \]
thus $H \in \check{D}\cap (T_{X,x}^0)^{ \check{}}$, and from the equality $\check{D}\cap (T_{X,x}^0)^{ \check{}}=
(D+T_{X,x}^0)^{\check{}}$ we deduce that the intersection is not empty if and only if $D+T_{X,x}^0\neq \C^n$, which
implies that $\mathrm{dim} D\cap T_{X,x}^0\geq k$, and consequently $\kappa(H)=x$ is a critical point.

	According to \cite[Chapter IV, 1.3]{Te3}, there exists an open dense set $U_k$ in the grasmannian of 
 $n-d+k-1$-planes of $\C^n$ such that if $D \in U_k$, the intersection $\lambda^{-1}(L^{d-k})\cap C(X^0)$
 is dense in $\lambda^{-1}(L^{d-k})$. So, for any $D \in U$, since $\kappa$ is a proper map and thus closed,
we have that $\kappa(\lambda^{-1}(L^{d-k}))= \kappa\left(\overline{\lambda^{-1}(L^{d-k})\cap C(X^0)}\right)=
\overline{ \kappa(\lambda^{-1}(L^{d-k}))}$, which finishes the proof.
\noindent
See \cite[Chap. 4, 4.1.1]{Te3} for a complete proof of a more general statement.
\end{proof}
 \begin{remark}\label{Factsofpolarvarieties} It is important to have in mind the following easily verifiable facts:
  \begin{enumerate}
 \item [a)] As we have seen before, the fiber $\kappa^{-1}(x)$ over a regular point $x \in X^0$ in the 
       (projectivized) conormal space $C(X)$ is a $\P^{n-d-1}$, so by semicontinuity of fiber dimension 
       we have that $\mathrm{dim}\kappa^{-1}(0) \geq n-d-1$.
 \item[b)] The analytic set $\lambda^{-1}(L^{d-k})$ is nothing but the intersection of $C(X)$ 
       and $\C^n \times L^{d-k}$ in $\C^n \times \check{\P}^{n-1}$. The space $\C^n \times L^{d-k}$ is 
       ``linear", defined by $n-d+k-1$ linear equations. For a general $L^{d-k}$, this intersection is of pure dimension $n-1-n+d-k+1=d-k$ if it is not empty. \par
       The proof of this is not immediate because we are working over an open neighborhood of a point $x\in X$, so we cannot assume that $C(X)$ is compact. However (see \cite[Chap. IV]{Te3}) we can take a Whitney stratification of $C(X)$ such that the closed algebraic subset $\kappa^{-1}(0)\subset \check \P^{n-1}$, which is compact. is a union of strata. By general transversality theorems in algebraic geometry (see \cite{Kl1}) a sufficiently general $L^{d-k}$ will be transversal to all the strata of $\kappa^{-1}(0)$ in $\check \P^{n-1}$ and then because of the Whitney conditions $\C^n \times L^{d-k}$ will be transversal in a neighborhood of $\kappa^{-1}(0)$ to all the strata of $C(X)$, which will imply in particular the statement on the dimension. Since $\kappa$ is proper, the neighborhood of $\kappa^{-1}(0)$ can be taken to be the inverse image by $\kappa$ of a neighborhood of $0$ in $X$. The meaning of "general" in Proposition \ref{polarv} is that of Kleiman's transversality theorem.
Moreover, since $C(X)$ is a reduced equidimensional analytic space, for a general $L^{d-k}$, the intersection of $C(X)$ 
and $\C^n \times L^{d-k}$ in $\C^n \times \check{\P}^{n-1}$ is generically reduced and since according to our general rule we remove embedded components when intersecting with linear spaces, $\lambda^{-1}(L^{d-k})$ is a reduced equidimensional complex analytic space.\par\noindent Note that the {\rm existence} of Whitney stratifications does not depend on the existence of polar varieties. In \cite[Chap. III, Proposition 2.2.2]{Te3} it is deduced from the idealistic Bertini theorem. \item[c)]  The fact that $\lambda^{-1}(L^{d-k})\cap C(X^0)$
 is dense in $\lambda^{-1}(L^{d-k})$ means that if a limit of tangent hyperplanes at points of $X^0$ contains $D_{d-k+1}$, it is a limit of tangent hyperplanes which also contain $D_{d-k+1}$. This equality holds because transversal intersections preserve the frontier condition; see \cite{Ch}, \cite[Remarque 4.2.3]{Te3}.
      \item[d)] Note that for a fixed $L^{d-k}$, the germ $(P_k(X;L^{d-k}),0)$ is empty if and only if the
      intersection $\kappa^{-1}(0) \cap \lambda^{-1}(L^{d-k}) $ is empty. From a) we know that 
      $\mathrm{dim}\kappa^{-1}(0)=n-d-1+r$ with $r \geq 0$. Thus, by the same
      argument as in b), this implies that the polar variety $(P_k(X;L^{d-k}),0)$ is not empty if and only if $\mathrm {dim}(\kappa^{-1}(0) \cap
      \lambda^{-1}(L^{d-k})) \geq 0$ and if and only if  $r\geq k$.
\end{enumerate}
\end{remark}
\begin{definition}\label{defpolarv}
   With the notations and hypotheses of Proposition \ref{polarv}, define for $0\leq k\leq d-1$
   the \textbf{local polar variety}.
\[P_k(X;L^{d-k})= \kappa(\lambda^{-1}(L^{d-k})) \]
\end{definition}

  A priori, we have just defined $\polar$ set-theoretically, but since $\lambda^{-1}(L^{d-k}) $ is empty or reduced and $\kappa$ is a projective fibration over the smooth part of $X$ we have
 the following result, for which a proof can be found in \cite[Chapter IV, 1.3.2]{Te3}.
 
 \begin{figure}[!ht]
    \begin{center}
              \includegraphics*{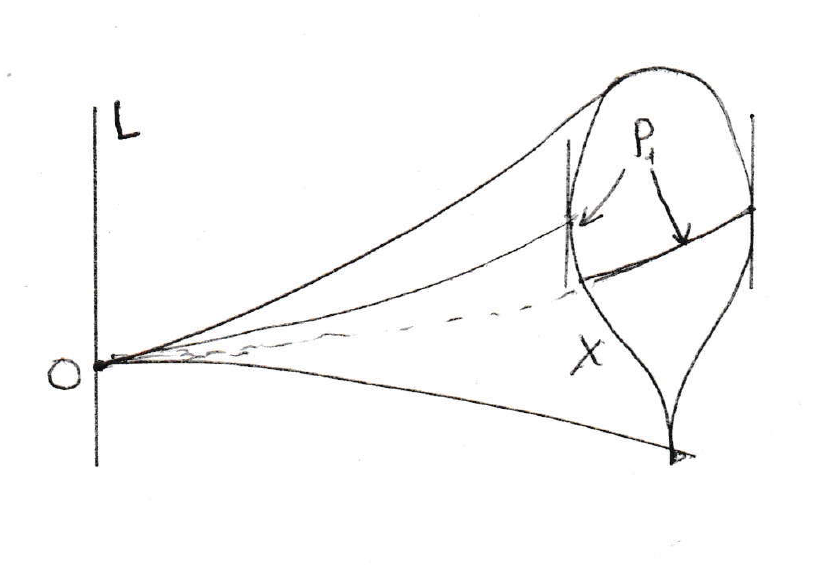}
       \end{center}
\end{figure}

 \begin{proposition}
   The local polar variety $\polar \subseteq X $ is a reduced closed analytic subspace of $X$, either of pure codimension
   $k$ in $X$ or empty.   
 \end{proposition}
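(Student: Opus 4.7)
The plan is to extract all three properties (closedness, reducedness, pure codimension $k$) from what has already been set up in Remark \ref{Factsofpolarvarieties} together with the properness of the conormal map $\kappa_X$.

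First I would record that, by part (b) of Remark \ref{Factsofpolarvarieties}, for a sufficiently general $L^{d-k}$ the preimage $\lambda^{-1}(L^{d-k})=C(X)\cap(\C^n\times L^{d-k})$ is either empty or a reduced complex analytic subspace of $C(X)$ of pure dimension $d-k$: this follows from Kleiman's transversality theorem applied to a Whitney stratification of $C(X)$ in which $\kappa^{-1}(0)\subset\check\P^{n-1}$ is a union of strata, together with the convention that such linear intersections are given the reduced structure. From this $\lambda^{-1}(L^{d-k})$ is an honest reduced equidimensional analytic space to start with.

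Next, since $\kappa_X\colon C(X)\to X$ is proper (the second factor $\check\P^{n-1}$ being compact), the restriction of $\kappa_X$ to the closed analytic subspace $\lambda^{-1}(L^{d-k})$ is also proper, and Remmert's proper mapping theorem yields that $P_k(X;L^{d-k})=\kappa_X(\lambda^{-1}(L^{d-k}))$ is a closed analytic subset of $X$, which we endow with its reduced structure in accordance with the convention of the paper. This settles closedness and reducedness.

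It remains to show purity of codimension $k$ when $P_k$ is nonempty. For this I would analyse the fibres of $\kappa_X\vert_{\lambda^{-1}(L^{d-k})}$. Over the smooth part $X^0$, the map $\kappa_X$ is the $\P^{n-d-1}$-bundle whose fibre over $x$ is the space of hyperplanes containing $T_{X^0,x}$; inside this fibre the intersection with $\C^n\times L^{d-k}$ is cut out by the $n-d+k-1$ linear conditions defining $L^{d-k}$, so for a generic $L^{d-k}$ a further application of Kleiman transversality, applied now fibrewise to a Whitney stratification of $C(X^0)$, forces this intersection to be $(-k)$-dimensional, i.e.\ empty at a generic point of $X^0$ and finite (in fact reduced to a point) at generic points of the image. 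Combined with the density statement $\lambda^{-1}(L^{d-k})\cap C(X^0)\subset\lambda^{-1}(L^{d-k})$ of part (c) of Remark \ref{Factsofpolarvarieties}, this makes $\kappa_X\vert_{\lambda^{-1}(L^{d-k})}$ generically finite onto $P_k(X;L^{d-k})$. Hence $\dim P_k=\dim\lambda^{-1}(L^{d-k})=d-k$, and purity follows from the pure-dimensionality of the source together with generic finiteness of the restricted map (so no component of the source can contract to something of smaller dimension than the general one).

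The main obstacle is the dimension bound, i.e.\ ensuring that for sufficiently general $L^{d-k}$ the map $\kappa_X\vert_{\lambda^{-1}(L^{d-k})}$ does not drop dimension on some component; the work is therefore all in the transversality argument, which is already imported from \cite[Chap.~IV]{Te3} and Kleiman's theorem in \cite{Kl1}. Once that is in place, closedness, reducedness and equidimensionality follow formally as above.
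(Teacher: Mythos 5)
Your proposal is correct and follows essentially the same route as the paper, which derives closedness and reducedness from the reducedness and pure dimension $d-k$ of $\lambda^{-1}(L^{d-k})$ (Remark~\ref{Factsofpolarvarieties}) together with the properness of $\kappa$ and its $\P^{n-d-1}$-bundle structure over $X^0$, and defers the remaining transversality/generic-finiteness details to \cite[Chapter IV, 1.3.2]{Te3}. You have merely made explicit the dimension count that the paper leaves to that reference.
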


      We have thus far defined a local polar variety that depends on both the choice
  of the embedding $(X,0) \subset (\C^n,0)$ and the choice of the linear space $D_{d-k+1}$. However,
  an important information we will extract from these polar varieties is their multiplicities at $0$,
  and these numbers are analytic invariants provided the linear spaces used to define them are general enough. This generalizes the invariance of the degrees of Todd's polar loci which we saw in the introduction.

  \begin{proposition}{{\rm (Teissier)}}\label{multpolv}
    Let $(X,0) \subset (\C^n,0)$ be as before, then for every $0\leq k \leq d-1$ and a sufficiently
    general linear space $D_{d-k+1} \subset \C^n$ the multiplicity of the polar variety $\polar$ at
    $0$ depends only on the analytic type of $(X,0)$.
  \end{proposition}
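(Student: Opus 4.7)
The plan is to split the claim into two sub-statements: (A) for a fixed embedding $(X,0)\hookrightarrow (\C^n,0)$, the multiplicity $m_0(P_k(X;L^{d-k}))$ is one and the same integer for every $L^{d-k}$ in a Zariski-dense open subset of the relevant Grassmannian; (B) this common value does not change if one replaces the embedding by any other embedding realizing the same analytic germ.

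\textbf{Step A (genericity of $L^{d-k}$).} I would invoke Kleiman's transversality theorem for the transitive action of $\mathrm{PGL}_n$ on $\check{\P}^{n-1}$, applied to a Whitney stratification of $C(X)$ in which $\kappa^{-1}(0)\subset \check{\P}^{n-1}$ is a union of strata (as already used in Remark \ref{Factsofpolarvarieties}(b)). This produces a Zariski-dense open subset $U$ of the Grassmannian such that $\C^n\times L^{d-k}$ meets each stratum of $C(X)$ transversally in a neighbourhood of $\kappa^{-1}(0)$. Over $U$, the intersections $\lambda^{-1}(L^{d-k})$ form a flat family of reduced, purely $(d-k)$-dimensional cycles, and since $\kappa$ is proper, this pushes forward to a flat family of reduced cycles $P_k(X;L^{d-k})$ on $X$. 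Flatness of the family forces the multiplicity at $0$ to be constant on $U$. Equivalently, passing to the normal-conormal diagram displayed in \S\ref{LOCPOL}, the required multiplicity can be read as the intersection number of the strict transform of $\lambda^{-1}(L^{d-k})$ with the exceptional divisor of $\hat e_0$, and this number is preserved by the transversality of $L^{d-k}$ with the strata of $\kappa^{-1}(0)$.

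\textbf{Step B (comparing two embeddings).} Any two embeddings of $(X,0)$ factor through a common larger embedding $(X,0)\hookrightarrow (\C^N,0)$, so it suffices to compare the invariants for $\iota:(X,0)\hookrightarrow(\C^n,0)$ and for its composition with the linear inclusion $(z)\mapsto(z,0)\in \C^n\times\C^m$. Given a generic $D'_{d-k+1}\subset\C^{n+m}$ of codimension $d-k+1$, genericity guarantees that the composition $\C^n\hookrightarrow\C^{n+m}\twoheadrightarrow\C^{n+m}/D'_{d-k+1}=\C^{d-k+1}$ remains surjective, that $D_{d-k+1}:=D'_{d-k+1}\cap \C^n$ is a generic codimension-$(d-k+1)$ subspace of $\C^n$, and that the induced projection $\pi\colon X\to \C^{d-k+1}$ is the same regardless of whether it is read off from the ambient $\C^n$ or from $\C^{n+m}$. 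Proposition \ref{polarv} then identifies the polar variety computed in either embedding with the closure in $X$ of the critical locus of $\pi|_{X^0}$, so the two polar varieties coincide set-theoretically near $0$.

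\textbf{Step C (scheme-theoretic equality; the main obstacle).} For the multiplicities at $0$ to agree, I need to know that the two polar varieties agree as analytic \emph{subspaces}, not merely as sets. The cleanest route is to use the ideal-theoretic description recalled from \cite{HM,HMS,Te3}: if $\pi\colon (X,0)\to(\C^{d-k+1},0)$ is given by linear coordinates $\ell_1,\ldots,\ell_{d-k+1}$, then, at a suitable generic choice, $P_k(X;L^{d-k})$ is scheme-theoretically defined in $\Oo_{X,0}$ by the Fitting ideal $F_{d-k}(\Omega^1_{X/\C^{d-k+1}})$ of the module of relative differentials of $\pi$. Since this Fitting ideal depends only on the morphism $\pi$, and not on the embedding through which it was produced, the scheme $P_k(X;L^{d-k})$ is intrinsic to $\pi$, and its multiplicity at $0$ is therefore the same in both embeddings. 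Combined with Step A, this proves the stated analytic invariance. The main obstacle is precisely this scheme-theoretic step: one must justify that, for sufficiently general $D_{d-k+1}$, the Fitting ideal $F_{d-k}(\Omega^1_{X/\C^{d-k+1}})$ genuinely cuts out the reduced polar variety with no embedded components, and that genericity of $D'_{d-k+1}$ in $\C^{n+m}$ specializes to genericity of $D_{d-k+1}$ in $\C^n$. Both points are controlled by the integral-closure techniques of \cite{Te3} and the transversality framework of \cite{Kl1}; once these are in place, the invariance of the multiplicity follows formally.
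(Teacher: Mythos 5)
Your overall architecture (a family of polar varieties over the parameter space of projections for a fixed embedding, then a common larger embedding to compare two embeddings) is the same as the paper's, which simply constructs a map $\pi\colon Z\to G$ over the space $G$ of linear projections $\C^n\to\C^{d-k+1}$ whose general fibers are the polar varieties and then invokes the \emph{analytic semicontinuity of multiplicity}. But your Step A contains a genuine gap at exactly this point: the assertion that ``flatness of the family forces the multiplicity at $0$ to be constant on $U$'' is false as a general principle. Multiplicity at a marked point is not a flat invariant: the family $y^2-tx=0$ is flat over the $t$-line, yet the fiber has multiplicity $2$ at the origin for $t=0$ and multiplicity $1$ for $t\neq 0$. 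What is true, and what the paper uses, is that the multiplicity along a section of an analytic family is Zariski upper semicontinuous; combined with the irreducibility of the Grassmannian this gives constancy on a dense open subset, which is precisely what ``sufficiently general'' means. A second unjustified step in the same paragraph is the claim that properness of $\kappa$ lets you ``push forward'' a flat family to a flat family: images of flat families under proper maps need not be flat, so even the flatness of the family of polar varieties themselves (as opposed to the $\lambda^{-1}(L^{d-k})$ upstairs) would require an argument. Both defects are repaired by replacing the flatness argument with semicontinuity applied to the total family over $G$.

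Your Step B is sound and is the paper's argument verbatim (a general $D'_{d-k+1}\subset\C^{n+m}$ meets $\C^n$ in a general $D_{d-k+1}$, by Chevalley applied to the dominant map $D'\mapsto D'\cap\C^n$ between Grassmannians, and the two induced projections of $X$ coincide). Step C, however, addresses a non-issue: the paper \emph{defines} $\polar$ as the reduced image $\kappa(\lambda^{-1}(L^{d-k}))$, so once Step B identifies the two critical loci set-theoretically, their reduced structures, hence their multiplicities at $0$, automatically agree; no Fitting-ideal scheme structure is needed, and introducing $F_{d-k}(\Omega^1_{X/\C^{d-k+1}})$ only creates the extra (and here unnecessary) burden of showing that this ideal is reduced without embedded components for general $D_{d-k+1}$.
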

  
  \begin{proof}$\;$\\
    See \cite[Chapter IV, Th\'eor\`eme 3.1]{Te3}. The idea is to construct for a given local embedding $X\subset\C^n$ a map $\pi\colon Z\to G$ where $G$ is the space of linear projections $\C^n\to \C^{d-k+1}$ such that for general $g\in G$ the fiber is the corresponding polar variety, and then to use the analytic semicontinuity of multiplicity. Given two different embeddings, one puts them in a common third embedding and uses a similar method.   \end{proof}
  
   	This last result allows us to associate to any reduced, pure $d$-dimensional, analytic local algebra
   $O_{X,x}$ a sequence of $d$ integers $(m_0,\ldots,m_{d-1})$, where $m_k$ is the multiplicity at $x$ of the 
   polar variety $\polar$ calculated from any given embedding $(X,x) \subset (\C^n,0)$, and
   a general choice of $D_{d-k+1}$. Note that in practice such a choice is not always easy to determine.

 \begin{remark}\label{generalflag}
      Since for a linear space $L^{d-k}$ to be ``sufficiently general'' means
  that it belongs to an open dense subset specified by certain conditions, we can just as well take
  a sufficiently general flag
 \[ L^1 \subset L^2 \subset \cdots \subset L^{d-2} \subset L^{d-1} \subset L^d\subset \check{\P}^{n-1}\]
  which by definition of a polar variety and Proposition \ref{multpolv}, gives us a chain
 \[ P_{d-1}(X;L^1) \subset P_{d-2}(X;L^2) \subset \cdots \subset P_{1}(X;L^{d-1}) \subset P_{0}(X;L^d)=X, \]  
  of polar varieties, each with generic multiplicity at the origin. This implies that if the 
 germ of a general polar variety $(\polar,0)$ is empty for a fixed $k$, then it will be empty for all
 $l\in\{k,\dots ,d-1\}$. This fact can also be deduced from \ref{Factsofpolarvarieties} d) by counting
 dimensions.
\end{remark}
\begin{definition}\label{projpol}{\rm (Definition of polar varieties for singular projective varieties)}  Let $V\subset\P^{n-1}$ be a reduced equidimensional projective variety of dimension $d$. Let $(X,0)\subset (\C^n,0)$ be the germ at $0$ of the cone over $V$. The polar varieties $P_k(X,L^{d-k+1}),\ 0\leq k\leq d$ are cones because tangent spaces are constant along the generating lines (see Lemma \ref{cone}). The associated projective subvarieties of $V$ are the polar varieties of $V$ and are denoted by $P_k(V)$ or $P_k(V,L^{d-k+1})$ or $P_k(V, D_{d-k+2})$ with $L^{d-k+1}=(D_{d-k+2})^{\check{}}\subset\check\P^{n-1}$.
\end{definition}
\noindent If $V$ is nonsingular this definition coincides with the definition of $P_k(V, D_{d-k+2})$ given in the introduction. It suffices to take the linear subspace $L^{d-k+1}\subset \check\P^{n-1}$ to be the dual of the subspace $D_{d-k+2}\subset \P^{n-1}$ of codimension $d-k+2$ which appears in that definition. 
\begin{ex}\label{sur} $\;$\\ Let $X:= y^2 - x^3 -t^2x^2=0 \subset \C^3$, so $\mathrm{dim}X=2$, and thus $k=0,1$. An easy calculation
 shows that the singular locus of $X$ is the $t-$axis, and $m_0(X)=2$. \\

 \begin{figure}[!ht]
    \begin{center}
    \label{schematic1}
              \includegraphics*{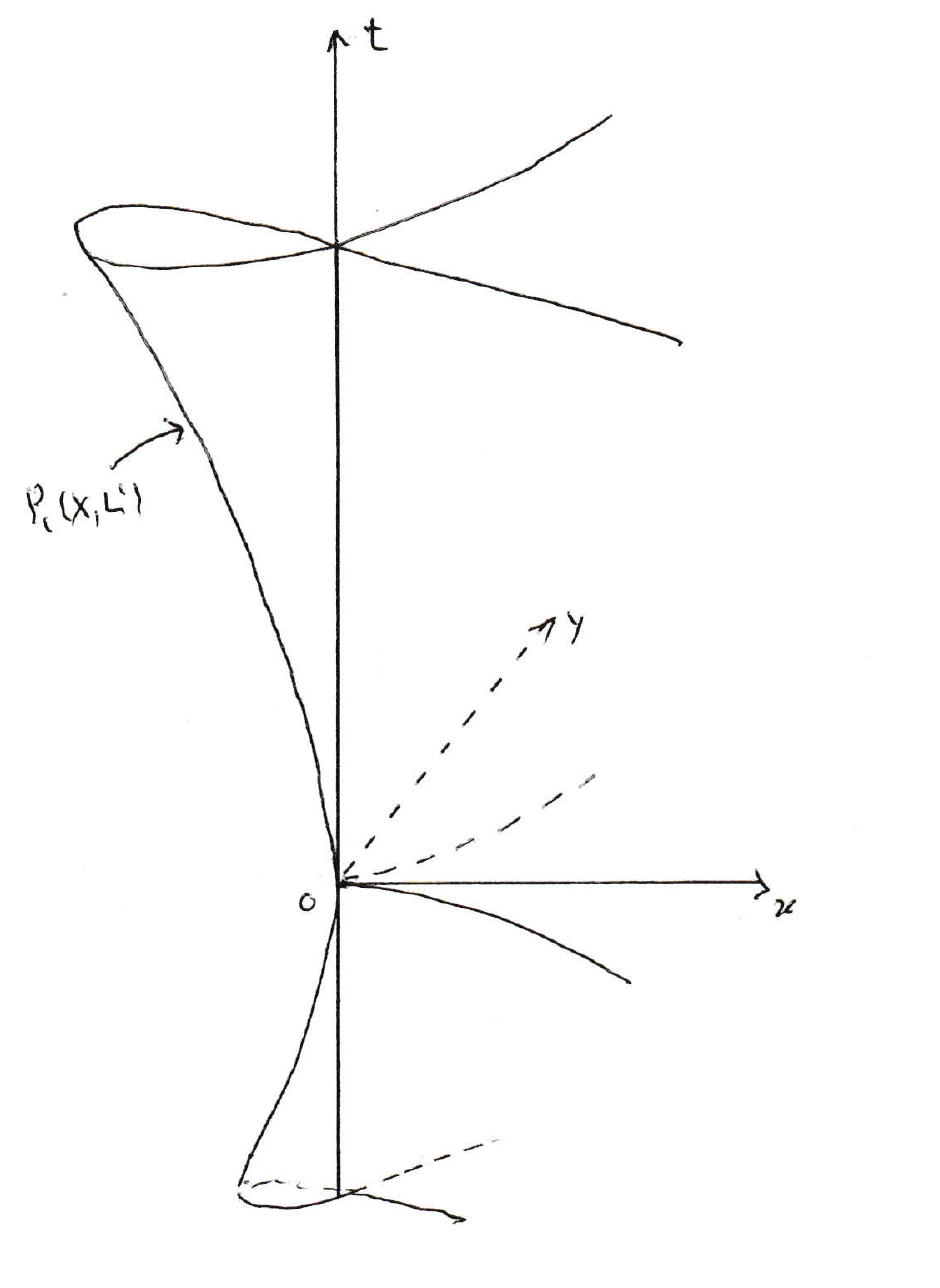}
    \end{center}
\end{figure}

 Note that for $k=0$, $D_3$ is just the origin in $\C^3$, so the the projection  
 \[ \pi:X^0 \to \C^3\] 
 with kernel $D_3$ is the restriction to $X^0$ of the identity map, which is of rank $2$ and we get that
 the whole $X^0$ is the critical set of such a map. Thus, \[P_0(X,L^2)=X.\]
 
 For $k=1$, $D_2$ is of dimension 1. So let us take for instance $D_2=y-$axis, so we get the projection
 \[\pi:X^0 \to \C^2 \;\;\; (x,y,t) \mapsto (x,t),\] 
 and we obtain that the set of critical points of the projection is given by
 \[P_1(X,L^1)= \left\{ \begin{array}{l} x= -t^2 \\ y=0 \end{array} \right. \]
 If we had taken for $D_2$ the line $t=0,\  \alpha x+\beta y=0$, we would have found that the polar curve is a nonsingular component of the intersection of our surface with the surface $2\alpha y=\beta x(3x+2t^2)$. For $\alpha\neq 0$ all these polar curves are tangent to the $t$-axis. As we shall see in the next subsection, this means that the $t$-axis is an ``exceptional cone" in the tangent cone $y^2=0$ of our surface at the origin, and therefore all the $2$-planes containing it are limits at the origin of tangent planes at nonsingular points of our surface.
 \end{ex}
\subsection{Limits of tangent spaces}
      
      Weith the help of the 
normal/conormal diagram and the polar varieties we will be able to obtain information on the limits of tangent spaces
to $X$ at 0, assuming that $(X,0)$ is reduced and purely $d$-dimensional. This method is based 
on Whitney's lemma and the two results which follow it:

\begin{lemma}\textbf{Whitney's lemma.-}\label{wl}
 Let $(X,0)$ be a pure-dimensional germ of analytic subspace of $\C^n$, choose a representative $X$ and let 
 $\{x_n\}\subset X^0$ be a sequence of points tending to $0$, such that 
 \[\lim_{n\to \infty}[0x_n]=l \;\; and \;\;
 \lim_{n\to \infty}T_{x_n}X=T,\] then $l \subset T$.
\end{lemma}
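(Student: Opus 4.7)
My plan is to apply the real curve selection lemma to convert the given sequence into a real analytic arc, after which the desired containment will follow by comparing the leading Taylor coefficient of the arc to the direction of its tangent vector. Concretely, I would work in the closed complex analytic fiber product $E_0X\times_X NX\subset X\times\P^{n-1}\times G(d,n)$ built from the blow-up of $0$ and the Semple-Nash modification introduced in Section~2. The sequence of points
\[
p_n\;=\;\bigl((x_n,[0\,x_n]),(x_n,T_{x_n}X)\bigr)
\]
lies entirely in the Zariski-open subset $U$ over $X^0$ (where both $e_0$ and $\nu_X$ are isomorphisms), and it converges to $p_\infty:=((0,[l]),(0,T))$, which therefore belongs to $\overline{U}$.

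Since $U$ is a semianalytic subset of a real analytic space, the real analytic curve selection lemma furnishes an arc $\gamma\colon[0,\epsilon)\to\overline{U}$ with $\gamma(0)=p_\infty$ and $\gamma(s)\in U$ for $s>0$. Reading off the first component, I obtain a real analytic map $x\colon[0,\epsilon)\to\C^n$ with $x(0)=0$, $x(s)\in X^0$ for $s>0$, and with $[0\,x(s)]\to[l]$ together with $T_{x(s)}X\to T$ as $s\to 0^+$. Expanding $x(s)=s^k v+O(s^{k+1})$ with $v\neq 0$ (the leading term exists since $x\not\equiv 0$), I see that $[\C\,x(s)]\to[\C v]$, so the line $l$ is precisely $\C v$.

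To conclude, I would differentiate: $\dot x(s)=k s^{k-1}v+O(s^k)$ is a nonzero tangent vector to $X^0$ at $x(s)$, hence lies in $T_{x(s)}X$, and its projective direction tends to $[\C v]=l$. Since the $d$-planes $T_{x(s)}X$ converge to $T$ in $G(d,n)$ and a limit of lines contained in a convergent sequence of $d$-planes is contained in the limit $d$-plane (by continuity of the incidence relation in the tautological bundle over $G(d,n)$), this forces $l\subset T$, as desired. The only step requiring genuine care is the application of the curve selection lemma in this mixed complex/real analytic context; once that is granted, the Taylor expansion argument is entirely routine.
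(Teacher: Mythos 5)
Your argument is correct, but it follows a genuinely different route from the one the paper uses. The paper obtains Whitney's lemma as the case $Y=\{0\}$ of assertion a) of Theorem \ref{limtan1}: one works in the normal/conormal diagram, uses the fact that the Liouville form vanishes on the conormal space $C(X)$ and that the vanishing of a differential form is a closed condition, concludes that every component of $\xi^{-1}(0)$ lies in the incidence variety $I\subset \P^{n-1}\times\check{\P}^{n-1}$, and then dualizes (every hyperplane containing a limit $T$ of tangent spaces is a limit of tangent hyperplanes, by Corollary \ref{NashvsConormalCoro}) to get $l\subset T$. You instead apply the real analytic curve selection lemma to the graph of $x\mapsto ([0x],T_{X^0,x})$ inside $E_0X\times_X NX$ and reduce everything to the elementary observation that for a real analytic arc $x(s)=s^kv+O(s^{k+1})$ in $X^0$ both the secant direction $[x(s)]$ and the velocity direction $[\dot x(s)]$ tend to $[v]$, while $\dot x(s)\in T_{x(s)}X$ and the incidence relation in the tautological bundle over $G(d,n)$ is closed; since $T$ is a complex subspace, $v\in T$ upgrades to $l=\C v\subset T$. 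This is essentially the classical proof, close in spirit to Whitney's original Theorem 22.1 and to Hironaka's argument, both of which the paper cites as alternatives; it is more elementary and self-contained. What the Lagrangian route buys in exchange is the much stronger conclusion of Theorem \ref{limtan1} I): not merely that each limit pair of secant and tangent hyperplane is incident, but that each component $D_\alpha$ of the exceptional divisor is Lagrangian in $I$, which yields the projective duality between the $V_\alpha$ and $W_\alpha$ on which the rest of the theory rests. The two points in your write-up that need (routine) care are the ones you flag: $U$ must be taken over $X^0\setminus\{0\}$ so that the secant is defined and $x\not\equiv 0$, and the curve selection lemma must be applied in a real analytic chart of $X\times\P^{n-1}\times G(d,n)$ around $p_\infty$, where $U$ is indeed semianalytic as the difference of two closed analytic sets.
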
 
      
     This lemma originally appeared in \cite[Theorem 22.1]{Whi1}, and you can also find a proof due to
Hironaka in \cite{L1} and yet another below in assertion a) of theorem \ref{limtan1}.\\

\begin{theorem}{\rm (L\^e-Teissier, see \cite{L-T2})}\label{limtan1}$\;$\\

\textbf{I)} In the normal/conormal diagram
\[\xymatrix{&X \times\P^{n-1}\times \check{\P}^{n-1}&\supset &E_0C(X)\ar[r]^{\hat{e}_0}\ar[dd]^{\kappa'}\ar[ddr]^\xi & C(X)\ar@{^{(}->}[r]\ar[dd]^\kappa
            \ar[dr]^{\lambda}& X \times \check{\P}^{n-1}\ar[d]^{pr_2} \\
            &&& & &  \check{\P}^{n-1} \\
            &X \times\P^{n-1}&\supset  &E_0X\ar[r]_{e_0}  & X &}\]
consider the irreducible components $\{D_{\alpha}\}$ of $D=|\xi^{-1}(0)|$. Then:
\begin{itemize}
\item[a)] Each $D_{\alpha}\subset \P^{n-1} \times \check{\P}^{n-1}$ is in fact contained
          in the incidence variety $I\subset \P^{n-1} \times \check{\P}^{n-1}$.\\

\item[b)] Each $D_\alpha$ is Lagrangian in $I$ and therefore establishes a projective duality 
          of its images:
\[\xymatrix{ D_\alpha \ar[rr]\ar[dd]& &  W_\alpha \subset \check{\P}^{n-1}\\
                                    & & \\
             V_\alpha \subset \P^{n-1} & & }\]
\end{itemize}

     \textnormal{Note that, from commutativity of the diagram we obtain  $\kappa^{-1}(0)=\bigcup_\alpha W_{\alpha}$, and 
                $e_0^{-1}(0)=\bigcup_\alpha V_{\alpha}$. It is important to notice that these expressions are not necessarily the 
                irreducible decompositions of $\kappa^{-1}(0)$ and $e_0^{-1}(0)$ respectively, since there may be repetitions; it is the case for the surface of example \ref{sur}, where the dual of the tangent cone, a point in $\check\P^2$, is contained in the projective line dual to the exceptional tangent. However, it is true that they contain
                the respective irreducible decompositions.} \\

                 \textnormal{In particular, note that if dim$V_{\alpha_0}=d-1$, then the cone $O(V_{\alpha_0})\subset \C^n$ is 
                 an irreducible component of the tangent cone $C_{X,0}$ and its projective dual $W_{\alpha_0}=
                 \check V_{\alpha_0}$ is contained in $\kappa^{-1}(0)$. That is, any tangent hyperplane to the tangent cone is 
                 a limit of tangent hyperplanes to $X$ at $0$. The converse is very far from true and we shall see more about this below.} \\

\textbf{II)} For any integer $k$, $0\leq k \leq d-1$ and sufficiently general $L^{d-k}\subset \check{\P}^{n-1}$
             the tangent cone $C_{P_k(X,L),0}$ of the polar variety $P_k(X,L)$ at the origin consists of:
\begin{itemize}
\item[$\bullet$]The union of the cones $O(V_\alpha)$ which are of dimension $d-k$ (= $\mathrm {dim}P_k(X,L)$).
\item[$\bullet$]The polar varieties $P_j(O(V_\beta),L)$ of dimension $d-k$, for the projection $p$ associated to $L$, of the
              cones $O(V_\beta)$, for $\mathrm{dim}O(V_\beta)= d-k+j$.
\end{itemize}

\noindent\textnormal{Note that $P_k(X,L)$ is not unique, since it varies with $L$, but we are saying that their tangent cones have things 
            in common. The $V_\alpha$'s are fixed, so the first part is the fixed part of $C_{P_k(X,L),0}$ because it is independent 
            of $L$, the second part is the mobile part, since we are talking of polar varieties of certain cones, which by definition move with $L$.}\\
\end{theorem}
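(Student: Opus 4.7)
For Part I(a), I would describe points of $\xi^{-1}(0)$ by sequences and apply Whitney's Lemma \ref{wl}. A point of $D=\xi^{-1}(0)$, projected to $\P^{n-1}\times\check\P^{n-1}$, is a limit $(\ell,H)$ of pairs $([0x_n],H_n)$ with $x_n\in X^0$ tending to $0$ and $H_n$ a hyperplane tangent to $X^0$ at $x_n$. Extracting subsequences we may assume $T_{X^0,x_n}\to T$; then $T\subset H$ by definition of a limit tangent hyperplane, while Whitney's Lemma gives $\ell\subset T$. Hence $\ell\subset H$, which is precisely the incidence relation, so each $D_\alpha\subset I$.

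For Part I(b), I would chain Propositions \ref{Specializationprop}, \ref{SpecLag}, \ref{conormalequiv} with Corollary \ref{SpecLagRel}. The exceptional divisor $\xi^{-1}(0)\subset E_0C(X)$ is, after the identifications of Proposition \ref{Specializationprop}, the projectivization of the special fibre of the relative conormal of the specialization $\varphi\colon\mathfrak X\to\C$ of $X$ to its tangent cone. Corollary \ref{SpecLagRel} says this relative conormal is $\varphi$-Lagrangian, and by the last item of Proposition \ref{SpecLag} each conic irreducible component of the special fibre is the relative conormal of its image. After projectivizing, each $D_\alpha$ is thus a Lagrangian subvariety of $\P T^*\P^{n-1}\simeq I$ (Lemma \ref{inclema}). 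Proposition \ref{conormalequiv} then identifies $D_\alpha$ with both the projective conormal of $V_\alpha=\mathrm{pr}_1(D_\alpha)\subset\P^{n-1}$ and that of $W_\alpha=\mathrm{pr}_2(D_\alpha)\subset\check\P^{n-1}$, giving the projective duality.

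For Part II, the plan is to read the tangent cone off the normal/conormal diagram. Since $P_k(X,L)=\kappa(\lambda^{-1}(L^{d-k}))$, the strict transform of $\lambda^{-1}(L^{d-k})$ under $\hat e_0$ maps via $\kappa'$ onto the strict transform of $P_k(X,L)$ under $e_0$, whose trace on $e_0^{-1}(0)$ is the projectivized tangent cone $\P C_{P_k(X,L),0}$. Because $\hat e_0$ is an isomorphism outside $\kappa^{-1}(0)$ and $L^{d-k}$ is closed, a sequence in $\lambda^{-1}(L^{d-k})\setminus\kappa^{-1}(0)$ converging in $\xi^{-1}(0)$ has hyperplane coordinate in $L^{d-k}$, so the trace equals $\bigcup_\alpha\mathrm{pr}_1\bigl(D_\alpha\cap(\P^{n-1}\times L^{d-k})\bigr)$. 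By Part I(b) each $D_\alpha$ is the conormal of $V_\alpha$, so this projection is the locus of $\ell\in V_\alpha$ at which some hyperplane tangent to $O(V_\alpha)$ at $\tilde\ell$ contains $D_{d-k+1}=(L^{d-k})^{\check{}}$. Translating containment into a dimension condition yields
\[
\dim\bigl(T_{O(V_\alpha),\tilde\ell}\cap D_{d-k+1}\bigr)\geq j,\qquad j:=\dim O(V_\alpha)-(d-k).
\]
For $j=0$ this locus is all of $V_\alpha$ and contributes the cone $O(V_\alpha)$; for $j\geq 1$ it is exactly the polar variety $P_j(O(V_\alpha),L)$, of dimension $d-k$; for $j<0$ it lies in a cone of dimension below $d-k$ and so does not appear in the top-dimensional part of $C_{P_k(X,L),0}$.

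The main obstacle I expect is the transversality bookkeeping required to make these identifications rigorous: for $L^{d-k}$ in a suitable dense open set one needs (i) $\lambda^{-1}(L^{d-k})$ to be of pure dimension $d-k$ and dense in $\lambda^{-1}(L^{d-k})\cap C(X^0)$ (Remark \ref{Factsofpolarvarieties}(b,c)); (ii) strict transform and total transform of $\lambda^{-1}(L^{d-k})$ to agree on their intersection with $D$, so that $\kappa'(D\cap(\P^{n-1}\times L^{d-k}))$ is faithfully $\P C_{P_k(X,L),0}$ on its top-dimensional part; and (iii) for each $D_\alpha$ the first projection of $D_\alpha\cap(\P^{n-1}\times L^{d-k})$ to have the expected generic image. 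All three follow from Kleiman's transversality applied to a Whitney stratification of $C(X)$ adapted to the decomposition $D=\bigcup D_\alpha$, together with the stability of Whitney conditions under transverse intersection.
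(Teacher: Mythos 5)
Your proposal is correct and follows essentially the same route as the paper: part I is obtained by combining Whitney's lemma with the Lagrangian specialization results (Propositions \ref{Specializationprop} and \ref{SpecLag}, Corollary \ref{SpecLagRel}) that the paper introduces precisely for this purpose, and part II proceeds by the same two steps, namely identifying $|\P C_{P_k(X,L),0}|$ with $\bigcup_\alpha\kappa'\bigl(D_\alpha\cap(\P^{n-1}\times L^{d-k})\bigr)$ and then reading each piece as a polar variety of the cone $O(V_\alpha)$ via the duality of I(b). Your dimension count $j=\dim O(V_\alpha)-(d-k)$ matches the paper's, and the transversality points you flag are exactly the ones the paper settles through Remark \ref{Factsofpolarvarieties} and the fact that transversal intersections preserve the frontier condition.
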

\begin{proof}$\;$\\
The proof of \textbf{I)}, which can be found in \cite{L-T1}, is essentially a strengthening of Whitney's lemma (Lemma \ref{wl}) using the normal/conormal diagram and the fact that the vanishing of a differential form (the symplectic form in our case) is a closed condition.\par\noindent
The proof of \textbf{II)}, also found in \cite{L-T1}, is somewhat easier to explain geometrically:\par\noindent
     Using our normal/conormal diagram, remember that we can obtain the blowing up $E_0(P_k(X,L))$ of the polar variety
  $P_k(X,L)$ by taking its strict transform under the morphism $e_0$, and as such we will get the projectivized
  tangent cone $\P C_{P_k(X,L),0}$ as the fiber over the origin.

  The first step is to prove that set-theoretically the projectivized tangent cone can also be expressed as 
  \[|\P C_{P_k(X,L),0}|= \bigcup_\alpha \kappa'( \hat{e}_0^{-1} ( \lambda^{-1}(L) \cap W_\alpha))
                       = \bigcup_\alpha \kappa' (D_\alpha \cap (\P^{n-1} \times L))\]
 
  	Now recall that the intersection $P_k(X,L) \cap X^0$ is dense in $P_k(X,L)$, so for any point $(0,[l]) \in 
 \P C_{P_k(X,L),0} $ there exists a sequence of points $\{x_n\} \subset X^0$ such that the directions of the secants $\overline{0x_n}$ converge to it. So, by definition
 of a polar variety, if $D_{d-k+1}=\check{L}$ and $T_n=T_{x_n}X^0$ then by \ref{polarv} we know that $\mathrm{dim}T_n \cap 
 D_{d-k+1}\geq k$ which is a closed condition. In particular if $T$ is a limit of tangent spaces obtained
 from the sequence $\{T_n\}$, then $T \cap D_{d-k+1}\geq k$ also. But if this is the case, since the 
 dimension of $T$ is $d$, there exists a limit of tangent hyperplanes $H \in \kappa^{-1}(0)$ such that 
  $T + D_{d-k+1} \subset H $ which is equivalent to $H \in \kappa^{-1}(0)\cap \lambda^{-1}(L) \neq \emptyset$.
 Therefore the point $(0,[l],H)$ is in $\bigcup_\alpha \hat{e}_0^{-1} ( \lambda^{-1}(L) \cap W_\alpha)$,
 and so we have the inclusion:
\[|\P C_{P_k(X,L),0}| \subset \bigcup_\alpha \kappa'( \hat{e}_0^{-1} ( \lambda^{-1}(L) \cap W_\alpha))\]

 	For the other inclusion, recall that $\lambda^{-1}(L)\setminus \kappa^{-1}(0)$ is dense in $\lambda^{-1}(L)$
 and so $\hat{e}_0^{-1} ( \lambda^{-1}(L))$ is equal set theoretically to the closure in $E_0C(X)$ of 
 $\hat{e}_0^{-1} ( \lambda^{-1}(L) \setminus \kappa^{-1}(0))$. Then for any point $(0,[l],H) \in 
 \hat{e}_0^{-1} ( \lambda^{-1}(L) \cap \kappa^{-1}(0))$ there exists a sequence $\{(x_n,[x_n],H_n)\}$ 
 in $\hat{e}_0^{-1} ( \lambda^{-1}(L) \setminus \kappa^{-1}(0))$ converging to it. Now by commutativity of 
 the diagram, we get that the sequence $\{(x_n,H_n)\}\subset \lambda^{-1}(L)$ and as such the sequence of 
 points $\{x_n\}$ lies in the polar variety $P_k(X,L)$. This implies in particular, that the sequence 
 $\{(x_n,[0x_n])\}$ is contained in $e_0^{-1}(P_k(X,L)\setminus \{0\})$ and the point $(0,[l])$ is in the 
 projectivized tangent cone $|\P C_{P_k(X,L),0}|$. \\

 	The second and final step of the proof is to use that from \textit{a)} and \textit{b)} it follows that
 each $D_\alpha \subset I \subset \P^{n-1} \times \check{\P}^{n-1}$ is the conormal space of $V_\alpha$ in $\P^{n-1}$, 
 with the restriction of $\kappa'$ to $D_\alpha$ being its conormal morphism.\\

  	Note that $D_\alpha$ is of dimension $n-2$, and since all the maps involved are just projections, we can 
 take the cones over the $V_\alpha$'s and proceed as in section \ref{du}. In this setting we get that
 since $L$ is sufficiently general, by Proposition \ref{polarv} and definition \ref{defpolarv}: 

\begin{itemize}
\item[$\bullet$] For the $D_\alpha$'s corresponding to cones $O(V_\alpha)$ of dimension $d-k$ (= $\mathrm{dim} P_k(X,L)$), 
        the intersection $D_\alpha \cap \C^n \times L$ is not empty and as such its image is a polar variety
        $P_0(O(V_\alpha),L)=O(V_\alpha)$.
\item[$\bullet$] For the $D_\alpha$'s corresponding to cones $O(V_\alpha)$ of dimension $d-k+j$, the intersection 
        $D_\alpha \cap (\C^n \times L)$ is either empty or of dimension $d-k$ and as such its image is a polar variety
        of dimension $d-k$, which is $P_j(O(V_\alpha),L)$.
\end{itemize}

You can find a proof of these results in \cite{L-T1}, \cite[Chap. IV]{Te3} and \cite{Te4}.
\end{proof}
       So for any reduced and purely $d-$dimensional complex analytic germ $(X,0)$, we have a method to ``compute'' or rather
       describe, the set of limiting positions of tangent hyperplanes. Between parenthesis are the types of computations involved:
\begin{itemize}
\item[1)] For all integers $k$, $0\leq k \leq d-1$, compute the ``general'' polar varieties 
          $P_k(X,L)$, leaving in the computation the coefficients of the equations of L as indeterminates.
          (Partial derivatives, Jacobian minors and residual ideals with respect to the Jacobian ideal);  
\item[2)] Compute the tangent cones  $C_{P_k(X,L),0}$. (computation of a standard basis with parameters; see remark \ref{initial});
\item[3)] Sort out those irreducible components of the tangent cone of each $P_k(X,L)$ which are independent of L (decomposition into irreducible components with parameters);
\item[4)] Take the projective duals of the corresponding projective varieties  (Elimination).  
\end{itemize}

       We have noticed, that among the $V_\alpha$'s, there are those which are irreducible components of ${\rm Proj} C_{X,0}$
       and those that are of lower dimension.

\begin{definition}
       The cones $O(V_{\alpha})$'s such that 
       \[ {\rm dim}\; V_\alpha <{\rm dim}\ {\rm Proj} C_{X,0}\]
       are called exceptional cones. 
\end{definition}
 
\begin{remark}\label{excone} 1) We repeat the remark on p. 567 of \cite{L-T2} to the effect that when $(X,0)$ is analytically isomorphic to the germ at the vertex of a cone the polar varieties are themselves isomorphic to cones so that the families of tangent cones of polar varieties have no fixed components except when $k=0$. Therefore in this case $(X,0)$ has no exceptional cones. \par\noindent 2) The fact that the cone $X$ over a nonsingular projective variety has no exceptional cones is thus related to the fact that the critical locus $P_1(X,0)$ of the projection $\pi\colon X\to\C^d$, which is purely of codimension one in $X$ if it is not empty,  actually moves with the projection $\pi$; in the language of algebraic geometry, the {\rm ramification divisor} of the projection  is {\rm ample} (see \cite[Chap. I, cor. 2.14]{Zak}) and even {\rm very ample} (see \cite{Ein}).\par\noindent 3) 
The dimension of $\kappa^{-1}(0)$ can be large for a singularity $(X,0)$ which has no exceptional cones. This is the case for example if $X$ is the cone over a projective variety of dimension $d-1<n-2$ in $\P^{n-1}$ whose dual is a hypersurface. \end{remark}
      
       Now one may wonder whether having no exceptional tangents makes $X$ look like a cone. We will 
       give a partial answer to this question in section \ref{sec:specialization} in terms of the Whitney equisingularity along the axis of parameters
       of the flat family specializing $X$ to its tangent cone.\par\medskip
We are now going to discuss the relation
between the conormal space of $(X,0) \subset (\C^n,0)$ and its Semple-Nash modification, or rather between their fibers over a singular point. It is convenient here to use the notations of projective duality of linear spaces.\par Given a vector subspace $T\subset \C^n$ we denote by $\P T$ its projectivization, i.e., the image of $T\setminus \{0\}$ by the projection $\C^n \setminus \{0\}\to \P^{n-1}$ and by $\check T\subset \check\P^{n-1}$ the projective dual of $\P T\subset\P^{n-1}$, which is a $\P^{n-d-1}\subset \check\P^{n-1}$, the set of all hyperplanes $H$ of $\P^{n-1}$ containing $\P T$.\par  We denote by  $\check\Xi \subset G(d, n) \times \check\P^{n-1}$ the cotautological $\P^{n-d-1}$-bundle over $G(d,n)$, that is
$ \check\Xi = \{(T,H)\; | \; T \in G(d,n), \; H \in \check T \subset \check\P^{n-1} \}$, and consider the intersection 
\[ \xymatrix{ 
  E:=(X \times \check\Xi)\cap ( NX \times \check\P^{n-1} ) \ar @{^{(}->}[r] \ar[dr]^{p_2} \ar[d]_{p_1}&  
  X \times G(d,n)\times \check{\P}^{n-1} \ar[d]\\
     NX & X \times \check{\P}^{n-1}}\]
and the morphism $p_2$ induced on $E$ by the projection onto $ X \times \check{\P}^{n-1}$.
We then have the following:

\begin{proposition}\label{ConormalvsNash}
   Let $p_2:E \to X \times \check{\P}^{n-1}$ be as before. The set-theoretical image $p_2(E)$ of the morphism 
$p_2$ coincides with the conormal space of $X$ in $\C^n$ 
\[ C(X) \subset X \times \check{\P}^{n-1}.\]
\end{proposition}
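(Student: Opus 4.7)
The plan is to identify $E$ with a projective bundle over the Semple--Nash modification $NX$ and then to exploit the density of the nonsingular locus on both sides of the diagram.

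First I would observe that $E$ is precisely the pullback of the cotautological $\P^{n-d-1}$-bundle $\check\Xi\to G(d,n)$ along $\gamma_X\colon NX\to G(d,n)$; equivalently, the defining condition imposes on a triple $(x,T,H)\in X\times G(d,n)\times\check\P^{n-1}$ that $(x,T)\in NX$ and $H\in\check T$. Consequently $p_1\colon E\to NX$ is a locally trivial projective bundle, so the preimage under $p_1$ of any dense open subset of $NX$ is dense in $E$. Since $G(d,n)$ and $\check\P^{n-1}$ are compact and $E$ is closed in $X\times G(d,n)\times\check\P^{n-1}$, the projection to $X\times\check\P^{n-1}$ is proper; hence $p_2$ is proper and $p_2(E)$ is a closed analytic subset of $X\times\check\P^{n-1}$.

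Next I would restrict over $X^0$. By the very construction of the Semple--Nash modification, $\nu_X$ induces an isomorphism $\nu_X^{-1}(X^0)\simeq X^0$, and this open subset is dense in $NX$. Over a point $(x,T_{X^0,x})$ with $x\in X^0$ the fiber of $p_1$ consists of the triples $(x,T_{X^0,x},H)$ with $H\supset T_{X^0,x}$, and $p_2$ sends it to $\{x\}\times N^*_{X^0,x}$. Thus $p_2$ identifies $p_1^{-1}(\nu_X^{-1}(X^0))$ with the (projectivized) conormal bundle $\kappa_X^{-1}(X^0)$ over $X^0$, which by definition is dense in $C(X)$.

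The equality then follows from two parallel observations. On one hand, $p_2(E)$ is a closed subset of $X\times\check\P^{n-1}$ containing $\kappa_X^{-1}(X^0)$, so $C(X)=\overline{\kappa_X^{-1}(X^0)}\subset p_2(E)$. On the other hand, $p_1^{-1}(\nu_X^{-1}(X^0))$ being dense in $E$, continuity of $p_2$ gives $p_2(E)\subset\overline{p_2(p_1^{-1}(\nu_X^{-1}(X^0)))}=\overline{\kappa_X^{-1}(X^0)}=C(X)$. The only point that really carries geometric content is the density of $p_1^{-1}(\nu_X^{-1}(X^0))$ in $E$; this in turn reduces to the local triviality of the projective bundle $p_1$ together with the defining density of $\nu_X^{-1}(X^0)$ in $NX$, and everything else is bookkeeping involving properness and continuity.
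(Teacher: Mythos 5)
Your argument is correct and is essentially the paper's own proof: both identify the locus $E^0=p_1^{-1}(\nu_X^{-1}(X^0))$ lying over the smooth part, note that $p_2(E^0)=C(X^0)$, and conclude by properness (hence closedness) of $p_2$. The only difference is that you spell out the density of $E^0$ in $E$ via the local triviality of the bundle $p_1$, a point the paper leaves implicit here and only records afterwards in Corollary \ref{Nashbundle}.
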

\begin{proof}
  By definition, the conormal space of $X$ in $\C^n$ is an analytic space $C(X) \subset X \times 
  \check{\P}^{n-1}$, together with a proper analytic map $\kappa_X: C(X)\to X$, where the fiber over a smooth point 
  $x \in X^0$ is the set of tangent hyperplanes, that is the hyperplanes $H$ containing the direction of the 
  tangent space $T_{X,x}$. That is, if we define $E^0=\{(x,T_{X,x},H) \in E \, | \, x \in X^0,H\in\check T_{X,x}\}$, then by 
  construction $E^0= p_1^{-1}(\nu_X^{-1}(X^0))$, and $p_2(E^0)= C(X^0)$. Since the morphism $p_2$ is proper it is closed, which finishes the proof.
\end{proof}

\begin{corollary}\label{NashvsConormalCoro}
   A hyperplane $H \in \check{\P}^{n-1}$ is a limit of tangent hyperplanes to $X$ at $0$, i.e., 
$H \in \kappa_X^{-1}(0)$, if and only if there exists a $d$-plane $(0,T) \in \nu_X^{-1}(0)$ such that $T \subset H$.
\end{corollary}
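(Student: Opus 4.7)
The plan is to deduce the corollary almost immediately from Proposition \ref{ConormalvsNash} by unwinding the definition of the auxiliary incidence space $E$ along the fiber over $0\in X$.

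First I would observe that Proposition \ref{ConormalvsNash} gives a set-theoretic equality $p_2(E)=C(X)\subset X\times\check\P^{n-1}$. Restricting both sides to the fiber over $0$ yields
\[
\kappa_X^{-1}(0)=\{H\in\check\P^{n-1}\mid (0,H)\in C(X)\}=\{H\in\check\P^{n-1}\mid (0,H)\in p_2(E)\}.
\]
So a hyperplane $H$ lies in $\kappa_X^{-1}(0)$ if and only if there exists some point $(0,T,H)\in E$ mapping to $(0,H)$ under $p_2$.

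Next I would unpack what it means for $(0,T,H)$ to lie in $E=(X\times\check\Xi)\cap(NX\times\check\P^{n-1})$. The condition $(0,T)\in NX$ is precisely the condition $T\in\nu_X^{-1}(0)$, i.e. $T$ is a limit at $0$ of tangent spaces to $X$ at smooth points. The condition $(T,H)\in\check\Xi$ is, by the very definition of the cotautological bundle, the condition $H\in\check T$, which means that the hyperplane $\P H\subset\P^{n-1}$ contains the projective space $\P T$, i.e.\ $T\subset H$ as vector subspaces of $\C^n$.

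Putting the two unpackings together gives both implications: the existence of $(0,T,H)\in E$ with $T\in\nu_X^{-1}(0)$ and $T\subset H$ is equivalent to $(0,H)\in p_2(E)=C(X)$, hence to $H\in\kappa_X^{-1}(0)$. There is no real obstacle here; the whole content is carried by Proposition \ref{ConormalvsNash}, and the corollary is just the statement of that proposition restricted to the fiber over $0$ and translated from the cotautological language $H\in\check T$ back to the containment $T\subset H$.
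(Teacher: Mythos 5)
Your proof is correct and follows essentially the same route as the paper: both deduce the corollary from Proposition \ref{ConormalvsNash} together with the definition of $E$ as $\{(x,T,H)\mid (x,T)\in NX,\ T\subset H\}$. The only difference is organizational — for the ``only if'' direction the paper re-runs an explicit sequence-and-compactness argument (lifting $(x_i,H_i)\to(0,H)$ to $E^0$ and using that inclusion of subspaces is a closed condition), whereas you simply restrict the set-theoretic equality $p_2(E)=C(X)$ to the fiber over $0$, which is a legitimate and cleaner use of the proposition as stated.
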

\begin{proof}
   Let $(0,T) \in \nu_X^{-1}(0)$ be a limit of tangent spaces to $X$ at $0$. By construction of $E$ and proposition 
 \ref{ConormalvsNash}, every hyperplane $H$ containing $T$ is in the fiber $\kappa_X^{-1}(0)$, and so is
 a limit at $0$ of tangent hyperplanes to $X^0$.\\ 
	On the other hand, by construction, for any hyperplane $H \in \kappa_X^{-1}(0)$ there is a sequence of points
$\{(x_i,H_i)\}_{i \in \N}$ in $\kappa_X^{-1}(X^0)$ converging to $p=(0,H)$. Since the map $p_2$ is surjective, by definition of $E$, we have a sequence  $(x_i,T_i,H_i)\in E^0$ with $T_i=T_{x_i}X^0\subset H_i$. By compactness of Grassmannians and projective spaces, this sequence has to converge, up to taking a subsequence, to $(x,T,H)$ with $T$ a limit at $x$ of tangent spaces to $X$. Since inclusion is a closed condition, we have $T\subset H$.
\end{proof}
\begin{corollary}\label{Nashbundle} The morphism $p_1: E \to  NX$ is a locally analytically trivial fiber bundle with fiber $\P^{n-d-1}$.
\end{corollary}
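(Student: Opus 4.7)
The plan is to exhibit $E$ as a pullback of the cotautological bundle $\check\Xi\to G(d,n)$ along the Gauss-type map $\gamma_X\colon NX\to G(d,n)$ (the second projection of $NX\subset X\times G(d,n)$), and then transfer local triviality from $\check\Xi$ to $E$.

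First I would unwind the definition of $E$: a point of $E$ is a triple $(x,T,H)\in X\times G(d,n)\times\check\P^{n-1}$ such that $(x,T)\in NX$ (so $T$ is a limit direction of tangent spaces at $x$) and $(T,H)\in\check\Xi$ (so $H$ contains $T$). Thus $E$ fits in a cartesian square
\[
\begin{array}{ccc}
E & \longrightarrow & \check\Xi \\
\downarrow p_1 & & \downarrow \\
NX & \stackrel{\gamma_X}{\longrightarrow} & G(d,n),
\end{array}
\]
so that $E\simeq NX\times_{G(d,n)}\check\Xi$ as analytic spaces over $NX$.

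Next I would invoke the classical fact that the cotautological projective bundle $\check\Xi\to G(d,n)$ is a locally analytically trivial $\P^{n-d-1}$-bundle. Concretely, $\check\Xi$ is the projectivization $\P(\mathcal T^\perp)$ of the rank-$(n-d)$ annihilator bundle $\mathcal T^\perp\subset G(d,n)\times\check\C^n$ of the tautological $d$-plane subbundle $\mathcal T\subset G(d,n)\times\C^n$; this annihilator bundle is locally trivialized over any open chart of $G(d,n)$ on which one can choose a basis of $\mathcal T$ depending holomorphically on the point, and then complete to a basis of $\C^n$. Projectivizing yields the local $\P^{n-d-1}$-triviality of $\check\Xi$.

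Finally I would conclude by the stability of locally analytically trivial fiber bundles under base change: pulling the trivializing covering of $G(d,n)$ back by $\gamma_X$ gives an analytic covering of $NX$ over each open set of which $p_1$ is isomorphic to the first projection from an open set of $NX$ times $\P^{n-d-1}$. The fiber of $p_1$ over $(x,T)\in NX$ is by construction $\{(x,T,H)\mid H\supset T\}\simeq\check T\simeq\P^{n-d-1}$, matching the bundle structure. There is essentially no obstacle here: the content is the identification of $E$ as a pullback, after which the result is formal. The only point requiring minor care is to check that $E$ carries the reduced analytic structure compatible with this pullback description, which follows because $\check\Xi\to G(d,n)$ is smooth so the fibered product $NX\times_{G(d,n)}\check\Xi$ is automatically reduced (indeed, locally a product with $\P^{n-d-1}$).
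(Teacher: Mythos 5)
Your proof is correct and follows essentially the same route as the paper, which also identifies $E$ as the pull-back by $\gamma_X$ of the (co)tautological bundle over $G(d,n)$ — phrased there as the $\P^{n-d-1}$-bundle of projective duals of the projectivized fibers of the Nash tangent bundle. Your version simply makes the cartesian square and the base-change argument explicit.
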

\begin{proof}By definition of $E$, the fiber of the projection $p_1$ over a point $(x,T)\in NX$ is the set of all hyperplanes in $\P^{n-1}$ containing $\P T$. In fact, the tangent bundle $T_{X^0}$, lifted to $NX$ by the isomorphism $NX^0\simeq X^0$, extends to a fiber bundle over $NX$, called the Nash tangent bundle of $X$. It is the pull-back by $\gamma_X$ of the tautological bundle of $G(d,n)$, and $E$ is the total space of the  $\P^{n-d-1}$-bundle of the projective duals of the projectivized fibers of the Nash bundle.
\end{proof}
By definition of $E$, the map $p_2$ is an isomorphism over $C(X^0)$ since a tangent hyperplane at a nonsingular point contains only the tangent space at that point. Therefore the map $p_2\colon E\to C(X)$ is a modification.\par\noindent In general the fiber of $p_2$ over a point $(x,H)\in C(X)$ is the set of limit directions at $x$ of tangent spaces to $X$ that are contained in $H$. If $X$ is a hypersurface, the conormal map coincides with the Semple-Nash 
modification. In general, the manner in which the geometric 
structure of the inclusion
$\kappa_X^{-1}(x)\subset \check{\P}^{n-1}$ determines the set of limit 
positions of tangent spaces, i.e., the fiber $\nu_X^{-1}(x)$ of the 
Semple-Nash modification, is
not so simple: by Proposition \ref{ConormalvsNash} and its corollary, 
the points of $\nu_X^{-1}(x)$ correspond to \textit{some of} the projective subspaces 
${\P}^{n-d-1}$ of $\check{\P}^{n-1}$ contained in $\kappa_X^{-1}(x)$. The assertion made in the observation on page 553 of \cite{L-T3} is wrong, as the next example shows.
\begin{example}\label{jawad} We use for our purpose the following example due to J. Snoussi in \cite{Sn2} of a surface singularity $(X,0)$ where the tangent cone has no linear component and the normalized Semple-Nash modification is a singular surface. We warn the reader that what follows requires some knowledge of resolution of singularities of surfaces. We recommend \cite{Sp} and the very informative paper \cite{B} of R. Bondil. \par\noindent Snoussi's example is a germ of a normal surface $(X,0)\subset (\C^5,0)$. In its minimal resolution of singularities $\pi\colon W\to X$ the inverse image of $0$, the exceptional divisor, has five irreducible components $E_i$ and if one represents each by a point and connect those points by an edge when the corresponding components intersect, one obtains the following diagram, called the \textit{dual graph}.  \par\bigskip
\vspace{.1in}
\begin{picture}(2,1)\setlength{\unitlength}{.4cm}
\put(7,0){$\bullet$}\put(6.5,1){$-3$}\put(6.8,-1){$E_1$}\put(7,.2) {\line(1,0){3}}\put(10,0){$\bullet$}\put(9.5,1){$-2$}\put(9.8,-1){$E_2$}\put(10,.2) {\line(1,0){3}}\put(13,0){$\bullet$}\put(12.5,1){$-2$}\put(12.8,-1){$E_3$}\put(13,.2) {\line(1,0){3}}\put(16,0){$\bullet$}\put(15.5,1){$-2$}\put(15.8,-1){$E_4$}\put(16,.2) {\line(1,0){3}}\put(19,0){$\bullet$}\put(18.5,1){$-3$}\put(18.8,-1){$E_5$}
\end{picture}
\vspace{.2in}

\noindent Each component $E_i$ is in fact a projective line and the numbers above each point represent the self intersection in the surface $W$ of the corresponding component.
\noindent The pull-back in $W$ of the maximal ideal $m_{X,0}$ is in this case an invertible sheaf which defines the divisor $Z=\sum_{i=1}^5E_i$. From the theory of resolution of rational surface singularities, one knows that the blowing-up $e_0\colon X'\to X$ of the maximal ideal is obtained by contracting in $W$ the $E_i$ such that their intersection number in $W$ with the cycle $Z$ is zero, in this case $E_2,E_3,E_4$. This contraction gives a map $\pi'\colon W\to X'$ such that $\pi=e_0\circ \pi'$, while the images in $X'$ of the components $E_i$ such that $E_i.Z<0$, in this case $E_1,E_5$, give the components of the projectivized tangent cone $\P C_{X,0}\subset \P^4$. This projectivized tangent cone is the union of two conics meeting at one point (compare with \cite[8.2]{B}). This point, which we denote by $\ell$,  is also the intersection point in $\P^4$ of the projective planes $\P^2$ containing each conic. By work of Spivakovsky in \cite[\S 5, Theorem 5.4]{Sp}, we know that the strict transform of a general polar curve of $(X,0)$ intersects $E_1,E_3$ and $E_5$. So the tangents to the polar curves are not separated in $X'$ and in fact, the strict transforms in $X'$ of some irreducible component of the polar curves of $(X,0)$ all go through the point $\ell$ which is therefore, by theorem \ref{limtan1}, an exceptional tangent. This means that \textit{all the hyperplanes in $\C^5$ containing the line $\ell$ are limits of tangent hyperplanes} at nonsingular points of $X$.\par
Those hyperplanes in $\C^5$ which contain $\ell$ form a $\P^3$ in the space $\check\P^4$ of hyperplanes of $\C^5$. On the other hand, to each limit $T$ at $0$ of tangent planes to $X^0$ corresponds a $\P^2(T)\subset \check\P^4$ of tangent hyperplanes. If $\ell\subset T$, then $\P^2(T)\subset\P^3$. But the space of projective planes contained in $\P^3$ is $3$-dimensional, while the space of limits of tangent planes, which is the fiber of the Semple-Nash modification, has to be of dimension $\leq 1$ for a surface.\par
In order to understand which projective planes in $\P^3$ are actually of the form $\P^2(T)$, let us first consider the analogue for the Semple-Nash modification  $\nu_X\colon NX\to X$ of the normal/conormal diagram:
\[\xymatrix{E_0NX\ar[r]^{\tilde{e}_0}\ar[dd]^{\nu'_X}\ar[ddr]^\eta & NX\ar@{^{(}->}[r]\ar[dd]^{\nu_X}
            \ar[dr]^{\gamma}& X \times G(d,n)\ar[d]^{pr_2} \\
             & &  G(d,n) \\
             E_0X\ar[r]_{e_0}  & X &}\] 
    We know that the strict transforms of the polar curves of $X$ are not separated by $e_0$ since some of their branches all go through the point $\ell\in e_0^{-1}(0)$. On the contrary the strict transforms of the polar curves are separated in $NX$ because if there was a base point $(0,T)\in\nu^{-1}(0)$ for the system of strict transforms of polar curves, considering the duals $\P^1(\rho)$ of the kernels of linear projections $\rho\colon \C^n\to\C^2$, in view of Corollary \ref{NashvsConormalCoro} it would mean that $\P^2(T)\cap\P^1(\rho)\neq\emptyset$ in $\P^4$ for general $\rho$, which cannot be for dimension reasons.\par The same proof generalizes to any dimension to show that there can be no base point for the system of strict transforms on $NX$ of polar varieties of any dimension. See also \cite[Chap. III, Theorem 1.2]{Sp}.\par\noindent This implies that the limits of tangent planes to $X$ which contain $\ell$ are limits of tangent planes to $X$ along some branches of the polar curves which all have the same tangent $\ell$ at the origin.\par                
 Consider now the space of linear projections $\pi\colon \C^5\to\C^3$. At least for a general projection, the image of $X$ will be a hypersurface $X_\pi\in\C^3$ having $\ell_\pi=\pi(\ell)$ as an exceptional tangent. To the projection $\pi$ corresponds an injection $\check\P^2(\pi)\subset \check\P^4$ which maps a hyperplane in $\C^3$ to its inverse image by $\pi$.\par\noindent
The line $\P^1(\ell_\pi)\subset \check\P^2(\pi)\subset\check\P^4$ consisting of the hyperplanes of $\C^3$ containing $\ell_\pi$ is the intersection in $\check\P^4$ of $\check\P^2(\pi)$ with the $\P^3$ of hyperplanes containing $\ell$. Since $\ell_\pi$ is an exceptional line in the tangent cone of $X_\pi$, each point $z$ of $\P^1(\ell_\pi)$ corresponds to a limit tangent plane $T_{\pi,z}$ of $X_\pi$ which has to be the image by $\pi$ of at least one limit tangent plane $T_z$ of $X$ containing $\ell$.\par
Since the images of general polar curves of $X$ are general polar curves of $X_\pi$ we can lift tangent planes to $X_\pi$ containing $\ell_\pi$ to tangent planes of $X$ containing $\ell$. Conversely, any limit tangent plane $T$ of $X$ has an image by $\pi$, at least for general $\pi$, which is a limit tangent plane to $X_\pi$.\par The point $z$ is the only intersection point of $\P^2(T_z)$ with $\P^1(\ell_\pi)$ in $\P^3$. Since $X$ is a rational singularity, the one dimensional components of the fiber $\nu_X^{-1}(0)$ must be projective lines so that the tangent planes which contain $\ell$ are parametrized by a $\P^1$. Thus, the $\P^2(T_z)$ must form themselves a linear system in $\P^3$, which means that they are the linear system of hyperplanes of $\P^3$ which contain a $\P^1\subset\P^3$. This in turn means that they are the duals of a system of lines in $\P^4$ containing $\ell$ and contained in a $\P^2\subset\P^4$.\par Since the set of limits at $0$ of tangent planes to $X^0$ contains the limits of tangent spaces to the (reduced) tangent cone at its nonsingular points, this $\P^2$ has to contain the point $\ell$ and the two lines through $\ell$ representing the tangents at $\ell$ to the two conics. It has to be the $\P^2$ spanned by the tangents at $\ell$ to the two conics which form the projectivization of the tangent cone of $X$ at $0$.\par\noindent
In conclusion, the $\P^2(T)\subset \P^3\subset\kappa^{-1}(0)$ which correspond to limits $T$ of tangent planes to $X$ containing the exceptional tangent $\ell$ are those which contain the $\P^1\subset\check\P^4$ dual to the $\P^2\subset\P^4$ spanned by the tangents at $\ell$ to the two conics. Those $\P^2(T)$ do constitute a $\P^1$, which is part of the fiber $\nu_X^{-1}(0)$ of the Semple-Nash modification of $X$.
\end{example}
 \noindent\textbf{Problem:} For a general reduced $d$-equidimensional germ $(X,0)\subset(\C^n,0)$ of a complex analytic space, find a way to characterize those $\P^{n-d-1}\subset\kappa^{-1}(0)$ which correspond to limits at $0$ of tangent spaces to $X^0$. Perhaps one can use the family of the general projections $\pi\colon\C^n\to\C^{d+1}$, which map limits of tangent spaces to $X$ to limits of tangent spaces to the hypersurface $\pi(X)$ and behave well on the tangent cone. Except in the case of curves (see \cite[Chap.I, \S 6]{Te3}) the equisingularity properties of the family of general hypersurface projections of a given singularity are largely unexplored (see \cite[Chap.VI, \S6]{Te3}). For generic projections to $\C^d$ one can consult \cite{Ak}.
\section{Whitney Stratifications}

       Whitney had observed, as we can see from the statement of Lemma \ref{wl}, that ``asymptotically'' near $0$ a germ $(X,0)\subset (\C^n,0)$ 
       behaves like a cone with vertex $0$,  in the sense that for any sequence $(x_i)_{i\in \N}$ of nonsingular points of $X$ tending to zero, the limit (up to restriction to a subsequence) of the tangent spaces $T_{x_i}X^0$ contains the limit of the secants $\overline{0x_i}$. Suppose now that we replace $0$ by a nonsingular subspace
       $Y\subset X$, and we want to understand what it means for $X$ to ``behave asymptotically like a cone with vertex $Y$''.\par\noindent First let us define what we mean by a cone with vertex $Y$.

\begin{definition}
       A cone with vertex $Y$ is a space $C$ equipped with a map \[\pi\colon C \longrightarrow Y\] and homotheties in the fibers, i.e., a morphism $\eta\colon C\times \C^*\to C$ with $\pi\circ\eta=\pi\circ{\rm pr}_1$, inducing an action of the multiplicative group $\C^*$ in the fibers of $\pi$ which has as fixed set the image of a section $\sigma\colon Y\rightarrow C$ of $\pi$; $\pi\circ \sigma={\rm Id}_Y$ and $\sigma (Y)=\{c\in C\vert \eta(c,\lambda)=c\ \forall \lambda\in\C^*\}$. We shall consider only homogeneous cones, which means that $Y$ is covered by open sets $U$ such that there is an embedding $\pi^{-1}(U)\subset U\times\C^k$ with $\sigma(U)=U\times\{0\}$, the map $\pi$ is induced by the first projection and $\eta$ is induced by the homotheties of $\C^k$.\end{definition}

      Let us take a look at the basic example we have thus far constructed.

\begin{example}\label{conelike}$\;$\\
 The reduced normal cone $|C_{X,Y}|\longrightarrow Y$, with the canonical analytic projection mentioned after definition
   \ref{defnc}.
\end{example}

     Now we can state Whitney's answer to the problem posed at the beginning of this section, again in terms of tangent spaces and secants:

\subsection{Whitney's conditions}

     Let $X$ be a reduced, pure dimensional analytic space of dimension $d$, let $Y\subset X$  be a nonsingular 
     analytic subspace of dimension $t$ containing $0$. Choose a local embedding $(X,0)\subset (\C^n,0)$ around $0$, and a local holomorphic
     retraction $\rho:(\C^n,0)\longrightarrow (Y,0)$. Note that, since $Y$ is nonsingular we can assume it is an open
     subset of $\C^t$, $(X,0)$ is embedded in an open subset of $\C^t \times \C^{n-t}$ and the retraction $\rho$ coincides with the
     first projection.
 
     We say that $X^0$ satisfies Whitney's conditions along $Y$ at $0$ if for any sequence of pairs of points
     $\{(x_i,y_i)\}_{i\in \N} \subset  X^0 \times Y$ tending to $(0,0)$ we have:
     \[\lim_{i\to \infty} [x_iy_i] \subset \lim_{i\to \infty} T_{X,x_i}\]
     where $[x_iy_i]$ denotes the line passing throught these two points. If we compare this to Whitney's lemma,
     it is just spreading out along $Y$ the fact observed when $Y=\{0\}$.
     
     In fact, Whitney stated two conditions, which together are equivalent to the above. Letting $\rho$ be the aforementioned 
     retraction, here we state the two conditions:

\begin{itemize}
     \item[a)] For any sequence $\{x_i\}_{i \in \N}\subset X^0$, tending to $0$ we have:
               \[ T_{Y,0} \subset \lim_{i\to \infty} T_{X,x_i} .\]
     \item[b)] For any sequence $\{x_i\}_{i \in \N}\subset X^0$, tending to $0$ we have:
               \[ \lim_{i\to \infty} [x_i\rho(x_i)] \subset \lim_{i\to \infty}  T_{X,x_i}.\]        
\end{itemize}
We leave it as an interesting exercise for the reader to verify that a homogeneous cone with vertex $Y$ satisfies these conditions. One uses the fact that locally, the equations defining $\pi^{-1}(U)$ in $ U\times\C^k$ are homogeneous polynomials in $z_1,\ldots ,z_k$ whose coefficients are analytic functions on $U$ and then takes a close look at the consequences of Euler's identity in terms of relative sizes of partial derivatives with respect to the coordinates $z_i$ and the coordinates on $Y$ and of the angle, or distance (see subsection \ref{defw}), between the secant line $\overline{x\pi(x)}$ and the tangent space to $X$ at $x\in X^0$. The shortest detailed proof involves integral dependence on ideals and cannot find its place here.\par\medskip
  Whitney's conditions can also be characterized in terms of the conormal space and the normal 
 /conormal diagram, as we shall see later.\par\noindent
 The fact that the Whitney conditions are independent of the embedding is not obvious from these definitions. It follows from the algebraic characterization explained in section \ref{total} below. 
 Note that condition $a)$ has the important consequence that a linear space transversal to $Y$ in $\C^n$ at a point $y$ will remain transversal to $X^0$ in a neighborhood of $y$.
 \begin{remark}\label{noemdim} It is convenient, for example when studying resolutions of singularities, to be able to consider as "similar" singularities which do not have the same embedding dimension. We point out that the fact that $X$ satisfies the Whitney conditions along $Y$ does not imply that the embedding dimension of $X$ is constant along $Y$. The surface defined in $\C^4$ by the equations $u_2^2-u_1^3-vu_3=0, u_3^2-u_1^5u_2-\frac{1}{16}v^2u_1^7=0$ satisfies the Whitney conditions along the $v$-axis but its embedding dimension is $3$ at the points of this axis where $v\neq 0$ and $4$ at the origin. Its multiplicity is $4$ at each point of the $v$-axis. The slice by $v=v_0\neq 0$ is isomorphic to the plane branch $(u_2^2-u_1^3)^2-u_1^5u_2-\frac{1}{16}u_1^7=0$ while the slice by $v=0$ is the monomial curve  given parametrically by $u_1=t^4,u_2=t^6, u_3=t^{13}$, which has embedding dimension $3$.\par
 It is not difficult to verify that our surface $X$ can be parametrized as follows: $u_1=t^4, u_2=t^6+\frac{1}{2}vt^7, u_3=t^{13}+\frac{1}{4}vt^{14}$. If we compose this parametrization with a sufficently general linear projection $p\colon X\to \C^2$, given by $u_1+\lambda u_3, u_2+\mu u_3$, with $\lambda,\mu \in\C^2$, $\lambda\neq 0$, we see that the composed map has rank two outside of $t=0$. This implies that the general polar curve of $X$ is empty and therefore equimultiple (of multiplicity $0$) along the $v$-axis $Y$ in a neighborhood of $0$. Since the tangent cone of $X$ at every point of $Y$ is defined by the ideal $(u_2^2, u_3^2)$, the multiplicity of $X$ is constantly equal to $4$ along $Y$ (see subsection \ref{mul}, e)). By theorem \ref{PE} below, the pair $(X^0,Y)$ satisfies Whitney's conditions at $0$. 
 \end{remark}


      Recall that our objective is to ``stratify'' $X$. What exactly do we mean by stratify, and how do Whitney
     conditions relate to this? What follows in this section consists mostly of material from \cite[\S 2]{Lip} 
     and \cite[Chapitre III]{Te3}.

\begin{definition}\label{stratif}
     A \textbf{stratification} of $X$ is a decomposition into a locally finite disjoint union $X=\bigcup X_\alpha$, of non-empty, 
     connected, locally closed subvarietes called \emph{strata}, satisfying:
\begin{itemize}
   \item [(i)] Every stratum $X_\alpha$ is nonsingular (and therefore an analytic manifold).   
   \item [(ii)] For any stratum $X_\alpha$, with closure $\overline{X_\alpha}$, both the frontier $\partial X_\alpha:= 
           \overline{X_\alpha} \setminus X_\alpha$ and the closure $\overline{X_\alpha}$ are closed analytic in $X$.
   \item [(iii)] For any stratum $X_\alpha$, the frontier $\partial X_\alpha:= \overline{X_\alpha} \setminus X_\alpha$
             is a union of strata.             
\end{itemize}
\end{definition}

    Stratifications can be determined by \emph{local stratifying conditions} as follows. 
    We consider conditions $C=C(W_1,W_2,x)$ defined for all $x \in X$ and all pairs $(W_1,x)\supset (W_2,x)$ of
    subgerms of $(X,x)$ with $(W_1,x)$ equidimensional and $(W_2,x)$ smooth. For example, $C(W_1,W_2,x)$ could signify
    that the Whitney conditions hold at x.

    For such a $C$ and for any subvarieties $W_1,W_2$ of $X$ with $W_1$ closed and locally equidimensional, and $W_2$ 
    locally closed, set
    \begin{align*}
         C(W_1,W_2) & :=\{x \in W_2 | \, W_2 \; \mathit{is\;smooth \;at \; x, \; and \; if} \; x \in W_1 \; \mathit{then} 
         \; (W_1,x)\supset (W_2,x) \; \mathit{and}\\ & \; C(W_1,W_2,x)\}, \\
         \widetilde{C}(W_1,W_2) & := W_2 \setminus C(W_1,W_2).
    \end{align*}  

    The condition C is called \emph{stratifying} if for any such $W_1$ and $W_2$, the set $\widetilde{C}(W_1,W_2)$ is contained in a 
    nowhere dense closed analytic subset of $W_2$. In fact, it suffices that this be so whenever $W_2$ is smooth, connected,
    and contained in $W_1$.\\


    Going back to our case, it is true that Whitney's conditions are stratifying. See \cite[Lemma 19.3, p. 540]{Whi1}. 
    The key point is to prove, given $Y\subset X$ as in section 4.1, that the set of points of $Y$ where the pair 
    $(X^0,Y)$ satisfies Whitney's conditions contains the complement of a strict closed analytic subspace of $Y$. 
  A proof of this different from Whitney's is given below as a consequence of theorem \ref{prin}.\\

    \begin{definition}\label{defwhstr} Let $X$ be as above, then by a \textbf{Whitney stratification} of $X$,
     we mean a stratification $X=\bigcup X_\alpha$, such that for any pair of strata $X_{\beta}, X_{\alpha}$
     with $X_{\alpha}\subset \overline{X_{\beta}}$, the pair $(\overline{X_{\beta}},X_{\alpha})$ satisfies
     the Whitney conditions at every point $x \in X_{\alpha}$.
    \end{definition}
    \begin{figure}[!ht]
    \begin{center}
    \label{schematic2}
              \includegraphics*{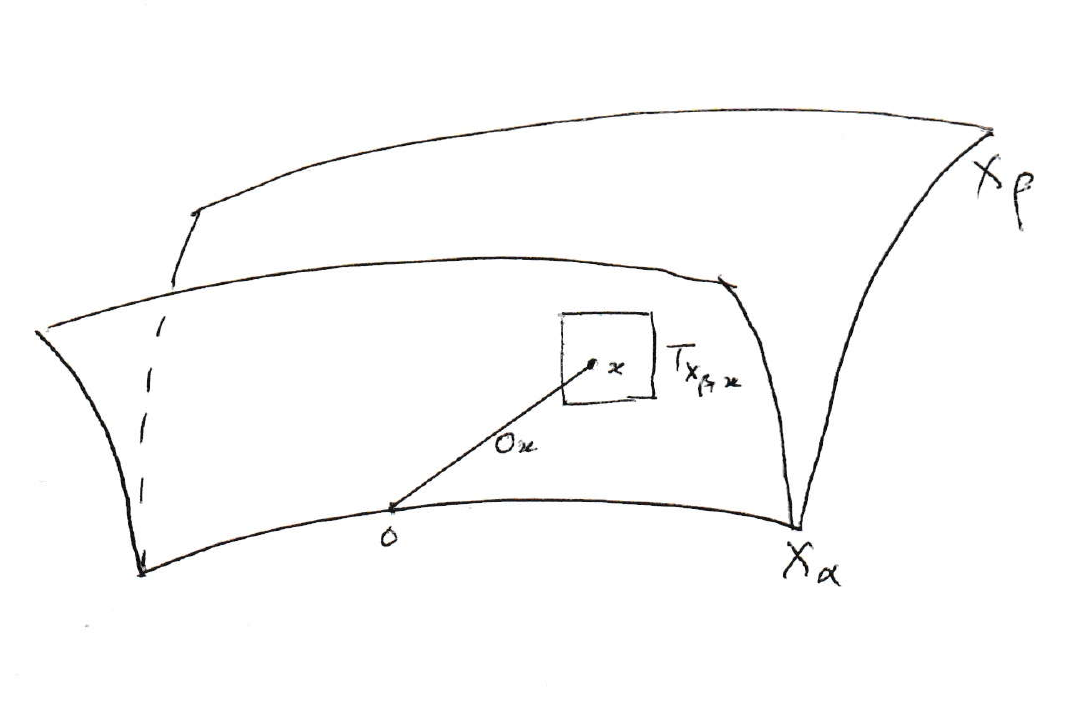}
    \end{center}
\end{figure}
\subsection{Stratifications}\label{stratifications}

    We will now state two fundamental theorems concerning Whitney's conditions, the first of which was proved by
    Whitney himself and the second by R. Thom and J. Mather. The proofs can be found in \cite{Whi1}, and \cite{Ma1} 
    respectively.

\begin{theorem}{\rm (Whitney)}
    Let $M$ be a reduced complex analytic space and let $X\subset M$ be a locally closed analytic subspace of $M$.
    Then, there exists a Whitney stratification $M=\bigcup M_\alpha$ of $M$ such that:
    \begin{itemize}
     \item[i)] $X$ is a union of strata. 
     \item[ii)] If $M_\beta \cap \overline{M_\alpha} \neq \emptyset$ then $M_\beta \subset \overline{M_\alpha}$.
    \end{itemize}  
\end{theorem}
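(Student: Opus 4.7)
My plan is to build a decreasing filtration $M=Z_d\supset Z_{d-1}\supset\cdots\supset Z_{-1}=\emptyset$ of $M$ by closed analytic subspaces with $\dim Z_i\leq i$, such that each difference $Z_i\setminus Z_{i-1}$ is smooth of pure dimension $i$ (or empty), such that both $\overline{X}$ and $\overline{X}\setminus X$ (the latter closed analytic because $X$ is locally closed) are unions of some $Z_i$, and such that for every $i>j$ the Whitney conditions hold for every incident pair $(Z_i\setminus Z_{i-1},\,Z_j\setminus Z_{j-1})$ at every point. The strata $M_\alpha$ of the theorem will then be the connected components of the locally closed smooth analytic spaces $Z_i\setminus Z_{i-1}$. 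Smoothness and pure dimension of each stratum, local finiteness, and clause (i) of the theorem are all built into the construction.

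I would produce the filtration by descending induction on $i$. Set $Z_d=M$. Assuming $Z_d\supset\cdots\supset Z_i$ have been built, form $Z_{i-1}^{(0)}:=\mathrm{Sing}(Z_i)\cup(\overline{X}\cap Z_i)\cup((\overline{X}\setminus X)\cap Z_i)$, where each of the last two is included only when its dimension drops below $i$; this is a closed analytic proper subset of $Z_i$. Next, for each irreducible component $S$ of each higher piece $Z_k\setminus Z_{k-1}$ with $k>i$ whose closure meets $Z_i\setminus Z_{i-1}^{(0)}$, invoke the stratifying property of the Whitney conditions (\cite[Lemma~19.3]{Whi1}): the locus $\widetilde{C}(\overline{S},\,Z_i\setminus Z_{i-1}^{(0)})$ where the Whitney conditions fail is contained in a nowhere dense closed analytic subset of $Z_i\setminus Z_{i-1}^{(0)}$. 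Adjoining the closures of all these bad loci to $Z_{i-1}^{(0)}$ and using local finiteness of irreducible components of coherent analytic subsets yields a closed analytic set $Z_{i-1}\subsetneq Z_i$ of dimension $<i$, and the descending induction terminates at $Z_{-1}=\emptyset$.

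It remains to establish clause (ii) of the theorem, which, given that the strata are connected, immediately implies the frontier condition in Definition~\ref{stratif}. Fix strata $M_\alpha\subset Z_i\setminus Z_{i-1}$ and $M_\beta\subset Z_j\setminus Z_{j-1}$ with $j<i$ and $A:=M_\beta\cap\overline{M_\alpha}\neq\emptyset$. Since $A$ is closed in $M_\beta$ and $M_\beta$ is connected, it suffices to show $A$ is open. Fix $p\in A$: the Whitney conditions for $(\overline{M_\alpha},M_\beta)$ at $p$ imply, via the Thom--Mather first isotopy theorem (\cite{Ma1,Th}) applied locally, that a neighborhood of $p$ in $\overline{M_\alpha}\cup M_\beta$ carries a topological product structure along $M_\beta$ exhibiting $M_\beta$ as the base of a locally trivial fibration by the link of $M_\alpha$ at $p$; this forces an open neighborhood of $p$ in $M_\beta$ to lie in $\overline{M_\alpha}$.

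The main obstacle is twofold. First, the stratifying property of the Whitney conditions---the assertion that $\widetilde{C}(W_1,W_2)$ lies in a nowhere dense closed analytic subset of $W_2$---is the nontrivial input that keeps dimensions strictly dropping in the descending induction; Whitney's original argument is in \cite{Whi1}, and an algebraic reproof via integral dependence on ideals is given in \cite{Te3}. Second, deriving the frontier condition from pointwise Whitney regularity requires the controlled vector fields underlying the Thom--Mather first isotopy theorem; this is the substantive topological input. Once both ingredients are granted, the rest is the inductive bookkeeping of the construction and the verification that passage to connected components preserves local finiteness and compatibility with the filtration.
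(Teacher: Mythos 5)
Your overall architecture is the standard one that the paper itself points to without proof: descending induction on skeleta $Z_d\supset Z_{d-1}\supset\cdots$, with the nontrivial input being that the Whitney conditions are a stratifying condition (\cite[Lemma 19.3]{Whi1}, \cite[\S 2]{Lip}, \cite[pp. 478--480]{Te3}); your bookkeeping for making $X$ a union of strata and for local finiteness is correct (modulo the small omission that the irreducible components of $Z_i$ of dimension $<i$ must also be pushed into $Z_{i-1}$ so that $Z_i\setminus Z_{i-1}$ comes out pure of dimension $i$). The step that does not hold up as written is your derivation of clause (ii). You invoke the Thom--Mather first isotopy theorem for the pair $(\overline{M_\alpha},M_\beta)$ at a point $p\in M_\beta\cap\overline{M_\alpha}$, but that theorem (Theorem \ref{T-M} here, and in Mather's notes) is stated for a \emph{Whitney stratification}, and the condition of frontier is part of the definition of a stratification (Definition \ref{stratif}, item (iii)); the controlled tube systems underlying the isotopy lemma are constructed for stratified sets already assumed to satisfy the frontier condition. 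As stated the argument is therefore circular, and repairing it by reproving a local isotopy statement for a partition that is only pairwise Whitney-regular is exactly the kind of substantive topological work one wants to avoid in this proof.

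The gap closes easily, in either of two ways already latent in your construction, and with no topology at all. First, the paper's definition of the good set $C(W_1,W_2)$ requires, at points $x\in W_1\cap W_2$, not only the condition $C(W_1,W_2,x)$ but also the germ inclusion $(W_2,x)\subset (W_1,x)$; so the bad set $\widetilde C(W_1,W_2)$ you push down already contains the locus where that inclusion fails, and that locus sits inside the closed analytic set $W_1\cap W_2$, which is nowhere dense in $W_2$ whenever $W_2\not\subset W_1$. Using $\widetilde C$ as actually defined, each surviving stratum $M_\beta$ has the property that $M_\beta\cap\overline{M_\alpha}$ is open as well as closed, which is clause (ii). Second, and equivalently: each $\overline{M_\alpha}$ is a closed analytic set --- it is an irreducible component of some $Z_k$, because the smooth locus of an irreducible complex analytic set minus a proper closed analytic subset is connected --- so you may adjoin to $Z_{i-1}$ the components of $\overline{M_\alpha}\cap Z_i$ of dimension $<i$, exactly as you already do for $\overline X$ and $\overline X\setminus X$; then each connected component $M_\beta$ of $Z_i\setminus Z_{i-1}$ is either contained in or disjoint from $\overline{M_\alpha}$ by irreducibility of $M_\beta$. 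Either repair removes the appeal to the isotopy theorem entirely and is the route taken in the references the paper cites.
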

  In fact, one can prove that \textit{any stratifying condition gives rise to a locally finite stratification of any space $X$ such that all pairs of strata satisfy the given condition.} See (\cite[\S 2]{Lip}, \cite[p. 478-480]{Te3}).\par\medskip\noindent
  Given a germ of $t$-dimensional nonsingular subspace $(Y,0)\subset (\C^n,0)$, by a (germ of) local holomorphic retraction $\rho\colon  (\C^n,0)\to (Y,0)$ we mean the first projection of a product decomposition $(\C^n,0)\simeq (Y,0)\times (\C^{n-t},0)$. By the implicit function theorem, such retractions always exist. One applies to such retractions Thom's first isotopy lemma to prove:  
\begin{theorem}\label{T-M}\textnormal{(Thom-Mather)(see \cite[Th\'eor\`eme 1G1]{Th}, \cite{Ma2}, \cite{G-M}, Chap. 1, 1.5)}$\;$\\
    Taking $M=X$ in the previous statement, let $X=\bigcup_\alpha X_\alpha$ be a Whitney stratification of $X$, let $x \in X$ and let $X_\alpha\subset X$ be the stratum
    that contains $x$. Then, for any local embedding $(X,x)\subset (\C^n,0)$ and any local retraction
    $\rho:(\C^n,0)\to (X_\alpha,x)$ and a real number $\epsilon_0>0$ such that for all $0<\epsilon < \epsilon_0$ 
    there exists $\eta_\epsilon$ such that for any $0<\eta < \eta_\epsilon$ there is a homeomorphism h
    \[\xymatrix{\B(0,\epsilon) \cap \rho^{-1}(\B(0,\eta) \cap M_\alpha) \ar[rr]^h \ar[ddr]_\rho  & &
                (\rho^{-1}(x)\cap \B(0,\epsilon )) \times (X_\alpha \cap \B(0,\eta))\ar[ddl]^{pr_2} \\ \\
              & X_\alpha \cap \B(0,\eta) & }\]
    compatible with the retraction $\rho$, and inducing for each stratum $X_\beta$ such that $\overline{X_\beta}
    \supset X_\alpha$ a homeomorphism 
    \[\overline{X_\beta}\cap \B(0,\epsilon ) \cap \rho^{-1}(\B(0,\eta) \cap X_\alpha) \longrightarrow
               (\overline{X_\beta}\cap \rho^{-1}(x)\cap(\B(0,\epsilon))\times (X_\alpha \cap \B(0,\eta))\]
    where $\B(0,\epsilon )$ denotes the ball in $\C^n$ with center in the origin and radius $\epsilon$.
\end{theorem}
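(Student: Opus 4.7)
The plan is to follow Thom and Mather and construct the homeomorphism $h$ by integrating controlled stratified vector fields. First I would set up control data near $x\in X_\alpha$: a system of tubular neighborhoods $\{(T_\beta,\pi_\beta,\rho_\beta)\}$ for the strata $X_\beta$ with $X_\alpha\subset \overline{X_\beta}$, where $\pi_\beta$ is a smooth retraction onto $\overline{X_\beta}$ and $\rho_\beta$ a non-negative tubular function vanishing exactly on $\overline{X_\beta}$, satisfying the standard compatibility $\pi_\alpha\circ\pi_\beta=\pi_\alpha$ and $\rho_\alpha\circ\pi_\beta=\rho_\alpha$ on $T_\alpha\cap T_\beta$. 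After the given product identification $(\C^n,0)\simeq (X_\alpha,x)\times(\C^{n-t},0)$ one may take $\pi_\alpha=\rho$ to be the first projection and $\rho_\alpha(z)=\Vert z-\rho(z)\Vert^2$.

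Next I would fix coordinates $(y_1,\ldots,y_t)$ on $X_\alpha$ near $x$ and lift each coordinate field $\partial/\partial y_i$ to a continuous stratified vector field $\tilde v_i$ on a neighborhood of $x$ in $X$: on every stratum $X_\beta$ with $X_\alpha\subset\overline{X_\beta}$, $\tilde v_i$ restricts to a smooth vector field tangent to $X_\beta$ satisfying the two control equations $d\pi_\alpha(\tilde v_i)=\partial/\partial y_i$ and $\tilde v_i(\rho_\alpha)=0$. Local lifts on each $X_\beta$ exist because Whitney's condition~(a) forces $X_\beta$ to be transverse to the fibers of $\pi_\alpha$ near $X_\alpha$; global smooth lifts on each $X_\beta$ are then obtained by a partition of unity. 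The delicate point is to patch these lifts across strata into a single continuous stratified field.

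The main obstacle is precisely this continuity at $X_\alpha$ together with simultaneous compatibility with $\rho_\alpha$, and it is here that Whitney's condition~(b) enters decisively. It guarantees that for any sequence $x_k\in X_\beta$ tending to a point $y\in X_\alpha$, every limit $T$ of tangent spaces $T_{X_\beta,x_k}$ contains the limiting secant direction $\lim_k[x_k\pi_\alpha(x_k)]$, which spans a fiber direction of $\rho_\alpha$. Consequently one can select the local lifts to be tangent to $X_\beta$ and orthogonal, in a fixed hermitian metric, to the fiber direction of $\pi_\alpha$; such choices stay bounded, extend continuously by $\partial/\partial y_i$ on $X_\alpha$, and automatically satisfy $\tilde v_i(\rho_\alpha)=0$. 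Condition~(a) simultaneously ensures that the $\pi_\alpha$-component really equals $\partial/\partial y_i$ in the limit.

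Finally I would integrate $\tilde v_1,\ldots,\tilde v_t$ for short times along $(y_1,\ldots,y_t)$; the composed flow $\Phi_y$ is a stratified homeomorphism of a neighborhood of $\rho^{-1}(x)$ onto its image, sending the slice $\rho^{-1}(x)$ to the slice $\rho^{-1}(y)$ and preserving each $\overline{X_\beta}$ because the $\tilde v_i$ are tangent to every stratum. Setting $h^{-1}(p,y)=\Phi_y(p)$ and choosing $\epsilon_0$ so that the flows are defined inside $\B(0,\epsilon)$ for all $\epsilon<\epsilon_0$, then $\eta_\epsilon$ small enough that the orbits starting in $\rho^{-1}(x)\cap\B(0,\epsilon)$ cover $\rho^{-1}(\B(0,\eta)\cap X_\alpha)\cap\B(0,\epsilon)$ without escaping the ball, yields the required $h$. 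Preservation of $\rho_\alpha$ gives the compatibility $\rho\circ h^{-1}=\mathrm{pr}_2$, and the tangency of each $\tilde v_i$ to $\overline{X_\beta}$ produces the induced homeomorphism on each closed stratum, completing the argument via Thom's first isotopy lemma.
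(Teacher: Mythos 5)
The paper offers no proof of this theorem: it quotes it from Thom, Mather and Goresky--MacPherson, remarking only that it follows from Thom's first isotopy lemma applied to a local retraction. Your sketch reconstructs the standard Thom--Mather argument (control data, lifting of vector fields from $X_\alpha$, integration of the flows), so the overall architecture is the right one and matches the cited references. But two steps in your write-up are genuine gaps rather than omitted routine details, and they are precisely the places where the real work of the Thom--Mather proof is done.

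First, you claim that Whitney's conditions a) and b) let you choose the lifts $\tilde v_i$ so that the resulting stratified vector field is \emph{continuous}. This is both more than the proof needs and not available in general: controlled vector fields on a Whitney stratified set need not extend continuously across strata (continuity of the field is essentially Verdier's condition $(w)$, i.e.\ rugosity, which is equivalent to the Whitney conditions in the complex analytic case but is not what the differentiable proof uses), and even if the field were continuous, mere continuity gives neither uniqueness of integral curves nor continuous dependence on initial conditions, so ``integrate and obtain a homeomorphism'' does not follow. What Mather actually proves is that the \emph{flow} is continuous and well defined, and this is deduced from the control identities $\pi_\gamma\circ\Phi_t=\Phi_t\circ\pi_\gamma$ and $\rho_\gamma\circ\Phi_t=\rho_\gamma$ by induction on the depth of the stratification. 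Second, and relatedly, you impose the control equations only with respect to the tube around $X_\alpha$. For the integral curves in a stratum $X_\beta$ to exist for the required time, not to run into lower-dimensional strata, and to fit together continuously, the lift must be controlled with respect to the whole compatible system of tubes $(T_\gamma,\pi_\gamma,\rho_\gamma)$ for \emph{every} intermediate stratum $X_\gamma$ with $X_\alpha\subset\overline{X_\gamma}$ and $X_\gamma\subset\overline{X_\beta}$; constructing such a system and performing the lifting inductively over the partial order of strata is the heart of the proof, and your sketch passes over it. With these two repairs --- replacing ``continuous stratified vector field'' by ``vector field controlled with respect to a full system of control data'' and deriving continuity and completeness of the flow from the control identities rather than from continuity of the field --- your argument becomes the proof in \cite{Ma2} and \cite{G-M}.
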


    In short, each $\overline{X_\beta}$, or if you prefer, the stratified set $X$, is locally topologically trivial
    along $X_\alpha$ at $x$. A natural question then arises: is the converse to the Thom-Mather theorem true? That
    is, does local topological triviality implies the Whitney conditions? The question was posed by the second author in \cite{Te1} for families of hypersurfaces with isolated singulaities.

    The answer is \emph{NO}. In [BS] Brian\c con and Speder showed that the family of surface germs
    \[z^5+ty^6z+xy^7+x^{15}=0\]
    (each member, for small t, having an isolated singularity at the origin) is locally topologically trivial, 
    but not Whitney in the sense that the nonsingular part of this hypersurface in $\C^4$ does not satisfy at the origin Whitney's condition along the singular locus, which is the $t$-axis.\\
    To explain the origin of this example we need to introduce the Milnor number of an isolated singularity of hypersurface. We shall meet it again in example \ref{smoothing} below. The Milnor number $\mu^{(d+1)}(X,x)$ of an isolated singularity of hypersurface $f(z_1,\ldots ,z_{d+1})=0$ as above is defined algebraically as the multiplicity in $\C\{z_1,\ldots ,z_{d+1}\}$ of the Jacobian ideal $j(f)=(\frac{\partial f}{\partial z_1},\ldots ,\frac{\partial f}{\partial z_{d+1}})$, which is also the dimension of the $\C$-vector space $\frac{\C\{z_1,\ldots ,z_{d+1}\}}{j(f)}$ since in this case the partial derivatives form a regular sequence.\par
    \begin{definition}\label{TT} We say that two germs $(X,0)\subset(\C^n,0)$ and $(X',0)\subset(\C^n,0)$ have the same embedded topological type if there exists a germ of homeomorphism $\phi\colon (\C^n,0)\to (\C^n,0)$ such that $\phi (X)=X'$.
    \end{definition} 
    By the Thom-Mather topological triviality theorem (Theorem \ref{T-M} above), if a family of germs of spaces with isolated singularities satisfies the Whitney conditions along its singular locus, all its members have the same embedded topological type.\par
     In \cite[Th\'eor\`eme 1.4]{Te1} it was shown that the Milnor number is an invariant of the embedded topological type of the germ of hypersurface and it was conjectured that the constancy of the Milnor number at the origin in an analytic family $X$ given by $F(t, z_1,\ldots ,z_{d+1})=0$, with $F(t, 0,\ldots ,0)= 0$, of hypersurfaces with isolated singularities was equivalent to the Whitney conditions for the smooth part of $X$ along the $t$-axis. Part of this conjecture was the statement that if the Milnor number is constant in an analytic family of hypersurfaces with isolated singularity, then the whole sequence of Milnor numbers $\mu^{(i)}(X_t,0)=\mu (X_t\cap H_{d+1-i},0)$ of general plane sections of all dimensions $i=1,\ldots, d+1$ (which also have isolated singularities) is constant. It was proved in \cite[Chap. II, \S 3]{Te1} that the constancy of all these Milnor numbers implies the Whitney conditions. The converse was proved later by Brian\c con and Speder in \cite{B-S2}. We shall see below in example \ref{smoothing} that this equivalence is now a special case of a general result. Note that the Milnor number of a general section by a line is the multiplicity minus one.\par\noindent  In the same period, L\^e and Ramanujam proved in \cite{L-R} that the constancy of the Milnor number implied the topological triviality of $X$ along the $t$ axis when $d\neq 2$ and L\^e proved in \cite{L2} that when $d=1$ the constancy of the Milnor number implies the constancy of multiplicity.\par\noindent Therefore, to prove that the local topological triviality does not imply the Whitney conditions for the nonsingular part of $X$ along the $t$-axis and thus give a counterexample to the conjecture of \cite{Te1} is the same as proving that the local topological type of the singularity does not determine the local topological type of a general hyperplane section through the origin, or that the family obtained by intersecting $X$ by a general hyperplane containing the $t$ -axis is not topologically trivial. This is what the example of Brian\c con-Speder does.
    
\begin{ex}\label{B-S}    Going back to this example, let us consider plane sections of the hypersurface $z^5+ty^6z+y^7x+x^{15}=0$ by the hyperplanes $y=ax+bz$ which form an Zariski open subset of the family of hyperplanes in $\C^4$ containing the $t$ axis. The resulting equation is the family of curves in the $(x,z)$ plane:
    \[z^5+tz(ax+bz)^6+x(ax+bz)^7+x^{15}=0\]
Using classical Newton polygon methods it is not difficult to see that if $a\neq 0$  for $t=0$ the germ of curve is irreducible, with a singularity of type $z^5-x^8=0$, while if $t\neq 0$ the curve has three irreducible components, two of type $z^2-x^3=0$ and one nonsingular component of type $z-x^2=0$.
\end{ex}
  However, a strengthened version of local topological triviality is equivalent to the Whitney conditions. This was proved by L{\^e} and Teissier (see \cite[\S 5]{L-T3}, and \cite[Chapitre VI]{Te3}). Let us refer to the 
    conclusion of the Thom-Mather theorem as condition $(TT)$ (local topological triviality), so we can restate theorem 
    \ref{T-M} as: Whitney implies $(TT)$.\par Let $X=\bigcup X_\alpha$ be a stratification of the complex analytic space $X$ and let $d_\alpha={\rm dim}X_\alpha$. We say 
    that a stratification satisfies the condition $(TT)^*$ (local topological triviality for the general sections) if in 
    addition to the condition $(TT)$, for every $x \in X_\alpha$, there exists for every $k> {\rm dim}X_\alpha$ a dense Zariski open
    set $\Omega$ in $G(k-d_\alpha, n-d_\alpha)$ such that for any nonsingular space $E$ containing $X_\alpha$ and such
    that $T_xE \in \Omega$, the (set-theoretic) intersection $\overline{X_\beta}\cap E$ satisfies $(TT)$ for all $X_\beta$ such that
    $\overline{X_\beta}\supset X_\alpha$. 

\begin{theorem}\label{TT}\textnormal{(L{\^e}-Teissier), see \cite[Th\'eor\`eme 5.3.1]{L-T3}}$\;$\\
    For a stratification  $X=\bigcup X_\alpha$ of a complex analytic space $X$, the following conditions are equivalent:
    \begin{itemize}
          \item [1)] $X=\bigcup X_\alpha$ is a Whitney stratification.
          \item [2)] $X=\bigcup X_\alpha$ satifies condition $(TT)^*$.
    \end{itemize}           
\end{theorem}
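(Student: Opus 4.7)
The plan is to prove both directions separately, with the forward implication being the easier of the two, using that general sections of Whitney-stratified sets are again Whitney-stratified, while the reverse implication uses the characterization of Whitney conditions via equimultiplicity of polar varieties together with the fact that these polar multiplicities are determined by topological invariants of general sections.

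For $1) \Rightarrow 2)$: assume $X = \bigcup X_\alpha$ is Whitney. By Theorem \ref{T-M} (Thom--Mather) we already have condition $(TT)$. To upgrade to $(TT)^*$, fix $x \in X_\alpha$ and let $k > d_\alpha$. I would invoke a Bertini-type theorem for Whitney stratifications (in the spirit of \cite[Chap. III]{Te3}): the set of nonsingular germs $E$ of dimension $k$ containing $X_\alpha$ with $T_xE$ in a suitable dense Zariski open $\Omega \subset G(k-d_\alpha, n-d_\alpha)$ is such that the intersections $\overline{X_\beta} \cap E$ are transverse to the Whitney strata of the germs $(\overline{X_\beta},x)$ off $X_\alpha$ and, by the stability of Whitney conditions under transverse intersection, inherit a Whitney stratification whose minimal stratum is still $X_\alpha$. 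Applying Thom--Mather again to this induced Whitney stratification of $E \cap X$ along $X_\alpha$ yields the desired $(TT)$ property for the sections, which is precisely $(TT)^*$.

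For $2) \Rightarrow 1)$: the central idea is to use the polar-variety characterization of Whitney conditions, namely that $(\overline{X_\beta}^0, X_\alpha)$ satisfies Whitney's conditions at $x$ if and only if for every $k$ with $0 \le k \le \dim \overline{X_\beta} - 1$ and every sufficiently general $L^{d-k}$, the polar variety $P_k(\overline{X_\beta}; L^{d-k})$ is equimultiple along $X_\alpha$ at $x$ (in particular, empty stays empty). By Proposition \ref{multpolv} and the formula $(E)$ relating polar multiplicities to local vanishing Euler--Poincar\'e characteristics of general linear sections, the numerical invariants $m_x(P_k(\overline{X_\beta};L^{d-k}))$ are expressible as alternating sums of Euler characteristics of sets of the form $\overline{X_\beta} \cap H_\bullet \cap \mathbf{B}(x,\epsilon)$ for general linear slices $H_\bullet$ of various codimensions. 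Under $(TT)^*$, these intersections are locally topologically trivial along $X_\alpha$, so each such Euler characteristic is locally constant on $X_\alpha$ near $x$; therefore every polar multiplicity is locally constant on $X_\alpha$, which gives the equimultiplicity characterizing Whitney.

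The main obstacle is the reverse direction, specifically the topological-to-polar-multiplicity passage: one must ensure that the Euler-characteristic formula involved actually recovers each polar multiplicity $m_x(P_k(\overline{X_\beta};L^{d-k}))$ separately (not just some alternating combination), and that $(TT)^*$ supplies topological triviality along $X_\alpha$ for \emph{all} the relevant iterated general sections of all dimensions $\le \dim \overline{X_\beta}$ appearing in the formula. This is exactly why the condition $(TT)^*$ is phrased with a Zariski-open set of nonsingular $E$'s of every dimension $k > d_\alpha$ containing $X_\alpha$; taking iterated general hyperplane sections one can extract the full sequence of local Euler characteristics that determines the polar multiplicities, so the equimultiplicity follows. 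A secondary technical point is to ensure that the emptiness of a polar variety is preserved, which by Remark \ref{Factsofpolarvarieties}(d) reduces to a dimension statement on $\kappa^{-1}(x)$ that is itself controlled by the same topological data.
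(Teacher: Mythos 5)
Your architecture for both directions coincides with the one the paper (following \cite{L-T3}) intends: Thom--Mather plus transversality of general nonsingular spaces containing $X_\alpha$ for $1)\Rightarrow 2)$, and for $2)\Rightarrow 1)$ the chain ``$(TT)^*$ $\Rightarrow$ constancy along $X_\alpha$ of the local vanishing Euler--Poincar\'e characteristics $\Rightarrow$ (via Theorem \ref{formula}) equimultiplicity of the polar varieties $\Rightarrow$ (via Theorem \ref{PE}) Whitney''. The forward direction is essentially right (note that the transversality of a general $E\supset X_\alpha$ to the nearby strata $X_\beta$ is itself a consequence of the Whitney conditions, via the dimension bound on $\xi^{-1}(0)$ in Theorem \ref{PE}). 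Also, the formula you need is that of Theorem \ref{formula}; the formula $(E)$ of the introduction concerns nonsingular projective varieties.

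The genuine gap is the sentence ``Under $(TT)^*$, these intersections are locally topologically trivial along $X_\alpha$, so each such Euler characteristic is locally constant on $X_\alpha$ near $x$.'' The quantities $\chi_i(\overline{X_\beta},\{y\})$ entering Theorem \ref{formula} are Euler characteristics of $\overline{X_\beta}\cap (L_i+t)\cap \mathbf{B}(y,\epsilon)$ for affine slices $L_i+t$ of general direction and codimension $i>d_\alpha$ which do \emph{not} pass through $y$ and do not contain $X_\alpha$ (they are Milnor-fibre-like objects; see Proposition \ref{HOT} and condition 6 of section \ref{total}), whereas $(TT)^*$ only asserts topological triviality along $X_\alpha$ of the intersections $\overline{X_\beta}\cap E$ for nonsingular $E$ \emph{containing} $X_\alpha$. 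The trivializing homeomorphisms are not linear and do not carry one family of slices to the other, so the constancy of the $\chi_i$ is not a formal consequence of $(TT)^*$; bridging the two kinds of sections (``passing near $x$ or through $x$'', in the language of the introduction) is one of the substantive steps of \cite{L-T3}, using fundamental systems of good neighborhoods and the compatibility of the trivializations with the retractions. A second, smaller, point you only gesture at: Theorem \ref{formula} yields, for each stratum, a single alternating sum over all incident strata, so recovering each $m_y(P_k(\overline{X_\beta},y))$ individually requires applying the formula to every closed stratum $\overline{X_\gamma}$ and to its general linear sections and running an induction on the dimension (or incidence depth) of the strata; this inversion is where the data for all codimensions provided by $(TT)^*$ is actually consumed.
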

  \noindent We shall see more about this below in section \ref{total}. 
  \subsection{Whitney stratifications and polar varieties}  \label{WhitneyinNormalConormal}

     We now have all the ingredients so it is time to put them together. Let us fix a nonsingular subspace 
     $Y \subset X$ through $0$ of dimension $t$ as before, recall that we are assuming $X$ is a reduced, pure dimensional analytic 
     space of dimension $d$. Let us recall the notations of section 3.1  and take a look at the normal/conormal diagram:

     \[\xymatrix{E_YC(X)\ar[r]^{\hat{e}_Y}\ar[dd]^{\kappa'}\ar[ddr]^\xi & C(X)\ar[dd]^\kappa
             \\
             & &  \\
             E_YX\ar[r]_{e_Y}  & Y\subset X }\]
             
     Remember that $E_YC(X)$ is the blowing up of $\kappa^{-1}(Y)$ in $C(X)$, and $\kappa'$ is obtained
 from the universal property of the blowing up, with respect to $E_YX$ and the map $\xi$. Just as in the case where $Y=\{0\}$, it is 
 worth mentioning that $E_YC(X)$ lives inside the fiber product $C(X) \times_X E_YX \subset X  
 \times \P^{n-1-t}\times \check {\P}^{n-1}$ and can be described in the following 
 way: take the inverse image of $E_YX \setminus e_Y^{-1}(Y)$ in $C(X) \times_X E_YX$ and close it,  
 thus obtaining $\kappa'$ as the restriction of the second projection to this space.\\
    
     Looking at the definitions, it is not difficult to prove that, if we consider the divisor:
     \[D=|\xi^{-1}(Y)| \subset E_YC(X), \;\; D \subset Y \times \P^{n-1-t}\times \check {\P}^{n-1},\]
     we have that, denoting by $\check{\P}^{n-1-t}$ the space of hyperplanes containing $T_0Y$:

     \begin{itemize}
     \item[$\bullet$)] The pair $(X^0,Y)$ satisfies Whitney's condition a) along $Y$ if and only if
     we have the set theoretical equality $C(X)\cap C(Y) = \kappa^{-1}(Y)$. It satisfies Whitney's condition a) at $0$ if and only if $\xi^{-1} (0) \subset
     \P^{n-1-t}\times \check {\P}^{n-1-t}$.
     \end{itemize}
     
      Note that we have the inclusion $C(X)\cap C(Y) \subset \kappa^{-1}(Y)$, so
     it all reduces to having the inclusion $\kappa^{-1}(Y)\subset C(Y)$, and since we have
     already seen that every limit of tangent hyperplanes $H$ contains a limit of
     tangent spaces $T$, we are just saying that every limit of tangent hyperplanes to $X$ at
     a point $y \in Y$, must be a tangent hyperplane to $Y$ at $y$. Following this line of thought,
     satisfying condition a) at $0$ is then equivalent to the inclusion $\kappa^{-1}(0) \subset
     \{0\} \times \check{\P}^{n-1-t}$ which implies $\xi^{-1} (0) \subset \P^{n-1-t}\times \check
     {\P}^{n-1-t}$.
         
     \begin{itemize}
     \item[$\bullet$)] The pair $(X^0,Y)$ satisfies Whitney's condition b) at $0$ if and only
      if $\xi^{-1}(0)$ is contained in the incidence variety $I\subset \P^{n-1-t}\times \check
      {\P}^{n-1-t}$.
     \end{itemize}
     
        This is immediate from the relation between limits of tangent hyperplanes and limits of  
 tangent spaces and the interpretation of $E_YC(X)$ as the closure of the inverse image
 of $E_YX \setminus e_Y^{-1}(Y)$ in $C(X) \times_X E_YX$ since we are basically taking limits as $x\to Y$ of 
 couples $(l,H)$ where $l$ is the direction in $\P^{n-1-t}$ of a secant line $\overline{yx}$ with $x\in X^0\setminus Y, y=\rho (x)\in Y$, where $\rho$ is some local retraction of the ambient space to the nonsingular subspace $Y$, and $H$ is a tangent hyperplane to $X$ at $x$. So, in order to verify the Whitney conditions, it is important to control the geometry of the projection $D\to Y$ of the divisor
 $D \subset E_YC(X)$.\\
 \begin{remark}\label{Intdep} Although it is beyond the scope of these notes, we point out to the interested reader that there is an algebraic definition of the Whitney conditions for $X^0$ along $Y\subset X$ solely in terms of the ideals defining $C(X)\cap C(Y)$ and $\kappa^{-1}(Y)$ in $C(X)$. Indeed, the inclusion $C(X)\cap C(Y) \subset \kappa^{-1}(Y)$ follows from the fact that the sheaf of ideals $\Jj_{C(X)\cap C(Y)}$ defining $C(X)\cap C(Y)$ in $C(X)$ contains the sheaf of ideals $\Jj_{\kappa^{-1}(Y)}$ defining $\kappa^{-1}(Y)$, which is generated by the pull-back by $\kappa$ of the equations of $Y$ in $X$. What was said above means that condition a) is equivalent to the second inclusion in:
     \[\Jj_{\kappa^{-1}(Y)}\subseteq \Jj_{C(X)\cap C(Y)}\subseteq \sqrt{\Jj_{\kappa^{-1}(Y)}}.\]
     It is proved in \cite{L-T2}, proposition 1.3.8 that having both Whitney conditions is equivalent to having the second inclusion in:
     \[\Jj_{\kappa^{-1}(Y)}\subseteq \Jj_{C(X)\cap C(Y)}\subseteq \overline{\Jj_{\kappa^{-1}(Y)}}.\]
     where the bar denotes the {\rm integral closure} of the sheaf of ideals, which is contained in the radical and is in general much closer to the ideal than the radical. The second inclusion is an algebraic expression of the fact that locally near every point of the common zero set the modules of local generators of the ideal $\Jj_{C(X)\cap C(Y)}$ are bounded, up to a multiplicative constant depending only on the chosen neighborhood of the common zero, by the supremum of the modules of generators of $\Jj_{\kappa^{-1}(Y)}$. See \cite[Th\'eor\`eme 7.2]{Lej-Te}. We shall see more about it in section \ref{sec:specialization}.\par\noindent
In the case where $Y$ is a point, the ideal defining $ C(X)\cap C(\{y\})$ in $C(X)$ is just the pull-back by $\kappa$ of the maximal ideal $m_{X,y}$, so it coincides with $\Jj_{\kappa^{-1}(Y)}$ and Whitney's lemma follows.
     \end{remark}

     \begin{definition} Let $Y \subset X$ as before. Then we say that the local polar variety $\polar$ is
      equimultiple along $Y$ at a point $x \in Y$ if the map $y\mapsto m_y(\polar)$ is constant for $y\in Y$ in a neighborhood of $x$. \par\noindent Note that this implies that if  $(\polar ,x)\neq \emptyset$, then $\polar \supset Y$ in a neighborhood of $x$ since the emptyness of a germ is equivalent to multiplicity zero.
     \end{definition}
     
     Now we can state the main theorem of these notes, a complete proof of which can be found in
     \cite[Chapter V, Thm 1.2, p. 455]{Te3}.
          
     \begin{theorem}\label{PE}{\rm (Teissier; see also \cite{HM} for another proof)}\label{prin}
            Given $0 \in Y \subset X$ as before, the following conditions are equivalent, where $\xi$ is the diagonal map in the normal/conormal diagram above:
     \begin{itemize}
        \item[1)] The pair $(X^0,Y)$ satisfies Whitney's conditions at $0$. 
        \item[2)] The local polar varieties $P_k(X,L)$, $0\leq k \leq d-1$, are equimultiple
                  along $Y$ (at $0$), for general $L$.
        \item[3)] ${\rm dim} \; \xi^{-1}(0)=n-2-t.$
     \end{itemize}        
     \end{theorem}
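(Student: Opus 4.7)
The three conditions express the same equisingularity phenomenon from the differential-geometric, algebraic, and dimensional viewpoints respectively. My plan is to prove $1)\Leftrightarrow 3)$ directly using the observations about the normal/conormal diagram already recorded in the excerpt, and then $2)\Leftrightarrow 3)$ by interpreting polar multiplicities as intersection numbers on $C(X)$ and translating them to fiber dimensions of $\xi$. Throughout, note that $E_YC(X)$ has dimension $n-1$ and $D=|\xi^{-1}(Y)|$ is a divisor in it of pure dimension $n-2$, so the map $\xi\colon D\to Y$ has generic fiber dimension $n-2-t$; consequently $\dim\xi^{-1}(0)\geq n-2-t$ by semicontinuity, and $3)$ is the assertion that equality holds, i.e. that $\xi$ is equidimensional at $0$.

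For $1)\Leftrightarrow 3)$: the text already observes that Whitney's condition a) is equivalent to $\xi^{-1}(0)\subset \P^{n-1-t}\times\check{\P}^{n-1-t}$ and that, granted a), condition b) is equivalent to the further inclusion $\xi^{-1}(0)\subset I$. To upgrade from such a containment to the \emph{dimensional} equality, I would exploit the Lagrangian nature of the conormal space. Applying a relative version of Proposition \ref{SpecLag} to the composed map $E_YC(X)\to C(X)\to Y$, one shows that equidimensionality of $\xi$ at $0$ forces the special fiber to inherit the $F$-Lagrangian structure, and hence to lie in the projective conormal/incidence geometry that encodes Whitney's conditions. Conversely, once Whitney's conditions are known, the incidence constraints, combined with the a priori lower bound $n-2-t$ from semicontinuity, pin the dimension down to exactly $n-2-t$; any strict inequality would produce extra components in $\xi^{-1}(0)$ that violate the Lagrangian half-dimensionality inherited by the generic fibers.

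For $2)\Leftrightarrow 3)$: by Proposition \ref{polarv} and Remark \ref{Factsofpolarvarieties}, the multiplicity $m_y(P_k(X,L))$ for sufficiently general $L$ can be read off as an intersection multiplicity of $C(X)$ with the linear space $\C^n\times L^{d-k}$ inside the fiber $\kappa^{-1}(y)$. Equimultiplicity along $Y$ for every $k=0,\ldots,d-1$ then forbids any jump in dimension of $\kappa^{-1}(y)\cap\lambda^{-1}(L^{d-k})$ as $y\to 0$, because such a jump would, by conservation of intersection numbers in a flat family, produce a strictly larger contribution to $m_0(P_k(X,L))$. Running this for all codimensions $k$ pins down the behaviour of the whole exceptional divisor of $E_YC(X)\to C(X)$ over $0$, yielding $\dim\xi^{-1}(0)=n-2-t$. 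Conversely, if $\xi$ is equidimensional at $0$, a one-parameter specialization argument along transverse slices shows that the intersection numbers defining the $m_y(P_k(X,L))$ are preserved, giving equimultiplicity.

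The main obstacle is the algebraic step in $2)\Leftrightarrow 3)$: translating equimultiplicity of polar varieties into a dimensional control on $\xi^{-1}(0)$. The technical heart is the principle of specialization of integral dependence of Hironaka--Teissier: by Remark \ref{Intdep} the Whitney conditions are equivalent to an integral dependence relation between the ideals $\Jj_{C(X)\cap C(Y)}$ and $\Jj_{\kappa^{-1}(Y)}$, and the bridge between equimultiplicity (a numerical datum of polar varieties) and this integral dependence (a geometric datum on $C(X)$) is precisely the content of \cite[Chapter V]{Te3}. This step requires the full algebraic machinery of integral closures of ideals and multiplicities in equimultiple families, and is where I would expect to invest the bulk of the technical work; the Lagrangian interpretation through Proposition \ref{SpecLag} is what keeps the geometric bookkeeping tractable.
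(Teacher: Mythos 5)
Your framework is the right one (the normal/conormal diagram, the translation of Whitney's conditions into containments of $\xi^{-1}(0)$, the a priori lower bound $\dim\xi^{-1}(0)\geq n-2-t$), but both halves of the plan have gaps that are fatal as written. For $1)\Rightarrow 3)$ you argue that a component of $\xi^{-1}(0)$ of dimension $>n-2-t$ would ``violate the Lagrangian half-dimensionality inherited by the generic fibers.'' But Whitney's conditions at $0$ only give the \emph{set-theoretic} containments $\xi^{-1}(0)\subset\P^{n-1-t}\times\check\P^{n-1-t}$ and $\xi^{-1}(0)\subset I$; the incidence variety $I$ has dimension $2(n-t)-3$, so these containments put no useful bound on $\dim\xi^{-1}(0)$. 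To conclude $\dim\xi^{-1}(0)\leq n-2-t$ you would need the contact form to vanish on the tangent vectors to $\xi^{-1}(0)$, i.e.\ each $D_\alpha$ to be $Y$-Lagrangian --- and by Proposition \ref{SpecLag} that is exactly what requires the equidimensionality of the fibers of $\xi$, i.e.\ condition 3). So this direction of your argument is circular. (The other direction, $3)\Rightarrow 1)$ via Proposition \ref{SpecLag} and the vanishing of the form being a closed condition, is sound, and is essentially how the paper deduces Theorem \ref{dual} from Theorem \ref{prin}.)

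For $2)\Leftrightarrow 3)$, note that $m_y(P_k(X,L))$ is the multiplicity at $y$ of the \emph{image} $\kappa(\lambda^{-1}(L^{d-k}))$, not an intersection number computed inside the fiber $\kappa^{-1}(y)$, and the family $y\mapsto\kappa^{-1}(y)\cap\lambda^{-1}(L^{d-k})$ is not a priori flat, so ``conservation of number'' cannot be invoked directly. More importantly, your sketch only ever controls the conormal side $\kappa^{-1}(0)$, whereas $\xi^{-1}(0)$ sits inside $\kappa^{-1}(0)\times e_Y^{-1}(0)$ and the normal-cone side must be bounded as well; this is where Hironaka's theorem enters (equimultiplicity of $P_0=X$ along $Y$ controls $\dim e_Y^{-1}(0)$, cf.\ Proposition \ref{propnc}), and it is absent from your plan. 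The proof the paper points to (\cite[Chap.~V]{Te3}) is genuinely different: it starts from the observation that condition 3) forces $\kappa^{-1}(0)\not\supset\check\P^{n-1-t}$, so that a general hyperplane containing $T_0Y$ is not a limit of tangent hyperplanes; it then reduces by induction to the case $\dim Y=1$ and shows by a geometric argument that Whitney's conditions force the relevant polar curve to be empty, and conversely that equimultiplicity of the polar varieties yields both the emptiness of the polar curve and, via Hironaka, the bound on $\dim e_Y^{-1}(0)$, whence the bound on $\dim\xi^{-1}(0)$. Your appeal to the principle of specialization of integral dependence correctly names a relevant tool, but as stated it defers the entire content of the equivalence to \cite{Te3} rather than proving it.
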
  

      Note that since ${\rm dim\  D}=n-2$ condition 3) is open and the theorem implies that
      $(X^0,Y)$ satisfies Whitney's conditions at $0$ if and only if it satisfies Whitney's 
      conditions in a neighborhood of $0$.\\

      Note also that by analytic semicontinuity of fiber dimension (see \cite[Chap. 3, 3.6]{Fi}, \cite[\S\   49]{K-K}), condition 3) is satisfied outside of a closed
      analytic subspace of $Y$, which shows that Whitney's conditions give a stratifying condition.\\

 Moreover, since a blowing up does not lower dimension, the condition \break${\rm dim} \;\xi^{-1}(0)=n-2-t$ implies 
      ${\rm dim}\; \kappa^{-1}(0)\leq n-2-t$. So that, in particular $\kappa^{-1}(0) \not\supset \check{\P}^{n-1-t}$,
      where $\check{\P}^{n-1-t}$ denotes as before the space of hyperplanes containing $T_0Y$. This tells us 
      that \textit{a general hyperplane containing $T_0Y$ is not a limit of tangent hyperplanes to $X$}. This fact is crucial in the proof that Whitney conditions are equivalent to the equimultiplicity of polar varieties since it allows the start of an inductive process. In the actual proof of \cite{Te3}, one reduces to the case where ${\rm dim}Y=1$ and shows by a geometric argument that the Whitney conditions imply that the polar curve has to be empty, which gives a bound on the dimension of $\kappa^{-1}(0)$. Conversely, the equimultiplicity condition on polar varieties gives bounds on the dimension of $\kappa^{-1}(0)$ by implying the emptiness of the polar curve and on the dimension of $e_Y^{-1}(0)$ by Hironaka's result, hence a bound on the dimension of $\xi^{-1}(0)$. \\
      
      It should be noted that Hironaka had proved in \cite[Corollary 6.2]{Hi} that the Whitney conditions for $X^0$ along $Y$ imply equimultiplicity of $X$ along $Y$.\\

      Finally, a consequence of the theorem is that \textit{given a complex analytic space $X$, there is a unique minimal
      (coarsest) Whitney stratification}; any other Whitney stratification of $X$ is obtained by adding strata
      inside the strata of the minimal one. A detailed explanation of how to construct this ``canonical" Whitney
      stratification using theorem \ref{prin}, and a proof that this is in fact the coarsest one can 
      be found in \cite[Chap. VI, \S\ 3]{Te3}. The connected components of the strata of the minimal Whitney stratification give a minimal "Whitney stratification with connected strata"\\

\subsection{Relative Duality}\label{reldu}
  
      There still is another result which can be expressed in terms of the relative conormal space
      and therefore in terms of relative duality. We first need the:\\

           \begin{proposition}{{\rm (Versions of this appear in \cite{La2}, \cite{Sa}.)}}\label{Specializationproprelative}
	    Let $X \subset \C^n$ be a reduced analytic subspace of dimension $d$ and let $Y \subset X$ be a 
	nonsingular analytic proper subspace of dimension $t$. Let $\varphi:\mathfrak{X} \to \C$ be the specialization
	of $X$ to the normal cone $C_{X,Y}$ of $Y$ in $X$, and  let $C(X), C(Y)$ denote the conormal spaces of $X$ and $Y$
	respectively, in $\C^n \times \check{\C}^n$. Then the relative conormal space
	\[ \kappa_\varphi\circ\varphi:=q: C_\varphi(\mathfrak{X}) \to \mathfrak{X} \to \C\]
	is isomorphic, as an analytic space over $\C$, to the specialization space of $C(X)$ to the normal cone $C_{C(X), C(Y) \cap C(X)}$
	of $C(Y) \cap C(X)$ in $C(X)$. In particular, the fibre $q^{-1}(0)$ is isomorphic
	to this normal cone. 	
\end{proposition}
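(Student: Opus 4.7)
The plan is to realize both families via the extended Rees algebra construction and match them using the symplectic lift of the scaling deformation to the normal cone.

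Working locally with coordinates such that $Y = \{z = 0\}$ and a standard basis $f_1,\ldots,f_k$ of $I_X$ adapted to the $J$-adic filtration ($J = I_Y$, $m_i = \nu_Y f_i$), the specialization $\mathfrak X \subset \C^n \times \C_v$ is cut out by $v^{-m_i} f_i(vz, y) = 0$, and its fiber $\varphi^{-1}(v_0)$ for $v_0 \neq 0$ is isomorphic to $X$ via $\sigma_{v_0}\colon (z, y) \mapsto (v_0 z, y)$. The symplectic lift $\Sigma_{v_0}(z, y, \zeta, \eta) = (v_0 z, y, v_0^{-1}\zeta, \eta)$ of $\sigma_{v_0}$ preserves the Liouville form $\alpha = \zeta\, dz + \eta\, dy$ and carries $C(X)$ to the conormal of $\sigma_{v_0}(X)$. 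Consequently $C_\varphi(\mathfrak X) \cap \{v \neq 0\}$ is the locus $\{(z, y, v, \zeta, \eta) \in T^*(\C^n \times \C/\C)\mid v \neq 0,\ (vz, y, v^{-1}\zeta, \eta) \in C(X)\}$, and $C_\varphi(\mathfrak X)$ is its closure in $\C^n \times \C \times \check\C^n$.

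On the other side, the specialization $\mathcal C' \to \C$ of $C(X)$ to $C_{C(X), C(X) \cap C(Y)}$ is the analytic spectrum of the extended Rees algebra $\mathcal R' = \bigoplus_n \mathcal I^n v^{-n}$, where $\mathcal I \subset \mathcal O_{C(X)}$ is the ideal of $C(X) \cap C(Y)$, locally generated by the restrictions of $z_1,\ldots, z_{n-t}, \eta_1,\ldots,\eta_t$. By Proposition \ref{Reesalgebra}, $\mathcal C' \to \C$ is faithfully flat with generic fiber $C(X)$ and special fiber $C_{C(X), C(X) \cap C(Y)}$. Both families therefore restrict to $C(X) \times \C^*$ over $\{v \neq 0\}$, and the symplectic family $\Sigma_v$ supplies a fiberwise identification between them over $\C^*$.

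The main step is to extend this identification across $v = 0$. The approach is to embed $\mathcal C'$ in $T^*(\C^n \times \C/\C)$ in such a way that its image in $\C^n \times \C$ is $\mathfrak X$; any such embedding produces a reduced closed $\varphi$-Lagrangian subspace whose generic fiber is the relative conormal of $\mathfrak X$, so by the homogeneous case of Proposition \ref{SpecLag} it must coincide with $C_\varphi(\mathfrak X)$. The hard part will be constructing this embedding at $v = 0$: the naive extension of $\Sigma_v^{-1}$ collapses the $(\zeta, \eta)$-coordinates on the special fiber, so one must instead build the embedding intrinsically from the Rees algebra structure of $\mathcal C'$ and verify that the resulting special fiber projects onto $\mathfrak X_0 = C_{X, Y}$ and is identified with its conormal $C(C_{X, Y})$. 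This latter identification, the isomorphism $C_{C(X), C(X) \cap C(Y)} \cong C(C_{X, Y})$ between the normal cone of the conormal and the conormal of the normal cone, is the essential geometric content of the proposition; it can be verified directly by observing that the $J$-adic initial forms $f_{i, m_i}$ generate the ideal of $C_{X, Y}$ and that their Jacobian conormal equations coincide, after tracking $v$-weights, with the Rees algebra presentation of the special fiber of $\mathcal C'$.
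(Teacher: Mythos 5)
The first half of your argument is essentially the paper's proof: in coordinates adapted to $Y$, the fibers of $\varphi$ over $v\neq 0$ are carried to $X$ by $\sigma_v\colon (z,y)\mapsto (vz,y)$, the induced map on covectors identifies the relative conormal over $\{v\neq 0\}$ with $\C^*\times C(X)$, and the Rees-algebra degeneration of $C(X)$ along the ideal $(z,\eta)$ of $C(X)\cap C(Y)$ is also trivialized over $\C^*$. One small point you gloss over: the two trivializations you use are \emph{different} maps of the ambient space --- the cotangent lift scales the fiber coordinates by $(v^{-1}\zeta,\eta)$ while the Rees degeneration scales them by $(\zeta,v\eta)$ --- and they agree only up to the fiberwise homothety by $v$, which is harmless precisely because the conormal spaces are conical. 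You need this to conclude that the two families coincide \emph{as subspaces of} $\C\times\C^n\times\check\C^n$ over $\C^*$, not merely that they are abstractly isomorphic there; an abstract isomorphism over $\C^*$ is not enough to take closures.

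Your ``main step,'' however, is where the argument goes wrong, in two ways. First, it is unnecessary: once the two subspaces agree over $v\neq 0$, you are done, because each is the closure of its restriction to $\{v\neq 0\}$ --- the relative conormal by its very definition as a closure over the dense open set where the fibers of $\varphi$ are smooth (and $\{v\neq 0\}$ is dense in $\mathfrak X$ by flatness), and the Rees degeneration by the faithful flatness you yourself invoke from Proposition \ref{Reesalgebra}. This is exactly how the paper concludes; no extension of a map across $v=0$ is required. Second, and more seriously, the isomorphism $C_{C(X),\,C(X)\cap C(Y)}\cong C(C_{X,Y})$ that you call ``the essential geometric content of the proposition'' is false in general, and the proposition does not assert it. The fiber $q^{-1}(0)$ is the \emph{limit} of the conormals of the nearby fibers, which contains the (relative) conormal of $C_{X,Y}$ but in general has additional Lagrangian components: these are exactly the duals of the exceptional cones of Theorem \ref{limtan1}, and Example \ref{sur} (where $\kappa^{-1}(0)$ contains the $\P^1$ dual to the exceptional tangent $t$-axis, while the conormal of the tangent cone $y^2=0$ contributes only a point) gives an explicit counterexample. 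The gap between ``conormal of the special fiber'' and ``special fiber of the relative conormal'' is precisely what the theory of aur\'eoles and exceptional cones measures, so a proof strategy that postulates their equality cannot be repaired. Relatedly, your final sentence assumes that the Jacobian equations of the initial forms $f_{i,m_i}$ compute the initial forms of the conormal equations of $X$; this fails for the same reason (compare Remark \ref{initial}), which is why the paper instead chooses generators $g_i$ of the ideal of $C(X)$ itself whose initial forms generate the initial ideal.
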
    

\begin{proof}$\;$\\
       Let $I\subset J$ be the coherent ideals of the structure sheaf of $\C^n$ that define the analytic subspaces
  $X$ and $Y$ respectively, and let $p:\mathfrak{D}\to \C$ be the specialization space of $C(X)$ to the normal cone of  
    $C(Y) \cap C(X)$ in $C(X)$.
   Note that, in this context, both spaces $\mathfrak{D}$ and $C_\varphi(\mathfrak{X})$ are analytic subspaces of $\C \times
   \C^n \times \check{\C}^n$. Let us consider a local chart, in such a way that $Y \subset X \subset \C^n$, locally  
   becomes $ \C^t \subset X \subset \C^n$ with associated local coordinates:
   \[(v,y_1\ldots,y_t,z_{t+1},\ldots,z_n,a_1,\ldots,a_t,b_{t+1},\ldots,b_n) \]  
   in $\C \times \C^n \times \check{\C}^n$.\\
   
       Let $J:=\left< z_{t+1},\ldots, z_n\right>$ be the ideal
  defining $Y$ in $\C^n$.  One can verify that, just as in the case of the tangent cone (see exercise \ref{ejercontan} b)),
  if $f_1, \ldots, f_r$, are local equations for $X$ in $\C^n$ such that their initial forms $\mathrm{in}_J f_i$ generate the
  ideal of $\mathrm{gr}_J O_X $ defining the normal cone of $X$ along $Y$, the equations $F_i:= v^{-k_i}f_i(y,vz), 
  \; i=1,\ldots, r$, where $k_i= sup \{k | f_i \in J^k\}$  locally define in $\C\times\C^n$ the specialization space $\varphi:\mathfrak{X} \to \C$ of $X$ to the normal cone 
  $C_{X,Y}$. Furthermore, if you look closely at the equations, you will easily verify that the open set $\mathfrak{X}
  \setminus \varphi^{-1}(0)$ is isomorphic over $\C^*$ to $\C^*\times X $, via the morphism $\Phi$ defined by the map
  $(v,y,z) \mapsto (v,y,vz)$. \\
  
  We can now consider the relative conormal space,
   \[q:C_\varphi(\mathfrak{X}) \to \mathfrak{X} \to \C,\] 
  and thanks to the fact that $\mathfrak{X} \setminus \varphi^{-1}(0)$ is an open subset with fibers $\mathfrak{X}(v)$ isomorphic to $X$, 
  the previous isomorphism $\Phi$ implies that $C_\varphi(\mathfrak{X}) \setminus q^{-1}(0)$ is isomorphic over $\C^*$ to $\C^*\times C(X)$.\\
 
     On the other hand, note that, since $J=\left< z_{t+1},\ldots, z_n\right> $ in $\Oo_{\C^n}$, 
  the conormal space $C(Y)$ is defined in $\C^n\times\check\P^{n-1}$ by the sheaf of ideals $J^C$ generated (in $\Oo_{\C^n\times\check\C^n}$) by  $(z_{t+1},\ldots,z_n,a_1,\ldots,a_t)$.\par\noindent Thus, if we chose local generators 
  $(g_1, \ldots , g_s)$ for the sheaf of ideals defining $C(X)\subset \C^n\times \check\P^{n-1}$, whose $J^C\Oo_{C(X)}$-initial forms generate the initial ideal,  the equations 
  $G_i(v,y,z,a,b)=v^{-l_i}g_i(v,y,vz,va,b)$ locally define a subspace $\D\subset \C\times\C^n\times\check \P^{n-1}$ with a faithfully flat projection $\D 
  \stackrel{p}{\rightarrow} \C$, where the fiber $p^{-1}(0)$ is the normal cone 
  $C_{C(X),C(Y)\cap C(X)}$. Note that
  in this case the $l_i$'s are defined with respect to the ideal $J^C\Oo_{C(X)}$.\par\noindent  The open set $\D \setminus p^{-1}(0)$ is isomorphic
  to $\C^*\times C(X)$ via the morphism defined by $(v,y,z,a,b) \mapsto (v,y,vz,va,b)$. \par\noindent 
 
  This last morphism is a morphism of the ambient space to itself over $\C$
  \begin{align*}
    \psi: \C \times \C^n \times \check{\C}^n &\longrightarrow \C \times \C^n \times \check{\C}^n\\
                               (v,y,z,a,b) &\longmapsto (v,y,vz,va,b)
   \end{align*}
  which turns out to be an isomorphism when restricted to the open dense set
   $\C^* \times \C^n \times \check{\C}^n$. So, if we take the analytic subspace  $\C^*\times C(X)$ 
   in the image, as a result of what we just said, we have the equality
  $\psi^{-1}(\C^*\times C(X)) = \D \setminus p^{-1}(0)$.\\
  
   Finally, recall that both morphisms defining $q$, are induced by the natural projections
  \[ \C \times \C^n \times \check{\C}^n \rightarrow \C \times \C^n \rightarrow \C ,\]
  and therefore we have a commutative diagram:
  \[\xymatrix @C=.2in{
    C_\varphi(\mathfrak{X}) \ar @{^{(}->}[r] \ar @/_6.5pc/[ddrr]_q \ar[d]_{\kappa_\varphi} 
         & \C \times \C^n \times \check{\C}^n \ar[rr]^\psi \ar[d]^\pi  & &
            \C \times \C^n \times \check{\C}^n  \ar[d]^\pi\\ 
       \mathfrak{X}  \ar @{^{(}->}[r] \ar @/_/[drr]_\varphi
       & \C \times \C^n  \ar[rr]^\phi \ar[dr] & &  \C \times \C^n  \ar[dl] \\
       &  & \C &         
  }\]
     To finish the proof, it is enough to check that the image  by $\psi$ of $C_\varphi(\mathfrak{X})\setminus
  q^{-1}(0)$ is equal to $\C^*\times C(X)$, since we already know that $\psi^{-1}(\C^*\times C(X) 
 ) = \D \setminus p^{-1}(0)$  and so we will find an open dense set common to both spaces, which are faithfully flat over $\C$, 
  and consequently the closures will be equal.
  
     Let $(y,z) \in X$ be a smooth point. The vectors
     \[\nabla f_i (y,z):= \left( \frac{\partial f_i}{\partial y_1}(y,z), \cdots ,\frac{\partial f_i}{\partial y_t}(y,z)
                   ,\frac{\partial f_i}{\partial z_{t+1}}(y,z),\cdots, \frac{\partial f_i}{\partial z_n}(y,z)\right) ,\]
    representing the $1$-forms $df_i$ in the basis $dy_j,dz_i$ , generate the linear subspace of $\check{\C}^n$ encoding all the 1-forms that vanish on the tangent space
     $T_{(y,z)}X^0$, i.e. the fiber over the point $(y,z)$ in $C(X)$. Analogously, let $(v,y,z) \in \mathfrak{X}$ be a smooth
     point in  $\mathfrak{X} \setminus \varphi^{-1}(0)$. Then the vectors
 \[\nabla F_i (v,y,z):= \left( \frac{\partial F_i}{\partial y_1}(v,y,z), \cdots ,\frac{\partial F_i}{\partial y_t}(v,y,z)
           ,\frac{\partial F_i}{\partial z_{t+1}}(v,y,z),\cdots, \frac{\partial F_i}{\partial z_n}(v,y,z)\right) \] 
     generate the linear subspace of $\check{\C}^n$ encoding of all the 1-forms that vanish on the tangent space
     $T_{(v,y,z)}\mathfrak{X}(v)^0$, i.e. the fiber over the point $(v,y,z)$ in $C_\varphi(\mathfrak{X})$. According to our choice of 
     $(v,y,z)$, we know that $\phi((v,y,z))=(v,y,vz)$ is a smooth point of $\C^*\times X$ and in particular $(y,vz)$
     is a smooth point of $X$. Moreover, notice that:
     \begin{align*}
            \frac{\partial F_i}{\partial y_j}(v,y,z) &= v^{-n_i}\frac{\partial f_i}{\partial y_j}(y,vz)\\
            \frac{\partial F_i}{\partial z_k}(v,y,z) &= v^{-n_i+1}\frac{\partial f_i}{\partial z_k}(y,vz)    
     \end{align*} 
     and therefore the image of the corresponding point
      \begin{align*}
      \psi(v,y,z,\frac{\partial F_i}{\partial y_j}(v,y,z),\frac{\partial F_i}{\partial z_k}(v,y,z)) & = 
         (v,y,vz,v^{-n_i+1}\frac{\partial f_i}{\partial y_j}(y,vz),v^{-n_i+1}\frac{\partial f_i}{\partial z_k}(y,vz)) \\
        & = (v,y,vz,v^{-n_i+1}\nabla f_i(y,vz))                              
       \end{align*}  
         is actually a point in $\C^*\times C(X)$. Since $v \neq 0$, the $v^{-n_i+1}\nabla f_i(y,vz)$ also 
        generate the fiber over the point $(v,y,vz)\in\C^*\times X$ by the map $\C^*\times C(X) \to \C^*\times X$ induced by $\kappa_\varphi$ and the isomorphism $\varphi^{-1}(\C^*)\simeq \C^*\times X$, which implies that 
        $\psi$ sends $C_\varphi(X)\setminus q^{-1}(0)$ onto $\C^*\times C(X)$.
\end{proof}
      
 Going back to our normal-conormal diagram:

      \[\xymatrix{E_YC(X)\ar[r]^{\hat{e}_Y}\ar[dd]^{\kappa'}\ar[ddr]^\xi & C(X)\ar[dd]^\kappa
             \\
             & &  \\
             E_YX\ar[r]_{e_Y}  & Y\subset X }\]

      Consider the irreducible components $D_\alpha\subset Y\times \P^{n-1-t} \times \check{\P}^{n-1}$ of \newline
      $D=|\xi^{-1}(Y)|$, that is $D=\bigcup D_\alpha$, and its images:
      \begin{align*}
      V_\alpha & = \kappa'(D_\alpha)\subset Y \times \P^{n-1-t},\\
      W_\alpha  & = \hat{e}_Y (D_\alpha)\subset Y \times \check{\P}^{n-1}.
      \end{align*}      
      
      We have:
      \begin{theorem}\label{dual}{\rm (L\^e-Teissier, see \cite[Thm 2.1.1]{L-T2})} The equivalent statements of theorem \ref{prin} are 
      also equivalent to:\par\noindent
For each $\alpha$, the irreducible divisor $D_\alpha$ is the relative conormal space of its image $V_\alpha\subset C_{X,Y}\subset Y\times \C^{n-t}$ with respect
      to the canonical analytic projection $Y\times \C^{n-t}\to Y$ restricted to $V_\alpha$, and all the fibers
      of the restriction $\xi:D_\alpha \to Y$ have the same dimension near $0$.
      \end{theorem}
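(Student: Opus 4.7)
My strategy is to apply Proposition \ref{Specializationproprelative} to identify $D$ with the bi-projectivization of the special fiber of the specialization $q\colon C_\varphi(\mathfrak{X})\to \C$ of $T^*_X\C^n$ to the normal cone $C_{T^*_X\C^n,\kappa^{-1}(Y)}$, and then apply Proposition \ref{SpecLag} to extract the relative-conormal structure of each component $D_\alpha$ over $Y$. Assuming the equivalent conditions of Theorem \ref{prin}, Whitney's condition a) gives the set-theoretic equality $C(Y)\cap C(X)=\kappa^{-1}(Y)$, so Proposition \ref{Specializationproprelative} applies; the special fiber $q^{-1}(0)$ is the normal cone $C_{T^*_X\C^n,\kappa^{-1}(Y)}$, which is conical both in the $\check\C^n$-direction and in the $\C^{n-t}$-direction normal to $Y$ in $\C^n$. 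Its bi-projectivization is canonically identified with the exceptional divisor $D=|\xi^{-1}(Y)|$ of $\hat e_Y\colon E_YC(X)\to C(X)$, so that the irreducible components of $q^{-1}(0)$ correspond exactly to the $D_\alpha$.

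Next I would extend the Lagrangian property from the generic fiber to $q^{-1}(0)$. Over $\{v\neq 0\}$ the space $C_\varphi(\mathfrak{X})$ is $\C^*\times T^*_X\C^n$, which is $\varphi$-Lagrangian with fibers of dimension $n$; by faithful flatness of the specialization, $q^{-1}(0)$ is also purely of dimension $n$. Proposition \ref{SpecLag}(1) then gives that $C_\varphi(\mathfrak{X})$ is $\varphi$-Lagrangian everywhere, and since $q^{-1}(0)$ is conical in $\check\C^n$, the third bullet of \ref{SpecLag}(2) applies: each irreducible component of $q^{-1}(0)$ is the conormal $T^*_{X_j}\C^n$ of its image $X_j\subset C_{X,Y}$.

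Now condition (3) of Theorem \ref{prin}, $\dim\xi^{-1}(0)=n-2-t$, ensures that each $D_\alpha$ dominates $Y$ with fibers of the generic codimension $t$, equivalently that the corresponding image $X_j\subset C_{X,Y}\subset Y\times\C^{n-t}$ dominates $Y$ under the first projection. Combined with Whitney a), which forces the cotangent direction to lie in $\check\C^{n-t}$, this identifies $T^*_{X_j}\C^n$ with the relative conormal $T^*_{X_j/Y}(Y\times\C^{n-t}/Y)$. Bi-projectivizing, each $D_\alpha$ becomes the projectivized relative conormal of its image $V_\alpha=\kappa'(D_\alpha)\subset C_{X,Y}\subset Y\times\C^{n-t}$ with respect to the first projection; its fiber over $y\in Y$ is the bi-projectivized conormal of the slice $V_{\alpha,y}\subset\C^{n-t}$, of common dimension $n-t-2=n-2-t$. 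For the converse, if each $D_\alpha$ is a relative conormal with equidimensional fibers over $Y$ near $0$, then the dimension count $\dim\xi^{-1}(y)=n-2-t$ near $0$ gives back condition (3) of Theorem \ref{prin}.

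The main obstacle is to cleanly track how the single specialization over $\C$ provided by Proposition \ref{Specializationproprelative} yields, after restriction to $q^{-1}(0)$, relative conormals over $Y$ rather than over $\C$: the point is that condition (3) rules out components of $q^{-1}(0)$ supported above proper subvarieties of $Y$, so that the relative-conormal conclusion of \ref{SpecLag}(2), applied to the homogeneous Lagrangian $q^{-1}(0)$ and transferred through the cone structure of $C_{X,Y}$, can be re-indexed to be relative over $Y$ after bi-projectivization.
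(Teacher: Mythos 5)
Your overall strategy---reduce to a Lagrangian specialization statement and invoke Proposition \ref{SpecLag}---is in the right spirit, but the central identification on which everything rests is not justified and, as stated, cannot be. The divisor $D=\vert\xi^{-1}(Y)\vert$ is the exceptional divisor of the blowing up of $\kappa^{-1}(Y)$ in $C(X)$, i.e., of the ideal generated by the pull-backs of the equations $z_{t+1},\ldots,z_n$ of $Y$; the special fiber $q^{-1}(0)$ of Proposition \ref{Specializationproprelative} is the normal cone of $C(X)\cap C(Y)$ in $C(X)$, i.e., of the ideal generated by $z_{t+1},\ldots,z_n$ \emph{and} $a_1,\ldots,a_t$. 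These are normal cones along different ideals, living after projectivization in different ambient spaces, and comparing them is precisely the content of Remark \ref{Intdep}: the inclusion $\Jj_{C(X)\cap C(Y)}\subseteq\overline{\Jj_{\kappa^{-1}(Y)}}$ \emph{is} (equivalent to) the Whitney conditions. So your "canonical identification" of $D$ with the bi-projectivization of $q^{-1}(0)$ begs the question in the forward direction and is unavailable in the converse. A second, independent problem: the Lagrangian property you extract from $q^{-1}(0)$ is unconditional. The specialization of $C(X)$ to $C_{C(X),C(Y)\cap C(X)}$ is faithfully flat over $\C$ for \emph{any} pair $Y\subset X$, so $q^{-1}(0)$ is always purely $n$-dimensional and always Lagrangian by Proposition \ref{SpecLag}(1) (and then each of its components is an absolute conormal by Proposition \ref{conormalequiv} --- note that the third bullet of \ref{SpecLag}(2), which you cite, concerns the components of $W$ over $S$, not the components of a single fiber). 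A statement that holds with no hypothesis cannot be equivalent to the conditions of Theorem \ref{prin}; all the actual content has been pushed into the "re-indexing" from base $\C$ to base $Y$, which you flag as the main obstacle but do not carry out.

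What is genuinely needed, and what the paper does, is to work with base $Y$ from the start: one first shows that each $D_\alpha$ is, over a dense open subset of $Y$, the relative conormal of its image --- this uses that the Whitney conditions are stratifying together with Theorem \ref{PE} and the integral-closure characterization of Remark \ref{Intdep}, and is the step entirely absent from your argument --- and then applies Proposition \ref{SpecLag} with $S=Y$ to the map $\xi\colon D_\alpha\to Y$. For that application the hypothesis that all fibers of $q$ have the same dimension is exactly the equidimensionality of the fibers of $\xi\colon D_\alpha\to Y$, i.e., condition (3) of Theorem \ref{prin} ($\dim\xi^{-1}(0)=n-2-t$); condition a) is what places $D_\alpha$ inside $Y\times\P^{n-1-t}\times\check\P^{n-1-t}$ so that the relative contact form makes sense. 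Your use of condition a) and of the dimension condition at the end is pointing at the right ingredients, but without the generic relative-conormality over $Y$ and without applying \ref{SpecLag} over $Y$ rather than over $\C$, the argument does not close.
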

      In particular, Whitney's conditions are equivalent to the equidimensionality of the fibers of the map $D_\alpha\to Y$, plus the fact that each $D_\alpha$ is contained in $Y\times\P^{n-1-t}\times \check \P^{n-1-t}$, where $\check\P^{n-1-t}$ is the space of hyperplanes containing the tangent space $T_{Y,0}$, and the contact form on $\P^{n-1-t}\times \check \P^{n-1-t}$ vanishes on the smooth points of $D_\alpha(y)$ for $y\in Y$. This means that each $D_\alpha$ is $Y$-Lagrangian and is equivalent to a relative
      (or fiberwise) duality:
      \[\xymatrix{ D_\alpha \ar[r]\ar[d] &   W_\alpha = Y{\rm- dual\:of}\:  V_\alpha\subset Y\times\check \P^{n-1-t}\\
                   Y\times\P^{n-1-t }\supset V_\alpha\ \ \ \ \ \ \ \ \ \ \ \ \ \ \ \ \ &}\]

      The proof uses that the Whitney conditions are stratifying, and that theorem \ref{PE} and the result of remark \ref{Intdep} imply\footnote{The proof of this in \cite{L-T2} uses a lemma, p.559, whose proof is incorrect, but easy to correct. There is an unfortunate mixup in notations. One needs to prove that $\sum_{t+1}^N\xi_kdz_k=0$ and use the fact that the same vector remains tangent after the homothety $\xi_k\mapsto \lambda\xi_k,\ t+1\leq k\leq N$. Since we want to prove that $L_1$ is $Y$-Lagrangian, we must take $dy_i=0$. } that $D_\alpha$ is the conormal of its image over a dense open set of $Y$. The condition ${\rm dim} \; \xi^{-1}(0)=n-2-t$ then gives exactly
      what is needed, in view of Proposition \ref{SpecLag}, for $D_\alpha$ to be $Y$-Lagrangian.\par\noindent  Finally, we want to state another result relating
      Whitney's conditions to the dimension of the fibers of some related maps. A complete proof of 
      this result can be found in \cite[Prop. 2.1.5 and Cor.2.2.4.1]{L-T2}.
      
      \begin{corollary} Using the notations above we have:
      \begin{itemize}
            \item[1)] The pair $(X^0,Y)$ satisfies Whitney's conditions at $0$ is and only if for each
                      $\alpha$ the dimension of the fibers of the projection $W_\alpha \to Y$ is locally constant 
		      near $0$.          
            \item[2)] The pair $(X^0,Y)$ satisfies Whitney's conditions at $0$ is and only if for each
                      $\alpha$ the dimension of the fibers of the projection $V_\alpha \to Y$ is locally constant 
		      near $0$. 
      \end{itemize}           
      \end{corollary}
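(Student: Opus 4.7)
The strategy is to apply Theorem \ref{dual} together with upper semicontinuity of fiber dimension for proper maps to the factorization $D_\alpha \to V_\alpha \to Y$ (respectively $D_\alpha \to W_\alpha \to Y$). By the $Y$-relative projective duality in Theorem \ref{dual}, the roles of $V_\alpha$ and $W_\alpha$ are symmetric once Whitney is known, so the two assertions of the corollary are proved in parallel.

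For the forward direction, assume Whitney conditions hold. Theorem \ref{dual} provides that each $D_\alpha$ is the $Y$-relative conormal of $V_\alpha$ and that $\dim D_\alpha(y)=n-2-t$ for all $y$ near $0$. Since $D_\alpha$ and $V_\alpha$ are irreducible with $V_\alpha\to Y$ dominant, let $d_V$ denote the generic fiber dimension of $V_\alpha\to Y$, so $\dim V_\alpha=t+d_V$ and the generic fiber of the proper surjection $D_\alpha\to V_\alpha$ has dimension $n-2-t-d_V$. For any $y$ near $0$, upper semicontinuity of fiber dimension applied in succession to the proper maps $D_\alpha\to V_\alpha$ and $V_\alpha\to Y$ yields
\[
\dim D_\alpha(y)\;\geq\;\dim V_\alpha(y)+(n-2-t-d_V).
\]
Combined with the Whitney equality $\dim D_\alpha(y)=n-2-t$, this forces $\dim V_\alpha(y)\leq d_V$; the opposite inequality is the standard semicontinuity bound, so equality holds. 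The same reasoning applied to $D_\alpha\to W_\alpha\to Y$, using that by the $Y$-relative biduality $D_\alpha$ is simultaneously the $Y$-relative conormal of $W_\alpha$, establishes equidimensionality of $W_\alpha\to Y$.

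For the backward direction, suppose $V_\alpha\to Y$ (or $W_\alpha\to Y$) has locally constant fiber dimension $d_V$ near $0$ for every $\alpha$. By Theorem \ref{prin} it suffices to show $\dim D_\alpha(0)\leq n-2-t$, the reverse inequality being automatic. Since Whitney conditions are stratifying, they hold over a dense open subset of $Y$, where Theorem \ref{dual} identifies $D_\alpha$ with the relative conormal of $V_\alpha$. Thus for generic $y$, the fiber $D_\alpha(y)$ coincides with the conormal $C(V_\alpha(y))\subset V_\alpha(y)\times\check{\P}^{n-1-t}$ in $\P^{n-1-t}$. Passing to the limit as $y\to 0$, and using that tangent hyperplanes at smooth points of $V_\alpha(y)$ specialize to hyperplanes containing the limit tangent spaces at the corresponding limit points of $V_\alpha(0)$, one obtains the containment $D_\alpha(0)\subset C(V_\alpha(0))$. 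The equidim hypothesis ensures that $V_\alpha(0)$ is a reduced equidimensional subvariety of $\P^{n-1-t}$ of dimension $d_V$, so its conormal has dimension $(n-1-t)-1=n-2-t$, yielding the required bound. The analogous argument starting from equidimensionality of $W_\alpha\to Y$ works symmetrically.

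The main obstacle lies in the backward direction, specifically in rigorously justifying the containment $D_\alpha(0)\subset C(V_\alpha(0))$ without assuming Whitney at $0$. The delicate point is that, a priori, the limit of conormals of nearby fibers could acquire extra components of dimension exceeding $n-2-t$ over singular or ``jumping'' points of $V_\alpha(0)$. Controlling this requires the equidimensionality hypothesis on $V_\alpha\to Y$ combined with the Lagrangian specialization structure of Proposition \ref{Specializationproprelative} (in its $Y$-relative version), whose application through Proposition \ref{SpecLag} guarantees that the limit conormal data at $y=0$ remains Legendrian, hence of dimension at most $n-2-t$.
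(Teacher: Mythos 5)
Your forward implication is sound: once Theorem \ref{dual} gives that every fiber $D_\alpha(y)$ has dimension $n-2-t$, the Chevalley bound for the surjection $D_\alpha(y)\to V_\alpha(y)$ (every fiber of $D_\alpha\to V_\alpha$ has dimension at least $\dim D_\alpha-\dim V_\alpha$) squeezes $\dim V_\alpha(y)$ between $d_V$ and $d_V$, and the same count works verbatim for $W_\alpha$ without invoking biduality. This half is a clean dimension count and is consistent with the tools of the paper, which itself only cites \cite[Prop. 2.1.5 and Cor. 2.2.4.1]{L-T2} rather than giving a proof.

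The backward implication, however, has a genuine gap exactly where you flag it, and the repair you propose does not work. The containment $D_\alpha(0)\subset C(V_\alpha(0))$ that you need is not a consequence of equidimensionality of $V_\alpha\to Y$: it says that every limit of tangent hyperplanes to the nearby fibers $V_\alpha(y)$ contains the tangent space to the special fiber $V_\alpha(0)$ at the limit point, which is precisely a Whitney-type condition a) for the family $\{V_\alpha(y)\}_{y\in Y}$ — i.e., a statement of the same nature as the one being proved. (Note also that even formulating the containment presupposes condition a) for $(X^0,Y)$, since $D_\alpha(0)$ lives a priori in $\P^{n-1-t}\times\check\P^{n-1}$ and must first be pushed into $\P^{n-1-t}\times\check\P^{n-1-t}$.) The appeal to Propositions \ref{Specializationproprelative} and \ref{SpecLag} to conclude that "the limit conormal data remains Legendrian, hence of dimension at most $n-2-t$" is circular: both parts of Proposition \ref{SpecLag} take as a \emph{hypothesis} that all fibers of $q$ have the expected dimension, and only then propagate the Lagrangian property from the generic fiber to the special one. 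They cannot be used to produce the dimension bound on $D_\alpha(0)$, which is the whole content of this direction. What is missing is an \emph{upper} bound on the fibers of $D_\alpha\to V_\alpha$ over points of $V_\alpha(0)$ (a priori these can be as large as $n-2$), and that is supplied in \cite{L-T2} by the dimension inequality of Proposition 2.1.5 for irreducible isotropic subvarieties of the incidence variety, relating $\dim D_\alpha(0)$ to the dimensions of its two images $V_\alpha(0)$ and $W_\alpha(0)$; without that input (or the reduction to $\dim Y=1$ by slicing, where dominance of $D_\alpha\to Y$ alone forces equidimensionality), the implication does not follow.
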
   
      \begin{remark} The fact that the Whitney conditions, whose original definition translates as the fact that $\xi^{-1}(Y)$ is in $Y\times\P^{n-1-t}\times\check{\P}^{n-1-t}$ and not just $Y\times\P^{n-1-t}\times\check{\P}^{n-1}$ (condition a) and moreover lies in the product $Y\times I$ of $Y$ with the incidence variety $I\subset \P^{n-1-t}\times\check{\P}^{n-1-t}$ (condition b)), are in fact of a Lagrangian, or Legendrian, nature, explains their stability by general sections (by nonsingular subspaces containing $Y$) and linear projections.
      \end{remark}
\noindent      \textbf{Problem:} The fact that the Whitney conditions are of an algebraic nature, since they can be translated as an equimultiplicity condition for polar varieties by theorem \ref{PE} leads to the following question: given a germ $(X,x)\subset (\C^n,0)$ of a reduced complex analytic space, endowed with its minimal Whitney stratification, does there exist a germ $({\mathcal Y},0)\subset (\C^N,0)$ of an \textit{algebraic} variety and a germ $({\mathcal H},0)\subset (\C^N,0)$ of a nonsingular \textit{analytic variety} transversal to the stratum of $0$ in the minimal Whitney stratification of $({\mathcal Y},0)$ such that $(X,0)$ with its minimal Whitney stratification is analytically isomorphic to the intersection of  
 $({\mathcal Y},0)$, with its minimal Whitney stratification, with $({\mathcal H},0)$ in $(\C^N,0)$? 
\section{Whitney stratifications and tubular neighborhoods}\label{tub}
In differential geometry, a very useful tool is the existence of a tubular neighborhood of a closed submanifold $X$ of a differentiable manifold $Z$. It is a diffeomorphism, inducing the identity on $X$, from a neighborhood of $X$ in its normal bundle in $Z$ to a neighborhood of $X$ in $Z$; here $X$ is viewed as the zero section of its normal bundle. If $X$ is a point $x$, it is just a diffeomorphism from a neighborhood of the origin in the tangent space $T_{Z,x}$ to a neighborhood of $x$ in $Z$. In this sense a tubular neighborhood of $X$ in $Z$ is a linearization "transversally to $X$" of a neighborhood of $X$ in $Z$. Since the normal bundle $T_{Z,X}$ is a fiber bundle one can choose a positive definite quadratic form $g_x(u,u)$, or metric, on its fibers depending differentiably on the points $x\in X$ and if one chooses some differentiable function $\epsilon (x)$ on $X$ which is everywhere $>0$ one can carry over via the diffeomorphism the tube in the normal bundle defined by $g_x(u,u)\leq \epsilon(x)$, to get a tube $T_\epsilon \subset Z$ with core $X$. The natural projection to $X$ in the normal bundle carries over to a retraction $\rho\colon T_\epsilon\to X$ and $\epsilon(y)$ defines a radius of the tube, or distance to $X$ from the frontier of the tube. All this carries over to the complex analytic case, replacing metric by hermitian metric. \par
When $X$ is singular the situation becomes more complicated, but Thom and Mather (see \cite{Th} and the excellent exposition in \cite[\S 6]{Ma2}) discovered that a Whitney stratification of a nonsingular space $Z$ such that $X$ is a union of strata allows one to build an adapted version of tubular neighborhoods of $X$ in $Z$. \par\noindent
Let $X=\bigcup_{\alpha\in A} X_\alpha$ be a closed Whitney stratified subset of a chart $\R^n$ of $Z$, where the $X_\alpha$ are differentiable submanifolds. There exists a family of triplets $(T_\alpha,\pi_\alpha,\rho_\alpha)$ such that:
\begin{itemize}
\item $T_\alpha$ is the intersection with $X$ of a tubular neighborhood of $X_\alpha$ in $\R^n$.
\item The map $\rho_\alpha\colon T_\alpha\to X_\alpha$ is a $C^\infty$ retraction; in particular $\rho_\alpha(x)=x$ for $x\in X_\alpha$.
\item The function $\delta_\alpha\colon T_\alpha\to \R_{\geq 0}$ is a $C^\infty$ function (the distance to the stratum) such that $\delta_\alpha^{-1}(0)=X_\alpha$.
\item Whenever $X_\alpha\subset\overline X_\beta$, the restriction of $(\rho_\beta,\delta_\beta)$ to $T_\alpha\cap X_\beta$ is a $C^\infty$ submersion $T_\alpha\cap X_\beta\rightarrow X_\beta\times\R_{\geq 0}$.
\item  Whenever $X_\alpha\subset\overline X_\beta$, we have for all $x\in T_\alpha\cap T_\beta$ the inclusion $\rho_\beta (x)\in T_\alpha$ and the equalities  $\rho_\alpha ( \rho_\beta (x))=\rho_\alpha (x)$ and $\delta_\alpha (\rho_\beta (x))=\delta_\alpha(x)$.
\end{itemize}
All this says that you have a tubular neighborhood of each stratum in such a way that when you approach the frontier of a stratum, you enter the tubular neighborhood of a frontier stratum in a way which is compatible with the tubular neighborhood of that stratum.\par A careful description of this in the complex analytic case can be found in \cite{Sc}, \cite{Sc2} if you think of radial vector fields as transversal to the boundaries of tubular neighborhoods. By taking the viewpoint of "fundamental systems of good neighborhoods" as in \cite[Definition 2.2.9]{L-T3}, one should obtains a more analytic version; in particular the function $\delta$ could be taken to be subanalytic. However, it seems the last condition for tubular neighborhoods may be too strict to be realized in the complex analytic case and one might think of weaker conditions such as the existence, locally on $X$, of constants $C_1,C_2$ such that $\vert\delta_\alpha(\rho_\beta (x))-\delta_\alpha(x)\vert<C_1\delta_\beta(x)$ and $\Vert \rho_\alpha(\rho_\beta(x))-\rho_\alpha(x)\Vert<C_2\delta_\beta(x)$. This means that one requests that the last condition for tubular neighborhoods should only be satisfied asymptotically as one approaches each $X_\beta$. \par\noindent
After Example \ref{conelike} we stated that Whitney conditions mean that $X$ is locally "cone like" along each stratum $X_\alpha$. Conicity indeed suggests the existence of tubular neighborhoods as above, since we can expect that there are "tubes" everywhere transversal to the cones, but nevertheless the existence of tubular neighborhoods of a Whitney stratified set, due to Thom and Mather in the differentiable framework, is quite delicate to prove. Again, see \cite[\S 6]{Ma2}. By methods also due to Thom and Mather, the tubular neighborhoods provide the local topological triviality along the strata which we shall see in the next section. We note that, compared to the distance in $\R^n$ or $\C^n$ to the stratum $X_\alpha$, the radius of the tubular neighborhood $T_\beta$ must in general tend to zero as we approach the frontier of $X_\beta$; see \cite{Sc}. Recently, strong equisingularity theorems in the same general direction have been proved by Parusi\~nski and Paunescu in the complex analytic case; see \cite{P-P}.
\begin{exercise} 1) Describe the minimal Whitney stratification and associated tubular neighborhoods for the singularity $x_1.\ldots x_k=0$ in $\C^n$.\par\noindent
2) Describe the minimal Whitney stratification (in the real and in the complex case) and associated tubular neighborhoods for the singularity of example \ref{sur}.\par\noindent
3) Do the same for the surface $y^2-tx^2=0$. What is the difference?
\end{exercise}
\noindent The purpose of this section is to introduce the following:\par\medskip\noindent
\textbf{Problem:} By Proposition \ref{propnc}, iii) we know that for $X=\bigcup_\alpha X_\alpha\subset\C^n$, if some polar variety $P_k(\overline X_\beta)$ is not equimultiple along a stratum $X_\alpha$ at a point $x\in  X_\alpha$ then for a general subspace $W$ of codimension $d_\beta-d_\alpha-k$ locally containing $X_\beta$, the intersection with the polar variety $P_k(\overline X_\beta)$ is of the same dimension as $X_\alpha$ but not set theoretically equal to $X_\alpha$ near $x$. Let us assume that we have embedded $X$ in $\C^n$ in such a way that $X_\alpha$ is a linear subspace and we consider linear subspaces $W$ of $\C^n$ of codimension $d_\beta-d_\alpha-k$ containing $X_\alpha$. If in a neighnorhood of $x$ we take a tube $T_\epsilon $ around $X_\alpha$ in $\C^n$ whose radius (distance of the frontier to $X_\alpha$ in $\C^n$)  tends to zero fast enough as we approach $x$, it will not meet the extra components of the intersection $W\cap P_k(\overline X_\beta)$, i.e., those which do not coincide with $X_\alpha$.\par
The problem is to determine whether this condition for tubes of not meeting the extra components of the intersections of general linear spaces $W$ of the right codimension with the non-equimultiple  polar varieties, for each pair of strata $X_\alpha\subset\overline X_\beta$, plus the requirement of not meeting the closures $\overline X_\gamma$ of the strata such that $X_\alpha$ is \textit{not} contained in $ \overline X_\gamma$, plus some adjustment of the retractions, is sufficient to provide a system of tubular neighborhoods in the weaker sense mentioned above. A possible approach is to use the part of \cite{L-T3} already mentioned. Other probably useful references are the books of Marie-H\'el\`ene Schwartz \cite{Sc} (where tubular neighborhoods appear as mentioned above) and \cite{Sc2}.\par\medskip\noindent
This problem is related to another one. An embedding $X\subset U\subset\C^n$, with $U$ open in $\C^n$, determines a metric on $X$, called the \textit{outer metric} $d(x,y)$, which is the restriction of the ambient metric. A homeomorphism $F\colon X'\to X$ between two metrized spaces is called bilipschitz if there exists a constant $C>0$ such that $\frac{1}{C}d(x,y)\leq d(F(x),F(y))\leq Cd(x,y)$. Following Neumann-Pichon in \cite{N-P}, we say that the \textit{Lipschitz geometry} of $X$ is its geometry up to bilipschitz homeomorphism; it is independent of the embedding $X\subset\C^n$. See \cite{N-P}, \cite{Ga7}.\par\medskip\noindent
\textbf{Problem:}\textit{ Does the Lipschitz geometry of a complex analytic space $X$ determine its minimal Whitney stratification in the sense that a bilipschitz homeomorphism between complex analytic spaces endowed with their minimal Whitney stratifications must carry strata to strata?} And if that is true, with what additional structure does one need to enrich the minimal Whitney stratification of $X$ in order to determine its Lipschitz geometry? For example optimal shrinking rates of the radii of tubular neighborhoods $T_\beta$ as functions of $\delta_\alpha$ whenever $X_\alpha\subset \overline X_\beta$, plus the local Lipschitz geometry of sections of the $X_\beta$ by nonsingular spaces transversal to the $X_\alpha\subset \overline X_\beta$? The results of \cite{B-H} and  \cite{N-P} indicate that one may have to refine the stratification in order for this transversal Lipschitz geometry to be constant along the strata.\par\noindent
There are encouraging results in the direction of the first question: by \cite{S} a bilipschitz homeomorphism must send nonsingular points to nonsingular points so that the first stratum (or strata if we insist that strata should be connected) of the minimal Whitney stratification is (are) preserved. It must induce bilipschitz homeomorphisms of the tangent cones by \cite{BFLS}, and preserve multiplicities at least in the case of hypersurfaces by \cite{F-S}. The general question of whether a bilipschitz homeomorphism preserves the Lipschitz geometry of general hyperplane sections seems to be open.\par
One way of understanding how much geometric information is lost in the singular case by taking homology or cohomology classes of local polar varieties is to make precise the idea that as the linear projections vary, some branches of the polar curves of normal surfaces having a fixed tangent, for example, span special regions of the surface in the sense of the geometric decomposition of \cite{N-P}. The generic contact of the branches with their common fixed tangent, or at least some weaker version of this contact, is an invariant of the Lipschitz geometry of the surface. This follows from the work of Neumann-Pichon in \cite{N-P} and gives hints for the solution of the problems just mentioned.

      \section{Whitney stratifications and the local total topological type}\label{total}
      \textbf{Warning} \textit{In this section and section \ref{duality}, we modify the notation for polar varieties; the general linear space defining each polar variety becomes implicit, while the point at which the polar variety is defined appears in the notation $P_k(X,x)$.}\par\medskip
      We have seen how to associate to a reduced equidimensional germ $(X,x)$ of a $d$-dimensional complex analytic space a generalized multiplicity (recall that $(X,x)=P_0(X,x)$):
      $$(X,x)\mapsto \left(m_x(X,x),m_x(P_1(X,x)),\ldots ,m_x(P_{d-1}(X,x))\right).$$
      We know from subsection \ref{mul} that the multiplicity $m_x(X)$ of a reduced germ $(X,x)$ of a $d$-dimensional complex analytic space has a geometric interpretation as follows: given a local embedding $(X,x)\subset (\C^n,0)$ there is a dense Zariski open set $U$ of the Grassmannian of $n-d$-dimensional linear subspaces $L\subset \C^n$ such that for $L\in U$, with equation $\ell (z)=0$, there exists $\epsilon >0$ and $\eta (\epsilon ,\ell)  >0$ such that the affine linear space $L_{t'}=\ell ^{-1}(t')$ intersects $X$ transversally in $m_x(X)$ points inside the ball $\B (0,\epsilon)$ whenever $0<\vert t'\vert <\eta (\epsilon ,\ell) $. Taking $t\in \B (0,\epsilon)$ such that $\ell (t)=t'$, we can write $L_{t'}$ as $L+t$.
      
 \begin{figure}[!ht]
    \begin{center}
    \label{schematic3}
              \includegraphics*{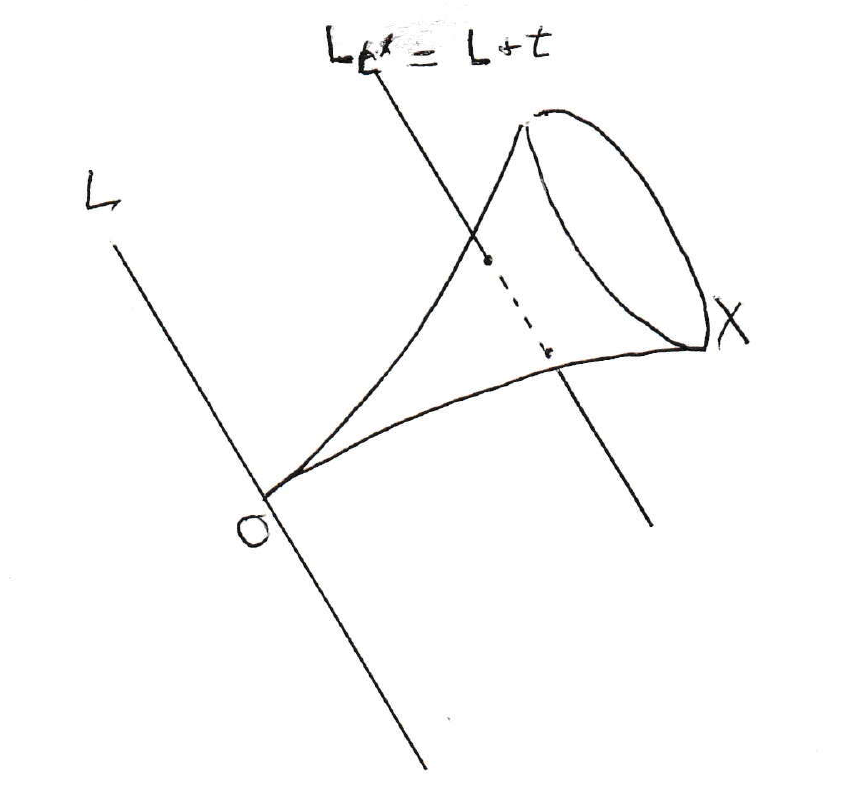}
    \end{center}
\end{figure}
      \par
      
      \newpage
      
      We may ask whether there is a similar interpretation of the other polar multiplicities in terms of the local geometry of $(X,x)\subset (\C^n,0)$. The idea, as in many other instances in geometry, is to generalize the number of intersection points ${\rm card}\{L_t\cap X\}$ by the Euler-Poincar\'e characteristic $\chi (L_t\cap X)$ when the dimension of the intersection is $>0$ because the dimension of $L_t$ is $>n-d$.\par\noindent
      
 \begin{proposition}{{\rm (L\^e-Teissier, see \cite[\S 3]{L-T3})}}\label{HOT} Let $X=\bigcup_\alpha X_\alpha$ be a Whitney stratified complex analytic set of dimension $d$. Given $x\in X_\alpha$, choose a local embedding $(X,x)\subset (\C^n,0)$. Set $d_\alpha={\rm dim} X_\alpha$. For each integer $i\in [d_\alpha+1,d]$ there exists a Zariski open dense subset $W_{\alpha,i}$ in the Grassmannian $G(n-i,n)$ and for each $L_i\in W_{\alpha,i}$ a semi-analytic subset $E_{L_i}$ of the first quadrant of $\R^2$, of the form $\{(\epsilon, \eta)\vert  0<\epsilon <\epsilon_0, 0<\eta <\phi(\epsilon)\}$ with $\phi (\epsilon)$ a certain Puiseux series in $\epsilon$, such that the homotopy type of the intersection $X\cap (L_i+t)\cap \B (0,\epsilon)$ for $t\in \C^n$ is independent of $L_i\in W_{\alpha,i}$ and $(\epsilon, t)$ provided that $(\epsilon, \vert t\vert )\in E_{L_i}$.
Moreover, this homotopy type depends only on the stratified set $X$ and not on the choice of $x\in X_\alpha$ or the local embedding.   In particular the Euler-Poincar\'e characteristics $\chi_i(X,X_\alpha)$ of these homotopy types are invariants of the stratified analytic set $X$.\end{proposition}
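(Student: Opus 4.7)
The plan is to combine Kleiman's transversality theorem with Thom's first isotopy lemma applied to a suitably stratified family of affine slices. First I would refine the Whitney stratification near $x$ by noting that, as a consequence of the conic structure of Whitney stratified germs (itself a consequence of Thom-Mather), there exists $\epsilon_0>0$ such that for $0<\epsilon<\epsilon_0$ the sphere $S(0,\epsilon)\subset\C^n$ is transversal to every stratum $X_\beta$ near $x$; hence $X\cap\overline{\B(0,\epsilon)}$ inherits a Whitney stratification by the $X_\beta\cap \B(0,\epsilon)$ together with boundary strata $X_\beta\cap S(0,\epsilon)$.

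Next, by Kleiman's transversality theorem (\cite{Kl1}) applied to the stratified action of the affine group on the Whitney stratified set $X\cap\overline{\B(0,\epsilon)}$, there exists a Zariski open dense $W_{\alpha,i}\subset G(n-i,n)$ such that for $L_i\in W_{\alpha,i}$ and $t$ in a dense Zariski open set of $\C^n$, the affine space $L_i+t$ is transversal to every stratum $X_\beta$ and to every boundary stratum $X_\beta\cap S(0,\epsilon)$. Since $i>d_\alpha$, a general $L_i$ does not contain $T_{X_\alpha,x}$, so the transversality condition along $X_\alpha$ near $0$ forces $L_i+t$ to avoid $X_\alpha$ for small $t\neq 0$ in the appropriate cone. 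I would define $E_{L_i}\subset \R^2_{\geq 0}$ to be the set of pairs $(\epsilon,\eta)$ such that $L_i+t$ is transversal to all strata of $X\cap\overline{\B(0,\epsilon)}$ whenever $|t|\leq \eta$; by the curve selection lemma in the semi-analytic category, the frontier of this set is described by a Puiseux series $\eta=\phi(\epsilon)$, yielding the claimed form.

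Then I would form the incidence space
\[ Y_{L_i}=\{(y,t)\in (X\cap\overline{\B(0,\epsilon)})\times U\colon y\in L_i+t\},\]
stratified via the projection $(y,t)\mapsto y$ (which identifies $Y_{L_i}$ with the product of $X\cap \overline{\B(0,\epsilon)}$ and a neighborhood of $0$ in a complement of $L_i$). For $(\epsilon,|t|)\in E_{L_i}$, the second projection $\pi_2\colon Y_{L_i}\to U$ is proper and, restricted to each stratum, is a submersion, because transversality of $L_i+t$ with each $X_\beta$ (and with the boundary strata) is precisely what is needed for the differential of $\pi_2$ to be onto along the strata of $Y_{L_i}$. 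Thom's first isotopy lemma then provides a stratified local trivialization of $\pi_2$ over the connected semi-analytic region $E_{L_i}$, proving that the homotopy type of $X\cap(L_i+t)\cap\B(0,\epsilon)$ is constant there; the independence from the choice of $L_i\in W_{\alpha,i}$ follows because two generic choices can be joined by a path inside $W_{\alpha,i}$ on which the same isotopy argument applies to the parametrized family.

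Finally, independence of $x\in X_\alpha$ follows from the Thom-Mather local topological triviality along $X_\alpha$ (Theorem \ref{T-M}): a local trivialization of $X$ along $X_\alpha$ carries the generic transversal slices at one point to those at another, preserving their homotopy types. Independence of the local embedding is obtained, as usual, by comparing two embeddings inside a common larger one: a generic $(n-i)$-plane in the composite ambient space cuts each embedded copy of $X$ in a slice whose homotopy type coincides with the intrinsic one, by the transversality argument above. The main obstacle I anticipate is the precise construction of the semi-analytic domain $E_{L_i}$ and the verification that all finitely many transversality conditions (one per pair $(\beta,\text{interior/boundary stratum})$) can be simultaneously realized by a single Puiseux-type curve $\eta=\phi(\epsilon)$; this uses the fact that in the real subanalytic category the locus where transversality fails is a closed subanalytic subset of $\R^2_{\geq 0}$ whose frontier admits a Puiseux parametrization near the origin.
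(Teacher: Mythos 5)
Your overall strategy (generic transversality plus Thom's first isotopy lemma applied to the family of affine slices, with the region $E_{L_i}$ carved out by curve selection) is the right general framework and is essentially that of the reference [L-T3, \S 3] to which the paper defers. But there is a genuine gap at the central step, the construction of $E_{L_i}$. You define $E_{L_i}$ as the set of $(\epsilon,\eta)$ such that $L_i+t$ is transversal to \emph{all} strata of $X\cap\overline{\B(0,\epsilon)}$ for \emph{every} $t$ with $|t|\leq\eta$, and then invoke curve selection to describe its frontier. That set is empty as soon as $d_\alpha\geq 1$: since $i>d_\alpha$, an affine space of dimension $n-i$ can never be transversal in $\C^n$ to $X_\alpha$ at a point where it meets it, and translates $L_i+t$ meeting $X_\alpha$ (or any stratum of dimension $<i$ accumulating at $x$) occur for arbitrarily small $|t|\neq 0$ --- take $t$ in the image of $X_\alpha$ in $\C^n/L_i$, a positive-dimensional germ through $0$. (Concretely, for $X=\{xy=0\}\subset\C^3$ with $X_\alpha$ the $z$-axis and $i=2$, the slice is two points for generic small $t$ but one point whenever $L_2+t$ meets the axis.) So the theorem cannot be reduced to stratified transversality of every translate. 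The actual argument must control the discriminant of the family of slices, and its crucial input is the Whitney conditions themselves: one chooses $L_i$ transversal to the strata of the \emph{compact} fibers over $x$ of the conormal spaces (or Nash modifications, or projectivized normal cones) of the closures $\overline{X_\beta}$ --- exactly as in Remark \ref{Factsofpolarvarieties} b) --- and condition a) then propagates this to transversality of $L_i+t$ to all the strata of dimension $\geq i$ at every point of $\B(0,\epsilon)$, uniformly in $t$, for $\epsilon$ small enough. Kleiman's theorem applied to ``$t$ in a dense Zariski open set'' delivers neither this uniformity nor the quantifier over $t$ in the statement, and curve selection can only parametrize the frontier of a region you have not yet shown to be nonempty.

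A second, smaller gap: the independence from $x\in X_\alpha$ does not follow from Theorem \ref{T-M} as you claim, because the Thom--Mather trivializing homeomorphism is not affine and does not carry the slices $L_i+t$ centered at one point of $X_\alpha$ to affine slices at another; one needs a separate isotopy argument moving the center of the ball and the slicing flag along $X_\alpha$, which again rests on the uniform transversality above. Finally, the transversality of small spheres to the strata is an input to, not a consequence of, the local conic structure theorem; it is proved directly from the curve selection lemma (or from Whitney's Lemma \ref{wl}).
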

\begin{definition}\label{EUVA} The Euler-Poincar\'e characteristics $\chi_i(X,X_\alpha),\ i\in [d_\alpha+1,d]$ are called the local vanishing Euler-Poincar\'e characteristics of $X$ along $X_\alpha$.	
\end{definition}

\begin{corollary}{{\rm (Kashiwara; see \cite{K1}, \cite{K2})}}\label{Euler} The Euler-Poincar\'e characteristics $\chi(X,X_\alpha )=\chi_{d_\alpha+1}(X,X_\alpha)$ of the corresponding homotopy types when $i=d_\alpha+1$ depend only on the stratified set $X$ and the stratum $X_\alpha$.\end{corollary}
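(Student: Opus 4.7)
The plan is to derive this corollary directly from Proposition \ref{HOT} by specializing to $i = d_\alpha+1$, while pointing out the more intrinsic route via constructibility of local Euler characteristics that justifies the attribution to Kashiwara. When $i = d_\alpha+1$, the codimension of the slicing linear space $L_{d_\alpha+1}$ equals $d_\alpha+1$, so $\dim L_{d_\alpha+1} + \dim X_\alpha = n-1$; hence for generic $L$ and small $\vert t\vert$, the affine slice $L + t$ misses $X_\alpha$ near $x$ but meets every higher-dimensional stratum $X_\beta$ with $X_\alpha \subset \overline{X_\beta}$. This is the geometric content behind singling out this index: it is the first slice dimension for which $X_\alpha$ itself is avoided.

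The first step would be to apply Proposition \ref{HOT} to obtain, for each $x \in X_\alpha$, a well-defined homotopy type for $X \cap (L_{d_\alpha+1} + t) \cap \B(0,\epsilon)$, independent of the generic choice of $L_{d_\alpha+1}$ and of $(\epsilon,t)$ in the semi-analytic region $E_{L_{d_\alpha+1}}$. This already gives that $\chi_{d_\alpha+1}(X,X_\alpha)$ is well defined at each $x \in X_\alpha$. The second step is to show this number is constant as $x$ varies in $X_\alpha$. For this I would invoke the Thom-Mather theorem \ref{T-M}: along $X_\alpha$, the Whitney stratified germ $(X,x)$ is locally topologically trivial in a way that is compatible with a retraction to $X_\alpha$, so the topology of a transverse linear slice is carried homeomorphically from one point of $X_\alpha$ to another. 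Strictly speaking, one must combine this with Proposition \ref{HOT} applied at each point to know that the slice produced by the same generic $L_{d_\alpha+1}$ at a moving point $x'$ still computes $\chi_{d_\alpha+1}(X,X_\alpha)$, which is where the independence from the choice of $x$ in \ref{HOT} is crucial.

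The independent Kashiwara-style proof I would sketch in parallel proceeds as follows: define, for any complex analytic set $X$ and closed analytic $X_\alpha \subset X$, a local Euler-characteristic function $x \mapsto \chi(X \cap (L+t) \cap \B(0,\epsilon))$ computed from a generic affine linear slice of codimension $d_\alpha+1$ through a point close to $x$. Using the theory of constructible functions on complex analytic spaces and the fact that taking generic hyperplane slices preserves constructibility, one shows that this function is constructible with respect to any Whitney stratification refining $\{X_\alpha\}$. Since $X_\alpha$ itself is a stratum, the function is constant on $X_\alpha$, yielding $\chi(X, X_\alpha)$ as a well-defined invariant.

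The main obstacle, in either approach, is controlling the interplay between the small parameters $(\epsilon, \vert t\vert)$ and the generic choice of $L$: one must ensure that the semi-analytic zone $E_L$ of good pairs $(\epsilon,\eta)$ can be chosen uniformly as $x$ moves in a compact piece of $X_\alpha$ and as $L$ varies in a suitable constructible subset of the Grassmannian. This is precisely the technical content absorbed in Proposition \ref{HOT} via the Puiseux-type function $\phi(\epsilon)$, and the constructibility approach shifts this difficulty into the proof that generic slicing preserves constructibility, which ultimately rests on Whitney's condition (b) and the first isotopy lemma.
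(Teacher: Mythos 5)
Your proposal is correct and follows essentially the same (implicit) route as the paper: the corollary is nothing more than the case $i=d_\alpha+1$ of Proposition \ref{HOT}, whose final sentence already asserts independence of the homotopy type from the point $x\in X_\alpha$ and from the local embedding, so your additional Thom--Mather step only re-derives what is already packaged in that proposition. The parallel constructible-function sketch is a reasonable gloss on why the result is attributed to Kashiwara, but it is not needed for the deduction.
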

The invariants  $\chi(X,X_\alpha )$ appeared for the first time in \cite{K1}, in connection with Kashiwara's index theorem for maximally overdetermined systems of linear differential equations.\par\noindent Note that if the codimension of the affine spaces is $\leq d_\alpha$ they meet $X_\alpha$ so that the intersection we study is contractible by Whitney's Lemma \ref{wl}, and if the codimension is $>d$ the intersection with $ X$ is empty.

\begin{ex}\label{smoothing}\em\begin{itemize}\par\noindent
\item Let $d$ be the dimension of $X$. Taking $X_\alpha=\{x\}$, which is permissible by Whitney's lemma (Lemma \ref{wl}), and $i=d$ gives $\chi_d (X,\{x\})=m_x(X) $, as we saw above.
\item Assume that $(X,x)\subset (\C^{d+1},0)$ is a hypersurface with isolated singularity at the point $x$ (taken as origin in $\C^{d+1}$), defined by $f(z_1,\ldots ,z_{d+1})=0$. By Whitney's lemma (Lemma \ref{wl}), in a sufficiently small neighborhood of $x$, the minimal Whitney stratification (see the end of section \ref{WhitneyinNormalConormal}) is $(X\setminus\{x\})\cup \{x\}$, and we have $$\chi_i(X,\{x\})=1+(-1)^{d-i}\mu^{(d+1-i)}(X,x),\eqno{(*)}$$ where $\mu^{(k)}(X,x)$ is the Milnor number of the restriction of the function $f$ to a general linear space of dimension $k$ through $x$.\par
Let us recall that the Milnor number $\mu^{(d+1)}(X,x)$ of an isolated singularity of hypersurface as above is defined algebraically as the multiplicity in $\C\{z_1,\ldots ,z_{d+1}\}$ of the Jacobian ideal $j(f)=(\frac{\partial f}{\partial z_1},\ldots ,\frac{\partial f}{\partial z_{d+1}})$, which is also the dimension of the $\C$-vector space $\frac{\C\{z_1,\ldots ,z_{d+1}\}}{j(f)}$ since in this case the partial derivatives form a regular sequence. Topologically it is defined by the fact that for $0<\vert \lambda\vert<<\epsilon<<1$ the \textit{Milnor fiber} $f^{-1}(\lambda)\cap \B(0,\epsilon)$ has the homotopy type of a bouquet of $\mu^{(d+1)}(X,x)$ spheres of dimension $d$. In fact this is true of any \textit{smoothing} of $(X,x)$ that is, any nonsingular fiber in an analytic family $F(v,z_1,\ldots ,z_{d+1})=0$ with $F(0,z_1,\ldots ,z_{d+1})=f(z_1,\ldots ,z_{d+1})$, within a ball $\B(0,\epsilon)$ and for $0<\vert v\vert<< \epsilon<<1$. This is a consequence of the fact that the basis of the miniversal deformation of an isolated singularity of hypersurface (or more generally, complete intersection) is nonsingular, and thus irreducible, and the smooth fibers are the fibers of a locally trivial fibration over the (connected) complement of the discriminant; see \cite[ \S 4]{Te2}. Since $f^{-1}(0)\cap \B(0,\epsilon)$ is contractible the Milnor fiber has $\mu^{(d+1)}(X,x)$ \textit{vanishing cycles} of dimension $d$. For all this, see \cite{Mi}.\par\noindent
Moreover (see \cite[Chap. I]{Te1}), the Milnor number of the restriction of the function $f$ to a general $i$-dimensional linear space through $0$ is well defined and does not depend on the choice of the embedding $(X,x)\subset (\C^{d+1},0)$ or the general linear space in $\C^{d+1}$ but only on the analytic algebra $\Oo_{X,x}$. It is denoted by $\mu^{(i)}(X,x)$. Note that $\mu^{(1)}(X,x)$ is the multiplicity of $(X,x)$ minus one, and $\mu^{(0)}(X,x)=1$. \par
These numbers are related to limits at $x$ of tangent hyperplanes to the hypersurface $X$ by the following result found in \cite[Chap.II, 1.6]{Te1}: For all hyperplanes $(H,0)\subset (\C^{d+1},0)$ we have $\mu (X\cap H,x)\geq \mu^{(d)}(X,x)$ and equality holds if and only if $H$ is not a limit at $x$ of tangent hyperplanes to $X$. Here $\mu (X\cap H,x)=\infty$ if $(X\cap H,x)$ is not an isolated singularity. This result has been generalized by Gaffney to isolated singularities of complete intersections in \cite[Proposition 2.6]{Ga3} and more general situations in \cite[Theorem 3.3]{Ga4}, \cite{Ga5}, \cite[pp.129-130]{Ga6}. For non isolated singularities a criterion in terms of Segre numbers is given in \cite[Theorem 4.13]{Ga-Ga}.  Another generalization,  in a more topological framework, to a large class of Whitney stratified complex analytic spaces containing isolated singularities, is due to M. Tib\u ar in \cite{Ti}.\par\medskip
Let $p\colon E\to\check \P^{n-1}$, with $E\subset \check \P^{n-1}\times\C^n$, be the tautological bundle of the projective space $\check \P^{n-1}$ of hyperplanes in $\C^n$; given $H\in \check \P^{n-1}$, the fiber $p^{-1}(H)\subset\C^n$ is the hyperplane $H\subset \C^n$. Starting with our germ of hypersurface $(X,0)\subset (\C^n,0)$ with isolated singularity, let us consider the intersection $\mathcal{H}=(\check \P^{n-1}\times X)\cap E$. The germ of $\mathcal{H}$ along $Y=\check \P^{n-1}\times\{0\}$, endowed with the projection $p_1\colon \mathcal{H}\to \check \P^{n-1}$ induced by $p$, is the family of hyperplanes sections of $(X,0)$. It is shown in \cite[Appendice]{Te8} that the open subset of $Y$ where the Milnor number of the corresponding fiber of $p_1$ is minimal, and thus equal to $\mu^{(d)}(X,0)$, coincides with the subset where $\mathcal{H}^0$ satisfies the Whitney conditions along $Y$. This shows that the family of hyperplane sections is quite special since in general the constancy of the Milnor number in a family of isolated hypersurface singularities does not imply the Whitney conditions (see example \ref{B-S} above).
\par\medskip
Let us now prove the equality $(*)$. By the results of \cite[Chap. II]{Te1}, it suffices to prove the equality for $i=1$. We know by Proposition \ref{dimC} that a general hyperplane $L_1$ through $x$ is not a limit of tangent hyperplanes to $X$ at nonsingular points. Thus, if $0<\vert t\vert<<\epsilon$, the intersection $X\cap (L_1+t)\cap \B(0,\epsilon)$ is nonsingular because it is a transversal intersection of nonsingular varieties. For the same reason, the intersection $L_1\cap X\cap \B(0,\epsilon)$ is nonsingular outside of the origin, which means that the hypersurface $f(0,z_2,\ldots ,z_{d+1})=0$ has an isolated singularity at the origin. Choosing coordinates so that $L_1$ is given by $z_1=0$, we see that the intersections with a sufficiently small ball $\B(0,\epsilon)$ around $x$ of $f(t,z_2,\ldots ,z_{d+1})=0$ and $f(0,z_2,\ldots ,z_{d+1})=\lambda$, for small $\vert t\vert,\ \vert\lambda\vert$, are two smoothings of the hypersurface with isolated singularity $f(0,z_2,\ldots ,z_{d+1})=0$. They are therefore diffeomorphic and thus have the same Euler characteristic. The first one is our $\chi_1(X,\{x\})$ and the second one is the Euler characteristic of a Milnor fiber of $f(0,z_2,\ldots ,z_{d+1})$, which is $1+(-1)^{d-1}\mu^{(d)}(X,x)$ in view of the bouquet description recalled above.
\end{itemize}
\end{ex}
It is known from \cite[4.1.8]{L-T1} (see also just after theorem \ref{formula} below) that the image of a general polar variety $P_k(X,x)$ by the projection $p\colon (\C^n,0)\to (\C^{d-k+1},0)$ which defines it has at the point $p(x)$ the same multiplicity as $P_k(X,x)$ at $x$. This is because for a general projection $p$ the kernel of $p$ is transversal to the tangent cone $C_{P_k(X,x),x}$ of the corresponding polar variety.  Using this in the case of isolated singularities of hypersurfaces, it is known from (\cite[Chap. II, proposition 1.2 and cor. 1.4]{Te1} or \cite[corollary p.610]{Te7} that the multiplicities of the polar varieties can be computed from the $\mu^{(k)}(X,x)$; we have the equalities\footnote{The fact that the constancy of the numbers $\mu^{(i)}(X_t,0)$ in a family $(X_t,0)_{t\in \D}$ of germs of hypersurfaces with isolated singularities is equivalent to the Whitney conditions along the singular locus follows from these equalities and Theorem \ref{PE}. See \cite[Chap. VI]{Te3}. It has been stressed, in particular by T. Gaffney in \cite{Ga6} and Gaffney-Kleiman in \cite{G-K}, that this is a condition bearing only on the fibers of the family, and not its total space.} \[m_x(P_k(X,x))= \mu^{(k+1)}(X,x)+\mu^{(k)}(X,x).\]
At this point it is important to note that the equality $m_x(P_{d-1}(X,x))=\mu^{(d)}(X,x)+\mu^{(d-1)}(X,x)$ which, by what we have just seen, implies the equality \[ \chi_1(X,\{x\})-\chi_2(X,\{x\})=
 (-1)^{d-1}m_x(P_{d-1}(X,x)),\]
 implies the general formula 
\[ \chi_{d-k}(X,\{x\})-\chi_{d-k+1}(X,\{x\})=
 (-1)^k m_x(P_k(X,x)),\]
simply because an affine space $L_{d-k}+t$ can be viewed as the intersection of an $L_1+t$ for a general $L_1$ with a general vector subspace $L_{d-k-1}$ of codimension $d-k-1$ through the point $x$ taken as origin of $\C^n$, and $$m_x(P_k(X,x))=m_x(P_k(X,x)\cap L_{d-k-1})=m_x(P_k(X\cap L_{d-k-1}),x)).$$ The  first equality follows from general results on multiplicities since $L_{d-k-1}$ is general, and the second from general results on local polar varieties found in (\cite[ 5.4]{Te3}, \cite[4.18]{L-T1}). This sort of argument is used repeatedly in the proofs.\par\medskip The formula for a general stratified set is the following:

\begin{theorem}\label{formula}{\rm (L\^e-Teissier, see \cite[th\'eor\`eme 6.1.9]{L-T1}, \cite[ 4.11]{L-T3})} With the conventions just stated, and for any Whitney stratified complex analytic set $X=\bigcup_\alpha X_\alpha\subset \C^n$, we have for $x\in X_\alpha$ the equality
\begin{align*} \chi_{d_\alpha +1}(X,X_\alpha)&-\chi_{d_\alpha +2}(X,X_\alpha)=\\
&\sum_{\beta\neq\alpha} (-1)^{d_\beta-d_\alpha-1}m_x(P_{d_\beta-d_\alpha-1}(\overline{X_\beta},x))(1-\chi_{d_\beta+1}(X,X_\beta)),
\end{align*}
where it is understood that $m_x(P_{d_\beta-d_\alpha-1}(\overline{X_\beta},x))=0$ if $x\notin P_{d_\beta-d_\alpha-1}(\overline{X_\beta},x)$.
\end{theorem}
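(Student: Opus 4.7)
The plan is to compare the two Euler characteristics on the left by adding one extra generic linear slice and applying a stratified version of the Morse-theoretic additivity of Euler characteristic. Fix a local embedding $(X,x)\subset(\C^n,0)$ and a sufficiently general linear projection $\pi=(\pi_1,\ell)\colon\C^n\to\C^{d_\alpha+2}$ with kernels $L_1:=\ker\pi_1$ and $L_2:=\ker\pi\subset L_1$, general enough for Propositions \ref{polarv}, \ref{HOT} and \ref{multpolv} to apply. For generic small $t$, the codimension $d_\alpha+1$ affine slice $L_1+t$ realizes $\chi_{d_\alpha+1}(X,X_\alpha)$ as $\chi(X\cap(L_1+t)\cap\B(0,\epsilon))$, and for $c$ close to but distinct from $\ell(t)$ the codimension $d_\alpha+2$ slice $(L_1+t)\cap\ell^{-1}(c)$ realizes $\chi_{d_\alpha+2}(X,X_\alpha)$. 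Viewing $\ell$ as a proper stratified map from $X\cap(L_1+t)\cap\B(0,\epsilon)$ to a small disk in $\C$ exhibits the first space as a fibration whose generic fiber computes the second.

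The second step is the standard additivity of Euler characteristic for such a map with finitely many isolated critical values, which yields
\begin{equation*}
\chi_{d_\alpha+1}(X,X_\alpha)-\chi_{d_\alpha+2}(X,X_\alpha)=\sum_{y}\bigl(1-\chi(F_y)\bigr),
\end{equation*}
where $y$ runs over critical points of the restriction of $\ell$ to the smooth strata of $X\cap(L_1+t)\cap\B(0,\epsilon)$ and $F_y$ is the local Milnor fiber of $\ell$ at $y$. Whitney condition a) and genericity of $(L_1,\ell)$ ensure that these critical points are isolated, lie on open strata $X_\beta^0$ with $X_\alpha\subset\overline{X_\beta}$, and have pairwise distinct critical values. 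A point $y\in X_\beta^0\cap(L_1+t)$ is critical for $\ell$ if and only if $T_yX_\beta\cap L_1\subset\ker\ell$, equivalently $\dim(T_yX_\beta\cap L_2)\geq d_\beta-d_\alpha-1$, which by Proposition \ref{polarv} places $y$ in the polar variety $P_{d_\beta-d_\alpha-1}(\overline{X_\beta})$ attached to the kernel $L_2$ of $\pi$. Since this polar variety has pure dimension $d_\alpha+1$ transverse to $L_1+t$ of complementary codimension, the number of such critical points near $x$ is $m_x(P_{d_\beta-d_\alpha-1}(\overline{X_\beta},x))$, by the transverse-intersection interpretation of multiplicity recalled in subsection \ref{mul}.

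The final and most delicate step is the local computation of $1-\chi(F_y)$ at each such $y\in X_\beta^0$. Thom-Mather's Theorem \ref{T-M} provides a topological product decomposition $(X,y)\simeq(X_\beta,y)\times(T,t_0)$ compatible with the stratification, where $(T,t_0)$ is a Whitney-stratified transverse germ of dimension $d-d_\beta$. Because $L_1+t$ is transverse to $X_\beta$ at $y$ and $\ell$ is generic, the restriction of $\ell$ to the smooth slice $X_\beta\cap(L_1+t)$ is a complex Morse function, with Milnor fiber homotopy equivalent to $S^{d_\beta-d_\alpha-2}$, while its restriction to the transverse germ is a generic linear function whose Milnor fiber is a generic hyperplane section of $(T,t_0)$ with Euler characteristic $\chi_{d_\beta+1}(X,X_\beta)$ by Proposition \ref{HOT}. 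A Thom-Sebastiani-type splitting identifies $F_y$ with the join of these two Milnor fibers, and the join formula $\chi(A\ast B)=\chi(A)+\chi(B)-\chi(A)\chi(B)$ gives $1-\chi(F_y)=(1-\chi_{d_\beta+1}(X,X_\beta))(1-1-(-1)^{d_\beta-d_\alpha-2})=(-1)^{d_\beta-d_\alpha-1}\bigl(1-\chi_{d_\beta+1}(X,X_\beta)\bigr)$, which upon summation produces the desired formula. The hardest point, where the proof needs genuine care, is justifying the Thom-Sebastiani splitting in the Thom-Mather topological product which is not a priori holomorphic: one realizes $F_y$ as the total space of a stratified fibration over the Morse Milnor fiber of the smooth factor whose fibers are generic hyperplane sections of $(T,t_0)$, or alternatively one proceeds by descending induction on $d-d_\alpha$, applying the theorem itself to the transverse germ $T$ in order to reduce the local computation to situations where classical complex Morse theory applies unambiguously.
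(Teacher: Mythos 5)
Your strategy is essentially the one underlying the paper's sketch and the original proof in \cite{L-T3}: slice by a general affine $L_1+t$ of codimension $d_\alpha+1$, view the further codimension-one slicing as the fibers of a general linear function $\ell$, localize the jump of Euler characteristic at the stratified critical points of $\ell$, recognize those critical points as $(L_1+t)\cap P_{d_\beta-d_\alpha-1}(\overline{X_\beta})$ for the polar varieties attached to $\ker\pi$, and count them by the multiplicity. The only real difference of presentation is that you count critical points upstairs on $X$, whereas the text works downstairs with the polar image and uses the transversality of $\ker\pi$ to the tangent cone of the polar variety to identify the multiplicity of the polar image with that of the polar variety; the two bookkeepings are equivalent. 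Your identification of the critical locus with the polar variety, the identification of the normal Milnor fiber with a codimension $d_\beta+1$ slice computing $\chi_{d_\beta+1}(X,X_\beta)$, and the sign computation via $1-\chi(A*B)=(1-\chi(A))(1-\chi(B))$ are all correct, and they reproduce the special cases worked out in Example \ref{smoothing}.

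The genuine soft spot is exactly the one you flag, and one of your two proposed repairs does not work. If $F_y$ were literally the total space of a fibration over the Morse Milnor fiber $F_1\simeq S^{d_\beta-d_\alpha-2}$ with fibers the generic hyperplane section $F_2$ of the transverse germ, multiplicativity of $\chi$ for fibrations would give $\chi(F_y)=\chi(F_1)\chi(F_2)$, hence $1-\chi(F_y)=1-\chi(F_1)\chi(F_2)$, which is not the join value $(1-\chi(F_1))(1-\chi(F_2))$ that your formula needs. Test it on a surface with an isolated singular point: there $F_1=S^0$ and $F_2=\emptyset$, so the fibration picture would force $F_y=\emptyset$ and a contribution of $+1$, whereas $F_y$ is the two-point Milnor fiber of a Morse point on the smooth curve $X^0\cap(L_1+t)$ and the contribution is $1-\chi(F_y)=-1$, as the theorem requires. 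The correct substitute is the multiplicativity of local Morse data (tangential times normal) of stratified Morse theory, or equivalently the "descriptible map" formalism of \cite[\S 2]{L-T3} that the paper cites: either yields $1-\chi(F_y)=(1-\chi(F_1))(1-\chi(F_2))$ without ever pretending that the Thom--Mather homeomorphism is holomorphic or that $\ell$ splits as a sum. Your inductive alternative is also viable and is closer in spirit to the original argument. Two smaller points should be made explicit: the genericity needed is not only that $\ell$ be Morse on each stratum but that $d\ell$ at each critical point avoid the limits of tangent hyperplanes coming from adjacent strata (Proposition \ref{dimC} guarantees this for general $\ell$), which is what makes the normal contribution equal to that of a generic hyperplane section of the transverse slice; and one must rule out critical phenomena on $\partial\B(0,\epsilon)$, which is the role of the fundamental systems of good neighborhoods invoked in the paper.
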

The main ingredients of the proof are the topological properties of \textit{descriptible} maps between stratified spaces (see \cite[\S 2]{L-T3}) and the transversality theorem already mentioned above which states that the kernel of the projection defining a polar variety $P_k(X,L)$ is transversal to the tangent cone $C_{P_k(X.L),0}$ at the origin provided that the projection is general enough. Thus, the image of that polar variety by this projection, a hypersurface called the polar image, has the same multiplicity as the polar variety (see \cite{L-T1}, 4.1.8).\par\noindent This is useful because one considers the intersections  $X\cap (L_i+t)\cap \B (0,\epsilon)$ as intersections with $X\cap \B (0,\epsilon)$ of the fibers of linear projections $\C^n\to \C^i$ over a ``general" point close to the image of the point $x\in X_\alpha$. Because we are in complex analytic geometry the variations of Euler-Poincar\'e characteristics can be computed as the number of intersection points of a general line with the polar image, which is its multiplicity. This is where the "descriptible" character of \textit{general} projections from the stratified space $X$ to $\C^i$, which lies beyond the scope of these notes, plays a key role in the computation of Euler-Poincar\'e characteristics; see \cite[Proposition 2.1.3]{L-T3}. The basic fact here is that in complex analytic geometry the complement of a closed union of strata in its "tubular neighborhood" as provided by the Whitney conditions (see section \ref{tub}), has zero Euler-Poincar\'e characteristic. In addition, the existence of fundamental systems of good neighborhoods of a point of $\C^n$ relative to a Whitney stratification also plays an important role. 
\begin{remark} A computation of vanishing Euler characteristics for isolated determinantal singularities is provided in \cite{NOT}. 
\end{remark}
Let us now go back to the definitions of stratifications and stratifying conditions (see definition \ref{stratif}). Given a complex analytic stratification $X=
\bigcup_\alpha X_\alpha$ of a complex analytic space, we can consider the following incidence conditions:
\begin{enumerate}
\item ``Punctual Whitney conditions", the incidence condition $\hat W_x(X_\alpha,X_\beta)$: For any $\alpha$, any point $x\in X_\alpha$, any stratum $X_\beta$ such that $\overline{X_\beta}$ contains $x$ and any local embedding $(X,x)\subset (\C^n,0)$, the pair of strata $(X_\beta,X_\alpha)$ satisfies the Whitney conditions at $x$.
\item ``Local Whitney conditions",  the incidence condition $W_x(X_\alpha,X_\beta)$: same as above except that the Whitney conditions must be satisfied at every point of some open neighborhood of $x$ in $X_\alpha$.
\item ``(Local Whitney conditions)$^*$": For each $\alpha$, for every $x\in X_\alpha$ and every local embedding $(X,x)\subset (\C^n,0)$, for every $i\leq n-d_\alpha$ there exists a dense Zariski open set $U_i$ of the Grassmannian $G(n-i-d_\alpha,n-d_\alpha)$ of linear spaces of codimension $i$ of $\C^n$ containing  the tangent space $T_{X_\alpha,x}$ such that for every germ of nonsingular subspace $(H_i,x)\subset (\C^n,0)$ of codimension $i$ containing $(X_\alpha,x)$ and such that $T_{H_i,x}\in U_i$, we have $W_x(X_\alpha,X_\beta\cap H_i)$.
\item ``Local Topological equisingularity", the incidence condition $(TT)_x$: For any $\alpha$, any point $x\in X_\alpha$, any stratum $X_\beta$ such that $\overline{X_\beta}$ contains $x$ and any local embedding $(X,x)\subset (\C^n,0)$, there exist germs of retractions $\rho\colon (\C^n,0)\to (X_\alpha,x)$ and positive real numbers $\epsilon_0$ such that for all $\epsilon,\ 0<\epsilon\leq\epsilon_0$ there exists $\eta_\epsilon$ such that for all $\eta$,  $0<\eta\leq\eta_\epsilon$, there is an homeomorphism $\B(0,\epsilon)\cap \rho^{-1}(\B(0,\eta)\cap X_\alpha)\simeq (\rho^{-1}(x)\cap \B(0,\epsilon))\times (\B(0,\eta)\cap X_\alpha)$ which is compatible with the retraction $\rho$ and the projection to $\B(0,\eta)\cap X_\alpha$ and, for each stratum $X_\beta$ such that $\overline{X_\beta}$ contains $x$, induces an homeomorphism:
$$\overline{X_\beta}\cap\B(0,\epsilon)\cap \rho^{-1}(\B(0,\eta)\cap X_\alpha)\simeq (\overline{X_\beta}\cap\rho^{-1}(x)\cap \B(0,\epsilon))\times (\B(0,\eta)\cap X_\alpha).$$
This embedded local topological triviality, meaning that locally around $x$ each $\overline{X_\beta}$ is topologically a product of the nonsingular  $X_\alpha$ by the fiber $\rho^{-1}(x)$, in a way which is induced by a topological product structure of the ambient space, will be denoted by $TT_x(X_\alpha,X_\beta)$ for each specified $\overline{X_\beta}$.
\item ``(Local Topological equisingularity)$^*$", the incidence condition $(TT^*)_x$: For each $\alpha$, for every $x\in X_\alpha$ and every local embedding $(X,x)\subset (\C^n,0)$, for every $i\leq n-d_\alpha$ there exists a dense Zariski open set $U_i$ of the Grassmannian $G(n-i-d_\alpha,n-d_\alpha)$ of linear spaces of codimension $i$ of $\C^n$ containing  the tangent space $T_{X_\alpha,x}$ such that for every germ of nonsingular subspace $(H_i,x)\subset (\C^n,0)$ of codimension $i$ containing $(X_\alpha,x)$ and such that $T_{H_i,x}\in U_i$, we have $TT_x(X_\alpha,X_\beta\cap H_i)$.

\item ``$\chi^*$ constant": For each $\alpha$, for every $x\in X_\alpha$, every stratum $X_\beta$ such that $X_\alpha\subset\overline{X_\beta}$,  and every local embedding $(X,x)\subset (\C^n,0)$, the map which to every point $y\in X_\alpha$ in a neighborhood of $x$ associates the sequence $\chi^*( \overline{X_\beta},y)=(\chi_{d_\alpha +1}(\overline{X_\beta},\{y\}),\ldots ,\chi_{d_\beta}(\overline{X_\beta},\{y\}))$ is constant on $X_\alpha$ in a neighborhood of $x$. Recall from Proposition \ref{HOT} that $\chi_{i}(\overline{X_\beta},\{y\}))$ is the Euler characteristic of the intersection, within a small ball $\B(0,\epsilon)$ around $y$ in $\C^n$,  of $\overline{X_\beta}$ with an affine subspace of codimension $i$ of the form $L_i+t$, where $L_i$ is a vector subspace of codimension $i$ of general direction and $0<\vert t\vert<\eta$ for a small enough $\eta$, depending on $\epsilon$.

\item ``$M^*$ constant": For each $\alpha$, for every $x\in X_\alpha$, for every stratum $X_\beta$ such that $X_\alpha\subset\overline{X_\beta}$,  and every local embedding $(X,x)\subset (\C^n,0)$, the map which to every point $y\in X_\alpha$ in a neighborhood of $x$ associates the sequence $$M^*(\overline{X_\beta},y)=\big(m_y(\overline{X_\beta}), m_y(P_1(\overline{X_\beta},y)), \ldots ,m_y(P_{d_\beta -1}(\overline{X_\beta},y))\big)\in \N^{d_\beta}$$ is constant in a neigborhood of $x$.\par\noindent
This condition is equivalent to saying that the polar varieties $P_k(\overline{X_\beta},x)$ which are not empty contain $X_\alpha$ and are locally around $x$  equimultiple along $X_\alpha$.
  \end{enumerate}
\textbf{The main theorem of \cite{L-T3} (Th\'eor\`eme 5.3.1) is that for a stratification in the sense of definition \ref{stratif} all these conditions are equivalent, except 4., which we know to be weaker.}\par\noindent Theorem \ref{formula}, which relates the multiplicities of polar varieties with local topological invariants, plays a key role in the proof.\par
Recall that we saw in subsection \ref{stratifications} the result of Thom-Mather (see Theorem \ref{T-M}) that Whitney stratifications have the property 4. of local topological equisingularity defined above. We also mentioned that the converse is known to be false since Brian\c con-Speder gave in \cite{BS} a counterexample to a conjecture of \cite[Pr\'eambule]{Te1}. The result just mentioned provides among other things the correct converse.
 \section{Specialization to the Tangent Cone and Whitney equisingularity} \label{sec:specialization}
 
   Let us now re-examine the question of how much does a germ of singularity $(X,0)$ without exceptional cones resembles a cone. 
   The obvious choice is to compare it with its tangent cone $C_{X,0}$, assuming that it is reduced, and we can rephrase the question by asking
   does the absence of exceptional cones implies that $(X,0)$ is Whitney-equisingular to its tangent cone?\\
   
   	To be more precise, let $(X,0) \subset (\C^n,0)$ be a reduced germ of an analytic singularity of
  pure dimension $d$, and let $\varphi: (\X,0) \to (\C,0)$ denote the specialization of $X$ to its 
  tangent cone $C_{X,0}$. Let $\X^0$ denote the open set of smooth points of $\X$, and let $Y$ denote
  the smooth subspace $0 \times \C \subset \X$.  Our aim is to study the equisingularity of $\X$ along $Y$. More precisely, we want to determine whether the absence of exceptional cones will allow us to construct a  Whitney stratification of $\X$  
  in which the parameter axis $Y$ is a stratum.\\
  
    The first result in this direction was obtained by L\^e and Teissier in \cite[Thm 2.2.1]{L-T4} and says that for a surface $(S,0)\subset (\C^3,0)$ with a
    reduced tangent cone the absence of exceptional cones is equivalent to $\{\X^0, \mathrm{Sing} \X \setminus Y, Y\}$ being a 
    Whitney stratification of $\X$. In particular $(S,0)$ is Whitney equisingular to its tangent cone $(C_{S,0},0)$. \\

  In the general case, we only have a partial answer which we will now describe. The first step to find out if such a stratification is possible, is to verify that the pair 
  $(\X^0, Y)$ satisfies Whitney's conditions. Since $\X \setminus \X(0)$ is isomorphic to the product 
  $\C^*\times X$, Whitney's conditions are automatically verified everywhere in $\{0\} \times \C$ , with 
  the possible exception of the origin.
  
  \begin{theorem}\cite[Thm. 8.11]{Gi} \label{Equivalenciashipersuperficies}
      Let $(X,0)$ be a reduced and equidimensional germ of a complex analytic space. Suppose
   that its tangent cone $C_{X,0}$ is reduced. The following 
   statements are equivalent:
  \begin{enumerate}
 \item The germ $(X,0)$ does not have exceptional cones.
  \item The pair $(\X^0, Y)$ satisfies Whitney's condition a) at the origin.
 \item The pair $(\X^0, Y)$ satisfies Whitney's conditions a) and b) at the origin.
 \item The germ $(\X,0)$ does not have exceptional cones.
 \end{enumerate}
 \end{theorem}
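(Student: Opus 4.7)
My strategy is to exploit the $\C^*$-equivariant structure of $\X$: the equations $F_i(z,v)=v^{-m_i}f_i(vz)$ defining $\X\subset\C^n\times\C$ are quasi-homogeneous, so $\X$ is invariant under the action $(z,v)\mapsto(\lambda z,\lambda^{-1}v)$ of $\C^*$. This plays the role, for Whitney conditions along $Y$, that genuine conicity plays for cones with vertex $Y$, while outside $\varphi^{-1}(0)$ one has the trivialization $\varphi^{-1}(\C^*)\simeq X\times\C^*$ from Exercise \ref{ejercontan} and the observation preceding Proposition \ref{limsec}. The main ingredients will be the characterizations of Whitney conditions via polar varieties (Theorem \ref{PE}) and via $Y$-Lagrangian structure of the exceptional divisor (Theorem \ref{dual}), together with Theorem \ref{limtan1} identifying exceptional cones inside the normal/conormal diagram.

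I would begin with $(1)\Leftrightarrow(4)$. Using the trivialization $\varphi^{-1}(\C^*)\simeq X\times\C^*$, limits at $(0,v_0)$ with $v_0\neq 0$ of tangent spaces to $\X^0$ split as $T\oplus\C\partial_v$ with $T$ a limit of tangent spaces to $X^0$ at $0$, and similarly for tangent hyperplanes via Corollary \ref{NashvsConormalCoro}. Since $C_{\X,0}=C_{X,0}\times\C$ (reading off the $v$-adic initial forms of the $F_i$), an application of Theorem \ref{limtan1}.I together with the $\C^*$-equivariance identifies the irreducible components $V_\alpha$ of the projectivized tangent cone of $\X$ at $0$ with those of $C_{X,0}$; the exceptional ones are in bijective correspondence, giving $(1)\Leftrightarrow(4)$.

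Next, $(3)\Rightarrow(2)$ is trivial, and I would prove $(2)\Rightarrow(1)$ by contraposition. Suppose $X$ has an exceptional cone, i.e.\ an irreducible component $V_\alpha$ of ${\rm Proj}\,C_{X,0}$ of dimension $<d-1$ whose dual $W_\alpha\subset\check\P^{n-1}$ is an irreducible component of $\kappa_X^{-1}(0)$. Via Proposition \ref{Specializationprop}, the conormal space $C(\X)$ specializes to $C(X)$ with fiber over $0\in Y$ isomorphic to the normal cone $C_{T^*_X\C^n,\kappa_X^{-1}(0)}$; the exceptional component produces, in the divisor $\xi^{-1}(0)$ of the normal/conormal diagram of $\X$ along $Y$, a limit hyperplane that fails to contain $T_{Y,0}$, contradicting Whitney a). Equivalently, since $Y$ projects to the tangent cone as the exceptional $v$-direction, a genuine exceptional cone of $X$ prevents condition a) by producing limit tangent planes to $\X^0$ not containing $\partial_v$.

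Finally, for $(1)\Rightarrow(3)$, I would combine Theorem \ref{limtan1}.II with the polar multiplicity criterion (Theorem \ref{PE}). Absence of exceptional cones of $X$ means, by Theorem \ref{limtan1}.II, that the tangent cones $C_{P_k(X,L),0}$ of the general polar varieties have no fixed components beyond those coming from the $V_\alpha$ of dimension $d-k$, so these polar varieties are equimultiple at $0$ after specialization. Since $P_k(\X,L)$ restricted to $\varphi^{-1}(\C^*)$ is $P_k(X,L)\times\C^*$ under the trivialization, the $\C^*$-equivariance then forces $P_k(\X,L)$ to be equimultiple along $Y$ at $0$ for every $k$. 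By Theorem \ref{PE}, this is equivalent to both Whitney conditions for $(\X^0,Y)$ at $0$, giving $(3)$. Alternatively, one can argue via Corollary \ref{SpecLagRel}: $C_\varphi(\X)$ is $\varphi$-Lagrangian, and absence of exceptional cones guarantees that the dimension of $\xi^{-1}(0)$ in the normal/conormal diagram of $\X$ along $Y$ attains the optimal value $n-2$, which by Theorem \ref{dual} gives the $Y$-Lagrangian structure of the divisor and both Whitney conditions.

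The main obstacle will be the step $(1)\Rightarrow(3)$, specifically controlling the dimension of $\xi^{-1}(0)$ in the normal/conormal diagram for $\X$ along $Y$; the asymmetry of the $\C^*$-weights (opposite on $v$ and on $z$) means one must carefully track how the absence of exceptional cones of $X$ translates into an equimultiplicity statement for polar varieties of the non-conical space $\X$, rather than being a direct consequence of conicity as in the case of genuine cones with vertex $Y$.
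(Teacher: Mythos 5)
Your proposal assembles the right toolkit (the normal/conormal diagram of $\X$ along $Y$, Theorem \ref{PE}, Theorem \ref{limtan1}, the specialization of conormal spaces), but the two implications that carry the actual content of the theorem are asserted rather than proved. The critical one is $(1)\Rightarrow(3)$. The trivialization $\varphi^{-1}(\C^*)\simeq X\times\C^*$ gives equimultiplicity of $P_k(\X,L)$ along $Y\setminus\{0\}$, but says nothing about $m_0(P_k(\X,L))$, which is exactly where the multiplicity could jump; and the $\C^*$-action on $\X$ has weights $+1$ on $z$ and $-1$ on $v$, so it fixes only the origin, not the points of $Y$, and cannot "force" equimultiplicity along $Y$. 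Likewise, in your alternative route, the statement that absence of exceptional cones "guarantees" $\dim\xi^{-1}(0)=n-2$ is precisely the assertion to be established. The paper closes this gap by a genuinely different mechanism: condition a) for $(\X^0,Y)$ at $0$ is reformulated as the \emph{strict} integral dependence of $\partial F/\partial v$ on the relative Jacobian ideal $J_\varphi$ in $\Oo_{\X,0}$, and this is propagated from $Y\setminus\{0\}$ to the origin by the principle of specialization of integral dependence, which requires showing that the exceptional divisor of the normalized blowup of $J_\varphi$ --- identified with the normalization of the relative conormal space $C_\varphi(\X)$ of Proposition \ref{Specializationproprelative} --- has no component lying over $\X(0)$; it is there, and only there, that the absence of exceptional cones of $(X,0)$ is used. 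Your proposal contains no substitute for this step.

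The equivalence $(1)\Leftrightarrow(4)$ has a similar problem: the "bijective correspondence" of components you invoke only controls limits of tangent hyperplanes to $\X^0$ along sequences staying in $\varphi^{-1}(\C^*)$ with $v$ bounded away from $0$, whereas the exceptional cones of $(\X,0)$ are governed by all limits at the origin, including those along sequences with $v\to 0$ --- which is the whole difficulty. In the paper, $(4)$ is deduced from $(3)$ (the Whitney conditions identify the set of limit tangent hyperplanes to $\X$ at $0$ with the dual of $C_{\X,0}=C_{X,0}\times\C$), not from a direct matching of components with those of $X$. Finally, for $(2)\Rightarrow(1)$ your contrapositive sketch ("an exceptional cone of $X$ produces a limit tangent hyperplane to $\X^0$ not containing $\partial_v$") is not justified and is not how the paper argues: Lemma \ref{GeneralizationNecessity} proceeds directly, using Proposition \ref{aimplicab} to upgrade a) to b), then the theory of aur\'eoles of $(\X,Y)$ from \cite{L-T2} (surjectivity and equidimensionality of the projections $V_\alpha\to Y$, transversality of $v=0$ and of $v=v_0$) to conclude that the aur\'eole of $(X,0)$ consists only of the components of its tangent cone. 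I would encourage you to either reproduce the integral-dependence argument or find a genuine replacement for it; as written, the proof of $(1)\Rightarrow(2)$ is circular.
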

  
     We would like to explain a little how one goes about proving this result.  To begin with, we know that  Whitney's condition b) is stronger than
     the condition a). The equivalence of statements 2) and 3) tells us that in this case they are equivalent for the pair of strata $(\X^0, Y)$ at the origin.
The special geometry of  $\X$ plays a crucial role in this result.
  
  \begin{proposition}\label{aimplicab} \cite[Proposition 6.1]{Gi}
     If the pair $(\X^0,Y)$ satisfies Whitney's condition a) at the origin, 
   it also satisfies Whitney's condition b) at the origin.  
  \end{proposition}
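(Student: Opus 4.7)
The plan is to exploit the natural $\C^*$-action on $\X$ given by $\lambda\cdot(z,v)=(\lambda z,\lambda^{-1}v)$, which preserves $\X$ thanks to the weighted-homogeneity of the defining equations $F_i(v,z)=v^{-m_i}f_i(vz)$, and stabilizes $Y=\{0\}\times\C$. Its infinitesimal generator is the vector field $\xi(z,v)=(z,-v)$, tangent to $\X$ at every smooth point, so that $(z_i,-v_i)\in T_{\X^0,p_i}$ for every $p_i=(z_i,v_i)\in\X^0$. The key algebraic identity
\[(z_i,v_i-v'_i)=(z_i,-v_i)+(2v_i-v'_i)(0,1)\]
exhibits, for any $q_i=(0,v'_i)\in Y$, the secant vector from $p_i$ to $q_i$ as a sum of an actual tangent vector to $\X^0$ and a scalar multiple of the generator $(0,1)$ of $T_{Y,0}$.

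Given arbitrary sequences $p_i\in\X^0$, $q_i\in Y$ tending to $0$, set $T:=\lim T_{\X^0,p_i}$ after extraction. Since $\X^0\cap Y\subset\{0\}$ one has $z_i\neq 0$, and further extractions provide $\xi^*:=\lim(z_i,-v_i)/\|(z_i,-v_i)\|$ and $z^*:=\lim z_i/\|z_i\|$. Grassmannian continuity yields $\xi^*\in T$, while hypothesis (a) gives $(0,1)\in T_{Y,0}\subset T$. Normalizing the identity above produces
\[\frac{(z_i,v_i-v'_i)}{\|(z_i,v_i-v'_i)\|}=A_i\,\frac{(z_i,-v_i)}{\|(z_i,-v_i)\|}+B_i\,(0,1)\]
for explicit scalars $A_i,B_i$. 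When $\xi^*$ and $(0,1)$ are linearly independent, the angle between $(z_i,-v_i)/\|(z_i,-v_i)\|$ and $(0,1)$ is bounded away from $0$, which forces $(A_i,B_i)$ to be bounded; a further extraction then shows that $u^*:=\lim (z_i,v_i-v'_i)/\|(z_i,v_i-v'_i)\|$ lies in $\mathrm{span}(\xi^*,(0,1))\subset T$, as required.

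The main obstacle is the degenerate case $\xi^*=\pm(0,1)$, corresponding to $\|z_i\|/|v_i|\to 0$, i.e., the sequence $p_i$ approaches $0$ tangentially to $Y$. Here $\mathrm{span}(\xi^*,(0,1))$ collapses to a line, so one needs a genuinely new direction in $T$, namely $(z^*,0)$. This is where the specific specialization structure of $\X$ enters: for $v_i\neq 0$, the point $v_iz_i$ lies in $X$ and tends to $0$ with $[v_iz_i]=[z_i]\to[z^*]$, so Whitney's Lemma \ref{wl} applied to $(X,\{0\})$ gives $[z^*]\in\lim T_{X^0,v_iz_i}$. Choosing $w_i\in T_{X^0,v_iz_i}$ with $w_i/\|w_i\|\to z^*$ and using the characterization $(a,b)\in T_{\X^0,p_i}\Leftrightarrow v_ia+z_ib\in T_{X^0,v_iz_i}$ (obtained from the differential of the isomorphism $\Phi\colon\varphi^{-1}(\C^*)\simeq X\times\C^*$), the pair $(w_i/v_i,0)$ belongs to $T_{\X^0,p_i}$ and normalizes to $(z^*,0)$, so $(z^*,0)\in T$. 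For $v_i=0$, the same conclusion follows immediately from $(z_i,0)\in T_{\X^0,p_i}$, obtained as the generator of the $\C^*$-orbit inside the tangent cone. A routine case analysis on the asymptotic ratio $\|z_i\|/|v_i-v'_i|$ then places $u^*$ in $\mathrm{span}((z^*,0),(0,1))\subset T$, completing the proof.
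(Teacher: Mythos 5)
Your proof is correct, but it takes a more hands-on route than the paper's. The paper argues in the normal/conormal diagram: condition a) forces every limit point $(0,l,H)\in\zeta^{-1}(0)$ to have a ``horizontal'' hyperplane $H$ (the coefficient $b$ of $dv$ vanishes), so the maps $\psi\colon ((z,v),(a:b))\mapsto ((vz),(a))$ and $\omega\colon ((z,v),[z])\mapsto ((vz),[z])$ push $(l,H)$ down to a limit secant direction and a limit tangent hyperplane of $(X,0)$, and Whitney's lemma applied to $(X,0)$ gives $l\subset H$, i.e.\ $(0,l,H)\in I$. You work instead with tangent vectors, and your two ingredients --- the tangent vector $(z_i,-v_i)$ supplied by the $\C^*$-action $\lambda\cdot(z,v)=(\lambda z,\lambda^{-1}v)$, and the transfer of tangent directions through the differential of $\Phi\colon(z,v)\mapsto(vz,v)$ --- are precisely the vector-level shadows of $\omega$ and $\psi$; the reduction to Whitney's lemma for $(X,0)$ via multiplication by $v$ is the same in both arguments, so what your version buys is the elimination of conormal spaces and blow-ups in favor of an explicit computation. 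Two remarks. First, your case analysis is avoidable: the single decomposition $(z_i,v_i-v_i')=(z_i,0)+(v_i-v_i')(0,1)$, together with $(0,1)\in T$ from condition a) and $(z^*,0)\in T$ from your Whitney-lemma/$d\Phi$ argument (which is valid whether or not $\Vert z_i\Vert/\vert v_i\vert\to 0$, and is immediate when $v_i=0$ since then $(z_i,0)$ is itself the generator of the $\C^*$-orbit), places the normalized secant in $\mathrm{span}((z^*,0),(0,1))\subset T$ because both normalized coefficients have modulus at most $1$; the identity involving $(z_i,-v_i)$ and the degenerate/non-degenerate dichotomy are then superfluous. Second, a tiny point: $(w_i/v_i,0)$ normalizes to $\frac{\vert v_i\vert}{v_i}\bigl(w_i/\Vert w_i\Vert,0\bigr)$, which converges to $(z^*,0)$ only up to a unimodular factor; since $T_{\X^0,p_i}$ is a complex subspace this is harmless, but it is cleaner to observe that $(w_i,0)=v_i\cdot(w_i/v_i,0)$ lies in $T_{\X^0,p_i}$ and, divided by $\Vert w_i\Vert$, converges to $(z^*,0)$.
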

  
     \begin{remark}\label{puntosclaves} $\;$\\
          \begin{enumerate}
     \item For any point $y \in Y$ sufficiently close to $0$, the tangent cone $C_{\X,y}$ is isomorphic to $C_{X,0} \times Y$,
     and the isomorphism is uniquely determined once we have chosen a set of coordinates. The 
     reason is that for any $f(z)$ vanishing on $(X,0)$, the function 
      $F(z,v)=v^{-m}f(vz)=f_m+vf_{m+1}+ v^2f_{m+2}+ \ldots$, vanishes in $(\X,0)$ and so for any
      point $y=(\underline{0},v_0)$ with $\vert v_0\vert$ small enough the series $F(z,v-v_0)$ converges for $z,v-v_0$ small enough and the initial form of $F(z,v-v_0)$ in $\C\{z_1, \ldots, z_n,
      v-v_0\}$ with respect to the ideal $(z_1, \ldots, z_n)$ is equal to the initial form of $f$ at $0$. That is $\mathrm{in}_{(0,v_0)}F=\mathrm{in}_0f$, which is independent of $v$.
      
     \item The projectivized normal cone $\P C_{\X,Y}$ is isomorphic to $Y \times \P C_{X,0}$. 
            This can be seen from the equations used to define $\X$ (section 2, exercise \ref{ejercontan}), 
            where the initial form of $F_i$ with respect to $Y$, is equal to the initial form of $f_i$ at 
            the origin: we have $\mathrm{in}_YF_i=\mathrm{in}_0f_i$.
            
     \item  There exists a natural morphism $\omega: E_Y\X \to E_0X$, making the following diagram
         commute: 
      \[\xymatrix{ E_Y\X \ar[r]^\omega \ar[d]_{e_Y} &  E_0X \ar[d]^{e_o}\\
                   \X  \ar[r]_\phi & X }\]
        Moreover, when restricted to the exceptional divisor $e_Y^{-1}(Y)= \P C_{\X,Y}$ it induces
        the natural map $\P C_{\X,Y}= Y \times \P C_{X,0} \to \P C_{X,0}$. \\   
          Algebraically, this results from the universal property of the blowing up $E_0X$ and the following diagram:
           \[\xymatrix{ E_Y\X  \ar[d]_{e_Y} &  E_0X \ar[d]^{e_o}\\
                   \X  \ar[r]_\phi & X }\]
          Note that, for the diagram to be commutative the morphism $\omega$ must map
      the point $((v,z), [z]) \in E_Y\X \setminus \{Y \times \P^{n-1}\} \subset \X \times \P^{n-1}$ 
      to the point $((vz), [z])$ in $E_0X \subset X \times \P^{n-1} $.               
   \end{enumerate}
 \end{remark} 
     
     Now we can proceed to the proof of Proposition \ref{aimplicab}.
    
    \begin{proof} (\textit{of Proposition \ref{aimplicab}})
	
       We want to prove that the pair $(\X^0,Y)$ satisfies Whitney's condition b) at the origin. 
    We are assuming that it already satisfies condition a), so in particular we have that 
    $\zeta^{-1}(0)$ is contained in $\{0\} \times \P^{n-1} \times \check{\P}^{n-1}$. By the remarks made at the beginning 
    of section  \ref{WhitneyinNormalConormal} 
    it suffices to prove that any point $(0,l,H) \in \zeta^{-1}(0)$  is contained in the incidence variety 
    $I \subset \{0\} \times \P^{n-1} \times \check{\P}^{n-1}$. This is done by considering the normal/conormal diagram
    of $\X$ augmented by the map $\omega: E_Y\X \to E_0X$ of the remarks above and the map 
    $\psi: C(\X) \to C(X) \times \C$ defined by $((z_1,\ldots,z_n,v),(a_1:\ldots:a_n:b)) \mapsto ((vz_1,\ldots,vz_n),(a_1:\ldots:a_n),v) $

   \[\xymatrix{E_YC(\X)\ar[r]^{\hat{e}_Y}\ar[dd]^{\kappa'_\X}\ar[ddr]^\zeta & C(\X)\ar[dd]^{\kappa_\X}\ar[r]^{\psi} & 
             C(X) \times \C \\
             & &  \\
             E_Y\X\ar[r]_{e_Y} \ar[d]_{\omega}  &  \X \\
             E_0X & & }\]
     
\noindent By construction, there is a sequence $(z_m, v_m, l_m, H_m)$ in $E_YC(\X) \hookrightarrow\linebreak
   C(\X) \times_\X E_Y\X $ tending to $(0,l,H)$, where $(z_m,v_m)$ is not in $Y$. Through $\kappa'_\X$, we obtain a 
   sequence $(z_m,v_m,l_m)$ in $E_Y\X$ tending to $(0,l)$, and through $\hat{e}_Y$ a sequence $(z_m,v_m,H_m)$
   tending to $(0,H)$ in $C(\X)$. \\

   In this case the condition $a)$ means that $b=0$ and so through $\psi$ we obtain the sequence $(t_mz_m, \widetilde{H}_m)$  
   tending to $(0,\widetilde{H})$ in $C(X)$.  Analogously, both the sequence $(v_mz_m, l_m)$ obtained through the map $\omega$ and its limit $(0,l)$ are in $E_0X$. 
   Finally, Whitney's Lemma \ref{wl} tells us that in this situation we have that $l \subset \widetilde{H}$ and so the point $(0,l,H)$ is in the incidence variety.
   \end{proof}

    \begin{lemma}\label{GeneralizationNecessity}\cite[Lemma 6.4]{Gi}
   If the tangent cone $C_{X,0}$ is reduced and the pair $(\X^0, Y)$
  satisfies Whitney's condition a), the germ $(X,0)$ does not have exceptional cones.
    \end{lemma}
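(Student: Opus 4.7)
The strategy is to combine three earlier results in the excerpt: Proposition \ref{aimplicab} (Whitney a) implies Whitney b) in this specialization setting), Theorem \ref{PE} (Whitney conditions equivalent to equimultiplicity of polar varieties), and Theorem \ref{limtan1}(II) (the tangent cone of a polar variety decomposes into fixed and mobile parts indexed by limits of tangent hyperplanes, including exceptional cones). The plan is to show that Whitney a) for $(\X^0, Y)$ at $0$ forces the equalities $m_0 P_k(X) = m_0 P_k(C_{X,0})$ for every $k$, and then to invoke Theorem \ref{limtan1}(II) to extract the absence of exceptional cones from those equalities.

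First I would invoke Proposition \ref{aimplicab} to upgrade the hypothesis to both Whitney conditions for $(\X^0, Y)$ at the origin. Applying Theorem \ref{PE} to the $(d{+}1)$-dimensional germ $(\X, 0)$ with the one-dimensional stratum $Y$ would then yield that every local polar variety $P_k(\X, L)$, $0 \leq k \leq d$, is equimultiple along $Y$ at $0$ for generic $L$. On the open complement $\X \setminus \varphi^{-1}(0) \simeq X \times \C^*$ of the specialization, a choice of $L$ compatible with $\varphi$ gives $P_k(\X, L) \cap \varphi^{-1}(v) \simeq P_k(X, L')$ for $v \neq 0$, whence $m_{(0,v)} P_k(\X, L) = m_0 P_k(X, L')$. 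A parallel identification at the special fibre, using that $C_{X,0}$ is reduced and that $C_{\X,0} \cong C_{X,0} \times \C$ (Remark \ref{puntosclaves}), should give $m_{(0,0)} P_k(\X, L) = m_0 P_k(C_{X,0}, L'')$. Together with equimultiplicity, this produces $m_0 P_k(X) = m_0 P_k(C_{X,0})$ for all $k$.

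To conclude, I would compare the decompositions of $C_{P_k(X), 0}$ and $C_{P_k(C_{X,0}), 0}$ supplied by Theorem \ref{limtan1}(II). Since $C_{X,0}$ is itself a cone, Remark \ref{excone}(1) shows that it has no exceptional cones, so $m_0 P_k(C_{X,0})$ is accounted for entirely by the mobile polar contributions of the top-dimensional irreducible components of $\P C_{X,0}$. For $X$, extra mobile terms $m_0 P_j(O(V_\beta), L)$ appear in $m_0 P_k(X)$ whenever there is an exceptional cone $O(V_\beta)$ with $\dim O(V_\beta) = d-k+j$ for some $j \geq 1$, plus extra fixed contributions from exceptional $O(V_\alpha)$ of dimension exactly $d - k$. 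The equality for all $k$ therefore forces every such extra contribution to vanish: if $O(V_\alpha)$ were a putative exceptional cone of dimension $e < d$, the choice $k = d - e$ would produce a strictly positive extra term in the decomposition for $X$ but not for $C_{X,0}$, contradicting the equality. Hence $(X,0)$ has no exceptional cones.

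The main obstacle is the identification $m_{(0,0)} P_k(\X, L) = m_0 P_k(C_{X,0}, L'')$ at the special fibre. The polar variety $P_k(\X, L)$ is not a priori a flat family over $\C$ with the ``expected'' special fibre, and $P_k(\X, L) \cap \varphi^{-1}(0)$ could in principle acquire embedded or extra top-dimensional components concentrated along exceptional directions of $X$, which would inflate $m_{(0,0)} P_k(\X, L)$. Making this identification rigorous is where the Lagrangian specialization machinery of Proposition \ref{Specializationproprelative} and Corollary \ref{SpecLagRel} enters, together with the reducedness of $C_{X,0}$ used to rule out such spurious contributions on the special fibre. An alternative and more hands-on path would be to construct, directly from a putative exceptional cone, a sequence $(z_m, v_m) \in \X^0$ tending to $(0,0) \in Y$ whose limit tangent space omits $T_{Y, 0}$, by tuning $v_m$ with respect to the transverse distance of $x_m = v_m z_m$ to the limiting tangent space $T = \lim T_{x_m} X^0$; the delicate point is that the standard quadratic tangency $\vert x_m^\perp \vert = O(\vert x_m \vert^2)$ of $X$ to its limit tangent spaces must fail along exceptional directions in order to produce the desired violation, which is precisely the geometric content one needs to extract from the presence of an exceptional cone.
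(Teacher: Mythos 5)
Your proposal takes a genuinely different route from the paper, and it has a genuine gap at its central step --- one you partly identify yourself but do not close. The chain ``Whitney for $(\X^0,Y)$ $\Rightarrow$ equimultiplicity of $P_k(\X,L)$ along $Y$ (Theorem \ref{PE}) $\Rightarrow$ $m_0P_k(X)=m_0P_k(C_{X,0})$'' breaks at the special fibre: equimultiplicity compares $m_{(0,v)}(P_k(\X,L))$ with $m_{(0,0)}(P_k(\X,L))$, and while at $(0,v)$, $v\neq 0$, the product structure $(\X,(0,v))\simeq (X,0)\times(\C,v)$ legitimately gives $m_{(0,v)}(P_k(\X,L))=m_0(P_k(X,L'))$, at the origin $m_{(0,0)}(P_k(\X,L))$ is the polar multiplicity of the $(d+1)$-dimensional germ $(\X,0)$ and there is no a priori identification of it with $m_0(P_k(C_{X,0}))$. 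Relating the absolute polar varieties of a total space to the polar varieties of its special fibre is precisely the kind of statement that requires transversality of $\{v=0\}$ to all limits of tangent hyperplanes to $\X$ at $0$ and a comparison of relative and absolute polar varieties; it is not supplied by Proposition \ref{Specializationproprelative} or Corollary \ref{SpecLagRel} as stated, and invoking it here is close to assuming what one wants to prove. The concluding step is also under-argued: passing from an equality of multiplicities to the absence of exceptional cones via Theorem \ref{limtan1}(II) requires controlling the non-reduced structure of $C_{P_k(X,L),0}$ and showing that the ``mobile'' contributions for $X$ and for $C_{X,0}$ have equal degree, neither of which is immediate.

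The paper's own proof avoids polar multiplicities entirely. After upgrading to both Whitney conditions via Proposition \ref{aimplicab}, it works with the \emph{aur\'eole} of $(\X,0)$ along $Y$: a finite family $\{V_\alpha\}$ of subcones of $C_{\X,Y}=Y\times C_{X,0}$ containing the exceptional cones, whose members project \emph{surjectively} onto $Y$ with equidimensional fibres once the Whitney conditions hold, and which is compatible with transversal hyperplane sections. Slicing by the transversal hyperplane $v=0$ produces the aur\'eole of the special fibre $\X(0)\cong C_{X,0}$, which, being a cone, has no exceptional cones; so each $V_\alpha\cap\{v=0\}$ is a component of $C_{X,0}$ and is nonempty by surjectivity, forcing every $V_\alpha$ to be an irreducible component of $Y\times C_{X,0}$. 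Slicing instead at $v=v_0\neq 0$ then exhibits the aur\'eole of $(X,0)$ as consisting only of components of its tangent cone. If you want to pursue your multiplicity-based strategy, the missing ingredient is exactly this kind of control of the behaviour of the relevant invariants under the two transversal slices $v=0$ and $v=v_0$; the aur\'eole formalism of L\^e--Teissier is what packages that control, and without it the key identity at the special fibre remains unproved.
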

   \begin{proof}
       Since $(\X^0, Y)$ satisfies Whitney's condition a), by Proposition
      \ref{aimplicab} it also satisfies Whitney's condition b).
      Recall that the aur\'eole of $(\X,0)$ along $Y$ is a collection $\{V_\alpha\}$ of subcones of the normal 
      cone $C_{\X,Y}$ whose projective duals determine the set of limits of tangent hyperplanes to $\X$ at
      the points of $Y$ in the case that the pair $(\X^0,Y)$ satisfies Whitney conditions a) and b) at every point
      of $Y$ (see \cite[Thm. 2.1.1, Corollary 2.1.2, p. 559-561]{L-T2}). 
      Among the $V_\alpha$ there are the irreducible components of $|C_{\X,Y}|$. Moreover we have:
      \begin{enumerate}
	       \item By Remark \ref{puntosclaves} we have that $C_{\X,Y} = Y \times C_{X,0}$ so its 
	        irreducible components are of the form $ Y \times \widetilde{V}_\beta$ where 
	        $\widetilde{V}_\beta$ is an irreducible component of $|C_{X,0}|$.
	        \item For each $\alpha$ the projection $V_\alpha \to Y$ is surjective and all the fibers 
	         are of the same dimension (see \cite{L-T2}, Proposition 2.2.4.2, p. 570).
	        \item The hyperplane $H$ corresponding to the point $(0:0: \cdots:0:1) \in \check{\P}^{n+1}$, which is $v=0$, is transversal to $(\X,0)$ by
	         hypothesis, and so by \cite[Thm. 2.3.2, p. 572]{L-T2} the collection $\{V_\alpha \cap H\}$
	         is the aur\'eole of $\X \cap H$ along $Y \cap H$. 
	     \end{enumerate}
        
      Notice that $(\X \cap H,Y\cap H)$ is equal to $(\X(0),0)$, which is isomorphic to the tangent cone 
      $(C_{X,0},0)$ and therefore does not have exceptional cones. This means that for each $\alpha$ either
      $V_\alpha \cap H$ is an irreducible component of $C_{X,0}$ or it is empty. But the intersection cannot be 
      empty because the projections $V_\alpha \to Y$ are surjective. Finally since all the fibers of the
      projection are of the same dimension, the $V_\alpha$'s are only the irreducible components of 
      $C_{\X,Y}$.\par\noindent This means that if we define the affine hyperplane $H_v$ as the hyperplane 
      with the same direction as $H$ and passing through the point $y=(0,v) \in Y$ for $v$ small enough; 
      $H_v$ is transversal to $(\X,y)$. So we have again that the collection $\{V_\alpha \cap H_v\}$
      is the aur\'eole of $\X \cap H_v$ along $Y \cap H_v$, that is, the aur\'eole of $(X,0)$, so it does not have
      exceptional cones.
    \end{proof}
    
    At this point it is not too hard to prove the equivalence of  statements 3) and 4) of theorem \ref{Equivalenciashipersuperficies}, namely 
   that the pair $(\X^0,Y)$ satisfies both Whitney conditions at the origin if and only if the 
      germ $(\X,0)$ does not have exceptional cones (see \cite[Proposition 6.5]{Gi}).\par\noindent  The idea is that 
      on the one hand we have that the Whitney conditions imply that $(X,0)$ has no exceptional cones and $b=0$, 
      but this means that the map $\psi: C(\X) \to C(X)$ $((z,v), [a:b]) \mapsto ((vz),[a])$ is defined everywhere.
    Thus, the set of limits of tangent hyperplanes to $(\X,0)$ is just the dual of the tangent cone. On the other hand               
      since $C_{\X,0}= C_{X,0} \times \C $ the absence of exceptional cones implies $b=0$ which is equivalent to Whitney's condition a).\\
      
      The key idea to prove Whitney's condition $a)$ starting from the assumption that $(X,0)$ is without exceptional cones is to use its algebraic 
      characterization given by the second author in \cite{Te1} for the case of hypersurfaces and subsequently generalized by Gaffney in \cite{Ga1}
      in terms of integral dependence of modules.  To give an idea of how it is done let us look at the hypersurface case. \\

      If  $(X,0)\subset (\C^n,0)$ is a hypersurface  then $(\X,0) \subset (\C^{n+1},0)$ is also a hypersurface. Let us say that it is defined by $F=0, \ F \in \C\{z_1,\ldots, z_n,v\}$. 
      Note that in this case the conormal space $C(\X)$ coincides with the Semple-Nash modification and thus every arc
       \[\gamma:(\C, \C\setminus\{0\},0) \to (\X,\X^0,0)\] 
       lifts  uniquely to an arc
             \[ \tilde{\gamma}:(\C, \C\setminus\{0\},0) \to ( C(\X), C(\X^0),(0,T))\]
       given  by \[\tau \mapsto \left(\gamma(\tau) ,T_{\gamma(\tau)}\X :=
            \left(\frac{\partial F}{\partial z_0} (\gamma(\tau)): \cdots : 
              \frac{\partial F}{\partial z_n}(\gamma(\tau)) : \frac{\partial F}{\partial v} 
             (\gamma(\tau))\right) \right),\]
       and so the vertical hyperplane  $\{v=0\}$, or $(0:\cdots:0:1)$ in projective coordinates, is not a limit of tangent spaces to $\X$ at $0$     
      if and only if  $\frac{\partial F}{\partial v}$  tends  to zero at least as fast as the slowest of the other partials, that is 
           \[\mathrm{order} \, \frac{\partial F}{\partial v}(\gamma(\tau)) \geq
             \mathrm{min}_j\, \left \{ \mathrm{order} \frac{\partial F}{\partial z_j}(\gamma(\tau))\right \}, \]
    where here and below $\mathrm{"order"}$ means order as a series in $\tau$. The point is that this is equivalent  to $\frac{\partial F}{\partial v} $ being
    integrally dependent on the relative Jacobian ideal $J_\varphi:=\left< \frac{\partial F}{\partial z_j}\right>$ in the local ring $O_{\X,0}$ as 
    proved by Lejeune-Jalabert and the second author in (\cite{Lej-Te}, Thm 2.1).  True, this is not precisely what we want, but it is very close because 
    the pair $(\X^0,Y)$ satisfies Whitney's condition a) at the origin if and only if $\frac{\partial F}{\partial v}$  tends  to zero 
    faster than the slowest of the other partials,  that is :
           \[\mathrm{order} \, \frac{\partial F}{\partial v}(\gamma(\tau)) >
             \mathrm{min}_j\, \left \{ \mathrm{order} \frac{\partial F}{\partial z_j}(\gamma(\tau))\right \} \]
   and according to the definition of strict dependence stated by Gaffney and Kleiman in \cite [Section 3, p. 555]{G-K}, not only for ideals but more generally for modules, 
   this is what it means for $\frac{\partial F}{\partial v} $ to be strictly dependent on the relative Jacobian ideal $J_\varphi$ in $O_{\X,0}$.\\
   
     As for the proof, note that we already know that the pair $(\X^0,Y)$ satisfies Whitney's conditions at every point $y \in Y\setminus\{0\}$, that is,  $\frac{\partial F}{\partial v} $ 
    is strictly dependent on the relative Jacobian ideal $J_\varphi$ in $O_{\X,y}$ at all these points. That this condition carries over to the origin can be determined by 
    the principle of specialization of integral dependence (see \cite[Appendice 1]{Te6}, \cite[Chap.1, \S 5]{Te3}, \cite{G-K}) which in this case  amounts to proving that the exceptional divisor $E$ of the normalized blowing up of $\X$ along 
    the ideal $J_\varphi$ does not have irreducible components whose image in $\X$ is contained in the special fiber $\X(0):=\varphi^{-1}(0)$. Fortunately, this normalized blowing up is isomorphic to a space we know,
    namely the normalization of the relative conormal space $C_\varphi(\X)$ of \ref{Specializationproprelative}:
       \[\widetilde{\kappa_\varphi}: \widetilde{C_\varphi(\X)} \to \X,\]
     and  we are able to use the absence of exceptional cones to prove that $E$ has the desired property. \\
     This ends our sketch of proof of theorem \ref{Equivalenciashipersuperficies}.
      \begin{corollary} \cite[Corollaries 8.14 and 8.15]{Gi}\\
   Let $(X,0)$ satisfy the hypothesis of theorem \ref{Equivalenciashipersuperficies}. 
        \begin{itemize}
        \item If $(X,0)$ has an isolated singularity and its tangent cone is a complete intersection singularity, then
      the absence of exceptional cones implies that $C_{X,0}$ has an isolated singularity and  
                 $\{\X \setminus Y, Y \}$ is a Whitney stratification of $\X$.  
        \item  If the  tangent cone $(C_{X,0},0)$ has an isolated singularity at the origin, then $(X,0)$ has an 
                  isolated singularity and $\{\X \setminus Y, Y \}$ is a Whitney stratification of $\X$. 
        \end{itemize} 
      \end{corollary}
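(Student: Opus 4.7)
Both bullets share a common skeleton. To prove that $\{\X\setminus Y,Y\}$ is a Whitney stratification of $\X$ near $0$, I need (i) that $\X\setminus Y$ is smooth, and (ii) that the pair $(\X\setminus Y,Y)$ satisfies Whitney's conditions at every point of $Y$. By flatness of $\varphi:\X\to\C$ and the isomorphism $\varphi^{-1}(\C^*)\simeq X\times\C^*$ noted before Proposition \ref{limsec}, smoothness at $(z_0,v_0)\in \X$ with $v_0\neq 0$ is equivalent to smoothness of $X$ at $v_0z_0$, while smoothness at $(z_0,0)$ with $z_0\neq 0$ is equivalent to smoothness of $C_{X,0}$ at $z_0$. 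Hence (i) amounts to $(X,0)$ and $(C_{X,0},0)$ both having isolated singularity. As for (ii), at $y=(0,v_0)$ with $v_0\neq 0$ the product structure reduces the check to Whitney's lemma \ref{wl} for the pair $(X^0,\{0\})$, which always holds; at $y=0$, (ii) is precisely condition (3) of Theorem \ref{Equivalenciashipersuperficies}, hence equivalent to the absence of exceptional cones for $(X,0)$. Thus in each bullet it suffices to verify (a) isolated singularity of whichever of $X$, $C_{X,0}$ is not assumed, and (b) that $(X,0)$ has no exceptional cones.

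For the first bullet, (b) is given. To obtain (a), namely that $C_{X,0}$ has isolated singularity, I proceed by contradiction. By Theorem \ref{Equivalenciashipersuperficies} the hypotheses give both Whitney conditions for $(\X^0,Y)$ at $0$ and the absence of exceptional cones for $(\X,0)$; by Remark \ref{puntosclaves} the tangent cone $C_{\X,0}$ is identified with $C_{X,0}\times\C$. Suppose $C_{X,0}$ admitted a positive-dimensional singular locus $\Sigma$; since $C_{X,0}$ is a cone, $\Sigma$ is a subcone through $0$. The complete intersection hypothesis makes $\X$ a complete intersection (using Exercise \ref{ejercontan} for the defining equations of $\X$), so by the Jacobian criterion and flatness the singular locus of $\X$ equals $Y\cup(\Sigma\times\{0\})$ near $0$. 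A sequence $(z_n,0)\in\Sigma\times\{0\}$ with $z_n\to 0$ gives a curve of singular points of $\X$ lying outside $Y$ and accumulating at the origin; limits of tangent hyperplanes to $\X^0$ along sequences approaching this curve produce, via Lemma \ref{wl} and theorem \ref{limtan1}, an irreducible component $V_\alpha$ of $\xi^{-1}(0)$ strictly contained in $\operatorname{Proj}C_{\X,0}$, i.e.\ an exceptional cone for $(\X,0)$, contradicting the absence of exceptional cones given above.

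For the second bullet, (a) is the statement that $X$ has isolated singularity. This follows from upper-semicontinuity of fiber dimension applied to the restriction of $\varphi$ to the singular locus $\Sigma(\X)$: the fiber over $v=0$ is $\Sigma(C_{X,0})=\{0\}$ by hypothesis, so nearby fibers $\Sigma(X)\times\{v\}$ have dimension $\leq 0$. For (b), the isolated singularity of $C_{X,0}$ means that $\operatorname{Proj}C_{X,0}$ is smooth, so by Remark \ref{excone} the cone $(C_{X,0},0)$ has no exceptional cones. The specialization family $\X$ then transfers this to $(X,0)$: for a general linear $L$, the polar variety $P_k(\X,L)$ specializes with $\varphi$ to the polar variety $P_k(C_{X,0},L)$ of the central fiber, and absence of exceptional cones for $C_{X,0}$ means the tangent cones of $P_k(C_{X,0},L)$ have no $L$-independent components of the wrong dimension; by conservation of multiplicity in the flat family together with Proposition \ref{multpolv}, the same holds for the polar varieties $P_k(X,L)$ of the nearby fiber $X$, giving absence of exceptional cones for $(X,0)$. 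Theorem \ref{Equivalenciashipersuperficies} then delivers the Whitney conditions at $0$.

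The main obstacle in both proofs is the precise geometric transfer of singularity data between $X$ and $C_{X,0}$ through the flat degeneration $\varphi$. In the first bullet, making rigorous that a positive-dimensional singular locus of $C_{X,0}$ generates an exceptional cone of $(\X,0)$ requires controlling limits of tangent hyperplanes along sequences approaching the singular curve $\Sigma\times\{0\}$, and using the CI hypothesis to ensure the defect from the smooth part of $C_{\X,0}=C_{X,0}\times\C$ is geometrically visible. In the second bullet, the specialization argument for polar varieties needs a careful statement that the tangent cones of $P_k(X,L)$ at $0$ converge, in a suitable equimultiple sense, to those of $P_k(C_{X,0},L)$ so that fixed components at one level force fixed components at the other. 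Proposition \ref{aimplicab} and the Lagrangian characterization in Theorem \ref{dual} provide the essential technical bridge for both arguments.
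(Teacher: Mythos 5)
Your reduction is sound in outline: smoothness of $\X\setminus Y$ plus the Whitney conditions along $Y$ (automatic at points $(0,v_0)$, $v_0\neq 0$, by the product structure, and equivalent at $0$ to the absence of exceptional cones by Theorem \ref{Equivalenciashipersuperficies}) is exactly what is needed. But both substantive steps fail. In the first bullet, the claim that the complete intersection hypothesis gives $\mathrm{Sing}\,\X=Y\cup(\Sigma\times\{0\})$ with $\Sigma=\mathrm{Sing}\,C_{X,0}$ is false: you are applying the Jacobian criterion to the fiber $v=0$ and forgetting the column $\partial F_i/\partial v$, which can restore the rank at singular points of the special fiber. Take $f=xyz+x^4+y^4+z^4$; then $F=xyz+v(x^4+y^4+z^4)$ and $\partial F/\partial v=1$ at $(1,0,0,0)$, so $\X$ is smooth there although the tangent cone $xyz=0$ is singular along the whole $x$-axis. (This $X$ has an isolated singularity and a reduced hypersurface tangent cone, so it satisfies the hypotheses of the first bullet except for the absence of exceptional cones; consistently with the corollary, condition a) fails for it at $0$, since the tangent hyperplane to $\X$ at $(\epsilon,0,0,0)$ is $v=0$.) The text in fact quotes the opposite statement from \cite{Gi}, Proposition 8.13: for complete intersections $\mathrm{Sing}\,\X$ is the specialization space of $|\mathrm{Sing}\,X|$, hence equals $Y$ when the singularity of $X$ is isolated. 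So your contradiction never materializes, and the conclusion that $C_{X,0}$ has an isolated singularity must be reached differently: from $\mathrm{Sing}\,\X=Y$ one gets $C_{X,0}\setminus\{0\}\subset\X^0$, and Whitney's condition a) at $0$ (supplied by Theorem \ref{Equivalenciashipersuperficies}) forces the hyperplane $v=0$, which is transversal to $Y$ at $0$, to remain transversal to $\X^0$ in a whole neighborhood of $0$, so that $C_{X,0}=\X\cap\{v=0\}$ is nonsingular off the origin.

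In the second bullet your derivation of the isolated singularity of $X$ (flatness plus semicontinuity of fiber dimension for $\mathrm{Sing}\,\X\to\C$) is correct, but the step transferring the absence of exceptional cones from $C_{X,0}$ to $X$ "by specialization of polar varieties and conservation of multiplicity" is circular. By Theorem \ref{PE}, equimultiplicity along $Y$ of the polar varieties of $\X$ is \emph{equivalent} to the Whitney conditions you are trying to prove; there is no a priori conservation of polar multiplicities in a flat family, and the tangent cones of $P_k(X,L)$ do not automatically degenerate onto those of $P_k(C_{X,0},L)$ without the equisingularity being asserted. What is needed is a direct proof of condition a) at $0$, for instance by the integral dependence argument sketched at the end of Section 7: one shows that $\partial F/\partial v$ is strictly dependent on the relative Jacobian ideal $J_\varphi$ at $0$, which by the principle of specialization of integral dependence reduces to showing that the exceptional divisor of the normalized blowup of $J_\varphi$ has no component mapping into $\X(0)$; this is precisely where the smoothness of $\X$ along $\X(0)\setminus\{0\}$ --- the one thing the isolated singularity of the cone does give you, via flatness --- enters. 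Finally, note that your assertion that smoothness of $\X$ at $(z_0,0)$ is equivalent to smoothness of $C_{X,0}$ at $z_0$ holds only in one direction (fiber smooth implies total space smooth); the example above refutes the converse.
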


       We have verified that the absence of exceptional cones allows us to start building a Whitney 
      stratification of $\X$ having $Y$ as a stratum. The question now is how to continue. We can prove (\cite[Proposition 8.13]{Gi})
      that  in the complete intersection case, the singular locus of $\X$ coincides with the specialization space $Z$ of $|\mathrm{Sing}X|$ to its tangent cone. 

   Suppose now that the germ $(\vert\mathrm{Sing} X\vert,0)$ has a reduced tangent cone; then a stratum $\X_\lambda$ 
 containing a dense open set of $Z$ will satisfy Whitney's conditions along $Y$ if and only if the 
 germ $(|\mathrm{Sing} \, X|,0)$ does not have exceptional cones. 

   In view of this it seems reasonable to start by assuming the existence of a Whitney 
 stratification $\{X_\lambda\}$ of $(X,0)$ such that for every $\lambda$ the germ $(\overline{X_\lambda},0)$
 has a reduced tangent cone and no exceptional cones. In this case, the specialization space $\X_\lambda$ of 
 $(\overline{X_\lambda},0)$ is canonically embedded as a subspace of $\X$, and the partition of $\X$ associated
 to the filtration given by the $\X_\lambda$ is a good place to start looking for the desired Whitney
 stratification of $\X$ but this is to our knowledge still an open problem. A precise formulation is the following:\par\medskip\noindent
 \textbf{Questions:} 1) Let $(X,0)$ be a germ of reduced equidimensional complex analytic space, and let $X=\bigcup_{\lambda\in L} X_\lambda$ be the minimal (section \ref{WhitneyinNormalConormal}) Whitney stratification of a small representative of $(X,0)$. Is it true that the following conditions are equivalent?
 \begin{itemize}
 \item The tangent cones $C_{\overline X_\lambda ,0}$ are reduced and the $(\overline {X_\lambda} ,0)$ have no exceptional cones, for $\lambda\in L$.
 \item The specialization spaces $(\X_\lambda )_{\lambda\in L}$ are the closures of the strata of the minimal Whitney stratification of $\X$. If $\{0\}$ is a stratum in $X$, we understand its specialization space to be $Y=\{0\}\times\C\subset\X$. Indeed, in this case the algebra ${\mathcal R}$ of Proposition \ref{Reesalgebra} is $k[v]$.
 \end{itemize}
  If that is the case, for a sufficiently small representative $\X$ of $(\X,0)$, the spaces $(X,0)$ and $(C_{X,0},0)$ are isomorphic to the germs, at $(0,v_0)$ and $(0, 0)$ respectively, of  two transversal sections, $v=0$ and $v=v_0\neq 0$ of a Whitney stratification of $\X\subset \C^n\times \C$, and so are Whitney-equisingular. Conversely, if $Y$ is a stratum of a Whitney stratification of $\X$, it is contained in a stratum of the minimal Whitney stratification of $\X$, whose strata are the specialization spaces $\X_\lambda$ of the strata $X_\lambda$ of the minimal Whitney stratification of $X$. It follows from theorem \ref{Equivalenciashipersuperficies} that the $\overline{X_\lambda}$ have a reduced tangent cone and no exceptional cones.\par\medskip\noindent
  2) Given an algebraic cone $C$, reduced or not, which systems of irreducible closed subcones can be obtained as exceptional tangents for some complex analytic deformation of $C$ having $C$ as tangent cone?
    \section{Polar varieties, Whitney stratifications, and projective duality} \label{duality}
    \textit{See the warning concerning notation at the beginning of section \ref{total}. In this section we go back and forth between a projective variety $V\subset \P^{n-1}$ of dimension $d$, the germ $(X,0)$ at $0$ of the cone $X\subset \C^n$ over $V$, and the germ $(V,v)$ of $V$ at a point $v\in V$, so that we also use the notations of section \ref{LOCPOL}. Note that $P_k(V),\ 0\leq k\leq d$ denotes the polar varieties in the sense of definition \ref{projpol}.}\par\medskip\noindent
 The formula of theorem \ref{formula} can be applied to the special singular point which is the vertex $0$ of the cone $X$ in $\C^n$ over a projective variety $V$ of dimension $d$ in $\P^{n-1}$, which we assume not to be contained in a hyperplane. The dual variety $\check V$ of $V$ was defined in subsection \ref{du}. Remember that every complex analytic space, and in particular $V$, has a minimal Whitney stratification (see the end of section \ref{WhitneyinNormalConormal}). We shall use the following facts, with the notations of Proposition \ref{HOT} and those introduced after Proposition \ref{dimC}:
    \begin{proposition}\label{projEuler}{\rm (Compare with \cite[end of \S 5]{Te4})} Let $V\subset\P^{n-1}$ be a projective variety of dimension $d$. 
    \begin{enumerate}
    \item If $V=\bigcup V_\alpha$ is a Whitney stratification of $V$, denoting by $X_\alpha\subset\C^n$ the cone over $V_\alpha$, we have that $X=\{0\}\bigcup X^*_\alpha$, where $X^*_\alpha=X_\alpha\setminus\{0\}$, is a Whitney stratification of $X$. It may be that $(V_\alpha)$ is the minimal  Whitney stratification of $V$ but $\{0\}\bigcup X^*_\alpha$ is not minimal, for example if $V$ is itself a cone.
    \item If $L_i+t$ is an $i$-codimensional affine space in $\C^n$ it can be written as $L_{i-1}\cap (L_1+t)$ with vector subspaces $L_i$ and for general directions of $L_i$ we have, denoting by $\B(0,\epsilon)$ the closed ball with center $0$ and radius $\epsilon$, for small $\epsilon$ and $0<\vert t\vert <<\epsilon$ :
    $$\chi_i (X,\{0\}):=\chi (X\cap(L_i+t)\cap \B(0,\epsilon))=\chi (V\cap H_{i-1})-\chi (V\cap H_{i-1}\cap H_1),$$ where $H_i=\P L_i\subset \P^{n-1}$.     \item For every stratum $X^*_\alpha$ of $X$, we have the equalities $\chi_i (X,X^*_\alpha)=\chi_i (V,V_\alpha)$.
    \item If the dual $\check V\subset \check\P^{n-1}$ is a hypersurface, its degree is equal to $m_0(P_d(X,0))$, which is the number of critical points of the restriction to $V$ of a general linear projection $\P^{n-1}\setminus L_2\rightarrow\P^1$. 
    \end{enumerate}
    \end{proposition}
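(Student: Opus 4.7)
The plan is to prove the four items in turn, using at each step the conical structure of $X$ and Lemma \ref{cone}. For part (1), the projectivization map $\pi\colon X\setminus\{0\}\to V$ is a $\C^*$-bundle, with $X_\alpha^*$ the preimage of $V_\alpha$. By Lemma \ref{cone}, the tangent space $T_{X^0,x}$ contains the generating line $\ell=\overline{0x}$ and induces an isomorphism $T_{X^0,x}/\ell\simeq T_{V,\pi(x)}$, so limits of tangent spaces and of secants in $X$ commute with $\pi$ up to the (constant) contribution of the generating line. The Whitney conditions a) and b) for $(X_\beta^*,X_\alpha^*)$ at $x\neq 0$ thereby reduce to the corresponding conditions for $(V_\beta,V_\alpha)$ at $\pi(x)$. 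For the pair $(X_\beta^*,\{0\})$, condition a) is vacuous and condition b) is immediate, since each $T_{X,x_n}$ already contains the secant $\ell_n=\overline{0x_n}$ by Lemma \ref{cone}.

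For part (2), I choose a linear form $\ell$ on $\C^n$ with $L_1=\ker\ell$. A point on the generating line over $v\in V$, with lift $\tilde v\in\C^n$, lies in $L_1+t$ iff its scale factor is $\ell(t)/\ell(\tilde v)$, well-defined and nonzero when $v\notin H_1=\P L_1$. Intersecting also with the vector subspace $L_{i-1}$ restricts projectivization to an isomorphism $X\cap L_{i-1}\cap(L_1+t)\simeq(V\cap H_{i-1})\setminus(V\cap H_{i-1}\cap H_1)$. The norm of the parametrized point is proportional to $|t|/|\ell(\tilde v)|$, so as $|t|/\epsilon\to 0$ the intersection with $\B(0,\epsilon)$ captures the complement in $V\cap H_{i-1}$ of a shrinking tubular neighborhood of $V\cap H_{i-1}\cap H_1$; Euler characteristic additivity (valid in the complex analytic stratified setting because the boundary of such a tubular neighborhood has zero Euler characteristic, see section \ref{tub}) then yields $\chi(V\cap H_{i-1})-\chi(V\cap H_{i-1}\cap H_1)$. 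For part (3), I use the local product structure near $x\in X_\alpha^*$ arising from the $\C^*$-action: the germ $(X,x)$ is holomorphically isomorphic to $(V,\pi(x))\times(\C,0)$ via a transversal slice to the generating line, with $X_\alpha^*$ corresponding to $V_\alpha\times\C$. A general codimension $i$ affine subspace $L_i+t$ passing near $x$ is transversal to the generating line and meets a small product neighborhood of $x$ in a family of codimension $i$ affine slices of $V$ fibered over a small interval of the line. For $|t|<<\epsilon$ this fibration is topologically trivial with fiber $V\cap(M+t')\cap\B(\pi(x),\epsilon')$ for a general codimension $i$ affine subspace $M+t'\subset\C^{n-1}$, so the interval factor contributes $1$ to the Euler characteristic and one obtains $\chi_i(X,X_\alpha^*)=\chi_i(V,V_\alpha)$.

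For part (4), I use the conormal description of polar varieties from subsection \ref{LOCPOL}. By Lemma \ref{cone}, the conormal map $\lambda\colon C(X)\to\check\P^{n-1}$ factors through $\check V$: a hyperplane tangent to $X$ at a smooth point of the generating line over $v\in V^0$ must contain that line, and hence corresponds to a hyperplane of $\P^{n-1}$ containing $T_{V,v}$, i.e., a point of $\check V$. A general line $L^1\subset\check\P^{n-1}$ meets the hypersurface $\check V$ transversally in $\deg\check V$ points $H_j$; since $\check V$ being a hypersurface means projective duality is birational generically on $\check V$, each $H_j$ is tangent to $V$ at a unique smooth $v_j$, so the generic fiber $\lambda^{-1}(H_j)\cap C(X^0)$ is the generating line in $X$ over $v_j$. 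Consequently $P_d(X;L^1)=\kappa(\lambda^{-1}(L^1))$ is the union of $\deg\check V$ distinct lines through $0$, each simple through $0$ with multiplicity one, giving $m_0(P_d(X,0))=\deg\check V$. The identification with the number of critical points of a general projection $\pi\colon\P^{n-1}\setminus L_2\to\P^1$ is immediate: such critical points are the $v\in V$ whose tangent space $T_{V,v}$ lies in some hyperplane through $L_2$, i.e., they correspond bijectively to $\check V\cap\check L_2$, where $\check L_2\subset\check\P^{n-1}$ is the general line dual to $L_2$.

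The main obstacle I expect is the careful control in parts (2) and (3) of how the ball of radius $\epsilon$ interacts with the shift $t$ and with the conical or product structure, so that the intersection with $\B(0,\epsilon)$ (resp.\ $\B(x,\epsilon)$) captures exactly the claimed homotopy type. This requires the tubular-neighborhood machinery for Whitney stratifications from section \ref{tub} to quantify the relation between $|t|$ and $\epsilon$ and to identify the regions pushed outside the ball with tubular neighborhoods of lower-dimensional loci; one must also verify that for generic $L_{i-1},L_1$ (resp.\ generic $L_i$) the relevant intersections inherit Whitney stratifications, which follows from Kleiman's transversality theorem as used in Remark \ref{Factsofpolarvarieties}. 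By contrast, parts (1) and (4) are essentially formal once Lemma \ref{cone} and the conormal description of polar varieties are in place.
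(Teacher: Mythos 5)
Your proposal is correct and follows essentially the same route as the paper's proof: the product structure along generating lines (Lemma \ref{cone}) for parts (1) and (3), the scaling identification of $X\cap(L_1+t)\cap\B(0,\epsilon)$ with a large "disk" in the affine chart $V\setminus(V\cap H_1)$ plus additivity of the Euler characteristic for part (2), and the identification $\kappa^{-1}(0)=\check V$ together with counting the lines of the reduced polar curve for part (4). The points you flag as needing care (the homotopy identification as $|t|/\epsilon\to 0$, via the distance-function/Thom--Mather argument, and genericity of the slices) are exactly the ones the paper handles, in the same way.
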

    Note that we will apply statements 2) and 3) not only to the cone $X$ over $V$ but also to the cones $\overline{X_\beta}$ over the closed strata $\overline{V_\beta}$.

\begin{proof}The first statement follows from the product structure of the cones along their generating lines outside of the origin, and the fact that $\overline{V_\beta}\times \C$ satisfies the Whitney conditions along $V_\alpha\times\C$ at a point $(x,\lambda)\in V_\alpha\times\C^*$ if and only if $\overline{V_\beta}$ satisfies those conditions along $V_\alpha$ at the point $x$.\par To prove the second one, we first remark that it suffices to prove the result for $i=1$ since we can then apply it to $X\cap L_{i-1}$. Assuming that $i=1$ we may consider the minimal Whitney stratification of $V$ and by an appropriate choice of coordinates assume that the hyperplane of $\P ^{n-1}$ defined by $z_1=0$ is transversal to the strata. Then, we use an argument very similar to the proof of the existence of fundamental systems of good neighborhoods in \cite{L-T3}. In $\P^{n-1}$ with homogeneous coordinates $(z_1:\ldots :z_n)$, we choose the affine chart $\A^{n-1}\simeq\C^{n-1}\subset\P^{n-1}$ defined by $z_1\neq 0$. The distance function to $0\in \A^{n-1}$ is real analytic on the strata of $V$.\par\noindent  Let us denote by $\D(0,R)$ the ball centered at $0$ and with radius $R$ in $\A^{n-1}$. By Bertini-Sard's theorem and Thom's isotopy theorem, we obtain that there exists a radius $R_0$, the largest critical value of the distance function to the origin restricted to the strata of $V$,  such that the homotopy type of $V\cap \D(0,R)$ is constant for $R> R_0$ and equal to that of $V\setminus V\cap H$, where $H$ is the hyperplane $z_1=0$. Thus, $\chi(V\setminus V\cap H)=\chi(V)-\chi(V\cap H)=\chi (V\cap \D(0,R))$. In fact, by the proof of the Thom-Mather theorem, the intersection $V\cap \D(0,R)$ is then a deformation retract of $V\setminus V\cap H$.\par\noindent Since all that is required from our hyperplane $z_1=0$ is that it should be transversal to the strata of $V$, we may assume that the hyperplane $L_1$ is defined by $z_1=0$. Given $t\neq 0$ and $\epsilon$, the application $(z_1:\ldots :z_n)\mapsto (t, t\frac{z_2}{z_1},\ldots ,t\frac{z_n}{z_1})$ from $\A^{n-1}$ to $L_1+t$ maps isomorphically $V\cap \D(0,\frac{\epsilon}{\vert t\vert})$ onto $X\cap (L_1+t)\cap \B(0,\epsilon)$. It now suffices to take $\vert t\vert$ so small with respect to $\epsilon$ that $\frac{\epsilon}{\vert t\vert}>R_0$.\par The third statement follows from the fact that locally at any point of $X^*_\alpha$, the cone $X$, together with its stratification, is the product of $V$, together with its stratification, by the generating line through $x$ of the cone, and product by a disk does not change the Euler characteristic.\par
Finally, we saw in Lemma \ref{cone} that the fiber $\kappa^{-1}(0)$ of the conormal map $\kappa\colon C(X)\to X$ is the dual variety $\check V$. The last statement then follows from the very definition of polar varieties. Indeed, given a general line $L^1$ in $\check\P^{n-1}$, the corresponding polar curve in $X$ is the cone over the points of $V$ where a tangent hyperplane belongs to the pencil $L^1$; it is a finite union of lines and its multiplicity is the number of these lines, which is the number of corresponding points of $V$.\par 
\end{proof}
Using Proposition \ref{projEuler}, we can rewrite in this case the formula of theorem \ref{formula} as a generalized Pl\"ucker formula for any $d$-dimensional projective variety $V\subset\P^{n-1}$ whose dual is a hypersurface:
\begin{proposition}\label{projfor}{\rm (Teissier, see \cite[\S 5]{Te4})} Given the projective variety $V\subset \P^{n-1}$ equipped with a Whitney stratification $V=\bigcup_{\alpha \in A}V_\alpha$, denote by $d_\alpha$ the dimension of $V_\alpha$. We have, if the projective dual $\check V$ is a hypersurface in $\check\P^{n-1}$:
\begin{align*}& (-1)^d{\rm deg}\check V=\\ &\chi (V)-2\chi (V\cap H_1)+\chi (V\cap H_2)-\sum_{d_\alpha<d}  (-1)^{d_\alpha}{\rm deg}_{n-2}P_{d_\alpha}(\overline{V_\alpha})(1-\chi_{d_\alpha +1}(V,V_\alpha)),
\end{align*}
where $H_1,H_2$ denote general linear subspaces of $\P^{n-1}$ of codimension $1$ and $2$ respectively, ${\rm deg}_{n-2}P_{d_\alpha}(\overline{V_\alpha})$ is the number of nonsingular critical points of a general linear projection $\overline{V_\alpha}\to \P^1$, which is the degree of $\check{\overline{V_\alpha}}$ if it is a hypersurface and is set equal to zero otherwise. It is equal to $1$ if $d_\alpha=0$. 
\end{proposition}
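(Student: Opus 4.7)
The plan is to deduce the formula as a direct application of Theorem \ref{formula} to the affine cone $X \subset \C^n$ over $V$ at its vertex, translating both sides into projective invariants by means of Proposition \ref{projEuler}. By part (1) of that proposition, $X = \{0\} \cup \bigcup_\gamma X_\gamma^*$ is a Whitney stratification of $X$, where $X_\gamma^*$ denotes the punctured cone over the stratum $V_\gamma$ of $V$, and $\dim X_\gamma^* = d_\gamma + 1$. I would apply Theorem \ref{formula} with $X_\alpha = \{0\}$, so that $d_\alpha = 0$, $d_\beta = d_\gamma + 1$, and the formula reads
\[
\chi_1(X,\{0\}) - \chi_2(X,\{0\}) = \sum_{\gamma} (-1)^{d_\gamma}\, m_0\bigl(P_{d_\gamma}(\overline{X_\gamma},0)\bigr)\bigl(1 - \chi_{d_\gamma+2}(X,X_\gamma^*)\bigr),
\]
the sum extending over all strata of $V$.

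For the left-hand side, Proposition \ref{projEuler}(2) gives immediately
\[
\chi_1(X,\{0\}) - \chi_2(X,\{0\}) = \bigl(\chi(V) - \chi(V\cap H_1)\bigr) - \bigl(\chi(V\cap H_1) - \chi(V\cap H_2)\bigr),
\]
producing the first three terms $\chi(V) - 2\chi(V\cap H_1) + \chi(V\cap H_2)$ of the target formula. On the right-hand side I would split the sum according to whether $d_\gamma = d$ or $d_\gamma < d$. For each stratum, the polar variety $P_{d_\gamma}(\overline{X_\gamma},0)$ is a $1$-dimensional cone (since $\overline{X_\gamma}$ has dimension $d_\gamma +1$ and the codimension is $d_\gamma$); by Definition \ref{projpol} it is the cone over the finite projective set $P_{d_\gamma}(\overline{V_\gamma})$, and the multiplicity of a $1$-dimensional cone at its vertex equals the number of generating lines, i.e.\ the degree $\deg_{n-2}P_{d_\gamma}(\overline{V_\gamma})$. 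For top-dimensional strata $d_\gamma = d$ this multiplicity coincides with $\deg\check{V}$ by Proposition \ref{projEuler}(4), the contributions of possibly several top components summing correctly because both the polar multiplicity and the degree of $\check V$ are additive over the irreducible components of $V$. Moreover, for $d_\gamma = d$ the factor $1 - \chi_{d+2}(X,X_\gamma^*) = 1$ since a generic slice of codimension $d+2$ of the $(d+1)$-dimensional stratum is empty; thus the top strata collectively contribute $(-1)^d\deg\check{V}$.

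For strata with $d_\gamma < d$, I would use Proposition \ref{projEuler}(3) to identify $\chi_{d_\gamma+2}(X,X_\gamma^*)$ with $\chi_{d_\gamma+1}(V,V_\gamma)$. The underlying reason is the local product structure along $X_\gamma^*$: near a point $y \in X_\gamma^*$ the germ $(X,y)$ is analytically isomorphic to $(V,\tilde y) \times (\C,\lambda_0)$, and a generic codimension-$(d_\gamma+2)$ affine slice of $\C^n$ near $y$ projects, along the generating line direction, to a generic codimension-$(d_\gamma+1)$ affine slice of $\P^{n-1}$ near $\tilde y$, yielding the same Euler characteristic of the local intersection. Combining everything, the right-hand side of Theorem \ref{formula} becomes
\[
(-1)^d\deg\check{V} + \sum_{d_\gamma < d}(-1)^{d_\gamma}\deg_{n-2}P_{d_\gamma}(\overline{V_\gamma})\bigl(1-\chi_{d_\gamma+1}(V,V_\gamma)\bigr),
\]
and equating with the left-hand side and solving for $(-1)^d\deg\check V$ yields the formula.

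The main obstacle in writing this out cleanly is the bookkeeping of indices: passing from codimension in $\C^n$ to codimension in $\P^{n-1}$ absorbs one dimension via the generating line, and one must verify carefully that the general affine slice of $\C^n$ appearing in the definition of $\chi_{d_\gamma+2}(X,X_\gamma^*)$ really does correspond, in the local product model, to a general codimension-$(d_\gamma+1)$ slice of $V$ (not of the same codimension). A secondary bookkeeping point is to check the additivity of polar multiplicities and dual degrees over the top-dimensional irreducible components of $V$ so that the top strata aggregate correctly into a single $\deg\check V$ term, and to absorb into the convention $\deg_{n-2} = 0$ the case of components whose projective dual fails to be a hypersurface.
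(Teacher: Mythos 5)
Your proof is correct and follows exactly the route the paper intends: apply Theorem \ref{formula} to the cone $X\subset\C^n$ at its vertex, use Proposition \ref{projEuler}(1),(2),(4) and the conical structure of the polar varieties to interpret the two sides, and solve for $(-1)^d\deg\check V$. You are in fact more careful than the text on the one delicate point: the identification you use, $\chi_{d_\gamma+2}(X,X_\gamma^*)=\chi_{d_\gamma+1}(V,V_\gamma)$, carries the index shift that comes from absorbing the generating-line direction (a general codimension-$i$ affine slice of $\C^n$ meets the local product $V\times\C$ in a copy of a general codimension-$(i-1)$ slice of $V$), which is the version actually required for the stated formula -- the literal wording of Proposition \ref{projEuler}(3), with equal indices on both sides, must be read with this shift -- and your justification by projecting the slice along the generating line is the right one.
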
Here we remark that if $(V_\alpha)_{\alpha\in A}$ is the minimal Whitney stratification of the projective variety $V\subset \P^{n-1}$, and $H$ is a general hyperplane in $\P^{n-1}$, the $V_\alpha\cap H$ that are not empty constitute the minimal Whitney stratification of $V\cap H$; see \cite[Chap. III, lemma 4.2.2]{Te3} and use the fact that the minimal Whitney stratification is defined by equimultiplicity of polar varieties (see \cite[Chap. VI, \S3]{Te3}) and that the multiplicity of polar varieties of dimension $>1$ is preserved by general hyperplane sections as we saw before theorem \ref{formula}.\par
The formula, $ (-1)^d{\rm deg}\check V=\chi (V)-2\chi (V\cap H_1)+\chi (V\cap H_2)$ in the special case where $V$ is nonsingular, already appears in \cite[formula (IV, 72)]{Kl4}.\par\noindent The formula of Proposition \ref{projfor} is \textit{a priori} different in general from the very nice generalized Pl\"ucker formula given by Ernstr\"om in \cite{Er}, which also generalizes the formula (IV, 72) to the singular case, even when the dual variety is not a hypersurface:
\begin{theorem}{\rm (Ernstr\"om, see \cite{Er})}\label{ER} Let $V\subset \P^{n-1}$ be a projective variety and let $k$ be the codimension in $\check\P^{n-1}$ of the dual variey $\check V$. We have the following equality:
$$(-1)^d{\rm deg}\check V=k\chi(V,{\rm Eu}_V)-(k+1)\chi(V\cap H_1,{\rm Eu}_{V\cap H_1})+\chi(V\cap H_{k+1},{\rm Eu}_{V\cap H_{k+1}}),$$
where the $H_i$ are general linear subspaces of $\P^{n-1}$ of codimension $i$ and $\chi(V,{\rm Eu}_V)$ is a certain linear combination with coefficients in $\Z$ of Euler characteristics of subvarieties of $V$, which is built using the properties of the \textit{local Euler obstruction} ${\rm Eu}(V,v)\in\Z$ associated to any point $v$ of $V$, especially that it is \textit{constructible} i.e., constant on constructible subvarieties of $V$.\end{theorem}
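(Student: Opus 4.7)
The plan is to interpret both sides as intersection numbers on the conormal variety $C(V) \subset \P^{n-1} \times \check\P^{n-1}$ and reduce the statement to a cohomological identity, using the local Euler obstruction as the natural bridge between constructible sheaf theory on $V$ and the Lagrangian geometry of $C(V)$.

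First I would express ${\rm deg}\,\check V$ as an intersection number. Since $\check V$ is the image of $C(V)$ under the projection $\check p\colon C(V)\to \check\P^{n-1}$ and has codimension $k$, its degree equals, up to sign, the number of intersection points of $C(V)$ with $\P^{n-1}\times L$ for a general linear subspace $L\subset\check\P^{n-1}$ of dimension $k$. Equivalently, by Proposition \ref{polarv} applied to the cone $X$ over $V$ and the last assertion of Proposition \ref{projEuler} suitably generalized to higher codimension of $\check V$, this counts signed critical points of a generic linear projection $V \dashrightarrow \P^k$ (away from its base locus), viewed as a Whitney-stratified map.

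Second I would invoke the microlocal/Kashiwara index theorem, which, for a Whitney stratified $V=\bigcup V_\alpha$, identifies the index of a generic cotangent covector at a stratified point with $\mathrm{Eu}_V$ times the Euler characteristic of a local vanishing cycle (a relative of the $\chi_i(V,V_\alpha)$ from Definition \ref{EUVA}). Summing over strata and using that the sections $V\cap H_j$ inherit Whitney stratifications (cf.\ the remark after Proposition \ref{projfor}), the signed count of critical points organizes into $(-1)^d\sum_\alpha n_\alpha\chi(V_\alpha)$ for explicit integers $n_\alpha$ built from $\mathrm{Eu}_{\overline{V_\alpha}}$; this is by definition the pairing $\chi(V,\mathrm{Eu}_V)$. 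Applying the same machinery to the linear sections $V\cap H_1$ and $V\cap H_{k+1}$ yields analogous intersection formulas for $\chi(V\cap H_1,\mathrm{Eu}_{V\cap H_1})$ and $\chi(V\cap H_{k+1},\mathrm{Eu}_{V\cap H_{k+1}})$ as intersection numbers of $C(V)$ with the conormal cycles $C(H_1)$ and $C(H_{k+1})$, which live in bidegrees $(n-2,1)$ and $(n-k-2,k+1)$ respectively inside $H^*(\P^{n-1}\times\check\P^{n-1})$.

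Third, I would combine these three intersection numbers. In the cohomology ring $H^*(\P^{n-1}\times\check\P^{n-1})=\Z[h,\check h]/(h^{n-1},\check h^{n-1})$, the class of the Lagrangian cycle $C(V)$ decomposes as a sum $\sum_{i=0}^{n-2}\delta_i h^{n-1-i}\check h^{i}$ whose nonzero range is controlled by the dimensions of $V$ and $\check V$: the component of top $\check h$-degree has weight ${\rm deg}\,\check V$ with index corresponding to the codimension $k$. The three specific test classes $1$, $h+\check h$ (=\,the class of the incidence divisor $I$), and $(h+\check h)^{k+1}$ produced by the three chosen linear sections span, modulo the part of the class of $C(V)$ that pairs to zero for cohomological reasons, a two-dimensional subspace in which the coefficients $(k,-(k+1),1)$ telescope to extract exactly the top-$\check h$ coefficient $(-1)^d{\rm deg}\,\check V$. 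This telescoping uses Lê's local formula $\mathrm{Eu}_{V\cap H}(v)=\mathrm{Eu}_V(v)-(\text{correction by a complex link})$ to align the three pairings into a single expression.

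The main obstacle is justifying the miraculous three-term form. A priori, when $\check V$ has codimension $k$, one might expect to need $k+2$ invariants (one for each dimension of linear section from $0$ to $k+1$); that only the extremes and the first step are required reflects a strong constraint on the class of the Lagrangian cycle $C(V)$, essentially a consequence of Proposition \ref{conormalequiv} combined with self-duality of the incidence structure (Lemma \ref{inclema}). Making this precise, and verifying the exact rational coefficients $(k,-(k+1),1)$ rather than some other linear combination, requires either a direct combinatorial computation of the pushforward of $[C(V)]$ under $\check p$ or Ernström's original argument via the MacPherson cycle, and this is where the genuine work of the proof lies; everything else is bookkeeping on Whitney stratifications.
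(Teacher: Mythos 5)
First, a remark on the comparison you were asked to make: the paper does not prove this statement at all. Theorem \ref{ER} is quoted from Ernstr\"om's paper \cite{Er} purely as a point of comparison with Proposition \ref{projfor}, so there is no in-house argument to measure your proposal against. That said, the architecture you propose --- identify the characteristic cycle of $\mathrm{Eu}_V$ with the conormal cycle $[C(V)]$ up to the sign $(-1)^{\dim V}$, compute the weighted Euler characteristics $\chi(V\cap H_j,\mathrm{Eu}_{V\cap H_j})$ as intersection numbers of $[C(V)]$ with test classes in $\P^{n-1}\times\check\P^{n-1}$, and read off $\deg\check V$ as an extreme polar degree --- is indeed the standard modern route to such formulas (it is essentially how \cite{A1} and \cite{M-T} recover and extend them), so the strategy is viable.

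The difficulty is that the proposal stops exactly where the theorem begins. The entire content of the three-term formula with coefficients $(k,-(k+1),1)$ rests on two facts you assert but do not establish: (i) when $\check V$ has codimension $k$, a whole range of intermediate polar degrees of $V$ vanishes --- this does \emph{not} follow from Proposition \ref{conormalequiv} or Lemma \ref{inclema}, which only record that $C(V)$ is Lagrangian and self-dual; it requires the dimension count that a general linear subspace of $\check\P^{n-1}$ of dimension $<k$ misses $\check V$ entirely, so the corresponding intersection numbers of $C(V)$ with $\P^{n-1}\times L$ are zero, while the first nonvanishing one equals $\deg\check V$ because the fibers of $C(V)\to\check V$ over general points are $(k-1)$-planes; and (ii) the specific linear combination of the three section classes isolates that surviving coefficient. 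You explicitly defer both to ``Ernstr\"om's original argument via the MacPherson cycle,'' which is circular in a proof of Ernstr\"om's theorem. Two smaller symptoms of the same issue: $H^*(\P^{n-1}\times\check\P^{n-1})\cong\Z[h,\check h]/(h^{n},\check h^{n})$, not $/(h^{n-1},\check h^{n-1})$, and the class of the $(n-2)$-dimensional cycle $C(V)$ lives in complementary codimension $n$, so it is $\sum_j\delta_j\,h^{\,j}\check h^{\,n-j}$ rather than a sum of total degree $n-1$; getting these exponents right is precisely what is needed to decide which $\delta_j$ equals $\deg\check V$, which ones vanish, and hence why three sections (rather than the $k+2$ you correctly suspect a priori) suffice. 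As it stands the proposal is a credible plan, not a proof.
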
 The local Euler obstruction is a local invariant of singularities which plays an important role in the theory of Chern classes for singular varieties, due to M-H. Schwartz and R. MacPherson (see \cite{Br}). Its definition is outside of the scope of these notes but we shall give an expression for it in terms of multiplicities of polar varieties below.

Coming back to our formula, if $\check V$ is not a hypersurface, the polar curve $P_{d-1}(X,L)$ is empty, but the degree of $\check V$ is still the multiplicity at the origin of a polar variety of the cone $X$ over $V$. We shall come back to this below.
\par\medskip\noindent
\textbf{The case where $V$ has isolated singularities}\par\medskip\noindent
Let us first treat the hypersurface case. Let $f(z_1,\ldots ,z_n)$ be a homogeneous polynomial  of degree $m$ defining a hypersurface $V\subset \P^{n-1}$ with isolated singularities, which is irreducible if $n>3$. The degree of $\check V$ is the number of points of $V$ where the tangent hyperplane contains a given general linear subspace $L$ of codimension $2$ in $\P^{n-1}$.  By Bertini's theorem we can deform $V$ into a nonsingular hypersurface $V'$ of the same degree, by considering the hypersurface defined by $F_{v_0}=f(z_1,\ldots ,z_n)+v_0z_1^m=0$, where the open set $z_1\neq 0$ contains all the singular points of $V$ and $v_0$ is small and non zero.\par\noindent
Taking coordinates such that $L$ is defined by $z_1=z_2=0$, the class of $V'$ is computed as the number of intersection points of $V'$ with the curve of $\P^{n-1}$ defined by the equations $\frac{\partial{F_{v_0}}}{\partial z_3}=\cdots =\frac{\partial F_{v_0}}{\partial z_n}=0$, which express that the tangent hyperplane to $V'$ at the point of intersection contains $L$. This is the \textit{relative polar curve} of \cite{Te3}\footnote{Or rather the projectivization of the relative polar surface of the homogeneous map $F_{v_0}\colon \C^n\to\C$. The distinction between absolute local polar varieties, which are defined as critical subsets of projections of a singular germ to a nonsingular one, and relative local polar varieties, whose nature is that of families of polar varieties of the fibers of a morphism, was established in \cite{Te3}. The idea was to extend to the local case the distinction between the polar curves used by Pl\"ucker and Poncelet to prove Pl\"ucker formulas, say for a projective plane curve, and the polar loci of Todd, which are a high dimensional generalization of the intersection of the polar curve with the given projective plane curve. This leads of course to the definition of relative Nash modification and relative conormal space for a morphism.}. For general $z_1$ this is a complete intersection and B\'ezout's theorem combined with Proposition \ref{projfor} gives $${\rm deg}\check V'=(-1)^{n-2}(\chi (V')-2\chi (V'\cap H_1)+\chi (V'\cap H_2))=m(m-1)^{n-2}.$$
\indent Now as $V'$ degenerates to $V$ when $v_0\to 0$,  by what we saw in example \ref{smoothing}, the topology changes only by $\mu^{(n-1)}(V,x_i)$ vanishing cycles in dimension $n-2$ attached to each of the isolated singular points $x_i\in V$ (example \ref{smoothing}). This gives $\chi (V)=\chi (V')-\sum_i (-1)^{n-2}\mu^{(n-1)}(V,x_i)$.\par\noindent We have $\chi (V\cap H_1)=\chi (V'\cap H_1)$ and $ \chi (V\cap H_2)=\chi (V'\cap H_2)$ since $H_1$ and $H_2$, being general, miss the singular points and are transversal to $V$ and $V'$ so that $V'\cap H_1$ (resp. $V'\cap H_2$) is diffeomorphic to $V\cap H_1$ (resp. $V\cap H_2$). It follows from a theorem of Ehresmann that all nonsingular projective hypersurfaces of the same degree are diffeomeorphic.\par\noindent We could have taken $H_2$ general in $H_1$ and $H_1$ to be $z_1=0$, and then $V\cap H_1=V'\cap H_1,\ V\cap H_2=V'\cap H_2$.\par On the other hand, in our formula the Whitney strata of dimension $<n-2$ are the $\{x_i\}$ so all the $d_{\{x_i\}}$ are equal to zero while the $\chi_1 (V,\{x_i\})$ are equal to $1+(-1)^{n-3}\mu^{(n-2)}(V,x_i)$, corresponding to the Milnor number of a generic hyperplane section of $V$ through $x_i$, as we saw in example \ref{smoothing}.\par
Substituting all this in our formula of Proposition \ref{projfor} gives:
\begin{align*}&(-1)^{n-2}{\rm deg}\check V=\\&(-1)^{n-2}m(m-1)^{n-2}-\sum_i(-1)^{n-2}\mu^{(n-1)}(V,x_i)\\&-\sum_i (1-(1+(-1)^{n-3}\mu^{(n-2)}(V,x_i)))
\end{align*}
Simplifying and rearranging we obtain:
$${\rm deg}\check V=m(m-1)^{n-2}-\sum_i(\mu^{(n-1)}(V,x_i)+\mu^{(n-2)}(V,x_i)).$$
This formula was previously established in \cite[Appendice II]{Te6} (see also \cite{La1}) by algebraic methods based on the fact that the multiplicity in the ring $\Oo_{V,x_i}$ of the Jacobian ideal is equal to $\mu^{(n-1)}(V,x_i)+\mu^{(n-2)}(V,x_i)$ (see \cite[Chap.II, \S 1]{Te1}).  This equality is called the \textit{restriction formula} because it shows that while the multiplicity of the jacobian ideal in the ambiant space at the point $x_i$ is $\mu^{(n-1)}(V,x_i)$, the multiplicity of its restriction to the hypersurface is $\mu^{(n-1)}(V,x_i)+\mu^{(n-2)}(V,x_i)$. This multiplicity is also the intersection multiplicity of the relative polar curve with the hypersurface, which counts the number of intersection points of the polar curve with a Milnor fiber of the hypersurface singularity. By the very definition of the relative polar curve, these points are those where the tangent hyperplane to the Milnor fiber is parallel to a given hyperplane of general direction or, in the projective case, belongs to a given general pencil of hyperplanes.\par\noindent This shows that the ``diminution of class"\footnote{That is, what one must add to the degree of the dual of $V$ to obtain the degree of the dual of a nonsingular variety of the same degree as $V$.}the due to the singularity is the number of hyperplanes containing $L$ and tangent to a smoothing of the singularity which are ``absorbed" by the singularity as the Milnor fiber specializes to the singular fiber; see \cite[Appendice II]{Te6}.\par\noindent See \cite[Corollary 3.8]{Pi1} for a proof in terms of characteristic classes, closer to the approach of Todd. Yet another proof, relating the degree of the dual variety to integrals of curvature and based on the relationship between (relative) polar curves and integrals of curvature brought to light by Langevin in \cite{Lan}, can be found in \cite[\S 5]{Gr}. \par
The same method works for complete intersections with isolated singula-rities, since they can also be smoothed in the same way, and the generalized Milnor numbers also behave in a similar way. Using the results of Navarro Aznar in \cite{N} on the computation of the Euler characteristics of nonsingular complete intersections and the results of L\^e in \cite{L1} on the computation of Milnor numbers of complete intersections, as well as a direct generalization of the small trick of example \ref{smoothing} for the computation of $\chi_1 (X,\{x\})$ (see also \cite{Ga2}),  one can produce a topological expression for the degrees of the duals of complete intersections with isolated singularities, in terms of the degrees of the equations and generalized Milnor numbers. We leave this to the reader as an interesting exercise. The answer, obtained by a different method, can be found in \cite[\S 2]{Kl3} and a proof inspired by \cite{Er} can also be found in \cite{M-T}. The correction term coming from the singularities has the same form as in the hypersurface case. \par In the general isolated singularities case (see \cite[\S 3]{Kl3}), both the computation of Euler characteristics and the topological interpretation of local invariants at the singularities offer new challenges.\par  \par\medskip\noindent
\textbf{Conclusion}: Given a projective variety $V$ of dimension $d$ endowed with its minimal Whitney stratification $V=\bigcup_{\alpha \in A}V_\alpha$, we can write the formula of Proposition \ref{projfor} as follows:
\begin{align*}& (-1)^d{\rm deg}\check V=\\ &\chi (V)-2\chi (V\cap H_1)+\chi (V\cap H_2)-\sum_{d_\alpha <d}  (-1)^{d_\alpha}{\rm deg}_{n-2}\check{\overline{V_\alpha}}(1-\chi_{d_\alpha +1}(V,V_\alpha)),
\end{align*} where we agree that ${\rm deg}_{n-2}\check{\overline{V_\alpha}}={\rm deg}\check{\overline{V_\alpha}}$ if ${\rm dim}\check V_\alpha=n-2$, and is $0$ if ${\rm dim}\check V_\alpha<n-2$.
Then we see by induction on the dimension that:
\begin{proposition}\label{degd} The degree of the dual variety, when it is a hypersurface, ultimately depends on the Euler characteristics of the $\overline{V_\alpha}$ (or the $V_\alpha$, since it amounts to the same by additivity of the Euler characteristic) and their general linear sections, and the local vanishing Euler characteristics $\chi_i (\overline{V_\beta},V_\alpha)$. 
\end{proposition}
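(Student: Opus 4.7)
The plan is to proceed by induction on $d = \dim V$, using the reformulated version of Proposition \ref{projfor} displayed just above the statement.

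For the base case $d = 0$, $V$ is a finite set of reduced points and each point is dual to a hyperplane in $\check{\P}^{n-1}$, so $\check V$ is a union of hyperplanes and ${\rm deg}\,\check V = \#V = \chi(V)$, consistent with the formula since there are no strata of dimension strictly less than zero and the general sections $V \cap H_1$, $V \cap H_2$ are empty. For the inductive step, assume the statement holds for every projective variety of dimension less than $d$. The displayed formula reads
\[(-1)^d{\rm deg}\,\check V = \chi(V) - 2\chi(V\cap H_1) + \chi(V\cap H_2) - \sum_{d_\alpha < d} (-1)^{d_\alpha}{\rm deg}_{n-2}\check{\overline{V_\alpha}}\bigl(1-\chi_{d_\alpha +1}(V,V_\alpha)\bigr).\]
The first three terms are Euler characteristics of $V$ and of its general linear sections, and each factor $\chi_{d_\alpha + 1}(V, V_\alpha)$ is among the local vanishing Euler characteristics listed in the conclusion. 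The only remaining ingredient is ${\rm deg}_{n-2}\check{\overline{V_\alpha}}$, which equals zero unless $\check{\overline{V_\alpha}}$ is a hypersurface; in that case it equals ${\rm deg}\,\check{\overline{V_\alpha}}$ and $\dim \overline{V_\alpha} = d_\alpha < d$, so the inductive hypothesis applies to $\overline{V_\alpha}$.

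To apply it cleanly I would equip $\overline{V_\alpha}$ with the Whitney stratification whose strata are the $V_\gamma \subset \overline{V_\alpha}$ coming from the given stratification of $V$; the fact that this is itself a Whitney stratification follows from the local, pairwise character of Whitney's conditions. The inductive hypothesis then expresses ${\rm deg}\,\check{\overline{V_\alpha}}$ in terms of Euler characteristics of the $V_\gamma \subset \overline{V_\alpha}$ and their general linear sections, together with local vanishing Euler characteristics $\chi_i(\overline{V_\beta}, V_\gamma)$ for pairs $V_\gamma \subset \overline{V_\beta} \subset \overline{V_\alpha}$ -- all of which are among the invariants of $V$ listed in the conclusion. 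Substituting these inductive expressions back into the displayed formula gives ${\rm deg}\,\check V$ as a universal expression in the data of the conclusion.

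The main technical point to watch -- and what I expect to be the chief obstacle -- is ensuring at each inductive step that the Euler characteristics of general linear sections of the $\overline{V_\alpha}$ are themselves reconstructible from the data on $V$. This is handled by the additivity of $\chi$ on constructible decompositions, using the partition $\overline{V_\alpha} \cap H = \bigsqcup_{V_\gamma \subset \overline{V_\alpha}} (V_\gamma \cap H)$, together with the observation that a single hyperplane $H$ can be chosen simultaneously general for all strata of $V$ (intersection of finitely many Zariski-open dense conditions on the dual projective space). Once these compatibility points are recorded, the induction closes without further difficulty, since all the building blocks of the formula at each stage are of the two advertised types.
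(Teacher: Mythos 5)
Your proposal is correct and is essentially the paper's own argument: the paper simply says ``we see by induction on the dimension'' after rewriting the formula of Proposition \ref{projfor} with ${\rm deg}_{n-2}\check{\overline{V_\alpha}}$ in place of the polar multiplicities, and your induction on $d$, with the inductive hypothesis applied to each closed stratum $\overline{V_\alpha}$ whose dual is a hypersurface, is exactly the intended proof. The compatibility points you record (the restricted stratification of $\overline{V_\alpha}$ is Whitney, a single hyperplane can be chosen general for all strata, additivity of $\chi$) are the right ones and are left implicit in the paper.
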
\noindent
\textbf{Problem:} Given $V$ as above with a defining homogeneous ideal, describe an algebraic method to produce an ideal defining the union of $V$ and the duals of the other strata of the minimal Whitney stratification of the dual $\check V$.\par\noindent
For example, the dual of a general plane algebraic curve has only cusps and double points as singularities. The construction described above adds to the curve all its "remarkable tangents", namely its double tangents and inflexion tangents.\par\medskip\noindent
Using the properties of polar varieties and theorem \ref{formula} one can prove a similar formula in the case where the dual $\check V$ is not a hypersurface, and thus extend Proposition \ref{degd} to all projective varieties. The degree of $\check V$ is then the multiplicity at the origin of the smallest polar variety of the cone $X$ over $V$ which is not empty. Now one uses the equalities $$m_x(P_k(X,x))=m_x(P_k(X,x)\cap L_{d-k-1})=m_x(P_k(X\cap L_{d-k-1}),x)),$$ which we have seen before theorem \ref{formula}. They tell us that the degree of $\check V$ is the degree of the dual of the intersection of $V$ with a linear space of the appropriate dimension for this dual to be a hypersurface.\par
More precisely, when $H$ is general hyperplane in $\P^{n-1}$, the following facts are consequences of the elementary properties of projective duality (see Remark \ref{Factsofpolarvarieties}, c)), and the property that tangent spaces are constant along the generating lines of a cone (see Lemma \ref{cone}):\begin{itemize} 
\item If $\check V$ is a hypersurface, the dual of $V\cap H$ is the cone with vertex $\check H$ over the polar variety $P_1(\check V, \check H)$, the closure in $\check V$ of  the critical locus of the restriction to $\check V^0$ of the projection $\pi\colon \check\P^{n-1}\to \check \P^{n-2}$ from the point $\check H\in \check\P^{n-1}$. Since we assume that $V$ is not contained in a hyperplane, the degree of the hypersurface $\check V$ is $\geq 2$, hence this critical locus is of dimension $n-3$ and the dual of $V\cap H$ is a hypersurface. In appropriate coordinates its equation is a factor of the discriminant of the equation of $\check V$.
\item  Otherwise, the dual of $V\cap H$ is the cone with vertex $\check H$ over $\check V$, i.e., the join $\check V*\check H$ in $\check\P^{n-1}$ of $\check V$ and the point $\check H$.
\end{itemize}Although they were suggested to us by the desire to extend Proposition \ref{degd} to the general case, these statements are not new. The authors are grateful to Steve Kleiman for providing the following references:  for the first statement, \cite[Lemma d, p.5]{Wa2}, and for the second one \cite[Thm. (4.10(a)), p.164]{HK}. One should also consult \cite{Kl5} and compare with \cite[Proposition 1.9]{Ho}, and \cite[Proposition 2.2]{A3}.\par Assuming that $H$ is general, it is transversal to the stratum $V^0$ and to verify these statements one may consider only what happens at nonsingular points of $V\cap H$, which are dense in $V\cap H$. At such a point $v\in V\cap H$, the space of hyperplanes containing the tangent space $T_{V,v}$ is of codimension one in the space of hyperplanes containing $T_{V\cap H,v}$ and does not contain the point $\check H$ since $H$ is general. Any hyperplane containing $T_{V\cap H,v}$ and distinct from $H$ determines with $H$ a pencil. Because of the codimension one, the line in $\check\P^{n-1}$ representing this pencil must contain a point representing a hyperplane tangent to $V$ at $v$. The closure in $\check\P^{n-1}$ of the union of the lines representing such pencils is the dual of $V\cap H$. It is a cone with vertex $\check H$ and because tangent spaces are constant along generating lines of cones, a tangent hyperplane to this cone must be tangent to $\check V$.\par\noindent
If ${\rm dim}\check V=n-3$ this cone is a hypersurface in $\check\P^{n-1}$. Otherwise we repeat the operation by intersecting $V\cap H$ with a new general hyperplane, and so on; we need to repeat this as many times as the \textit{dual defect} $\delta(V)={\rm codim}_{\check\P^{n-1}}\check V-1$. We can then apply Proposition \ref{degd} to $V\cap H_{\delta (V)}$ because its dual is a hypersurface. The cone $\check H*\check V$ on a projective variety $\check V$  from a point $\check H$ in $\check\P^{n-1}\setminus \check V$ has the same degree as $\check V$. To see this, remember that the degree is the number of points of intersection with a general (transversal) linear space of complementary dimension. If a general linear space $L$ of codimension ${\rm dim}\check V +1$ intersects  $\check H*\check V$ transversally in $m$ points, the cone $\check H*L$ will intersect transversally $\check V$ in $m$ points. Thus, the iterated cone construction does not change the degree so that the degree of the dual of $V\cap H_{\delta (V)}$ is the degree of $\check V$.\par\noindent The smallest non empty polar variety of the cone $(X,0)$ over $V$ is $P_{d-\delta (V)}(X,0)$. \par\medskip\noindent This suffices to show that Proposition \ref{degd} is valid in general: \textit{Obtaining a precise formula for the degree of $\check V$ in the general case is reduced to the computation of Euler-Poincar\'e characteristics and local vanishing Euler-Poincar\'e characteristics of general linear sections of $V$ and of the strata of its minimal Whitney stratification}.\par It would be interesting to compare this with the viewpoints of \cite{Pi1}, \cite{Er}, \cite{A1} and \cite{A2}. The comparison with \cite{Er} would hinge on the following two facts:
\begin{itemize}
\item By corollary 5.1.2 of \cite{L-T1} we have at every point $v\in V$ the equality
$${\rm Eu}(V,v)= \sum_{k=0}^{d-1} (-1)^km_v(P_k(V,v)).$$
\item As an alternating sum of multiplicities of polar varieties, in view of theorem \ref{PE}, the Euler obstruction is constant along the strata of a Whitney stratification.
\end{itemize}
Indeed, if we expand the formula written above Proposition \ref{degd} in terms of the Euler characteristics of the strata $V_\alpha$ and their general linear sections, and then remove the symbols $\chi$ in front of them, we obtain a linear combination of the $V_\alpha$ and their sections, with coefficients depending on the local vanishing Euler-Poincar\'e characteristics along the $V_\alpha$, which has the property that taking formally the Euler characteristic gives $(-1)^d{\rm deg}\check V$. Redistributing the terms using theorem \ref{formula} should then give Ernstr\"om's theorem. We leave this as a problem for the reader. Another interesting problem is to work out in the same way formulas for the other polar classes, or ranks (see \cite[\S 2]{Pi1}).\par\medskip Finally, by Proposition \ref{projEuler} the formula of theorem \ref{formula} appears in a new light, as containing an extension to the case of non-conical singularities of the generalized Pl\"ucker formulas of projective geometry. Interesting connections between the material presented here and the theory of characteristic classes for singular varieties are presented in \cite{Br}, \cite{Br2}, \cite{AB} and \cite{A2}. \par\medskip\noindent
\textbf{Acknowledgement}\\ The authors are grateful to Ragni Piene for useful comments on a version of this text, to Steve Kleiman and Anne Fr\"uhbis-Kr\"uger  for providing references with comments, and to the referee for his attentive reading and many very useful suggestions.
\section{APPENDIX: SOME COMPLEMENTS}
\subsection{Whitney's conditions and the condition (w)}\label{defw}

In the vector space $\C^n$ equipped with its hermitian metric corresponding to the bilimear form $(u,v)=\sum_{i=1}^n u_i\overline v_i$, let us consider two non zero vector subspaces $A,B$ and define the diatance from $A$ to $B$, in that order, as:
$${\rm dist}(A,B)={\rm sup}_{u\in B^\perp\setminus\{0\}, v\in A\setminus\{0\}}\frac{\vert (u,v)\vert}{\Vert u\Vert \Vert v\Vert},$$
where $B^\perp =\{u\in\C^n\vert (u,b)=0$ for all $b\in B\}$ and $\Vert u\Vert^2=(u,u)$.\par\noindent
Note that ${\rm dist}(A,B)=0$ means that we have the inclusion $B^\perp\subset A^\perp$, that is $A\subset B$. Note also that $(A,B)\mapsto {\rm dist}(A,B)$ defines a real analytic function on the product of the relevant grassmanians, and that Schwarz's inequality implies ${\rm dist}(A,B)\leq 1$.\par\medskip
In \cite[Lemma 5.2]{Hi}, Hironaka proved that if the pair $(X^0,Y)$ satisfies the Whitney conditions at every point $y$ of $Y$ in a neighborhood of $0$, there exists a positive real number $e$ such that 
$${\rm lim}_{x\to y,x\in X^0}\frac{{\rm dist}(T_{Y,y},T_{X,x})}{{\rm dist}(x,Y)^e}=0.$$
This is a "strict", or analytic,  version of condition $a)$. There is a similar statement for condition $b)$.\par\noindent  In \cite{V}, Verdier defined another incidence condition: the pair $(X^0,Y)$, with $Y\subset X$ nonsingular, satisfies the condition $(w)$ at a point $y\in Y$ if there exist a neighborhood $U$ of $y$ in $X$ and a constant $C>0$ such that for all $x\in X^0\cap U,\ y'\in Y\cap U$, we have:
$${\rm dist}(T_{Y,y'},T_{X,x})\leq C\Vert y'-x\Vert.$$
This is to Hironaka's strict Whitney condition $a)$ as Lipschitz is to H\"older. These metric conditions also make sense in $\R^n$ endowed with the euclidean metric.\par\noindent Condition $(w)$ implies both Whitney conditions (see \cite[Th\'eor\`eme 1.5]{V}). \par Verdier showed that $(w)$ is a stratifying condition, also in the subanalytic case (see \cite[Th\'eor\`eme 2.2]{V}).\par
In \cite[Chap.V, Th\'eor\`eme 1.2]{Te3} (where $(w)$ is called "condition a) stricte avec exposant 1"), it is shown, using the algebraic characterization of Whitney conditions,  that in the complex-analytic case, $(w)$ is in fact equivalent to the Whitney conditions. Brodersen and Trotman showed in \cite{B-T} that this is not true in real algebraic geometry. \par\medskip
In the subanalytic world, and in particular in semialgebraic geometry, Comte and Merle showed in \cite{CM} that one could define local polar varieties, and that one could define real analogues of the local vanishing Euler-Poincar\'e characteristics as well as local real analogues of the multiplicities of polar varieties,  relate them by a real analogue of theorem \ref{formula} and prove that they are constant along the strata of a $(w)$ stratification of a subanalytic set. 
\subsection{Other applications}
According to Ragni Piene in "Polar varieties revisited" (see  \cite{Pi2} and\break www.ricam.oeaw.ac.at/specsem/specsem2013/workshop3/slides/piene.pdf), 
polar varieties have been applied to study:
\begin{itemize}
\item Singularities (L\^e-Teissier, Teissier, Merle, Comte. . . ; see the references to L\^e, Teissier, Gaffney, Kleiman, and  \cite{Co},\cite{CM})
\item The topology of real affine varieties (Giusti, Heinz et al., see \cite{BGHM1}, \cite{BGHM2}, \cite{BGHP},
Safey El Din-Schost; see \cite{S-S})
\item Real solutions of polynomial equations (Giusti, Heinz, et al., see \cite{BGHP})
\item Complexity questions (see \cite{M-R} for the complexity of computations of Whitney stratifications, and \cite{B-L})
\item Foliations (Soares, Corral, see \cite{So}, \cite{Cor})
\item Focal loci and caustics of reflection (Catanese-Trifogli,
Josse-P\`ene; see \cite{CT}, \cite{J-P})
\item Euclidean distance degree : The Euclidean distance degree of a variety is the number of critical points of the squared distance to a generic point outside the variety. (J. Draisma et al., see \cite{DHOST}).
\end{itemize}

\noindent
Arturo Giles Flores,
Departamento de Matem\'aticas y F\'isica,
Centro de Ciencias B\'asicas,
Universidad Aut\'onoma de Aguascalientes.\par\noindent
Avenida Universidad 940, Ciudad Universitaria
C.P. 20131,\par\noindent
Aguascalientes, Aguascalientes, M\'exico.\par\noindent
arturo.giles@cimat.mx
\par\medskip\noindent
Bernard Teissier, Institut math\'ematique de Jussieu-Paris Rive Gauche, \par\noindent 
UP7D - Campus des Grands Moulins,
Boite Courrier 7012. \par\noindent B\^at. Sophie Germain, 8 Place Aur\'elie de Nemours, 75205 PARIS Cedex 13, France.\par\noindent
bernard.teissier@imj-prg.fr

\end{document}